\newcommand\C{\mathbb{C}}
\newcommand\D{\mathbb{D}}
\newcommand\Q{\mathbb{Q}}
\newcommand\R{\mathbb{R}}
\newcommand\ZZ{\mathbb{Z}}
\renewcommand\H{\mathcal{H}}
\newcommand\M{\mathcal{M}}
\newcommand\I{\mathcal{I}}
\newcommand\SI{\mathcal{SI}}
\newcommand\T{\mathcal{T}}
\newcommand\K{\mathcal{K}}
\newcommand\BI{\mathcal{BI}}
\renewcommand\L{\mathcal{L}}
\renewcommand\cH{\mathcal{H}}
\newcommand\Sp{\text{Sp}}
\newcommand\GL{\text{GL}}
\DeclareMathOperator{\Mod}{Mod}
\DeclareMathOperator{\SMod}{SMod}
\DeclareMathOperator{\PMod}{PMod}
\DeclareMathOperator{\Hom}{Hom}
\DeclareMathOperator{\B}{B}
\DeclareMathOperator{\PB}{PB}
\DeclareMathOperator{\Z}{\mathcal{Z}}
\DeclareMathOperator{\PZ}{\mathcal{P}\hspace*{-.3ex}\mathcal{Z}}
\DeclareMathOperator{\Lift}{Lift}
\DeclareMathOperator{\Capmap}{Cap}
\DeclareMathOperator{\Res}{Res}
\DeclareMathOperator{\Ind}{Ind}
\DeclareMathOperator{\cd}{cd}
\newcommand{\p}[1]{\bigskip \noindent \emph{#1}.}
\renewcommand{\S}{\mathcal S}
\numberwithin{equation}{section}
\theoremstyle{plain}
\newtheorem{theorem}{Theorem}[section]
\newtheorem{proposition}[theorem]{Proposition}
\newtheorem{lemma}[theorem]{Lemma}
\newtheorem{corollary}[theorem]{Corollary}
\newtheorem{conjecture}[theorem]{Conjecture}
\newtheorem{question}{Question}
\title{Representation stability in the level 4 braid group}
\author{Kevin Kordek}
\author{Dan Margalit}
\thanks{}
\date{}
\begin{document}

\vspace*{-4.5ex}

\maketitle

\vspace*{-7ex}

\begin{abstract}
We investigate the cohomology of the level 4 subgroup of the braid group, namely, the kernel of the mod 4 reduction of the Burau representation at $t=-1$.  This group is also equal to the kernel of the mod 2 abelianization of the pure braid group.  We give an exact formula for the first Betti number; it is a quartic polynomial in the number of strands.  We also show that, like the pure braid group, the first homology satisfies uniform representation stability in the sense of Church and Farb.  Unlike the pure braid group, the group of symmetries---the quotient of the braid group by the level 4 subgroup---is one for which the representation theory has not been well studied; we develop its representation theory.  This group is a non-split extension of the symmetric group.

As applications of our main results, we show that the rational cohomology ring of the level 4 braid group is not generated in degree 1 when the number of strands is at least 15, and we compute all Betti numbers of the level 4 braid group when the number of strands is at most 4. We also derive a new lower bound on the first rational Betti number of the hyperelliptic Torelli group and on the top rational Betti number of the level 4 mapping class group in genus 2.  Finally, we apply our results to locate all of the 2-torsion points on the characteristic varieties of the pure braid group. 
\end{abstract}


\vspace*{0ex}

\section{Introduction}

For an integer $m\geq 0$, the \emph{level $m$ braid group} $\B_n[m]$ is a finite-index subgroup of the braid group $\B_n$. It is defined as the kernel of the composition 
\[
\B_n \to  \GL_n(\ZZ[t,t^{-1}]) \to \GL_n(\ZZ) \to \GL_n(\ZZ/m)
\]
where the first map is the (unreduced) Burau representation (see Birman's book \cite{birman}), the next map is evaluation at $t=-1$ and the last map is given by reducing entries mod $m$. 

Arnol'd \cite{arnold} showed that $\B_n[2]$ is equal to the pure braid group $\PB_n$. Brendle and the second author showed that $\B_n[4]$ is equal to $\PB_n^2$, the subgroup of $\PB_n$ generated by all squares of elements \cite{brendlemargalit}.  Equivalently, $\B_n[4]$ is the kernel of the mod 2 abelianization map 
\[
\PB_n \to H_1(\PB_n;\ZZ/2) \cong (\ZZ/2)^{n \choose 2}.
\]
As such $\B_n[4]$ is also isomorphic to the fundamental group of the mod 2 congruence cover of the complement of the braid arrangement $X_n$.   Brendle and the second author additionally showed that $\B_n[4]$ is equal to the subgroup of $\PB_n$ generated by all squares of Dehn twists \cite{brendlemargalit}.   Little else is known about the algebraic structure of $\B_n[m]$ when $m > 2$.

Our main result is Theorem~\ref{thm:reps}, which states that $H_1(\B_n[4];\C)$ satisfies uniform representation stability in the sense of Church and Farb.  The group of symmetries is $\Z_n = \B_n/\B_n[4]$, which is a non-split extension of the symmetric group $S_n$ by $H_1(\PB_n;\ZZ/2)$.   Theorem~\ref{thm:reps} gives the explicit decomposition of $H_1(\B_n[4];\C)$ into irreducible $\Z_n$-modules:
\[
H_1(\B_n[4]; \C)\cong 
\begin{cases}
V_2(1,(0)) & n =2 \\[0ex]
V_3(1,(0)) \oplus V_3(1,(1)) \oplus V_3(\rho_3,(0)) & n=3 \\[0ex]
V_n(1,(0)) \oplus V_n(1,(1)) \oplus V_n(1,(2)) \oplus V_n(\rho_3,(0)) \oplus V_n(\rho_4,(0))  & n\geq 4.\\
\end{cases}
\]
Each summand here is of the form
\[
V_n(\rho,\lambda) = \Ind_{\Z_n^I}^{\Z_n} \left(V_m(\rho) \boxtimes V_{n-m}(\lambda)\right)
\]
where $\Z_n^I$ is the stabilizer in $\Z_n$ of a set $I$ of pairs of elements of $\{1,\dots,n\}$, $V_m(\rho)$ is an irreducible representation of $\Z_m^I$, and $V_{n-m}(\lambda)$ is an irreducible representation of $S_{n-m}$.  See Theorem~\ref{thm:class} and the preceding discussion for the precise definitions.

Our first step towards proving Theorem~\ref{thm:reps} is to prove Theorem~\ref{thm:main}, which gives an explicit basis for $H_1(\B_n[4];\Q)$.  From this basis we obtain a formula for the first Betti number of $\B_n[4]$, which is a quartic polynomial in $n$:
\[
\displaystyle \dim H_1(\B_n[4];\Q) = 3{n \choose 4} + 3{n \choose 3} + {n \choose 2}.
\]

Church and Farb introduced the theory of representation stability and proved that $H_k(\PB_n;\Q)$ satisfies uniform representation stability \cite{churchfarb}.  By passing to $\PB_n$-invariants, our Theorem~\ref{thm:reps} recovers their result for $k=1$.  Church and Farb take advantage of the explicit basis for $H_k(\PB_n;\Q)$ provided by Arnol'd; our Theorem~\ref{thm:main} plays the role of the Arnol'd result.  They also employ the representation theory of $S_n$.  As part of our work we develop the representation theory of $\Z_n$ from the ground up (Section~\ref{sec:bnbar}).  Our results suggest that it is an interesting problem to study the stability of $H_k(\B_n[m])$ as $k$, $n$, and $m$ vary.

We derive a number of consequences of our methods and results, about the level 4 hyperelliptic mapping class group $\SMod_g[4]$, the braid Torelli group $\BI_g$, the level 4 mapping class group $\Mod_g[4]$, the characteristic varieties $V_d(X_n)$ of the braid arrangement $X_n$, and $\B_n[4]$ itself.  Specifically, we give the following applications.
\begin{enumerate}[leftmargin=*]
\item $H^*(\B_n[4];\Q)$ is not generated in degree 1 (Theorem~\ref{thm:mainD}).
\item $\B_n[4]$ is not generated by 4th powers of half-twists (Theorem~\ref{thm:2}).
\item All 2-torsion on $V_1(X_n)$ lies on central components,  and outside $V_2(X_n)$ (Theorem~\ref{thm:2torsion}).
\newcounter{enumi_saved}
\setcounter{enumi_saved}{\value{enumi}}
\end{enumerate}
We further give:
\begin{enumerate}[leftmargin=*]
\setcounter{enumi}{\value{enumi_saved}}
\item a new lower bound for the first Betti number of $\BI_g$ (Theorem~\ref{thm:mainF}),
\item a new lower bound for the top Betti number of $\Mod_2[4]$ (Proposition~\ref{prop:genus2prop}), and 
\item computations of all Betti numbers of $\B_n[4]$ for $n \leq 4$ (Theorem~\ref{thm:smallbetti}).
\setcounter{enumi_saved}{\value{enumi}}
\end{enumerate}
Finally, we obtain analogues of some of our results about $\B_n[4]$ for $\SMod_g[4]$:
\begin{enumerate}[leftmargin=*]
\setcounter{enumi}{\value{enumi_saved}}
\item we determine the first Betti number of $\SMod_g[4]$  (Corollary~\ref{cor:mainC}), and 
\item we show $H^*(\SMod_g[4];\Q)$ is not generated in degree 1 (Theorem~\ref{thm:mainE}).
\end{enumerate}

Representation stability has been studied for representations of Weyl groups (such as the symmetric groups and the hyperoctahedral groups), certain linear groups, and certain wreath products, among others; see the surveys by Farb, Khomenko--Kesari, and Wilson  \cite{farb, khomenko, wilsonnotes}.  The group $\Z_n$ seems to have not appeared before in the theory.  The general trend has to obtain representation stability from the finite generation of a module over a category associated to a sequence of groups.  The category for the groups $\{\Z_n\}$ is the subject of a forthcoming paper with Miller and Patzt.  With the current technology it does not appear to be possible to obtain our uniform representation stability from this categorical viewpoint.

Representation stability has also been studied extensively for various types of configuration spaces, beginning with the work of Church, Church--Farb, and Church--Ellenberg--Farb \cite{church, cef1, churchfarb}.  However, there is no general theory for the representation stability of the homology of covers of configuration spaces.  On the other hand, the representation stability for the homology of specific covers, such as orbit configuration spaces, have been studied, for example, by Bibby--Gadish and Casto \cite{bibby,casto}.   Congruence covers of complements of hyperplane arrangements are well studied; see, for example, the survey by Suciu \cite{suciu}.  However, we are not aware of any previously known general closed formulas for the Betti numbers of congruence covers of $X_n$.

The uniform representation stability of the $\{H_1(\B_n[4];\C) \}$ can also be interpreted as a result about the twisted homology of $\B_n$ with coefficients in the $V_n(\rho,\lambda)$.  Wahl and Randal-Williams have proven a general stability result for the homology of braid groups with twisted coefficients \cite{wahl}.  Their theorem applies to certain coefficient systems, called polynomial coefficient systems.   It seems to be an interesting (and difficult) problem to determine if the $V_n(\rho,\lambda)$ are polynomial in this sense.  Even if this were the case, their result would not imply our Theorem~\ref{thm:reps}; it would only imply that the multiplicities stabilize.

In the remainder of the introduction, we explain our results in detail and give an outline for the paper.  We begin with a discussion of Theorem~\ref{thm:main}.  This theorem can in theory be obtained from our main result, Theorem~\ref{thm:reps} (and the dimension count in Theorem~\ref{thm:main} does indeed follow immediately).  However, our entire approach to Theorem~\ref{thm:reps} is predicated on Theorem~\ref{thm:main}.


\subsection*{An explicit basis} In this paper we identify $\B_n$ with the mapping class group of the 2-dimensional disk $\D_n$ with $n$ marked points in the interior  \cite[Chapter 9]{farbmargalit}.  In general, for a surface $S$, possibly with boundary and possibly with marked points, we define the mapping class group $\Mod(S)$ as the group of homotopy classes of orientation-preserving homeomorphisms of $S$ that preserve the set of marked points and fix the boundary pointwise.

We label the marked points by $[n] = \{1,\dots,n\}$ and denote by $T_{ij}$ the (left) Dehn twist about the curve in $\D_n$ indicated in Figure~\ref{fig:twist} and by $\tau_{ij}$ the image of $T_{ij}^2$ in $H_1(\B_n[4];\Q)$.  The group $\PB_n$ acts on $H_1(\B_n[4];\Q)$ by conjugation; for $f \in \PB_n$ we denote by $f\tau_{ij}$ the image of $\tau_{ij}$ under the action of $f$.  

For $n \geq 4$ we define the set $\S$ to be $\S_1 \cup \S_2 \cup \S_3$ where
\begin{align*}
\S_1 &= \{ \tau_{ij} \mid i < j \}, \\
\S_2 &= \{ T_{ik}\tau_{ij},\ T_{jk}\tau_{ik},\  T_{ij}\tau_{jk} \mid i < j < k   \}, \text{ and} \\
\S_3 &= \{ T_{il}T_{jk} \tau_{ij},\  T_{ij}T_{k\ell} \tau_{ik},\ T_{ik}T_{j\ell} \tau_{i\ell} \mid i < j < k < \ell \}.
\end{align*}
For $n < 4$ we define $\S$ in the same way, except that we declare $\S_1$, $\S_2$, and $\S_3$ to be empty when $n$ is less than 2, 3, and 4, respectively.    In this paper we compose elements of $\B_n$ from right to left (functional notation).

\begin{theorem}\label{thm:main}
For all $n\geq 1$ the set $\S$ is a basis for $H_1(\B_n[4];\Q)$.  In particular,
\[
\displaystyle \dim H_1(\B_n[4];\Q) = 3{n \choose 4} + 3{n \choose 3} + {n \choose 2}.
\]
\end{theorem}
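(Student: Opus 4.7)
The plan is to prove Theorem~\ref{thm:main} by establishing separately that (a) $\S$ is a spanning set for $H_1(\B_n[4];\Q)$, and (b) $\dim H_1(\B_n[4];\Q) \leq |\S| = \binom{n}{2} + 3\binom{n}{3} + 3\binom{n}{4}$. Taken together these two halves force $\S$ to be a basis and yield the stated Betti number formula as a byproduct.

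For the spanning step, I would begin from the Brendle--Margalit generation result cited in the introduction: $\B_n[4]$ is generated by the squared Dehn twists $T_{ij}^2$. Consequently $H_1(\B_n[4];\Q)$ is spanned as a $\Q$-vector space by the classes $f\tau_{ij}$ with $f\in\PB_n$ and $i<j$ (conjugation by an element of $\B_n \setminus \PB_n$ only permutes the pairs $(i,j)$). Since $\B_n[4]$ acts trivially on its own first homology, the $\PB_n$-action factors through the finite group $\PB_n/\B_n[4]\cong(\ZZ/2)^{\binom{n}{2}}$, and so for each $(i,j)$ the orbit of $\tau_{ij}$ is finite and indexed by mod-$2$ exponent vectors on the pairs. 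To cut this large list down to $\S$, I would use two pure braid group facts: if the curves of $T_{k\ell}$ and $T_{ij}$ are disjoint then they commute, so $T_{k\ell}\tau_{ij}=\tau_{ij}$; and when the indices overlap, the three- and four-strand pure braid relations express $T_{k\ell}\tau_{ij}$ as a $\ZZ$-linear combination of $\tau_{ij}$ together with the $T_\bullet\tau_\bullet$ classes already appearing in $\S_2$ or $\S_3$. The three ``triangle'' conjugates in $\S_2$ correspond to the three ways $T_{k\ell}$ can act nontrivially on a fixed triple, and the three ``four-chord'' conjugates in $\S_3$ to the three pairings of a fixed quadruple.

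For the dimension bound, the natural route is cover-theoretic. Viewing $\B_n[4]$ as the fundamental group of the regular cover $\tilde X_n\to X_n$ of the braid arrangement complement with deck group $A=(\ZZ/2)^{\binom{n}{2}}$, rational transfer gives
\[
H^1(\B_n[4];\Q)\;\cong\;\bigoplus_{\chi\in\widehat A} H^1(X_n;\mathcal L_\chi),
\]
where $\mathcal L_\chi$ is the rank-one $\Q$-local system associated to $\chi$. Each summand can then be controlled via the Orlik--Solomon presentation of $H^*(X_n;\Q)$ together with the known structure of the depth-one characteristic variety $V_1(X_n)$: the trivial character contributes $\binom{n}{2}$ (the dimension of $H_1(\PB_n;\Q)$), a specific family of characters supported on triples contributes $3\binom{n}{3}$, and a specific family supported on quadruples contributes $3\binom{n}{4}$, with all other characters giving zero. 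A more self-contained alternative would be to apply Reidemeister--Schreier to a presentation of $\PB_n$ with respect to the index-$2^{\binom{n}{2}}$ subgroup $\B_n[4]$ and abelianize the resulting presentation directly, matching relations against $\S$.

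The main obstacle is the dimension step. The spanning side is algorithmic but tedious; the real content lies in arguing that no further collapses occur beyond those forced by $\S$. This requires precise control over the nonvanishing (and exact dimension) of $H^1(X_n;\mathcal L_\chi)$ for every character $\chi$, since the characteristic varieties can exhibit subtle ``jumps'' at resonant characters, and a miscount on a single character would either invalidate $\S$ as a spanning set or as a linearly independent set. The combinatorial bookkeeping needed to match the three pieces $\S_1,\S_2,\S_3$ with the contributions from one-pair, three-pair, and four-pair characters, respectively, is where the bulk of the work should be expected.
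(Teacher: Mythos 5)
There is a genuine gap, and it sits exactly where you locate ``the main obstacle.'' Your plan is spanning plus the bound $\dim \leq |\S|$; but a spanning set of size $|\S|$ already gives $\dim\leq|\S|$, so these two halves together do not force $\S$ to be a basis --- you need a \emph{lower} bound on the dimension (equivalently, linear independence of $\S$, or an exact computation of the dimension). Your transfer decomposition $H^1(\B_n[4];\Q)\cong\bigoplus_\chi H^1(X_n;\mathcal L_\chi)$ would supply this if you could evaluate every summand, but the input you propose --- ``the known structure of the depth-one characteristic variety $V_1(X_n)$'' --- is not available: Cohen--Suciu describe only the central components $\check V_1(X_n)$, the existence of translated components is open, and the dimension of $H^1(X_n;\mathcal L_\chi)$ at 2-torsion characters is precisely the content of Theorem~\ref{thm:2torsion}, which this paper \emph{deduces from} Theorem~\ref{thm:reps} (hence from Theorem~\ref{thm:main}), not the other way around. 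The paper's actual mechanism for the lower bound is entirely different and is the key construction you are missing: homomorphisms $\psi_{ij}:\B_n[4]\to\ZZ^{\binom{2n-2}{2}}$ obtained by lifting a braid to a 2-fold branched cover of $\D_n$ and abelianizing the pure mapping class group of the cover (Section~\ref{doublecoverssection}). These detect the commutator-type classes in $\S_2$ and $\S_3$ that the abelianization of $\PB_n$ cannot see, and together with the forgetful maps they give linear independence directly (Propositions~\ref{prop:3b} and~\ref{prop:low}).

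Your spanning step also starts from a false premise. Brendle--Margalit prove that $\B_n[4]$ is generated by squares of Dehn twists about \emph{arbitrary} curves; it is emphatically not generated by the $T_{ij}^2$ and their conjugates --- that is the content of Theorem~\ref{thm:2} in this very paper. So the classes $f\tau_{ij}$ do not obviously span $H_1$: one must show that the class of $T_c^2$ for a curve $c$ surrounding $k>2$ points lies in the $\Q[\PZ_n]$-module generated by the $\tau_{ij}$, which the paper does via the boundary-twist identity of Lemma~\ref{boundarylemma} (an invariants/coinvariants argument using the signed word length). Moreover, the reduction of the resulting spanning set to something of cardinality $3\binom{n}{4}+3\binom{n}{3}+\binom{n}{2}$ is not a matter of ``three- and four-strand pure braid relations'' alone: it requires the squared lantern relation to express $\overline{[T_{ij},T_{jk}]}$ in terms of the $\tau$'s (Lemma~\ref{lem:comm}) and a Jacobi/Witt--Hall identity in $H_1(G^2;\Q)$ (Lemmas~\ref{lem:jacobi} and~\ref{lemma:key}) to kill products with three or more factors $(1-T_{ik})$ and to impose the normalization $i=\min\{i,j,k_1,k_2\}$. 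Without these inputs neither half of your argument closes.
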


We do not know if the abelianization of $\B_n[4]$ is torsion free, which is to say that we do not know a complete description of $H_1(\B_n[4];\ZZ)$.  On the other hand, the proof of Theorem~\ref{thm:main} also works with $\ZZ/p\ZZ$ coefficients for any odd prime $p$, implying that any non-trivial torsion in the abelianization would have to be 2-primary.

As applications of Theorem~\ref{thm:main} we prove the following two theorems in Section~\ref{sec:smallbetti} and~\ref{sec:nongen}, respectively.  The first gives all Betti numbers for $\B_3[4]$ and $\B_4[4]$.

\begin{theorem}\label{thm:smallbetti}
For $n=3$ and $n=4$, the dimensions of $H_k(\B_n[4]; \Q)$ are as follows:
\[
\dim H_k(\B_3[4]; \Q) = \left\{
\begin{array}{lr}
1 & k = 0 \\
6 & k = 1 \\
5 & k = 2 \\
0 & k\geq 3
\end{array}
\right.
\qquad
\dim H_k(\B_4[4]; \Q) = \left\{
\begin{array}{lr}
1 & k = 0 \\
21& k = 1 \\
103 & k = 2 \\
83 & k = 3 \\
0 & k\geq 4
\end{array}
\right.
\]
\end{theorem}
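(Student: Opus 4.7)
My plan combines three general facts with Theorem~\ref{thm:main}. First, $\B_n[4]$ has cohomological dimension at most $n-1$: it is a torsion-free subgroup of $\PB_n$, which admits a finite $K(\PB_n,1)$ of dimension $n-1$ (for instance the Salvetti complex of the braid arrangement, or equivalently the Fadell--Neuwirth configuration space $F_n(\C)$ up to homotopy). Second, the Euler characteristic is multiplicative under finite covers, and iterating the Fadell--Neuwirth fibration $F_{n-1} \to \PB_n \to \PB_{n-1}$ gives $\chi(\PB_n) = 0$ for all $n \geq 2$; hence $\chi(\B_n[4]) = [\PB_n:\B_n[4]] \cdot \chi(\PB_n) = 0$. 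Third, Theorem~\ref{thm:main} records $\dim H_1(\B_n[4];\Q) = 6$ for $n=3$ and $\dim H_1(\B_n[4];\Q) = 21$ for $n=4$.

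For $n=3$ these three ingredients already suffice: $\cd(\B_3[4]) \leq 2$ makes $H_k$ vanish for $k \geq 3$, and the Euler-characteristic identity $1 - 6 + \dim H_2 = 0$ forces $\dim H_2(\B_3[4];\Q) = 5$.

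For $n=4$ the same inputs give only $\dim H_2 - \dim H_3 = 20$, leaving one Betti number to be determined. I plan to compute $H_3(\B_4[4];\Q)$ directly from a finite $K(\B_4[4],1)$ model, most naturally the index-$64$ cover of the Salvetti complex of $\PB_4$ corresponding to the surjection $\PB_4 \to H_1(\PB_4;\ZZ/2) \cong (\ZZ/2)^6$. This cover is a finite $3$-dimensional CW complex, and computing the ranks of its three rational cellular boundary maps will yield $\dim H_3 = 83$; Euler characteristic then forces $\dim H_2 = 103$. An alternative is the Aomoto-type decomposition $H^*(\B_4[4];\C) = \bigoplus_\chi H^*(X_4;\C_\chi)$ over characters $\chi$ of the deck group $(\ZZ/2)^6$, with each twisted summand computed via the Orlik--Solomon algebra of the braid arrangement.

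The main obstacle is this last step for $n=4$: both routes reduce to a finite but intricate chain-complex calculation (the Salvetti cover has on the order of a thousand cells; the Orlik--Solomon route requires $64$ twisted complexes), so in practice the verification is best carried out by computer. Once $\dim H_3 = 83$ is confirmed, the rest of the table follows formally from Theorem~\ref{thm:main}, the cohomological-dimension bound, and $\chi(\B_4[4]) = 0$.
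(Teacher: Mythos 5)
Your $n=3$ argument is complete and is exactly the paper's: $\cd \B_3[4] \leq 2$, $\chi(\B_3[4])=0$, and $b_1=6$ from Theorem~\ref{thm:main} force $b_2=5$. For $n=4$, however, the decisive step is missing: you correctly observe that the three general inputs only give $b_2-b_3=20$, but the value $\dim H_3(\B_4[4];\Q)=83$ is then asserted rather than derived --- you propose to extract it from a rank computation on the $64$-fold cover of the Salvetti complex (or from $64$ twisted Orlik--Solomon complexes) and acknowledge that this would have to be done by computer. As written, that is a gap, not a proof: the one number that cannot be obtained formally is exactly the one left to an unexecuted calculation.

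The paper closes this gap without any chain-level computation by exploiting the splitting $\B_4[4]\cong \PMod_{0,5}^2\times \ZZ$ (coming from $\B_n[4]=\PB_n^2$ and $\PB_n\cong \PMod_{0,n+1}\times\ZZ$). The point is that the interesting factor $\PMod_{0,5}^2$ has cohomological dimension $2$, so the Euler-characteristic trick applies to \emph{it}: Harer--Zagier gives $\chi(\PMod_{0,5})=2$, the index $[\PMod_{0,5}:\PMod_{0,5}^2]=2^{\binom{4}{2}-1}=32$ gives $\chi(\PMod_{0,5}^2)=64$, and the K\"unneth relation $b_j(\B_4[4])=b_j(\PMod_{0,5}^2)+b_{j-1}(\PMod_{0,5}^2)$ forces $b_1(\PMod_{0,5}^2)=20$, hence $b_2(\PMod_{0,5}^2)=83$, hence $b_2(\B_4[4])=103$ and $b_3(\B_4[4])=83$. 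If you want to salvage your route, you must either actually carry out (and verify) the proposed computation, or import this splitting so that Euler characteristics alone suffice.
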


The second application shows that, even though $H_1(\B_n[4];\Q)$ is generated by the images of 4th powers of half-twists, the group $\B_n[4]$ is not.  This is in contrast with $\B_n$ and $\PB_n$, each of which is generated by its simplest elements, half-twists and squares of half-twists.

\begin{theorem}
\label{thm:2}
Let $n\geq 3$ and suppose that $G$ is a subgroup of $\B_n[4]$ that contains $\BI_n$.  Then $G$ does not have a generating set consisting entirely of even powers of Dehn twists about curves surrounding 2 points.  In particular $\B_n[4]$ is not generated by 4th powers of half-twists.
\end{theorem}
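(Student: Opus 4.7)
The plan is to argue by contradiction. If such a $G$ existed, it would be contained in
\[
H := \bigl\langle T_c^{2k} : c \text{ bounds exactly 2 marked points},\ k \geq 1 \bigr\rangle,
\]
the subgroup of $\B_n[4]$ generated by all such even powers. Because the $\B_n$-action on isotopy classes of $2$-point curves is transitive, $H$ is the normal closure in $\B_n$ of the single element $T_{12}^2 = \sigma_1^4$. It therefore suffices to produce an element of $\BI_n$ that is not in $H$.

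A natural candidate is $\sigma := T_\gamma^2$, where $\gamma$ bounds a disk containing exactly $3$ of the marked points. Since $T_\gamma \in \PB_n$, one has $\sigma \in \PB_n^2 = \B_n[4]$; and via the Birman--Hilden correspondence, $\gamma$ lifts to a separating simple closed curve $\widetilde\gamma$ in the double branched cover, so the lift of $T_\gamma$ lies in the hyperelliptic Torelli group and hence $\sigma \in \BI_n$.

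The main obstacle is proving $\sigma \notin H$. First homology alone does not suffice, and this is the subtle point: by Theorem~\ref{thm:main}, every basis element of $H_1(\B_n[4]; \Q)$ is itself the image of some $T_c^2$ with $c$ a $2$-point curve (since conjugation by any $\B_n$-element preserves the set of marked points enclosed by a curve, so each $\PB_n$-conjugate of $\tau_{ij}$ is realized by a $2$-point squared twist), so the classes $\{[T_c^2]\}$ span all of $H_1(\B_n[4]; \Q)$ and no linear functional can separate $\sigma$ from $H$. The obstruction must therefore come from a finer, non-abelian invariant. My plan is to work directly in the quotient group $\B_n/H$---which admits the presentation of $\B_n$ with the single added relation $\sigma_1^4 = 1$---and to construct a convenient homomorphism out of $\B_n/H$ (for small $n$ via a finite complex reflection group; for general $n$ via a well-chosen mod-$4$ reduction of a linear or permutation representation of $\B_n$) in which the image of $\sigma$ is non-trivial. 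Producing this homomorphism and verifying the non-triviality of $\sigma$'s image is the technical heart of the argument, and handling the cases in which $\BI_n$ itself is forced to be trivial in $\B_n[4]$ (so that the main statement is vacuous) requires us to extract the ``in particular'' conclusion by this direct $H$-versus-$\B_n[4]$ argument.
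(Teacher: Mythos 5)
There is a genuine gap, and it stems from a false premise. Your reduction is fine: if $G$ had such a generating set it would lie in the subgroup $H$ generated by all even powers of Dehn twists about $2$-point curves, so it suffices to exhibit an element of $\BI_n$ outside $H$; and your candidate $T_\gamma^2$ for $\gamma$ a $3$-point curve is a correct choice (it lies in $\BI_n$ by the Brendle--Margalit--Putman generating set quoted in the paper). But your claim that ``first homology alone does not suffice'' because the classes $[T_c^2]$ span $H_1(\B_n[4];\Q)$ is a non sequitur: spanning over $\Q$ does not mean that the subgroup of $H_1(\B_n[4];\Q)$ generated by those classes (an integer lattice) contains the class of $T_\gamma^2$. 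The image of $H$ in $H_1(\B_n[4];\Q)$ is exactly the $\ZZ$-span of the classes $T\tau_{ij}$, and for $n=3$ each such class is one of the six basis elements of $\S$ (by Lemma~\ref{lem:lantern}), so that image is the lattice $\ZZ\S$. Meanwhile Lemma~\ref{boundarylemma} gives
\[
\tau_{\partial} = \tfrac{1}{2}\left(\tau_{12} + \tau_{13} + \tau_{23} + T_{13}\tau_{12} + T_{12}\tau_{13} + T_{12}\tau_{23}\right),
\]
which has half-integer coordinates in the basis $\S$ and hence does not lie in $\ZZ\S$. This is precisely the paper's argument: one first reduces to $n=3$ via the forgetful maps $\B_n[4]\to\B_3[4]$ (which send even powers of $2$-point twists to even powers of $2$-point twists or to the identity, and which restrict to a surjection on any $G$ containing $\BI_n$), and then applies the displayed identity. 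Having dismissed this route, you propose instead to build a homomorphism out of $\B_n/\langle\langle\sigma_1^4\rangle\rangle$ detecting $T_\gamma^2$, but you do not construct it; since that construction is, by your own account, ``the technical heart of the argument,'' the proposal as written does not constitute a proof. (Also, your worry about cases where $\BI_n$ is trivial is moot: $\BI_n$ is nontrivial for all $n\geq 3$.)
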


We have the following related question.

\begin{question}
Does $\B_n[4]$ have a generating set whose cardinality is equal to the dimension of $H_1(\B_n[4];\Q)$ (or even a generating set whose cardinality is a quartic polynomial in $n$)?
\end{question}

\begin{figure}[t]
\centering
\includegraphics[scale=.85]{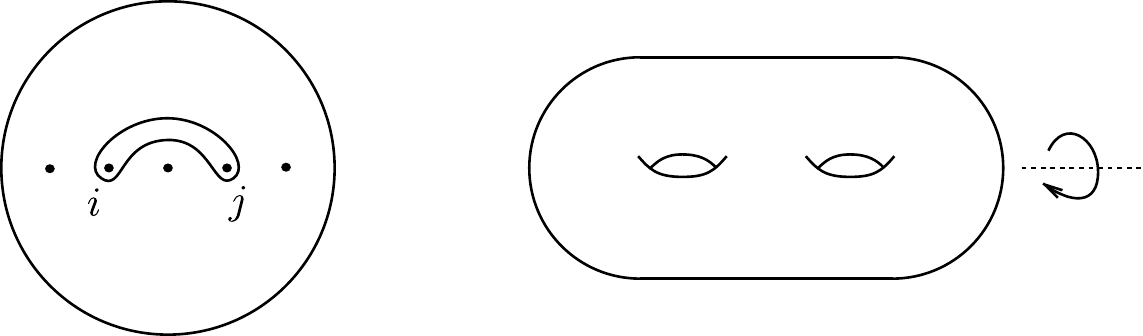}
\caption{\emph{Left:} $T_{ij}$ is the Dehn twist about the indicated curve; \emph{Right:} A hyperelliptic involution}
\label{fig:twist}
\end{figure}


\subsection*{Irreducible representations of \boldmath$\Z_n$} In order to state our main theorem, Theorem~\ref{thm:reps}, we must first describe the irreducible representations of the group $\Z_n = \B_n/\B_n[4]$.  The end result of our discussion here is Theorem~\ref{thm:class} below, which gives a naming system for the irreducible representations.

In order to state our classification of irreducible $\Z_n$-representations we require several definitions.  Let $[n]$ denote $\{1,\dots,n\}$ and let $[n]^{\underline 2}$ denote the set of unordered pairs of elements of $[n]$.  The standard action of $\B_n$ on $[n]$ (via the symmetric group $S_n$) induces an action on $[n]^{\underline 2}$.  We say that a subset $I$ of $[n]^{\underline 2}$ is full if the union of the elements of $I$ is $[n]$.  The symmetric group $S_n$ acts on the set of full subsets of $[n]^{\underline 2}$; let $\mathbb{I}_n$ be a set of orbit representatives.

Let $I \in \mathbb{I}_m$.  For $n \geq m$ we may regard $I$ as a subset of $[n]^{\underline 2}$.  We denote by $\B_n^I$ the stabilizer in $\B_n$ of the set $I$ and by $\Z_n^I$ the image in $\Z_n$.  We prove in Section~\ref{sec:proj} that there is a natural surjective map
\[
\Z_n^I \to \Z_m^I \times S_{n-m}.
\]
Next, let $\PZ_n$ denote the image of $\PB_n$ in $\Z_n$.  This group is isomorphic to $(\ZZ/2)^{n \choose 2}$, and the irreducible representations of $\PZ_n$ are in bijection with subsets of $[n]^{\underline 2}$; see Section~\ref{sec:iso}.  We denote the representation corresponding to $I \subseteq [n]^{\underline 2}$ by $V_I$.  We say that a representation of $\PZ_n$ is $I$-isotypic if it decomposes as a direct sum of copies of $V_I$.

We are ready to describe the irreducible representations of $\Z_n$ that appear in our classification.  The input for one of these representations consists of two pieces of data, an $I$-isotypic irreducible representation $\rho$ of some $\Z_m^I$ with $I \in \mathbb{I}_m$ and an irreducible representation of $S_{n-m}$; as usual we label the latter by its corresponding padded partition of $[n-m]$, call it $\lambda$.    With these in hand, we define a $\Z_n$-representation $V_n(\rho,\lambda)$ by the formula 
\[
V_n(\rho,\lambda) = \Ind_{\Z_n^I}^{\Z_n} \left(V_m(\rho) \boxtimes V_{n-m}(\lambda)\right)
\]
where $V_m(\rho)$ and $V_{n-m}(\lambda)$ are the representations corresponding to $\rho$ and $\lambda$ and $\Z_n^I$ acts via the surjection to $\Z_n^I \to \Z_m^I \times S_{n-m}$.  Of course if $\rho$ is isomorphic to $\rho'$ then $V_m(\rho,\lambda)$ is isomorphic to $V_m(\rho',\lambda)$.  In order to obtain a unique name for each representation, we fix one representative from each equivalence class once and for all.  

We observe that if we take $I=\emptyset$, then $V_n(\rho,\lambda)$ is the representation of $\Z_n$ that factors through the $S_n$-representation $V_n(\lambda)$.  We denote such a representation as $V_n(1,\lambda)$.  

\begin{theorem}
\label{thm:class}
The $V_n(\rho,\lambda)$ are irreducible $\Z_n$-representations.  Further, every irreducible $\Z_n$-representation is isomorphic to exactly one $V_n(\rho,\lambda)$.
\end{theorem}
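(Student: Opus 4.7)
The plan is to apply Clifford's theorem to the abelian normal subgroup $\PZ_n \trianglelefteq \Z_n$. I will use three inputs from earlier sections: (i) the identification of irreducibles of $\PZ_n$ with subsets of $[n]^{\underline 2}$ from Section~\ref{sec:iso}; (ii) the surjection $\Z_n^I \twoheadrightarrow \Z_m^I \times S_{n-m}$ from Section~\ref{sec:proj}; and (iii) the fact (also extracted from Section~\ref{sec:proj}) that the kernel $K$ of this surjection is the subgroup of $\PZ_n$ generated by those $\tau_{jk}$ with $\{j,k\} \not\subseteq [m]$.

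Clifford's theorem for an abelian normal subgroup gives a bijection between isomorphism classes of irreducible $\Z_n$-representations and pairs $(\chi, U)$, where $\chi$ ranges over a set of $\Z_n$-orbit representatives in $\widehat{\PZ_n}$ and $U$ ranges over irreducible representations of the inertia group $\Z_n^\chi = \Stab_{\Z_n}(\chi)$ whose restriction to $\PZ_n$ is $\chi$-isotypic; the irreducible of $\Z_n$ is $\Ind_{\Z_n^\chi}^{\Z_n} U$. Crucially, this formulation asks only for ordinary representations of $\Z_n^\chi$, so the non-splitness of $\Z_n \to S_n$ introduces no projective-representation subtleties at this stage.

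I then analyze the orbit data. Each character corresponds to a subset $I \subseteq [n]^{\underline 2}$; the $\Z_n$-orbit is determined by the size $m$ of the support of $I$ (since $S_n$ is transitive on $m$-subsets of $[n]$) together with the $S_m$-orbit of $I$ among full subsets of $[m]^{\underline 2}$. Choosing $\operatorname{supp}(I) = [m]$ and $I \in \mathbb I_m$, the stabilizer in $S_n$ is $S_m^I \times S_{n-m}$, and the inertia group in $\Z_n$ is $\Z_n^I$. By input (iii), $K$ lies in $\ker\chi_I$, so every $\chi_I$-isotypic irreducible $U$ of $\Z_n^I$ factors through $\Z_n^I/K \cong \Z_m^I \times S_{n-m}$. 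Conversely, irreducibles of this direct product are external tensor products $V_m(\rho) \boxtimes V_{n-m}(\lambda)$, and such a representation, pulled back to $\Z_n^I$, is $\chi_I$-isotypic on $\PZ_n$ iff $\rho$ is $I$-isotypic on $\PZ_m$ (the $S_{n-m}$-factor plays no role on $\PZ_n$ after the kernel is collapsed). Combining with Clifford, the irreducibles of $\Z_n$ are exactly the $V_n(\rho, \lambda)$, each occurring once after fixing representatives of the equivalence classes of $\rho$.

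The main obstacle is really inputs (ii) and (iii) — constructing the surjection $\Z_n^I \to \Z_m^I \times S_{n-m}$ and identifying its kernel — because these are precisely what allow us to convert the $\chi_I$-isotypic representation theory of the non-split inertia group $\Z_n^I$ into the ordinary representation theory of a direct product, sidestepping the projective subtleties that usually arise for non-split extensions. These are handled in Section~\ref{sec:proj}; granting them, the argument above is a bookkeeping application of the Clifford/Mackey machine.
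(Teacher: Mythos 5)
Your proposal is correct and follows essentially the same route as the paper: the paper's Proposition~\ref{prop:irred} and Lemma~\ref{lem:technical} are precisely the Clifford/Mackey correspondence for the abelian normal subgroup $\PZ_n$ (proved there by hand via isotypic decompositions, Frobenius reciprocity, and the double-coset restriction formula), and the reduction of the $I$-isotypic representation theory of the inertia group $\Z_n^I$ to that of $\Z_m^I\times S_{n-m}$ via Lemma~\ref{lem:product} is exactly the paper's argument. The only difference is that you cite the general Clifford correspondence as a black box where the paper re-derives it in this setting, which is a matter of packaging rather than of substance.
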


The usual map $\B_n \to S_n$ induces a short exact sequence
\[
1 \to \PZ_n \to \Z_n \to S_n \to 1.
\]
If this sequence were split, we could hope to understand the representation theory of $\Z_n$ via the representation theory of $S_n$.  We prove, however, in Proposition~\ref{prop:nosplit} that it is not split.


\subsection*{Statement of the main theorem: representation stability}

The conjugation action of $\B_n$ on $\B_n[4]$ induces an action of $\B_n$ on $H_1(\B_n[4]; \C)$. Since this restricts to a trivial action of $\B_n[4]$, we have that $H_1(\B_n[4]; \C)$ is in a natural way a representation of $\Z_n$. 

Church and Farb defined representation stability for sequences of representations of $S_n~=~\B_n/\PB_n$. We extend their definition to our setting and show that the $H_1(\B_n[4];\C)$ are uniformly representation stable.

For each $n$ there is a standard inclusion $\B_n \to \B_{n+1}$ obtained by adding a strand.  We show in Lemma~\ref{lem:inc} that this induces inclusions $\B_n[4] \to \B_{n+1}[4]$ and $\Z_n \to \Z_{n+1}$.  Suppose we have a sequence of $\Z_n$-representations $V_n$ and maps $\varphi_n : V_n \to V_{n+1}$.  Following Church--Farb, we say that the sequence $\{V_n\}$ is consistent if for each $n$ the map $\varphi_n$ is equivariant with respect to the $\Z_n$-action.  The inclusions $\B_n[4] \to \B_{n+1}[4]$ induce maps $H_1(\B_n[4];\C) \to H_1(\B_{n+1}[4];\C)$.  With respect to these maps the $H_1(\B_n[4];\C)$ form a consistent sequence of $\Z_n$-representations (Lemma~\ref{lem:bn4cons}).

Further following Church--Farb, we say that a consistent sequence of $\Z_n$-representations $V_n$ satisfies \emph{representation stability} if
\begin{enumerate}
\item the maps $\varphi_n : V_n \to V_{n+1}$ are injective,
\item the span of the $\Z_{n+1}$-orbit of $\varphi_n(V_n)$ is equal to $V_{n+1}$, and
\item if we decompose each $V_n$ into irreducible $\Z_n$-representations 
\[
V_n \cong \bigoplus_{(\rho,\lambda)} c_{\rho,\lambda,n}V_n(\rho,\lambda)
\]
then each of the sequences of multiplicities $c_{\rho,\lambda,n} \geq 0$ is independent of $n$ for $n$ large.
\end{enumerate}
We say that the $V_n$ satisfy \emph{uniform representation stability} if  there is some $N$ so that every $c_{\rho,\lambda,n}$ is independent of $n$ for $n \geq N$.  

Let $I_3$ and $I_4$ be the subsets of $[3]^{\underline 2}$ and $[4]^{\underline 2}$ given by
\[
I_3 = \{\{1,3\},\{2,3\}\}, \quad \text{and} \quad I_4 = \{\{1,3\},\{2,3\}, \{1,4\},\{2,4\}\}.
\]  
We may assume that $I_3 \in \mathbb{I}_3$ and $I_4 \in \mathbb{I}_4$.  Let $\mu_2$ denote the multiplicative group $\{\pm 1\}$; we can regard $\mu_2$ as a subgroup of $\GL(\C)$.  In Section~\ref{sec:homs} we define specific homomorphisms $\rho_k : \Z_k^{I_k} \to \mu_2$, and hence representations of $\Z_k^{I_k}$ for $k \in \{3,4\}$.   Each $\rho_k$ is the sum of the winding numbers of the pairs of strands corresponding to the elements of $I_k$.

\begin{theorem}\label{thm:reps}
There are $\Z_n$-equivariant isomorphisms
\[
H_1(\B_n[4]; \C)\cong 
\begin{cases}
V_2(1,(0)) & n =2 \\[1ex]
V_3(1,(0)) \oplus V_3(1,(1)) \oplus V_3(\rho_3,(0)) & n=3 \\[1ex]
V_n(1,(0)) \oplus V_n(1,(1)) \oplus V_n(1,(2)) \oplus V_n(\rho_3,(0)) \oplus V_n(\rho_4,(0))  & n\geq 4.\\
\end{cases}
\]
Further, the sequence $\{H_1(\B_n[4]; \C)\}$ of $\Z_n$-modules is uniformly representation stable. 
\end{theorem}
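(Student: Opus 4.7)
The plan is to leverage the explicit basis $\S = \S_1 \cup \S_2 \cup \S_3$ from Theorem~\ref{thm:main} together with the $\PZ_n$-action on $H_1(\B_n[4];\C)$, decomposing the representation into character eigenspaces and then recognizing the isotypic pieces as induced representations via Mackey. The dimensions line up suggestively: $|\S_1| = \binom{n}{2}$ equals the total dimension of $V_n(1,(0)) \oplus V_n(1,(1)) \oplus V_n(1,(2))$, while $|\S_2| = 3\binom{n}{3} = \dim V_n(\rho_3,(0))$ and $|\S_3| = 3\binom{n}{4} = \dim V_n(\rho_4,(0))$.

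First I would analyze the $\PZ_n$-action. Since $\PZ_n \cong (\ZZ/2)^{\binom{n}{2}}$ is abelian of exponent $2$, $H_1(\B_n[4];\C)$ decomposes as a direct sum of character eigenspaces, each indexed by a subset $J \subseteq [n]^{\underline 2}$. By direct computation on the basis $\S$, using that $T_{kl}$ commutes with $T_{ij}^2$ when $\{k,l\} \cap \{i,j\} = \emptyset$ and otherwise produces one of the designated basis elements of $\S_2$ or $\S_3$, I would show that the characters appearing fall into exactly three $S_n$-orbits: the trivial character (contributing $\binom{n}{2}$ dimensions), the orbit of $\chi_{I_3}$ (contributing $3\binom{n}{3}$ dimensions), and the orbit of $\chi_{I_4}$ (contributing $3\binom{n}{4}$ dimensions). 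A nontrivial part of this step is verifying that no other orbits contribute; in particular, the singleton $\chi_{\{i,j\}}$ orbit must be shown to contribute zero, and similarly for any orbit of larger or more complicated full subsets of $[n]^{\underline 2}$.

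Next, for each of these three orbits, Mackey's formula identifies the corresponding isotypic subspace as $\Ind_{\Z_n^I}^{\Z_n} W_I$, where $W_I$ is the $\chi_I$-eigenspace viewed as a $\Z_n^I$-module. For $I = \emptyset$ the $\PZ_n$-invariants carry an $S_n = \Z_n/\PZ_n$-action which I would identify with the permutation representation on $[n]^{\underline 2}$; classical representation theory of $S_n$ then gives the decomposition $V_n(1,(0)) \oplus V_n(1,(1)) \oplus V_n(1,(2))$. For $I = I_3$ and $I = I_4$, I would verify that $W_I$ is one-dimensional, that the $\Z_m^I$-action on it (via the surjection $\Z_n^I \to \Z_m^I \times S_{n-m}$ of Section~\ref{sec:proj}) is exactly the character $\rho_m$ defined in Section~\ref{sec:homs}, and that the $S_{n-m}$-action is trivial. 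Combined with Theorem~\ref{thm:class}, this yields the stated $\Z_n$-equivariant decomposition; the small cases $n=2,3$ are handled by direct inspection of which orbits are nonempty.

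For uniform representation stability, the inclusion $\B_n \hookrightarrow \B_{n+1}$ embeds $\S$ for $n$ into $\S$ for $n+1$ (the extra strand does not appear in the support of any new basis element), so the induced map on $H_1$ is injective and its $\Z_{n+1}$-orbit spans the target by Lemma~\ref{lem:bn4cons} and the above decomposition. Since the multiplicities $c_{\rho,\lambda,n}$ are all $0$ or $1$ and are constant for $n \geq 4$, uniform stability follows immediately with $N=4$. The main obstacle will be the first step: showing that no spurious characters appear in the $\PZ_n$-decomposition requires careful braid-group calculations on every element of $\S$, especially the four-index elements of $\S_3$ such as $T_{il}T_{jk}\tau_{ij}$, together with the identification of explicit $\chi_I$-eigenvectors as prescribed combinations of basis elements supported on the corresponding full subset.
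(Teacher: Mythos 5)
Your proposal is correct in outline and rests on the same underlying machinery as the paper (the $I$-isotypic decomposition of Section~\ref{sec:iso}, the surjection $\Z_n^I \to \Z_m^I \times S_{n-m}$, and the explicit basis of Theorem~\ref{thm:main}), but it is organized in the opposite direction. You propose a top-down Clifford-theoretic analysis: diagonalize the $\PZ_n$-action, show only the orbits of $\chi_\emptyset$, $\chi_{I_3}$, $\chi_{I_4}$ occur, and then apply Mackey. The paper works bottom-up: it exhibits three explicit vectors $\alpha_{12}=\prod(1+T_{rs})\tau_{12}$, $x_3=(1-T_{13})\prod_{j\ge 4}(1+T_{1j})(1+T_{2j})\tau_{12}$, and $x_4=(1-T_{14})(1-T_{23})\tau_{12}$, shows each spans a one-dimensional $\Z_n^{I_k}$-module of the right isotype (Lemmas~\ref{lem:bni gens} and~\ref{lem:x1x2}), and concludes that the three induced submodules exhaust $H_1$ because their total dimension $\binom{n}{2}+3\binom{n}{3}+3\binom{n}{4}$ already equals $\dim H_1(\B_n[4];\C)$ by Theorem~\ref{thm:main}. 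This dimension count, plus complete reducibility, is exactly what lets you skip the step you flag as your ``main obstacle'': you do not need to verify directly that no spurious characters appear (and a direct verification is genuinely delicate --- for instance the cyclic module $\Q[\PZ_n]\tau_{12}$ could a priori contain eigenvectors $\prod_{k\in S}(1-T_{1k})(1-T_{2k})\tau_{12}$ for $|S|\ge 3$, and killing these by hand requires more relations than Lemma~\ref{lem:lantern} provides). Conversely, the real work in either organization is the same: one must locate the eigenvectors for $\chi_{I_3}$ and $\chi_{I_4}$ and compute the $\Z_k^{I_k}$-action on them via explicit coset representatives and braid relations, which is the content of Lemma~\ref{lem:x1x2} and which your plan defers to step (c). Two small repairs: for the spanning condition in the stability argument you should cite Proposition~\ref{prop:span} (the $\tau_{ij}$ generate $H_1$ as a $\Q[\PZ_n]$-module and all lie in one $\Z_{n+1}$-orbit meeting the image) rather than Lemma~\ref{lem:bn4cons}, which only gives consistency; and the identification of the $\PZ_n$-invariants with the permutation module on $[n]^{\underline 2}$ should be justified by the equivariance $g\alpha_{ij}=\alpha_{g(i)g(j)}$ (or by transfer to $H_1(\PB_n;\C)$), after which the decomposition into $V_n(1,(0))\oplus V_n(1,(1))\oplus V_n(1,(2))$ for $n\ge 4$ is indeed classical.
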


The $\PZ_n$-invariants of $H_1(\B_n[4]; \C)$ is exactly $H_1(\PB_n;\C)$; this subspace corresponds to the summands $V_n(\rho,\lambda)$ in the statement of Theorem~\ref{thm:reps} with $\rho=1$.  Thus, the first statement of Theorem~\ref{thm:reps} recovers Church--Farb's description of $H_1(\PB_n;\C)$ as an $S_n$-representation.  

Also, representation stability for a sequence  $V_n$ of $\Z_n$-modules implies representation stability for the sequence of $\PZ_n$-invariants, which are $S_n$-modules.  In this way, the second statement of Theorem~\ref{thm:reps} recovers the representation stability of $H_1(\PB_n;\C)$  discovered by Church--Farb \cite{churchfarb}.

It appears to be an interesting problem to determine the character table of $\Z_n$.  

\subsection*{Level 4 hyperelliptic mapping class groups} Let $\Sigma_g$ be a closed orientable surface of genus $g$. Let $\Mod_g$ denote its mapping class group. This group is, for example, the (orbifold) fundamental group of the moduli space  $\M_g$ of Riemann surfaces of genus $g$.

The hyperelliptic mapping class group $\SMod_g$ is the centralizer in $\Mod_g$ of some fixed hyperelliptic involution; see Figure~\ref{fig:twist}.  The level $m$ mapping class group $\Mod_g[m]$ is the subgroup of $\Mod_g$ consisting of all elements that act trivially on $H_1(\Sigma_g; \ZZ/m)$.  The level $m$ hyperelliptic mapping class group $\SMod_g[m]$ is the intersection $\SMod_g \cap \Mod_g[m]$.  

As we will explain in Section~\ref{sec:ala}, there are isomorphisms $\B_{2g+1}[4]\cong \SMod_g[4] \times \ZZ$ for all $g \geq 1$.  Even in the absence of an inclusion $\Sigma_g \to \Sigma_{g+1}$, these isomorphisms give rise to maps $\SMod_g[4] \to \SMod_{g+1}[4]$ as follows:
\[
\SMod_g[4] \to \B_{2g+1}[4] \to \B_{2g+3}[4] \to \SMod_{g+1}[4],
\]
where the first map is inclusion into the first factor, the second map is the standard inclusion, and the third map is projection onto the first factor.  The induced maps $H_1(\SMod_g[4];\C) \to H_1(\SMod_{g+1}[4];\C)$ are injective and equivariant with respect to the $\Z_{2g+1}$- and $\Z_{2g+3}$-actions.  Since the $\ZZ$-factor of $\B_{2g+1}[4]$ is central, it follows that this factor corresponds to the trivial representation $V_{2g+1}(1,(0))$ in Theorem~\ref{thm:reps}.  We thus obtain the following consequence of Theorem~\ref{thm:reps}.

\begin{corollary}\label{cor:mainC}
For $g\geq 1$, there are $\Z_{2g+1}$-equivariant isomorphisms
\[
H_1(\SMod_g[4]; \C) \cong 
\begin{cases}
V_3(1,(1)) \oplus V_3(\rho_3,(0)) & g=1 \\[1ex]
V_{2g+1}(1,(1)) \oplus V_{2g+1}(1,(2)) \oplus V_{2g+1}(\rho_3,(0)) \oplus V_n(\rho_4,(0))  & g\geq 2.\\
\end{cases}
\]
In particular, we have
\[
\displaystyle \dim H_1(\SMod_g[4];\Q) = 3{2g+1\choose 4} + 3{2g+1\choose 3} + {2g+1 \choose 2} -1.
\]Further, the sequence $\{H_1(\SMod_g[4]; \C)\}$ of $\Z_{2g+1}$-modules is uniformly representation stable.  
\end{corollary}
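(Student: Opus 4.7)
The plan is to deduce this corollary directly from Theorem~\ref{thm:reps} via the product decomposition $\B_{2g+1}[4] \cong \SMod_g[4] \times \ZZ$ established in Section~\ref{sec:ala}. Applying the Künneth formula for first homology gives a splitting
\[
H_1(\B_{2g+1}[4]; \C) \cong H_1(\SMod_g[4]; \C) \oplus H_1(\ZZ; \C) \cong H_1(\SMod_g[4]; \C) \oplus \C.
\]
The key point is that this splitting is $\Z_{2g+1}$-equivariant. Since the $\ZZ$-factor is central in $\B_{2g+1}[4]$ (and in fact centralized by all of $\B_{2g+1}$, as the hyperelliptic involution used to build the isomorphism is fixed by conjugation), the $\C$-summand is acted on trivially and is therefore isomorphic to $V_{2g+1}(1,(0))$ as a $\Z_{2g+1}$-module.

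Next, I would use the uniqueness of the isotypic decomposition of $H_1(\B_{2g+1}[4]; \C)$ provided by Theorem~\ref{thm:reps}. Since every irreducible appearing in that decomposition has multiplicity one, subtracting off a single copy of $V_{2g+1}(1,(0))$ identifies $H_1(\SMod_g[4]; \C)$ with the direct sum of the remaining summands. Substituting $n=2g+1$ into the formula of Theorem~\ref{thm:reps} in each of the cases $g=1$ (so $n=3$) and $g\geq 2$ (so $n\geq 5$) yields the two displayed decompositions. The dimension formula for $H_1(\SMod_g[4];\Q)$ then follows by subtracting $1$ from the quartic formula of Theorem~\ref{thm:main} at $n=2g+1$.

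For uniform representation stability, the discussion preceding the corollary already exhibits a consistent sequence of $\Z_{2g+1}$-modules with injective transition maps $H_1(\SMod_g[4]; \C) \to H_1(\SMod_{g+1}[4]; \C)$, where I would still need to verify that these maps generate the target under the $\Z_{2g+3}$-action (this reduces to the analogous statement for $\B_n[4]$, which is built into the proof of Theorem~\ref{thm:reps}). With the explicit decomposition in hand, each multiplicity $c_{\rho,\lambda,g}$ is either $0$ or $1$ and is independent of $g$ once $g \geq 2$, so the sequence is uniformly representation stable with stability bound $N=2$.

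The main obstacle, such as there is one, is the equivariance of the Künneth splitting and the identification of the $\ZZ$-summand as the trivial irreducible $V_{2g+1}(1,(0))$; everything else is a matter of reading off the answer from Theorem~\ref{thm:reps} and bookkeeping indices between $g$ and $n=2g+1$.
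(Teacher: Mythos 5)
Your proposal is correct and follows essentially the same route as the paper: use $\B_{2g+1}[4]\cong \SMod_g[4]\times\ZZ$, observe that the central $\ZZ$-factor contributes exactly the multiplicity-one trivial summand $V_{2g+1}(1,(0))$, strip it off from the decomposition of Theorem~\ref{thm:reps}, and carry over the consistent-sequence/stability statements. The only quibble is your parenthetical justification of centrality via the hyperelliptic involution; the cleaner reason is that the $\ZZ$-factor is generated by a power of the full twist $T_\partial$, which is central in all of $\B_{2g+1}$.
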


In contrast to Corollary~\ref{cor:mainC}, $H_1(\Mod_g[m];\Q)=0$ for $g \geq 3$ and $m \geq 1$; see the paper by Hain \cite{hain2}.

Under the map $\B_{2g+1}[4] \to \SMod_g[4]$ from Section~\ref{sec:ala}, the basis elements from Theorem~\ref{thm:main} map to the classes of 4th powers of Dehn twists about nonseparating curves.

The group $\SMod_g$ is the orbifold fundamental group of the hyperelliptic locus $\H_g$ in  $\M_g$. The group $\SMod_g[4]$ is the fundamental group of any connected component $\H_g[4]$ of the hyperelliptic locus in the moduli space of  genus $g$ Riemann surfaces $C$ with level 4 structure,\footnote{There are $\displaystyle\frac{ 2^{g^2}(2^{2g}-1)\cdots (2^2-1)}{(2g+2)!}$ such components; they are mutually isomorphic.} i.e. a symplectic basis for $H_1(C; \ZZ/4)$. In fact, $\H_g[4]$ is a $K(\pi,1)$ space for $\SMod_g[4]$. Thus $H^i(\SMod_g[4];\Q) \cong H^i(\H_g[4];\Q)$ for all $j\geq 0$. As such, Corollary~\ref{cor:mainC} gives the first Betti number of $\H_g[4]$.  

For $g=2$ we have $\SMod_g = \Mod_g$.  In this case we have the following result.

\begin{proposition}\label{prop:genus2prop}
We have $\dim H_3(\Mod_2[4]; \Q) \geq 3068$.
\end{proposition}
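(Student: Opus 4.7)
The plan is to combine the splitting $\B_5[4] \cong \Mod_2[4] \times \ZZ$ from Section~\ref{sec:ala} (using $\SMod_2 = \Mod_2$) with three ingredients: the K\"unneth formula, a retraction argument transferring $H_2$-information from $\B_4[4]$ to $\B_5[4]$, and the Harer--Zagier formula for the orbifold Euler characteristic of $\Mod_2$. K\"unneth gives
\[
b_k(\B_5[4];\Q) = b_k(\Mod_2[4];\Q) + b_{k-1}(\Mod_2[4];\Q),
\]
and since $\Mod_2$ has virtual cohomological dimension $4g-5 = 3$ and $\Mod_2[4]$ is torsion-free, $b_k(\Mod_2[4];\Q) = 0$ for $k \geq 4$. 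In particular the target bound $b_3(\Mod_2[4];\Q) \geq 3068$ is equivalent to $b_4(\B_5[4];\Q) \geq 3068$.

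The crucial intermediate step is to show $b_2(\Mod_2[4];\Q) \geq 49$. For this I would use the Fadell--Neuwirth forgetful map $\PB_5 \to \PB_4$, which is a homomorphic retraction of the standard inclusion $\PB_4 \hookrightarrow \PB_5$. Because a group homomorphism sends squares to squares and $\B_n[4] = \PB_n^2$ by Brendle--Margalit, this map restricts to a retraction of the inclusion $\B_4[4] \hookrightarrow \B_5[4]$ from Lemma~\ref{lem:inc}, giving a split injection $H_2(\B_4[4];\Q) \hookrightarrow H_2(\B_5[4];\Q)$. Theorem~\ref{thm:smallbetti} then yields $b_2(\B_5[4];\Q) \geq 103$, and combining with K\"unneth and $b_1(\Mod_2[4];\Q) = 54$ (Corollary~\ref{cor:mainC}) gives $b_2(\Mod_2[4];\Q) \geq 103 - 54 = 49$.

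To close the argument I would compute $\chi(\Mod_2[4])$. The Harer--Zagier formula gives $\chi^{\mathrm{orb}}(\Mod_2) = B_4/(2g(2g-2))|_{g=2} = -1/240$, and the index is
\[
[\Mod_2 : \Mod_2[4]] = |\Sp_4(\ZZ/4)| = 2^{10}\cdot|\Sp_4(\ZZ/2)| = 1024 \cdot 720 = 737280.
\]
Multiplicativity of the orbifold Euler characteristic under finite-index subgroups, combined with the torsion-freeness of $\Mod_2[4]$, gives
\[
1 - 54 + b_2(\Mod_2[4];\Q) - b_3(\Mod_2[4];\Q) = 737280 \cdot (-1/240) = -3072,
\]
so $b_3(\Mod_2[4];\Q) = 3019 + b_2(\Mod_2[4];\Q) \geq 3019 + 49 = 3068$.

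The main obstacle is pinning down the bound $b_2(\Mod_2[4];\Q) \geq 49$: the Euler characteristic alone only yields $b_3 \geq 3019$, and one must import the explicit value $b_2(\B_4[4];\Q) = 103$ via the retraction argument to close the remaining gap of $49$. All other steps---the index computation, K\"unneth, and the vcd vanishing---are routine.
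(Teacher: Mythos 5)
Your proof is correct and follows the same skeleton as the paper's: the splitting $\B_5[4]\cong \Mod_2[4]\times\ZZ$, K\"unneth, the split forgetful map $\B_5[4]\to\B_4[4]$ forcing $\dim H_2(\B_5[4];\Q)\geq \dim H_2(\B_4[4];\Q)=103$ and hence $b_2(\Mod_2[4])\geq 103-54=49$, and finally $\chi(\Mod_2[4])=-3072$ with $\cd=3$ to get $b_3=3019+b_2\geq 3068$. The one place you diverge is the Euler characteristic: you compute $\chi(\Mod_2[4])$ from the Harer--Zagier value $\chi^{\mathrm{orb}}(\Mod_2)=-1/240$ together with the index $|\Sp_4(\ZZ/4)|=2^{10}\cdot 720=737280$ and torsion-freeness of the level 4 subgroup, whereas the paper routes through the isomorphism $\SMod_2[4]\cong\PMod_{0,6}^2$ and the Harer--Zagier Euler characteristic of the sphere braid group (Lemma~\ref{lem:eulerlemma}), arriving at the same $-2^{9}\cdot 3!=-3072$. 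Your route is a fine independent check but requires the extra inputs that $\Mod_g\to\Sp_{2g}(\ZZ/4)$ is surjective and that $\Mod_2[4]$ is torsion-free; the paper's route keeps everything inside the braid-group framework already developed. Both are valid, and all your numerical values check out.
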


Proposition~\ref{prop:genus2prop} improves on a special case of a result of Fullarton--Putman \cite[Theorem A]{fp}, which gives $\dim H_3(\Mod_2[4]; \Q) \geq 24$.


\subsection*{Albanese cohomology}
For a finitely generated group $\Gamma$, the \emph{Albanese cohomology} of $\Gamma$ is the subalgebra $H^*_{Alb}(\Gamma;\Q)$ of the rational cohomology algebra $H^*(\Gamma;\Q)$ generated by $H^1(\Gamma;\Q)$. In other words, $H^*_{Alb}(\Gamma;\Q)$ is the image of the cup product mapping
\[
\Lambda^*H^1(\Gamma;\Q)\rightarrow H^*(\Gamma;\Q).
\]
The term ``Albanese cohomology" was introduced by Church--Ellenberg--Farb in their work on representation stability \cite{cef1}. 

Arnol'd \cite{arnold} showed that the cohomology ring of $\PB_n$ is generated by degree 1 classes, in other words $H^*_{Alb}(\PB_n;\Q) = H^*(\PB_n;\Q)$. We prove the following contrasting result. 

\begin{theorem}\label{thm:mainD}
Let $n\geq 15$.  Then $H^*_{Alb}(\B_n[4];\Q)$ is a proper subalgebra of $H^*(\B_n[4];\Q)$.
\end{theorem}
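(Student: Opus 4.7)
The plan is to exploit a parity obstruction coming from the $\PZ_n$-action on cohomology. Because $\PZ_n\cong(\ZZ/2)^{\binom{n}{2}}$, its irreducible $\C$-characters are naturally indexed by subsets $I\subseteq [n]^{\underline 2}$; write $\chi_I$ for the corresponding character. Using the description of the $\rho_k\colon \Z_k^{I_k}\to\mu_2$ as sums of winding numbers over pairs in $I_k$, together with the fact that $\PZ_n$ lies inside every stabilizer $\Z_n^I$ and maps into $\PZ_k^{I_k}$ under the projection $\Z_n^{I_k}\to\Z_k^{I_k}\times S_{n-k}$, a Mackey-type computation shows that the $\PZ_n$-characters occurring in $V_n(\rho_k,(0))$ are exactly the $S_n$-translates $\chi_{\sigma(I_k)}$, each of cardinality $|I_k|\in\{2,4\}$. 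Combined with the trivial $\PZ_n$-action on the summands $V_n(1,\lambda)$, Theorem~\ref{thm:reps} then implies that every $\PZ_n$-character appearing in $H^1(\B_n[4];\C)$ has $|I|$ \emph{even}.

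The cup product on $H^*(\B_n[4];\C)$ is $\PZ_n$-equivariant, and since $\PZ_n$ is abelian, characters multiply as $\chi_{I_1}\cdot\chi_{I_2}=\chi_{I_1\triangle I_2}$. Because $|I_1\triangle I_2|=|I_1|+|I_2|-2|I_1\cap I_2|$ remains even whenever both $|I_1|$ and $|I_2|$ are, an induction on cohomological degree shows that every $\PZ_n$-character occurring in $H^*_{Alb}(\B_n[4];\C)$ satisfies $|I|$ even. It therefore suffices, for each $n\ge 15$, to exhibit a subset $I\subseteq [n]^{\underline 2}$ of \emph{odd} cardinality and a degree $k\ge 2$ for which the $\chi_I$-isotypic component of $H^k(\B_n[4];\C)$ is nonzero; the parity conclusion above then places this component outside of $H^*_{Alb}(\B_n[4];\C)$.

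To produce the required odd-cardinality character, we use the standard identification of the $\chi_I$-isotypic component of $H^k(\B_n[4];\C)$ with the cohomology $H^k(X_n;\L_{\chi_I})$ of the braid arrangement complement $X_n$ with coefficients in the rank-one local system associated to the character $\chi_I\colon \PB_n\to \PZ_n\to\C^\times$. The task thus reduces to finding, for each $n\ge 15$, a $2$-torsion character of odd cardinality lying on some characteristic variety $V_d(X_n)$, so that the corresponding local-system cohomology is nonvanishing.

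The main obstacle is this last construction: exhibiting the odd-cardinality $2$-torsion class on some $V_d(X_n)$. The threshold $n\ge 15$ presumably records the smallest $n$ for which the Orlik--Solomon or Aomoto-complex combinatorics of the braid arrangement guarantee such a character, or equivalently the smallest $n$ at which an explicit odd-cardinality $I$ can be built from the basis $\S$ of Theorem~\ref{thm:main} in a way that survives the cup-product relations. The existence for large $n$ is heuristically plausible, since the number of $2$-torsion characters grows as $2^{\binom{n}{2}}$ while the resonance obstructions on $V_d(X_n)$ are of polynomial complexity; making this precise is the crux of the argument.
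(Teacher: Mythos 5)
Your parity observation is correct as far as it goes: by Theorem~\ref{thm:reps} and Lemma~\ref{lem:branch}, every $\PZ_n$-character $\chi_I$ occurring in $H^1(\B_n[4];\C)$ has $I=\emptyset$, $I=\sigma(I_3)$, or $I=\sigma(I_4)$, hence $|I|$ even, and since cup product is $\PZ_n$-equivariant and $\chi_{I_1}\chi_{I_2}=\chi_{I_1\triangle I_2}$, every character occurring in $H^*_{Alb}(\B_n[4];\C)$ has $|I|$ even. But this only yields the theorem if you can actually produce an odd-cardinality $I$ and a $k\geq 2$ with $H^k(X_n;\L_{\chi_I})\neq 0$, and that step --- which you yourself flag as ``the crux'' --- is entirely missing. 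Nothing in the paper supplies it: Theorem~\ref{thm:2torsion} shows that in degree $1$ the only $2$-torsion characters with nonvanishing local-system cohomology are the $\rho_{g(I_3)}$ and $\rho_{g(I_4)}$, both of even cardinality, so the phenomenon you need would have to occur in the higher characteristic varieties $V^k_d(X_n)$ with $k\geq 2$ (not the degree-one varieties $V_d(X_n)$ you cite), about which essentially nothing is established here. The heuristic count of ``exponentially many characters versus polynomially many resonance obstructions'' is not an argument, and it is not even clear that the statement is true: it is consistent with everything proved in the paper that the entire failure of surjectivity of $\Lambda^*H^1\to H^*$ occurs inside the even-cardinality (indeed, inside the $\{\emptyset,\sigma(I_3),\sigma(I_4)\}$-supported) part of $H^*$. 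Your speculation about the origin of the threshold $n\geq 15$ is also off: it has nothing to do with Orlik--Solomon combinatorics.

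For comparison, the paper's proof avoids exhibiting any class at all (it explicitly says so). It first proves the analogous statement for $\SMod_g[4]\cong\PMod_{0,2g+2}^2$ when $g\geq 7$ by a pure dimension count: $\chi(\SMod_g[4])=-2^{\binom{2g+1}{2}-1}(2g-1)!$ is super-exponentially large in absolute value and $\cd\SMod_g[4]=2g-1$, so some odd Betti number $b_{2k-1}$ with $k\geq 2$ exceeds $\binom{b_1}{2g-1}$, which is only of polynomial-in-$g$ exponential size because $b_1$ is a quartic polynomial in $g$ (Corollary~\ref{cor:mainC}); this forces $\Lambda^iH^1\to H^i$ to fail surjectivity for some $i$. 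Theorem~\ref{thm:mainD} then follows for odd $n=2g+1\geq 15$ from the splitting $\B_{2g+1}[4]\cong\SMod_g[4]\times\ZZ$ and for even $n$ from the split surjection $\B_{2g+2}[4]\to\B_{2g+1}[4]$, using that a split surjection of groups induces a surjection on cohomology compatible with cup products. If you want to salvage your approach, you would need to prove nonvanishing of $H^k(X_n;\L_{\chi_I})$ for some odd $|I|$ and $k\geq 2$; as written, the proposal reduces the theorem to an unproven and possibly harder statement.
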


Although Theorem~\ref{thm:mainD} asserts that there are cohomology classes in $H^*(\B_n[4];\Q)$ that are not cup products of classes in $H^1(\B_n[4];\Q)$, our proof does not produce examples of such classes. 
We will derive Theorem~\ref{thm:mainD} from the following slightly stronger result. 

\begin{theorem}\label{thm:mainE}
 For all $g\geq 7$ the Albanese cohomology $H^*_{Alb}(\emph{SMod}_g[4]; \Q)$ is a proper subalgebra of $H^*(\emph{SMod}_g[4]; \Q)$.
\end{theorem}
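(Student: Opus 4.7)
The plan is to find a cohomological degree $d$ in which $\dim H^d(\SMod_g[4];\Q)$ strictly exceeds $\binom{b_1}{d}$, where $b_1 = \dim H^1(\SMod_g[4];\Q)$; this forces properness since any class in $H^d_{Alb}(\SMod_g[4];\Q)$ lies in the image of the cup-product map $\Lambda^d H^1\to H^d$, whose image has dimension at most $\binom{b_1}{d}$. The natural choice is $d=2g-1$, the cohomological dimension of $\SMod_g[4]$. Indeed, $\H_g[4]$ is a finite cover of the hyperelliptic locus $\H_g$, which is affine of complex dimension $2g-1$ (being the quotient of a configuration space by a finite group), and hence is itself affine of complex dimension $2g-1$ and a $K(\SMod_g[4],1)$; by the Andreotti--Frankel theorem, $\cd(\SMod_g[4])=2g-1$. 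Combining this with Corollary~\ref{cor:mainC}, which gives $b_1$ as an explicit quartic polynomial in $g$, one obtains an explicit upper bound $\binom{b_1}{2g-1}$ of order at most $g^{O(g)}$.

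The main step is to produce a matching lower bound on $\dim H^{2g-1}(\SMod_g[4];\Q)$ that surpasses $\binom{b_1}{2g-1}$ once $g\geq 7$. Here I would follow the duality-group strategy of Fullarton--Putman: $\SMod_g$ is commensurable with the mapping class group $\Mod_{0,2g+2}$ of the $(2g+2)$-punctured sphere, which is a virtual duality group of dimension $2g-1$ whose dualizing Steinberg module $\mathrm{St}$ is the reduced top homology of the associated curve complex. This yields
\[
H^{2g-1}(\SMod_g[4];\Q) \cong H_0(\SMod_g[4];\mathrm{St})\otimes\Q,
\]
and an orbit count for the action of $\SMod_g[4]$ on top-dimensional simplices (for example, pants decompositions) produces a lower bound proportional to the index $[\SMod_g:\SMod_g[4]]$. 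This index grows like $|\Sp_{2g}(\ZZ/4)|$, that is, doubly exponentially in $g$, and hence eventually outpaces the merely single-exponential bound $\binom{b_1}{2g-1}$.

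With both estimates in place, the desired inequality $\dim H^{2g-1}(\SMod_g[4];\Q) > \binom{b_1}{2g-1}$ reduces to a direct numerical verification at each $g\geq 7$. The hard part is the Steinberg-module lower bound: one must make the orbit count sufficiently explicit to cover the boundary case without losing super-exponentially many orbits, and one must carefully track how the hyperelliptic involution interacts with the level-$4$ condition under the Birman--Hilden quotient $\SMod_g \to \Mod_{0,2g+2}$. It is also necessary to understand the finite group $\SMod_g/\SMod_g[4]$ well enough; via the identification $\B_{2g+1}[4]\cong \SMod_g[4]\times\ZZ$, this is closely related to the group $\Z_{2g+1}$ whose representation theory is developed earlier in the paper.
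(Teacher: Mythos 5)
Your overall framing---bound the image of the cup product $\Lambda^d H^1 \to H^d$ by ${b_1 \choose d}$ and then exhibit a cohomology group of larger dimension---is the same as the paper's, and your identification of $\cd \SMod_g[4] = 2g-1$ is correct. But the step that actually produces the large Betti number has a genuine gap. You assert that the Steinberg-module/duality computation $H^{2g-1}(\SMod_g[4];\Q)\cong H_0(\SMod_g[4];\mathrm{St})\otimes\Q$ together with an orbit count on top-dimensional simplices yields a lower bound ``proportional to the index'' $[\SMod_g:\SMod_g[4]]$. That is exactly the part that is not known: the coinvariants $H_0(-;\mathrm{St})$ are a quotient of the span of orbit representatives by relations coming from the Steinberg module, and controlling those relations is the hard content of Fullarton--Putman. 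Their Theorem A, applied to $\Mod_2[4]$, gives only $\dim H_3(\Mod_2[4];\Q)\geq 24$, vastly smaller than the index, which the paper's Proposition~\ref{prop:genus2prop} improves to $3068$ by a different route. So the lower bound you need in top degree is not available by the method you cite, and your proof does not close. (A smaller point: the index grows like $2^{\Theta(g^2)}$, i.e.\ super-exponentially, not doubly exponentially; the comparison with ${b_1 \choose 2g-1} = e^{O(g\log g)}$ would still work if the bound existed.)

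The paper avoids this entirely by not trying to bound the top Betti number at all. It computes the Euler characteristic $\chi(\SMod_g[4]) = -2^{{2g+1 \choose 2}-1}(2g-1)!$ (Harer--Zagier's $\chi(\PMod_{0,2g+2})$ times the index of $\PMod_{0,2g+2}^2$, using $\SMod_g[4]\cong\PMod_{0,2g+2}^2$). Since $\cd = 2g-1$, the odd Betti numbers must sum to more than $|\chi|$, so \emph{some} $b_{2k-1}$ with $2\leq k\leq g$ exceeds $\frac{1}{g-1}\bigl(|\chi| - b_1\bigr)$; combined with the uniform bound $d_i \leq {b_1 \choose 2g-1}$ for all $2\leq i\leq 2g-1$ (valid because $2g-1 < b_1/2$), a calculus estimate finishes the argument for $g\geq 7$. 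If you want to repair your proof, replace the Steinberg-module step with this Euler characteristic argument, and drop the insistence on degree $2g-1$ specifically.
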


The original proofs that the cohomology groups of the pure braid groups and the pure string motion groups are representation stable take advantage of the fact that the cohomology algebras of both are generated in degree 1; see the papers by Church--Farb and Wilson \cite{churchfarb,wilson}. One would like to emulate this in the case of $\B_n[4]$ to prove that its higher cohomology groups of $\B_n[4]$ are representation stable. However, as Theorem~\ref{thm:mainD} shows, this approach cannot work. Instead, we propose the following. 

\begin{conjecture}\label{conjalb}
For each $k\geq 1$ the sequence of $\Z_n$-representations $H^k_{Alb}(\B_n[4]; \Q)$ is uniformly representation stable for $n\geq 4k$. 
\end{conjecture}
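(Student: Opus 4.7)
The plan is to deduce Conjecture~\ref{conjalb} from Theorem~\ref{thm:reps} together with a wedge-product stability result for sequences of $\Z_n$-modules. By definition the cup product gives a $\Z_n$-equivariant surjection
\[
\Lambda^k H^1(\B_n[4];\Q) \twoheadrightarrow H^k_{Alb}(\B_n[4];\Q)
\]
that is compatible with the stabilization maps induced by the inclusions $\B_n[4]\hookrightarrow \B_{n+1}[4]$ of Lemma~\ref{lem:inc}. Thus I would first prove uniform representation stability of $\{\Lambda^k H^1(\B_n[4];\Q)\}$ for $n \geq 4k$, and then transfer this to $\{H^k_{Alb}(\B_n[4];\Q)\}$ by analyzing the kernels of the cup product maps.

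Dualizing Theorem~\ref{thm:reps} identifies $H^1(\B_n[4];\Q)$, for $n \geq 4$, as a sum of five irreducibles of the form $V_n(\rho,\lambda)$ whose small-$n$ parameter $m$ is at most $4$; in particular this sequence is uniformly representation stable with stable range $n \geq 4$. The heart of the argument is then to establish an analogue for $\Z_n$-modules of Church--Farb's result that tensor products, and hence wedge powers, of uniformly representation stable sequences of $S_n$-modules are again uniformly representation stable, with the stable range of a $k$-fold exterior power equal to $k$ times the stable range of its factor. Applying this to $H^1(\B_n[4];\Q)$ would give uniform representation stability of $\Lambda^k H^1(\B_n[4];\Q)$ for $n \geq 4k$, exactly matching the range predicted by the conjecture.

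To prove the tensor-product stability result for $\Z_n$-modules, the key computation is a Mackey-style decomposition of $V_n(\rho,\lambda) \otimes V_n(\rho',\lambda')$ using the induced description
\[
V_n(\rho,\lambda) = \Ind_{\Z_n^I}^{\Z_n}\left(V_m(\rho)\boxtimes V_{n-m}(\lambda)\right)
\]
from Theorem~\ref{thm:class}. Parameterizing the double cosets $\Z_n^I\backslash \Z_n/\Z_n^{I'}$ in terms of pairs of full subsets $I,I' \subseteq [n]^{\underline 2}$, and using the surjections $\Z_n^I \to \Z_m^I \times S_{n-m}$ from Section~\ref{sec:proj}, the tensor product should decompose as a sum of $V_n(\sigma,\mu)$ whose multiplicities are independent of $n$ once $n$ exceeds $m + m'$. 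To then pass from $\Lambda^k H^1$ to $H^k_{Alb}$, I would exploit semisimplicity together with the fact that the kernels of the cup product maps form a consistent subsequence of the $\Lambda^k H^1(\B_n[4];\Q)$: combined with the stable-multiplicity decomposition of $\Lambda^k H^1$, this should force the multiplicities in the quotient $H^k_{Alb}$ to be eventually constant.

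The main obstacle will be the Mackey analysis and the precise tracking of the stable range. The irreducibles of $\Z_n$ are parameterized by richer data than those of $S_n$, namely pairs $(\rho,\lambda)$ in which $\rho$ is an $I$-isotypic representation of $\Z_m^I$, and the interplay between $\PZ_n$-characters and the $S_n$-action under induction requires a considerably more delicate bookkeeping than the Pieri-rule analysis that sufficed for $S_n$. A secondary difficulty is the final quotient step: uniform representation stability does not transfer to arbitrary equivariant quotients automatically, so one must show that the kernels of the cup product maps form a uniformly representation stable subsequence --- which will likely require either an explicit description of those kernels or a finite-generation property for an associated FI-type category, such as the one being developed in the forthcoming work with Miller and Patzt mentioned in the introduction.
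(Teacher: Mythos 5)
The statement you are trying to prove is stated in the paper as Conjecture~\ref{conjalb}; the authors explicitly leave it open, and they note both that the degree-one-generation strategy fails (Theorem~\ref{thm:mainD}) and that the categorical (FI-type) machinery for $\Z_n$ is not yet capable of delivering uniform representation stability. So there is no paper proof to compare against, and your proposal should be judged as a research plan rather than a proof. As such it has a genuine gap that it does not close: the passage from $\Lambda^k H^1(\B_n[4];\Q)$ to its quotient $H^k_{Alb}(\B_n[4];\Q)$. Even granting uniform representation stability of $\{\Lambda^k H^1(\B_n[4];\Q)\}$ with range $n\geq 4k$ (itself an unproven Mackey-type computation for $\Z_n$, where the double cosets $\Z_n^I\backslash \Z_n/\Z_n^{I'}$ and the interaction of $\PZ_n$-characters under induction have no existing analogue of the Littlewood--Richardson/Murnaghan bookkeeping), condition (3) of the definition --- eventual constancy of multiplicities --- does not descend to equivariant quotients. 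The multiplicity of $V_n(\rho,\lambda)$ in $H^k_{Alb}$ equals the multiplicity in $\Lambda^k H^1$ minus the multiplicity in the kernel of the cup product map, and nothing in your argument controls the latter. In the $S_n$ setting this is exactly the point where Church--Ellenberg--Farb invoke Noetherianity of finitely generated FI-modules, which makes subquotients automatically finitely generated and hence stable; no such Noetherianity statement is available for the category governing $\{\Z_n\}$, and the paper states that even the forthcoming categorical framework does not yield uniform stability. You acknowledge this obstacle in your last paragraph, but acknowledging it is not the same as overcoming it: without either an explicit description of the cup-product kernels or a finite-generation theorem for $\Z_n$-modules over the relevant category, the argument does not terminate.

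A secondary issue is the claimed stable range. The assertion that the stable range of $\Lambda^k V$ is $k$ times that of $V$ is a theorem of Church--Farb for $S_n$ whose proof rests on explicit branching and Pieri-rule estimates; for $\Z_n$ the irreducibles $V_n(\rho,\lambda)$ carry the extra datum of a full subset $I\subseteq[m]^{\underline 2}$ and an $I$-isotypic $\rho$, and the bound $n\geq m+m'$ you propose for the tensor product would need to be proved, not asserted, before the range $n\geq 4k$ in the conjecture could be matched. In short: your outline identifies the right intermediate objects, but both of its pillars (wedge-power stability for $\Z_n$ and descent to the Albanese quotient) are themselves open problems of essentially the same difficulty as the conjecture.
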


Conjecture~\ref{conjalb} may be compared with a result of Church--Ellenberg--Farb, which states that the Albanese cohomology of the Torelli group is a finitely generated FI-module, hence uniformly representation stable \cite[proof of Theorem 7.2.2]{cef1}.  


\subsection*{Hyperelliptic Torelli groups}

The \emph{braid Torelli group} $\BI_n$ is the level 0 subgroup of $\B_n$, i.e. the kernel of the Burau representation evaluated at $t=-1$ (the latter is sometimes called the integral Burau representation). This group, an infinite-index subgroup of $\B_n$, is even more mysterious than the $\B_n[m]$. For example, it is not known if this group is finitely generated when $n\geq 7$. It is, however, known that for $n = 2,3,4,5,6$ have that that $\BI_n$ is isomorphic to 1, $\ZZ$, $F_\infty$, $F_\infty \times \ZZ$, and $F_\infty \ltimes F_\infty$, respectively, where $F_\infty$ denotes the free group of countably infinite rank.  It is also known that $\BI_7$ is not finitely presented \cite[Theorem 1.3]{bcm}.  An appealing infinite generating set for $\BI_n$, consisting of all squares of Dehn twists about curves surrounding either 3 or 5 punctures, was identified by Brendle, Putman, and the second author \cite{brendlemargalitputman}.

The \emph{hyperelliptic Torelli group} $\SI_g$ is the subgroup of $\SMod_g$ whose elements act trivially on $H_1(\Sigma_g;\ZZ)$. There are isomorphisms $\BI_{2g+1}\cong \SI_g\times \ZZ$; see  \cite{pointpushing}.  Using Hain's description \cite{haininfpresentations} of the image of the second Johnson homomorphism, one can obtain a lower bound
\[
\displaystyle \dim H_1(\BI_{2g+1};\Q) \geq \frac{g(g-1)(4g^2+4g-3)}{3}+1.
\]
We will deduce an improved lower bound from Theorem~\ref{thm:main} as follows.

\begin{theorem}\label{thm:mainF}
For $g\geq 3$ we have
\[
\dim H_1(\BI_{2g+1};\Q)\geq \frac{1}{6}\left(20g^4+12g^3-5g^2+9g\right)
\]
\end{theorem}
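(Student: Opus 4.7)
Write $n=2g+1$.  The plan is to combine two independent sources of rational classes in $H_1(\BI_n;\Q)$: one visible through the inclusion $\BI_n \hookrightarrow \B_n[4]$ together with Theorem~\ref{thm:main}, and one detected by Hain's second Johnson homomorphism on the hyperelliptic Torelli factor of $\BI_{2g+1}\cong \SI_g\times \ZZ$.

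For the first source, I would argue that the inclusion induces a surjection $\iota_*\colon H_1(\BI_n;\Q)\twoheadrightarrow H_1(\B_n[4];\Q)$.  Equivalently, the cokernel $H_1(Q;\Q)=0$, where $Q:=\B_n[4]/\BI_n$ is (a finite-index subgroup of) the integral Burau image; this is an arithmetic subgroup of a symplectic-type linear group of higher rank for $g\geq 3$, so the vanishing follows from standard higher-rank congruence considerations.  Alternatively, one can exhibit explicit lifts of the basis of Theorem~\ref{thm:main} into $\BI_n$ using the Brendle--Putman--Margalit generating set of $\BI_n$ by squares of Dehn twists about $3$- and $5$-curves.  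Either way, Theorem~\ref{thm:main} then gives
\[
\dim(\mathrm{image}\ \iota_*) = 3\binom{n}{4}+3\binom{n}{3}+\binom{n}{2} = \tfrac{1}{2}(4g^4+4g^3+3g^2+g).
\]

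For the second source, the restriction of Johnson--Morita's second Johnson homomorphism to the $\SI_g$ factor produces a $\B_n$-equivariant homomorphism $\tau\colon \BI_n\to J$ whose image has rank $\tfrac{g(g-1)(4g^2+4g-3)}{3}$ by Hain's computation; let $\bar\tau\colon H_1(\BI_n;\Q)\to J\otimes \Q$ be the induced map on rational homology.  The main step, and the main obstacle, is to prove that these two pieces of data are independent---equivalently, that $\bar\tau$ restricts to a surjection on $\ker \iota_*$, or dually, that no nonzero coordinate of $\tau$ extends from $\BI_n$ to a homomorphism $\B_n[4]\to\Q$.  Intuitively this should hold because $\B_n[4]$ acts nontrivially on $J$ through its image in the symplectic group, so any such extension would have to be a nonzero crossed homomorphism rather than an honest homomorphism.

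Granting this independence, the combined map
\[
(\iota_*,\bar\tau)\colon H_1(\BI_n;\Q)\longrightarrow H_1(\B_n[4];\Q)\oplus (J\otimes \Q)
\]
is surjective, and summing dimensions yields
\[
\dim H_1(\BI_n;\Q)\geq \tfrac{1}{2}(4g^4+4g^3+3g^2+g) + \tfrac{g(g-1)(4g^2+4g-3)}{3} = \tfrac{1}{6}(20g^4+12g^3-5g^2+9g).
\]
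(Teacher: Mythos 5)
Your strategy is the same as the paper's: combine the surjection $\iota_*\colon H_1(\BI_n;\Q)\to H_1(\B_n[4];\Q)$ (equivalently, onto $H_1(\SMod_g[4];\Q)$, after splitting off the central $\ZZ$) with the second Johnson homomorphism landing in Hain's module $V(2\lambda_2)$ (your $J\otimes\Q$), and add the two dimensions. The surjectivity of $\iota_*$ is handled as in the paper, via $\B_{2g+1}[4]/\BI_{2g+1}\cong \Sp_g(\ZZ)[4]$ and the vanishing of $H_1(\Sp_g(\ZZ)[4];\Q)$, and your arithmetic is correct. But the step you defer with ``granting this independence'' is the actual content of the theorem, and the heuristic about crossed homomorphisms is not yet an argument. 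The missing idea is a coinvariants computation. Given an arbitrary $y_1\in V(2\lambda_2)$ that must be hit by an element of $\ker\iota_*$: because $V(2\lambda_2)$ is a nontrivial irreducible module over the deck group $\Sp_g(\ZZ)[4]$ (this uses the Borel density theorem), its coinvariants vanish, so $y_1=\sum_{h}(h-1)v_h$ for finitely many $h\in\Sp_g(\ZZ)[4]$ and $v_h\in V(2\lambda_2)$. Choosing $\bar\tau$-preimages $\tilde v_h$ of the $v_h$ and setting $\tilde y_1=\sum_h (h-1)\tilde v_h$, equivariance of $\bar\tau$ gives $\bar\tau(\tilde y_1)=y_1$, while $\iota_*(\tilde y_1)=0$ because $\Sp_g(\ZZ)[4]$ acts trivially on $H_1(\B_n[4];\Q)$ (the action is induced by conjugation of $\B_n[4]$ on itself, which is trivial on its own homology). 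This is precisely why the two sources of first homology are independent; without it the dimensions need not add.

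There is a second, smaller gap: you cite Hain's computation for the rank of the image of $\tau_2$, but that computation concerns $\tau_2$ on $\K_g$, not its restriction to $\SI_g\subset\K_g$ (equivalently to $\BI_n$). You still need surjectivity of $\bar\tau$ itself. The paper obtains it by combining Borel density (which makes $V(2\lambda_2)$ irreducible over the lattice $\Sp_g(\ZZ)[2]$, so any nonzero equivariant map onto it is surjective) with Morita's theorem that $\tau_2(T_c)\neq 0$ for a separating curve $c$; taking $c$ invariant under the hyperelliptic involution puts $T_c$ in $\SI_g$, so the restriction is nonzero. With these two points supplied, your proof closes up and coincides with the paper's.
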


The finiteness properties of the $\BI_n$ and the $\B_n[m]$ are related by the following proposition, which we prove in Section~\ref{hyptorellisection}.

\begin{proposition}
\label{prop:unbound}
Let $n$ be odd.  If the sequence $\left(\dim H_1(\B_n[m];\Q)\right)_{m=1}^{\infty}$ is unbounded then $H_1(\BI_n;\Q)$ is infinite dimensional, and in particular $\BI_n$ is not finitely generated.
\end{proposition}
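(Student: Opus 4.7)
The plan is to exploit the short exact sequence
\[
1 \to \BI_n \to \B_n[m] \to \Gamma_n[m] \to 1,
\]
where $\Gamma_n$ denotes the image of $\B_n$ in $\GL_n(\ZZ)$ under the integral Burau representation at $t=-1$ and $\Gamma_n[m] = \Gamma_n \cap \ker\bigl(\GL_n(\ZZ)\to\GL_n(\ZZ/m)\bigr)$ is the corresponding level-$m$ congruence subgroup. The first move is to write down the associated five-term exact sequence in rational group homology,
\[
H_2(\Gamma_n[m];\Q) \to H_1(\BI_n;\Q)_{\Gamma_n[m]} \to H_1(\B_n[m];\Q) \to H_1(\Gamma_n[m];\Q) \to 0,
\]
and observe that since the coinvariant quotient $H_1(\BI_n;\Q)_{\Gamma_n[m]}$ has dimension at most $\dim H_1(\BI_n;\Q)$, this yields the bound
\[
\dim H_1(\B_n[m];\Q) \leq \dim H_1(\BI_n;\Q) + \dim H_1(\Gamma_n[m];\Q).
\]

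The second step is to control the $\Gamma_n[m]$ contribution uniformly in $m$. For odd $n$, A'Campo's theorem identifies $\Gamma_n$ with an arithmetic subgroup of the integral symplectic group $\Sp_{n-1}(\ZZ)$, so that $\Gamma_n[m]$ is commensurable with the principal congruence subgroup of $\Sp_{n-1}(\ZZ)$ at level $m$. Provided $n-1 \geq 4$, the group $\Sp_{n-1}(\ZZ)$ has Kazhdan's property (T), and consequently every finite-index subgroup has finite abelianization; in particular $H_1(\Gamma_n[m];\Q) = 0$ uniformly in $m$.

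Combining the two steps, the displayed inequality collapses to $\dim H_1(\B_n[m];\Q) \leq \dim H_1(\BI_n;\Q)$. Taking the contrapositive proves the proposition: if the left-hand side is unbounded in $m$, then $\dim H_1(\BI_n;\Q) = \infty$, which in turn precludes $\BI_n$ from being finitely generated, since any finitely generated group has a finitely generated abelianization. The main obstacle is the second step --- the uniform vanishing of $H_1(\Gamma_n[m];\Q)$ --- which requires the arithmetic input of property (T) and hence effectively constrains the argument to odd $n \geq 5$, where $\Sp_{n-1}(\ZZ)$ is of sufficiently high rank.
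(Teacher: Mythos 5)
Your proof is correct and rests on the same core idea as the paper's: the quotient $\B_n[m]/\BI_n$ is an arithmetic group whose rational $H_1$ vanishes, so the (co)invariants of $H_1(\BI_n;\Q)$ dominate $H_1(\B_n[m];\Q)$. The difference is in how the vanishing is arranged. The paper only identifies the quotient exactly at even levels, via the Brendle--Margalit isomorphism $\B_{2g+1}[2m]/\BI_{2g+1}\cong \Sp_g(\ZZ)[2m]$, invokes the known vanishing of $H_1(\Sp_g(\ZZ)[2m];\Q)$, and then needs an extra step --- the transfer surjection $H_1(\B_n[2m];\Q)\to H_1(\B_n[m];\Q)$ --- to pass from unboundedness over all levels to unboundedness over even levels. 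You bypass both the even-level restriction and the transfer by observing that $\Gamma_n[m]$ is in every case a finite-index subgroup of a lattice commensurable with $\Sp_g(\ZZ)$, and killing its abelianization uniformly with property (T); this is a cleaner one-shot bound at the cost of a (standard but heavier) arithmetic input, whereas the paper's route uses only the vanishing for principal congruence subgroups. One remark: you correctly flag that the argument needs $n\geq 5$ (i.e.\ $g\geq 2$); the paper's proof has exactly the same implicit restriction, since it cites $H_1(\Sp_g(\ZZ)[2m];\Q)=0$ only for $g\geq 2$, and indeed for $n=3$ the quotient is a congruence subgroup of $\SL_2(\ZZ)$ with large abelianization while $\BI_3\cong\ZZ$, so the statement genuinely requires $n\geq 5$. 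Your caveat is a feature, not a gap.
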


We make the following conjecture.

\begin{conjecture}\label{conj:fg}
For fixed $n \geq 4$, the sequence $\left(\dim H_1(\B_n[m];\Q)\right)_{m=1}^{\infty}$ is unbounded.  In particular, $H_1(\BI_n;\Q)$ is infinite dimensional and $\BI_n$ is not finitely generated.
\end{conjecture}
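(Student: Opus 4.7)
The plan is to reduce the unboundedness of $\dim H_1(\B_n[m];\Q)$ to the existence of infinitely many torsion characters on the first characteristic variety $V_1(X_n)$ and then invoke the known structure of $V_1(X_n)$ to produce them. It suffices to consider even $m = 2k$, since then $\B_n[m]$ is normal in $\PB_n = \B_n[2]$ with finite abelian quotient $G_m := \PB_n/\B_n[m]$, a quotient of $H_1(\PB_n;\ZZ/k) \cong (\ZZ/k)^{\binom{n}{2}}$. The cover $Y_m \to X_n$ corresponding to $\B_n[m]$ is then a finite abelian cover, and the Sakuma--Libgober decomposition gives
\[
\dim H_1(Y_m;\C) \;=\; \sum_{\chi \in \widehat{G_m}} \dim H^1(X_n;\C_\chi),
\]
where $\C_\chi$ is the rank-one local system with monodromy $\chi$. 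Since a nontrivial $\chi$ contributes precisely when $\chi \in V_1(X_n)$, the first assertion reduces to showing that $\bigcup_{k} \widehat{G_{2k}} \cap V_1(X_n)$ is infinite.

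By Arapura's theorem for quasi-projective varieties, every irreducible component of $V_1(X_n)$ is a coset by a torsion point of an algebraic subtorus of $\Hom(\PB_n,\C^*) \cong (\C^*)^{\binom{n}{2}}$. Work of Cohen--Suciu and Dimca--Papadima--Suciu on arrangement complements then shows that for $n \geq 4$ the variety $V_1(X_n)$ carries \emph{positive-dimensional} components, for instance the $2$-dimensional ``local'' components $\{z_{ij}z_{ik}z_{jk} = 1\}$ (in the appropriate coordinate subtorus) attached to each rank-$2$ flat $\{i,j,k\}$. A positive-dimensional translated subtorus of $(\C^*)^N$ has a Zariski-dense set of torsion points and hence carries torsion of arbitrarily large order. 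Consequently, for any $N$ one may choose $k$ so large that $\widehat{G_{2k}}$ captures at least $N$ such torsion characters on a single positive-dimensional component, each contributing at least $1$ to the sum above; this forces $\dim H_1(\B_n[2k];\Q) \to \infty$. The ``in particular'' clause then follows, for odd $n \geq 5$, directly from Proposition~\ref{prop:unbound}; for even $n \geq 4$ one would argue analogously using that $\BI_n$ surjects onto the inverse system formed by the quotients $\B_n[m]/\B_n[m']$ for $m \mid m'$.

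\textbf{Main obstacle.}  The principal difficulty -- and likely the reason this remains a conjecture -- is to ensure that the characters in $\widehat{G_m}$ actually detect the positive-dimensional components of $V_1(X_n)$. Since $\B_n[m]$ is defined via mod-$m$ reduction of the integral Burau representation rather than via the mod-$k$ reduction of the full abelianization of $\PB_n$, the quotient $G_m$ may be a proper quotient of $H_1(\PB_n;\ZZ/k)$, and some characters at which $V_1(X_n)$ has nontrivial local systems could in principle be killed. Proving the conjecture thus requires a careful comparison of the two filtrations on $\PB_n$ -- the characteristic Burau filtration and the abelianization filtration -- together with an effective description of which torsion points on each positive-dimensional component of $V_1(X_n)$ survive to $\widehat{G_m}$ for infinitely many $m$.
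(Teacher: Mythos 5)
The statement you are trying to prove is Conjecture~\ref{conj:fg}: the paper offers no proof of it (only the partial evidence of Theorem~\ref{thm:main} and the reduction in Proposition~\ref{prop:unbound}), so there is nothing to compare against, and your proposal must stand on its own. It does not, and the failure occurs before the obstacle you flag at the end. Your opening reduction asserts that for $m=2k$ the quotient $G_m=\PB_n/\B_n[m]$ is finite \emph{abelian}, indeed a quotient of $H_1(\PB_n;\ZZ/k)\cong(\ZZ/k)^{\binom{n}{2}}$, so that the cover $Y_m\to X_n$ is an abelian cover and $\dim H_1(Y_m;\C)$ is the sum of $\dim H^1(X_n;\C_\chi)$ over characters $\chi$ of $G_m$. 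This is false for general even $m$: by definition $G_m$ is the image of $\PB_n$ in $\GL_n(\ZZ/m)$ under the integral Burau representation reduced mod $m$, and this image is nonabelian once $m>2$ is not in the special situation $m=4$ (the images of the generating twists do not commute mod $m$; in the limit $\PB_n/\BI_n$ is a nonabelian congruence-type group). Equivalently, $\B_n[m]$ does not contain $[\PB_n,\PB_n]$ for general $m$; the equality $\B_n[4]=\PB_n^2=\ker\bigl(\PB_n\to H_1(\PB_n;\ZZ/2)\bigr)$ is a special theorem of Brendle--Margalit and is exactly what makes the $m=4$ case tractable in this paper. Consequently $Y_m$ is not an abelian cover, the Sakuma--Libgober character decomposition you invoke does not compute $H_1(Y_m;\C)$ (one would need all irreducible representations of the nonabelian $G_m$, with higher-rank local systems), and the reduction of the conjecture to counting torsion characters on $V_1(X_n)$ collapses at the first step.

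Even if you replace $\B_n[m]$ by the kernels of the mod-$k$ abelianizations of $\PB_n$ (for which the character-sum formula and the density of torsion points on positive-dimensional components of $V_1(X_n)$ do give growing first Betti numbers of the corresponding abelian covers), this proves unboundedness for the wrong tower: to transfer it to $\bigl(\dim H_1(\B_n[m];\Q)\bigr)_m$ you would need containments between $\B_n[m]$ and those abelianization kernels, or surjections on $H_1$, and that comparison of the Burau filtration with the abelianization filtration is precisely the open content of the conjecture (and is what your ``main obstacle'' paragraph is gesturing at, though the problem is more basic than selecting which torsion characters survive). Finally, the ``in particular'' clause is only partially addressed: Proposition~\ref{prop:unbound} is stated for odd $n$, and your one-line suggestion for even $n$ (that $\BI_n$ ``surjects onto the inverse system'') is not an argument; the needed input there is the vanishing of $H_1$ of the relevant symplectic congruence quotients, which is what drives the proof of Proposition~\ref{prop:unbound} and would have to be adapted, not merely cited, in the even case.
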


Theorem~\ref{thm:main} provides some evidence for Conjecture~\ref{conj:fg}; with the equalities
\[
\dim H_1(\B_n;\Q) = 1\ \ \ \  \dim H_1(\PB_n;\Q) = {n \choose 2}
\]
it implies
\[
\dim H_1(\B_n;\Q) \ll \dim H_1(\B_n[2];\Q) \ll \dim H_1(\B_n[4];\Q).
\]
From the above descriptions of $\BI_n$ for $2 \leq n \leq 6$ we have that $H_1(\BI_n;\Q)$ is not finite dimensional for $4 \leq n \leq 6$.  Brendle, Childers, and the second author showed that $H_{g-1}(\BI_{2g+1};\Q)$ is infinite dimensional \cite[Theorem 1.3]{bcm}.  It is not known if any of the other $H_{k}(\BI_{n};\Q)$ are infinite dimensional.

The group $\BI_{2g+1}$ is isomorphic to the direct product of $\ZZ$ with fundamental group of any component of the branch locus of the period map on Torelli space \cite{hainsurvey}.  Therefore Conjecture~\ref{conj:fg} implies that this fundamental group is infinitely generated.


\subsection*{Torsion points on the characteristic variety for the braid arrangement}

Let $X_n\subset \C^n$ denote the complement of the braid arrangement
\[
X_n = \C^n \setminus  \{
(x_1,\dots,x_n) \in \C^n \mid x_i = x_j \text{ some } i,j
\}
\]
There is a one-to-one correspondence between homomorphisms $\PB_n \cong \pi_1(X_n)\rightarrow \C^{\times}$ and 1-dimensional complex local systems over $X_n$; to a homomorphism $\rho: \PB_n\rightarrow \C^{\times}$ we associate the 1-dimensional local system $\C_{\rho}$ with monodromy $\rho$. Thus the space of all such local systems can be identified with the algebraic torus $\Hom(\PB_n, \C^{\times})$, which we identify with $(\C^{\times})^{n \choose 2}$ via
$\rho \to \left(\rho(T_{12}),\ldots , \rho(T_{n-1,n})\right)$.

For $d\geq 1$, the $d$th characteristic variety $V_d(X_n)$ of $X_n$ is the subvariety of $(\C^{\times})^{n \choose 2}$ consisting of all $\rho\in \Hom(\PB_n, \C^{\times})$ such that $\dim H^1(X_n; \C_{\rho})\geq d$. One reason for studying the characteristic varieties of $X_n$ is that they give fine information about the topology of its abelian covers. A general theorem of Arapura~\cite{arapura} implies that $V_d(X_n)$ is a union of algebraic subtori, possibly with some components translated away from the identity ${\bf 1}$ by finite-order elements. Following Cohen--Suciu \cite{cohensuciu}, we denote by $\check{V}_d(X_n)$ the union of the components of $V_d(X_n)$ that contain {\bf 1} (these are the so-called central components). 

Let $\rho_I: \PZ_n\rightarrow \mu_2$ denote the homomorphism giving the irreducible representation $V_I$ of $\PZ_n$ (see Section~\ref{sec:bnbar}) and denote the unique extension $\PB_n\rightarrow \mu_2\subset \C^{\times}$ by $\rho_I$ as well.  

\begin{theorem}\label{thm:2torsion}
Let $n\geq 3$. For $d\geq 2$, the characteristic variety $V_d(X_n)$ contains no 2-torsion. The set of 2-torsion points on $V_1(X_n)$ is $\{\rho_{g(I_3)}\}_{g\in S_n}\cup \{\rho_{g(I_4)}\}_{g\in S_n}$, which is contained in $\check{V}_1(X_n)$.
\end{theorem}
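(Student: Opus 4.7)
The plan is to recognize the 2-torsion characters as those factoring through $\PZ_n$, convert the computation of their twisted cohomology on $X_n$ to a $\PZ_n$-multiplicity computation in $H_1(\B_n[4];\C)$, and then read the answer off of Theorem~\ref{thm:reps}. Any 2-torsion character $\rho:\PB_n\to\C^{\times}$ satisfies $\rho^2=1$, so it factors through the mod 2 abelianization $H_1(\PB_n;\ZZ/2)\cong\PZ_n$; thus $\rho=\rho_I$ for a unique $I\subseteq[n]^{\underline 2}$. Since the Galois $\PZ_n$-cover $X_n[4]\to X_n$ has total space a $K(\B_n[4],1)$, standard covering space theory yields the $\PZ_n$-equivariant decomposition
\[
H_1(\B_n[4];\C) \cong H^1(X_n[4];\C) \cong \bigoplus_{I\subseteq [n]^{\underline 2}} H^1(X_n;\C_{\rho_I}),
\]
so $\dim H^1(X_n;\C_{\rho_I})$ is precisely the multiplicity of the $\PZ_n$-character $V_I$ in $H_1(\B_n[4];\C)$. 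Proving that this multiplicity is $1$ when $I\in S_n\cdot I_3\cup S_n\cdot I_4$ and $0$ for every other nonempty $I$ will simultaneously identify the nontrivial $V_1(X_n)$ 2-torsion and show that $V_d(X_n)$ has no nontrivial 2-torsion for $d\geq 2$.

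These multiplicities come directly from Theorem~\ref{thm:reps}. The summands $V_n(1,\lambda)$ restrict to trivial $\PZ_n$-representations and contribute only to the $V_\emptyset$-multiplicity. For $k\in\{3,4\}$, the summand $V_n(\rho_k,(0)) = \Ind_{\Z_n^{I_k}}^{\Z_n}\bigl(V_k(\rho_k)\boxtimes V_{n-k}((0))\bigr)$ is induced from a subgroup $\Z_n^{I_k}$ containing the normal subgroup $\PZ_n$, so by Mackey's formula (in its simplest form for induction from a subgroup containing a normal one),
\[
\Res_{\PZ_n}^{\Z_n} V_n(\rho_k,(0)) \cong \bigoplus_{g\in\Z_n/\Z_n^{I_k}} g\cdot\bigl(V_k(\rho_k)\boxtimes V_{n-k}((0))\bigr)\big|_{\PZ_n}.
\]
Because $V_k(\rho_k)$ is $I_k$-isotypic, it restricts to the character $\rho_{I_k}$ on $\PZ_k$, and pulling back along the natural quotient $\PZ_n\to\PZ_k$ (which factors through the surjection $\Z_n^{I_k}\to\Z_k^{I_k}\times S_{n-k}$) each summand on the right becomes the character $\rho_{g(I_k)}$ of $\PZ_n$. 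The coset space $\Z_n/\Z_n^{I_k}$ is in $\Z_n$-equivariant bijection with the $S_n$-orbit $S_n\cdot I_k$, so each $\rho_J$ with $J\in S_n\cdot I_k$ appears exactly once, yielding the stated multiplicities.

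To prove containment in $\check V_1(X_n)$, I would exhibit a central algebraic subtorus of $V_1(X_n)$ through each point. The character $\rho_{I_3}$ satisfies the triangle relation $\rho(T_{12})\rho(T_{13})\rho(T_{23})=1$ with all other coordinates trivial, so it lies on the two-dimensional ``triangle'' central component of $V_1(X_n)$ pulled back from the well-known central component of $V_1(X_3)$; the same argument handles all conjugates $\rho_{g(I_3)}$. The character $\rho_{I_4}$ satisfies the three opposite-edge relations $\rho(T_{12})\rho(T_{34})=\rho(T_{13})\rho(T_{24})=\rho(T_{14})\rho(T_{23})=1$ with all other coordinates trivial, so it lies on the essential central component of $V_1(X_4)$ identified by Cohen--Suciu \cite{cohensuciu}, pulled back to $V_1(X_n)$; again the same for its $S_n$-conjugates. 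The main obstacle is the Mackey bookkeeping in the second step---keeping straight how $\PZ_n$ sits inside $\Z_n^{I_k}\subset\Z_n$, the surjection $\Z_n^{I_k}\to\Z_k^{I_k}\times S_{n-k}$, and the conjugation action on characters---which is where Theorem~\ref{thm:reps} does most of the heavy lifting.
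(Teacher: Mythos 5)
Your proposal is correct and follows essentially the same route as the paper: reduce $\dim H^1(X_n;\C_{\rho_I})$ to the multiplicity of $V_I$ in $H_1(\B_n[4];\C)$ as a $\PZ_n$-module, compute that restriction from Theorem~\ref{thm:reps} via Mackey/branching (the paper's Lemma~\ref{lem:branch}), and check the surviving points against the Cohen--Suciu equations for the central components. The only cosmetic difference is that you obtain the multiplicity identity from the isotypic decomposition of $H^1$ of the finite Galois cover, whereas the paper uses the degeneration of the Hochschild--Serre spectral sequence; these are equivalent.
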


For $n \leq 4$, all known components of the characteristic varieties of $X_n$ contain {\bf 1} but it is an open problem to determine whether this holds for general $n$. Arapura's theorem implies that any component of $V_d(X_n)$ not containing {\bf 1} must contain some point of finite order. Therefore, if one could show that every torsion point on $V_d(X_n)$ were contained in $\check{V}_d(X_n)$, it would follow that $\check{V}_d(X_n) = V_d(X_n)$.  By Theorem~\ref{thm:2torsion}, any translated components of $V_d(X_n)$, should they exist, would have to be translated by an element of order at least 3.


\subsection*{Outline of the paper}

The remainder of the paper essentially has three parts.  The first part, Sections~\ref{doublecoverssection}--\ref{upperboundsection}, is devoted to the proof of Theorem~\ref{thm:main}.  In Section~\ref{doublecoverssection} we define maps 
\begin{align*}
\psi &: \B_n[4]\rightarrow \ZZ^{{n \choose 2}}, \\
\psi_{i\infty} &: \B_n[4]\rightarrow \ZZ^{{2n-1 \choose 2}}  \quad 1 \leq i \leq n, \text{ and}\\
\psi_{ij} &: \B_n[4]\rightarrow \ZZ^{{2n-2 \choose 2}} \quad 1 \leq i < j \leq n
\end{align*}
that we will use to detect nontrivial classes in $H_1(\B_n[4];\Q)$.  The map $\psi$ is simply the restriction of the abelianization of $\PB_n$.  This map is clearly not sufficient for our purposes, since the dimension of $H_1(\B_n[4];\Q)$ asserted by Theorem~\ref{thm:main} is much larger than that of $H_1(\PB_n;\Q) = {n \choose 2}$.  The $\psi_{ij}$ are designed to detect elements of $H_1(\B_n[4];\Q)$ coming from the commutator subgroup of $\PB_n$.  Roughly, they are defined as follows: lift an element of $\B_n[4]$ to a double cover of $\D_n$ and apply the abelianization of the pure braid group of the cover.  We show at the end of Section~\ref{doublecoverssection} that these maps do indeed detect commutators of pure braids.

With the $\psi_{ij}$ in hand, we complete the proof of Theorem~\ref{thm:main} for $n=3$ in Section~\ref{3strandsection}.  That the given basis elements are independent is proved using the $\psi_{ij}$ and the fact that the first Betti number is 6, which comes easily from the equality $\B_3[4] = \PB_3^2$ and the isomorphism $\PB_3 \cong F_2 \times \ZZ$.  

The proof of Theorem~\ref{thm:main} for $n \geq 4$ is carried out in the next three sections.  In Section~\ref{4punctureslowerbound} we use the $\psi_{ij}$ to show that the basis elements from Theorem~\ref{thm:main} are linearly independent; this step is similar in spirit to the $n=3$ case, and is complicated mainly by the large number of homology classes being considered.  Then in Section~\ref{spanning} we give an infinite spanning set for $H_1(\B_n[4];\Q)$ whose elements have a particularly simple form: they are the images of squares of Dehn twists about curves surrounding two marked points.  There is an obvious spanning set for $H_1(\B_n[4];\Q)$ coming from the generating set for $\B_n[4]$ given by Brendle and the second author, namely the squares of Dehn twists, and our spanning set is a subset of this one.  Finally, in Section~\ref{upperboundsection} we complete the proof of Theorem~\ref{thm:main}.  This section is the technical heart of the proof.  The idea is to whittle down the spanning set from Section~\ref{spanning} using a series of relations in $H_1(\B_n[4];\Q)$.  The relations are obtained using a combination of the squared lantern relation, a Jacobi identity, the Witt--Hall identity, and the standard Artin relations for $\PB_n$. 

The second part of the paper is dedicated to the proof of Theorem~\ref{thm:reps}.  As above, this first requires an investigation of the representation theory of $\Z_n$.  In particular, the statement of representation stability requires a naming system for its irreducible representations.  This is carried out in Section~\ref{sec:reppf}.  The main difficulty stems from the fact that $\Z_n$ does not split as a semi-direct product over $S_n$.  We then prove Theorem~\ref{thm:reps} in Section~\ref{sec:urs} by exhibiting the irreducible representations from the statement of the theorem as explicit submodules and by verifying the three parts of the definition of uniform representation stability.  These submodules are the spans of the $\Z_n$-orbits of elements $a_{ij}$, $x_3$, and $x_4$.  The main obstacle towards proving Theorem~\ref{thm:reps} is simply locating the elements $x_3$ and $x_4$ in the first place.  

Finally, the third part of the paper gives the proofs of the various applications of our main results.  First in Section~\ref{sec:nongen} we quickly dispense with Theorem~\ref{thm:2} as a consequence of Theorem~\ref{thm:main}.  Then in Section~\ref{sec:alb} we Theorem~\ref{thm:mainE} and then use this to prove Theorem~\ref{thm:mainD}.  For Theorem~\ref{thm:mainE}, the basic idea is to compare the dimension of $H^1(\SMod_g[4];\Q)$ (Corollary~\ref{cor:mainC}) to the Euler characteristic of $\SMod_g[4]$.  The latter is an enormous negative number, signaling the presence of large amounts of cohomology in odd degrees. A careful comparison of the odd Betti numbers of $\SMod_g[4]$ with the dimensions of the odd graded pieces of the exterior algebra $\Lambda^*H^1(\SMod_g[4];\Q)$ then gives the result.  At the end of Section~\ref{sec:alb} we prove Proposition~\ref{prop:genus2prop} and Theorem~\ref{thm:smallbetti}.

Next, in Section~\ref{hyptorellisection} we prove Theorem~\ref{thm:mainF}.  The idea is to show that there is a surjective map $H_1(\BI_{2g+1};\Q) \to H_1(\SMod_g[4];\Q)$ and that the direct sum of this map with the second Johnson homomorphism is surjective.  The result is then obtained by adding together the dimensions of the targets of these two maps.  Finally, in Section~\ref{sec:tors} we prove Theorem~\ref{thm:2torsion} as an application of Theorem~\ref{thm:reps}.

\subsection*{Acknowledgments}
The authors would like to thank Lei Chen, Jordan Ellenberg, and Benson Farb for encouraging us to examine the role of representation stability in the homology of the level 4 braid group.  We would also like to thank Santana Afton, Weiyan Chen, Nir Gadish, Marissa Loving, Jeremy Miller, Christopher O'Neill, Peter Patzt, Andrew Putman, and Oscar Randal-Williams for helpful comments and conversations.  This material is based upon work supported by the National Science Foundation under Grant No. DMS - 1057874.


\section{Abelian quotients from double covers}\label{doublecoverssection}

The goal of this section is to define and describe the homomorphisms
\begin{align*}
\psi &: \B_n[4]\rightarrow \ZZ^{{n \choose 2}}, \\
\psi_{i\infty} &: \B_n[4]\rightarrow \ZZ^{{2n-1 \choose 2}}  \quad 1 \leq i \leq n, \text{ and}\\
\psi_{ij} &: \B_n[4]\rightarrow \ZZ^{{2n-2 \choose 2}} \quad 1 \leq i < j \leq n
\end{align*}
discussed in the introduction.  We will denote the induced maps on $H_1(\B_n[4];\Q)$ by the same symbols.  We will use these homomorphisms in Sections~\ref{3strandsection} and~\ref{4punctureslowerbound} to detect non-zero homology classes in $H_1(\B_n[4]; \Q)$.  

The map $\psi$ will simply be defined as the restriction of the abelianization of $\PB_n$.   As discussed in the introduction, the $\psi_{ij}$ will be defined in terms of 2-fold covers of $\D_n$.  In Section~\ref{sec:covers} we describe the 2-fold covers used.  Then in Section~\ref{sec:psi def} we define the $\psi_{ij}$ and in Section~\ref{sec:psi comp} we compute the images under the $\psi_{ij}$ of each square of a Dehn twist in $\B_n[4]$.  Finally we in Section~\ref{sec:nat} we give a naturality (equivariance) formula for the $\psi_{ij}$ and use this formula to compute several examples.  

One of the examples we compute at the end of the section is $\psi_{1\infty}([T_{23}^2,T_{12}])$.   In particular we show it is non-zero.  Of course $\psi$ evaluates to zero on the commutator subgroup of $\PB_n$, and so this computation verifies that the $\psi_{ij}$ are indeed giving more information than $\psi$.

\subsection{Double covers of the disk.}\label{sec:covers} Denote the set of marked points of $\D_n$ by $P$.  There is a correspondence
\[
H_1(\D_n,P \cup \partial \D_n;\ZZ/2)  \longleftrightarrow  \{\text{2-fold branched covers of } (\D_n,P) \}/\sim
\]
Here, a branched cover over $(\D_n,P)$ is a branched cover over $\D_n$ where the set of branch points lies in $P$.

The above correspondence can be explained as a sequence of three correspondences as follows.  First, it is a consequence of Lefschetz duality that $H_1(\D_n,P \cup \partial \D_n;\ZZ/2)$ is isomorphic to $H^1(\D_n^\circ;\ZZ/2)$, where $\D_n^\circ$ is the surface obtained from $\D_n$ by removing the $n$ marked points.  Second, by basic covering space theory, the latter is in bijective correspondence with the equivalence classes of 2-fold covers of $\D_n^\circ$.  Third, 2-fold covers over $\D_n^\circ$ are in bijection with 2-fold branched covers of $\D_n$ with branch set in $P$; we pass from one to the other by adding/subtracting the points of $P$ and its preimage.  The stated correspondence follows.

As above, let $[n]$ denote the set $\{1,\dots,n\}$ and let $[n]^{\underline 2}$ denote the set of pairs of elements of $[n]$.  Also let $[n]_\infty$ denote $[n] \cup \{\infty\}$ and let $[n]_\infty^{\underline 2}$ be the set of pairs of elements of $[n]_\infty$.  There is a natural bijection between $[n]$ and $P$, where $i$ corresponds to the $i$th marked point.  If we think of $\partial \D_n$ has having the label $\infty$ then there is a further bijection between $[n]_\infty$ and the set of connected components of $P \cup \partial \D_n$.  There is a map
\[
\{ [n]_\infty^{\underline 2} \} \to \{\text{2-fold branched covers of } (\D_n,P) \}/\sim
\]
defined as follows.  For an element of $[n]_\infty^{\underline 2}$ we obtain a nontrivial element of $H_1(\D_n,P \cup \partial \D_n;\ZZ/2)$ by choosing an arc between the corresponding components of $P \cup \partial \D_n$ (the arc should be disjoint from $P \cup \partial \D_n$ on its interior).  This homology class, hence the resulting equivalence class of covers, is independent of the choice of arc.  We refer to any resulting cover of $\D_n$ as an $(ij)$-cover of $\D_n$.  An $(i\infty)$-cover of $\D_n$ is a disk with $2n-1$ marked points and any other $(ij)$-cover is an annulus with $2n-2$ marked points.  

As elements of $[n]_\infty^{\underline 2}$ only give equivalence classes of branched covers over $\D_n$, it will be helpful to fix specific $(ij)$-covers once and for all, as follows.  First we fix a copy of $\D_n$ once and for all, as the closed unit disk in the plane with the marked points along the $x$-axis.

Then for each $i$ we let $\alpha_{i\infty}$ be the vertical arc in $\D_n$ connecting the $i$th marked point to the upper boundary of $\D_n$.  And for each $\{i,j\} \in [n]^{\underline 2}$ we let $\alpha_{ij}$ be the semi-circular arc that connects the $i$th and $j$th marked points and lies above the $x$-axis.  

We construct the specific $(ij)$-covers by taking two copies of $\D_n$, cutting each along the corresponding $\alpha_{ij}$-arc and then gluing the two cut disks together.  Each cut disk corresponds to a fundamental domain for the deck group.   We think of the $\alpha_{ij}$ as branch cuts.  

In the $(i\infty)$-cover $\tilde \D_n$, each marked point of $\D_n$ has two pre-images except for the $i$th, which has one.  We label the preimage in $\tilde \D_n$ of the $i$th marked point with $i$.  For each $j \neq i$ we label the preimages of the $j$th marked point in the first and second fundamental domains of $\tilde \D_n$ with $j$ and $j'$, respectively.  For an $(ij)$-cover with $j \neq \infty$ the marked points in $\tilde \D_n$ are labeled similarly.  We denote by $[n]'$ the set of symbols $\{1',\dots,n'\}$.  So the labels of the marked points of $\tilde \D_n$ lie in $[n] \cup [n]'$.

We remark that the cover $\tilde \D_n$, and hence the labeling of its marked points, is sensitive to the homotopy class of each $\alpha_{ij}$, not just the corresponding class in $H_1(\D_n,P \cup \partial \D_n;\ZZ/2)$.


\subsection{Homomorphisms from double covers}\label{sec:psi def} Our next goal is to define $\psi$ and the $\psi_{ij}$.  The $\psi_{ij}$ will be defined as follows: given an element of $\B_n[4]$, lift it to the corresponding cover $\tilde \D_n$, and then take an abelian quotient of the pure mapping class group $\PMod(\tilde \D_n)$.  In general, for a surface $S$ with marked points, $\PMod(S)$ is the subgroup of $\Mod(S)$ given by the kernel of the action on the set of marked points.  

We will carry out the plan described in the previous paragraph by explicitly describing the lifting maps and the abelian quotients.  The latter will be aided by another homomorphism, called the capping homomorphism, which we define along the way.

\p{Lifting} We begin with the lifting homomorphism.  Consider an element of $[n]_\infty^{\underline 2}$ and let $\tilde \D_n$ denote the corresponding branched cover.  There is a homomorphism
\[
\Lift : \PB_n \to \Mod(\tilde \D_n),
\]
defined as follows.  Let $f \in \PB_n \cong \PMod(\D_n)$ and let $\phi : \D_n \to \D_n$ be a representative homeomorphism fixing the boundary.  Because $f$ lies in $\PB_n$ it fixes the element of $H_1(\D_n,P \cup \partial \D_n;\ZZ/2)$ corresponding to $\tilde \D_n$.  Hence $\phi$ lifts to a homeomorphism of $\tilde \D_n$.  There is a unique lift that induces the identity map on $\partial \tilde \D_n$;   let $\tilde f$ be the corresponding element of $\Mod(\tilde \D_n)$.    Then $\Lift$ is defined by
\[
\Lift(f) = \tilde f.
\]

\begin{lemma}
\label{lem:lift}
Let $n \geq 2$, let $\{i,j\} \subset [n]_\infty$, and let $\tilde \D_n$ denote the corresponding branched cover of $\D_n$.  The lifting homomorphism $\Lift : \PB_n \to \Mod(\tilde \D_n)$ restricts to a homomorphism
\[
\B_n[4] \to \PMod(\tilde \D_n).
\]
\end{lemma}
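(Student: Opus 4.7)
The plan is to show that the obstruction to $\tilde f$ being pure is controlled by the mod $2$ abelianization of $\PB_n$, whose kernel is precisely $\B_n[4]$.

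First, I would observe that for any $f\in\PB_n$, the lift $\tilde f = \Lift(f)$ preserves each fiber of $\tilde \D_n\to \D_n$ over a marked point of $\D_n$ setwise, simply because $f$ fixes each marked point of $\D_n$ individually. Over a branch point of the cover the fiber is a single marked point of $\tilde \D_n$, which is therefore automatically fixed by $\tilde f$. Over any other marked point of $\D_n$, the fiber is an unordered pair $\{k,k'\}$ of marked points of $\tilde \D_n$, and $\tilde f$ either fixes it or swaps it. Recording the action on these pairs yields a group homomorphism
\[
\Pi_{ij} : \PB_n \longrightarrow (\ZZ/2)^k,
\]
whose kernel is exactly $\Lift^{-1}(\PMod(\tilde \D_n))$. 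Here $k$ denotes the number of non-branch marked points of $\D_n$, so $k=n-1$ for an $(i\infty)$-cover and $k=n-2$ otherwise. That $\Pi_{ij}$ is a homomorphism reduces to the fact that $\Lift$ itself is a homomorphism, which in turn follows from the uniqueness of the lift subject to the boundary-identity normalization: if $\tilde f_1$ and $\tilde f_2$ are identity on $\partial \tilde\D_n$, so is their composition, and hence $\tilde f_1\tilde f_2 = \widetilde{f_1 f_2}$.

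Second, since the target $(\ZZ/2)^k$ is abelian of exponent two, the homomorphism $\Pi_{ij}$ factors through the quotient
\[
\PB_n \twoheadrightarrow H_1(\PB_n;\ZZ/2) \cong (\ZZ/2)^{n\choose 2}.
\]
By the result of Brendle and the second author cited in the introduction, $\B_n[4]$ is exactly the kernel of this quotient. Therefore $\B_n[4]\subseteq \ker \Pi_{ij} = \Lift^{-1}(\PMod(\tilde \D_n))$, which is what we wanted.

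The argument is essentially structural and avoids any explicit description of $\widetilde{T_{ab}}$ or $\widetilde{T_{ab}^2}$. The main obstacle, in my view, is only the bookkeeping in the first step: one needs to be careful that after the normalization of lifts by being the identity on the boundary, the induced action on each non-branch fiber really is a well-defined element of $\ZZ/2$ and is compatible with composition in $\PB_n$. Once this is established, the factoring through $H_1(\PB_n;\ZZ/2)$ is automatic, and we never need to invoke a generating set for $\B_n[4]$, only the description of $\B_n[4]$ as the kernel of the mod $2$ abelianization.
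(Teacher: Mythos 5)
Your argument is correct, and it takes a genuinely different route from the paper's. The paper invokes the Brendle--Margalit theorem in the form ``$\B_n[4]$ is generated by squares of Dehn twists,'' reduces to checking the generators, and then analyzes the preimage of a single curve $c$ (one component, so $T_c^2$ lifts to $T_{\tilde c}$; or two components, so it lifts to a product of squared twists) --- a computation that is reused later when the images $\psi_{ij}(T_{k\ell}^2)$ are computed. You instead invoke the equivalent characterization ``$\B_n[4]=\ker\left(\PB_n\to H_1(\PB_n;\ZZ/2)\right)$'' and observe that the obstruction to purity of the lift is a homomorphism $\Pi_{ij}:\PB_n\to(\ZZ/2)^k$ recording the deck-swap on each two-point fiber; since the target has exponent $2$, this factors through the mod $2$ abelianization and hence kills $\B_n[4]$. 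The bookkeeping you flag is fine: $\Lift(f)$ preserves each fiber setwise because the covering map intertwines $\Lift(f)$ with $f$ and $f$ fixes each marked point, and $\Pi_{ij}$ is the composition of the homomorphism $\Lift$ with the action of $\Mod(\tilde\D_n)$ on its marked points, so it is a homomorphism with kernel exactly $\Lift^{-1}(\PMod(\tilde\D_n))$. Your approach is more structural and avoids the case analysis on preimages of curves; the paper's approach yields, as a byproduct, the explicit description of $\Lift(T_c^2)$ that is needed in the subsequent computations, so the case analysis is not wasted there.
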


\begin{proof}

As mentioned in the introduction, it is a theorem of Brendle and the second author that $\B_n[4]$ is equal to the subgroup of $\PB_n$ generated by squares of Dehn twists.  Therefore, it is enough to show that if $c$ is a simple closed curve in $\D_n$ then $\Lift(T_c^2)$ lies in $\PMod(\tilde \D_n)$.

The preimage $\tilde c$ in $\tilde \D_n$ of a simple closed curve $c$ in $\D_n$ is a 2-fold cover of $c$.  In particular it has one or two components.  In the first case, $T_c^2$ lifts to the Dehn twist about $\tilde c$.  In the second case, $T_c^2$ lifts to the product of the squares of the Dehn twists about the components of $\tilde c$.  In both cases, the lift lies in $\PMod(\tilde \D_n)$, as desired.
\end{proof}

\p{Capping} We now proceed to the capping homomorphism.  Let $S$ be a surface with boundary.  We choose one distinguished component of the boundary of $S$.  Let $\hat S$ be the surface obtained by gluing a disk to this component.  There is a homomorphism 
\[
\Capmap : \Mod(S) \to \Mod(\hat S),
\]
defined as follows: given an element $f$ of $\Mod(S)$ we can represent it by a homeomorphism of $S$ that fixes the boundary, and then extend this homeomorphism to $\hat S$ in such a way that the extension is the identity on the complement of $S$.  The resulting mapping class is $\Capmap(f)$.  

\p{The abelianization of the pure braid group} The abelianization of the pure braid group is:
\[
\PB_n/[\PB_n,\PB_n] \cong H_1(\PB_n;\ZZ) \cong \ZZ^{n \choose 2}.
\]
The abelianization map
\[
\PB_n \to \ZZ^{n \choose 2}
\]
can be described as follows.  There are $n \choose 2$ forgetful homomorphisms
\[
\PB_n \to \PB_2 \cong \ZZ
\] 
obtained by forgetting all but two of the marked points in $\D_n$, and the abelianization of $\PB_n$ is the direct sum of these homomorphisms.

We now give a slightly different, and more natural, description of the abelianization of $\PB_n$.  We will denote the element $\{i,j\}$ of $[n]^{\underline 2}$ by $(ij)$ (so $(ji)=(ij)$).   Let $\ZZ\{ (ij) \}_{i<j}$ denote the free abelian group on the set of elements of $[n]^{\underline 2}$.  We can write the abelianization of $\PB_n$ as 
\[
\PB_n\rightarrow \ZZ\{ (ij)\}_{i<j},
\]
where the $(ij)$-factor of $\ZZ\{ (ij) \}_{i<j}$ corresponds to the map $\PB_n \to \PB_2$ where all marked points except the $i$th and the $j$th are forgotten.  This notation will be especially useful when the labels of the marked points are not natural numbers, as is the case in our $(i\infty)$- and $(ij)$-covers.

For any subset $A = \{i_1, i_2, \cdots, i_k\}$ of $\{1,\dots,n\}$ there is an associated element of the free abelian group $\ZZ\{ (ij)\}_{i<j}$.  This element may be denoted by $(A)$ or $(i_1 i_2 \cdots i_k)$ and it is defined as
\[
(i_1 i_2 \cdots i_k) = \sum_{p < q} (i_p i_q).
\]
For example, $(123) = (12) + (13) + (23)$.  In this notation it makes sense to interpret $(\emptyset)$ as the identity.  This language makes it convenient to describe the image of a Dehn twist under the abelianization of $\PB_n$: if $c$ is a simple closed curve in $\D_n$ surrounding the marked points $\{i_1,\dots,i_k\}$, then
\[
T_c \mapsto (i_1 i_2 \cdots i_k).
\]
The braid group $\B_n$ acts on $\PB_n$, hence its abelianization, by conjugation.  The action of a particular element $f \in \B_n$ depends only on its image in the quotient $\B_n/\PB_n$, which is isomorphic to the symmetric group on $[n]$.  If the image of $f$ is the permutation $\sigma$ then the action $f_*$ on $\ZZ\{ (ij)\}_{i<j}$ is given by $f_*(ij) = (\sigma(i)\sigma(j))$.

\p{The definition of $\psi$} As advertised, we define 
\[
\psi : \B_n[4] \to \ZZ\{ (ij)\}_{i<j} \cong \ZZ^{n \choose 2}
\]
as simply the restriction of the abelianization of $\PB_n$.  From our description of the latter we immediately obtain a formula for the image of the square of an arbitrary Dehn twist under the map $\psi$: if $c$ is a simple closed curve in $\D_n$ and $A$ is the set of labels of marked points in the interior of $c$, then 
\[
\psi(T_c^2) = 2 (A).
\] 
If $f \in \B_n$ maps to $f_*$ in the symmetric group on $[n]$ then
\[
\psi(f \cdot T_c^2) = 2(f_*(A)).
\]
Our goal in the remainder of this section is to define the $\psi_{ij}$ and obtain similar formulas for the image of a square of a Dehn twist.

\p{The definitions of the $\psi_{ij}$} We are finally ready to define the $\psi_{ij}$.  First, for $i \in [n]$ we define $\psi_{i\infty}$ as the composition
\[
\psi_{i\infty} : \B_n[4] \stackrel{\Lift}{\to} \PMod(\tilde \D_n) \to \ZZ\{ (k\ell)\}_{\{k,\ell\} \subseteq L_{i\infty}} \cong \ZZ^{2n-1 \choose 2},
\]
where $\tilde \D_n$ is the $(i\infty)$-cover of $\D_n$ and $L_{i\infty}$ is the set of labels of the marked points of $\tilde \D_n$.  The existence of the first map is ensured by Lemma~\ref{lem:lift}.  The second map is the abelianization.  The isomorphism at the end comes from the fact that the $(i\infty)$-cover is a disk with $2n-1$ marked points, with $L_{i\infty} = [n] \cup [n]' \setminus i'$.

For $\{i,j\} \in [n]^{\underline 2}$ we denote by $\tilde \D_n$ the $(ij)$-cover of $\D_n$ and by $L_{ij}$ the corresponding set of marked points.  Then we define $\psi_{ij}$ in the analogous way.  The only difference is that we must apply the capping homomorphism (as above, we cap the component of the boundary lying in the second fundamental domain, where the marked points are labeled with primed numbers):
\[
\psi_{ij} : \B_n[4] \stackrel{\Lift}{\to} \PMod(\tilde \D_n) \stackrel{\Capmap}{\to} \PB_{2n-2} \to \ZZ\{ (k\ell)\}_{\{k,\ell\} \subseteq L_{ij}} \cong \ZZ^{2n-2 \choose 2}.
\]

\p{The induced maps} Since $\psi$ and the $\psi_{ij}$ are maps to abelian groups, they also induce maps 
\begin{align*}
\psi &: H_1(\B_n[4];\Q) \rightarrow \Q^{{n \choose 2}}, \\
\psi_{i\infty} &: H_1(\B_n[4];\Q) \rightarrow \Q^{{2n-1 \choose 2}}  \quad 1 \leq i \leq n, \text{ and}\\
\psi_{ij} &: H_1(\B_n[4];\Q) \rightarrow \Q^{{2n-2 \choose 2}} \quad 1 \leq i < j \leq n
\end{align*}
As shown here, we refer to the induced maps by the same symbols as the original maps.

\subsection{Computations}\label{sec:psi comp} Our next goal is to describe the images under the $\psi_{ij}$ of a square of a Dehn twist, first for squares of Artin generators and then for arbitrary squares of Dehn twists.  The statement of the first lemma requires some notation.  Let $\{i,j\}$ be an element of $[n]_\infty^{\underline 2}$ with $i < j$ and let $\{k,\ell\}$ be an element of $[n]^{\underline 2}$ with $k < \ell$.  We say that $\{i,j\}$ and $\{k,\ell\}$ are \emph{linked} if
\[
i < k < j < \ell \quad \text{ or } \quad k < i < \ell < j
\]
and we say that they are \emph{unlinked} if
\[
i < k < \ell < j \quad \text{ or } \quad k < i < j < \ell.
\]
Also, we write $[n\setminus k,\ell]$ for $[n] \setminus \{k,\ell\}$.  

\begin{figure}
\includegraphics[scale=1]{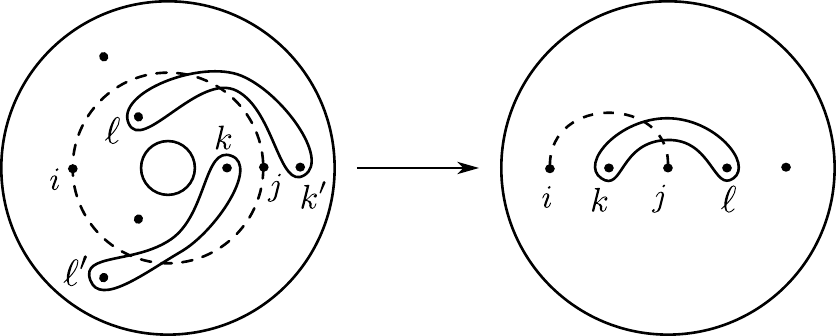}
\caption{A representative case for the proof of Lemma~\ref{lem:psi Artin}}
\label{fig:psi artin}
\end{figure}

\begin{lemma}
\label{lem:psi Artin}
Let $k,\ell \in [n]$ with $k < \ell$ and let $i,j \in [n]_\infty$ with $i < j$.  Then
\[
\psi_{ij}(T_{k\ell}^2) = 
\begin{cases}
(k\ell\ell') &  \{i,j\} \cap \{k,\ell\} = \{k\} \\
(kk'\ell) &  \{i,j\} \cap \{k,\ell\} = \{\ell\} \\
2(k\ell) + 2(k'\ell') & \{i,j\},\{k,\ell\} \text{ unlinked} \\
2(k\ell') + 2(k'\ell) & \{i,j\},\{k,\ell\} \text{ linked} \\
2\left(\{k,\ell\} \cup [n\setminus k,\ell ]'\right)  + 2([n\setminus k,\ell ]') &  \{i,j\}=\{k,\ell\} \\
\end{cases}
\]
\end{lemma}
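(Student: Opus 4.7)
The approach is to compute the lift of $T_{k\ell}^2$ to the cover $\tilde\D_n$ and then apply the abelianization of $\PMod(\tilde\D_n)$, preceded by the capping map when $j\ne\infty$. The central dichotomy is: a simple closed curve $c$ in $\D_n$ has disconnected preimage in $\tilde\D_n$ (two components $c_1,c_2$ swapped by the deck group) iff $c\cdot\alpha_{ij}\equiv 0\pmod 2$, in which case $T_c^2$ lifts to $T_{c_1}^2T_{c_2}^2$; otherwise the preimage is a single connected double cover $\tilde c$ of $c$, and $T_c^2$ lifts to the single Dehn twist $T_{\tilde c}$. By Poincaré--Lefschetz duality (pairing $[\gamma_k]+[\gamma_\ell]$ with $[\alpha_i]+[\alpha_j]$ in $H_1(\D_n^\circ;\ZZ/2)$), the mod 2 intersection number equals $|\{i,j\}\cap\{k,\ell\}|$, so the preimage is connected exactly in the shared-endpoint cases (cases 1 and 2) and disconnected in cases 3, 4, 5.

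In cases 1 and 2, the connected lift $\tilde c$ double covers $c_{k\ell}$ and encloses exactly three marked points: the single preimage of the shared endpoint (a branch point) and the two preimages of the unshared endpoint. Abelianizing $T_{\tilde c}$ gives $(k\ell\ell')$ in case 1 and $(kk'\ell)$ in case 2.

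In cases 3, 4, 5, the lifts $c_1, c_2$ are exchanged by the deck group and $T_{k\ell}^2$ lifts to $T_{c_1}^2 T_{c_2}^2$. To identify the enclosed marked points, I would analyze how the branch cut $\alpha_{ij}$ cuts the interior $U$ of $c_{k\ell}$ and then track sheet labels across each cut. In case 3 the intersections with $\alpha_{ij}$ are either absent or come in pairs, and the sheet labels return to their original values along each lift, producing $c_1$ enclosing $\{k,\ell\}$ and $c_2$ enclosing $\{k',\ell'\}$. In case 4 the linking condition $i<k<j<\ell$ (or its reflection) forces $\alpha_{ij}$ to cut $U$ into two pieces, one containing $k$ and one containing $\ell$, so a single sheet-switch across that cut mixes primed and unprimed labels in each lift, giving $c_1$ enclosing $\{k,\ell'\}$ and $c_2$ enclosing $\{k',\ell\}$. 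In case 5, $U$ contains both branch points, so its preimage is the 2-fold branched cover of a disk over two points, namely an annulus with $c_1,c_2$ as its two boundary components; after capping the primed-side boundary of $\tilde\D_n$, one lift bounds a disk with marked points $\{i,j\}\cup[n\setminus i,j]'$ and the other bounds a disk with marked points $[n\setminus i,j]'$.

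The main obstacle is the geometric analysis in case 4: one must verify that the linked configuration forces $\alpha_{ij}$ to separate $k$ from $\ell$ inside $U$, which is what distinguishes case 4 from case 3 even though both satisfy $c_{k\ell}\cdot\alpha_{ij}\equiv 0\pmod 2$. Once this is established, the bookkeeping of sheet labels across the cuts gives the mixing $\{k,\ell\}\leadsto\{k,\ell'\}$. Case 5 is simpler in spirit but requires careful accounting of the capping step to identify the two enclosed sets, and cases 1 and 2 require identifying precisely which three marked points lie inside the unique connected lift of $c_{k\ell}$.
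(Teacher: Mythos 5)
Your proposal is correct and follows essentially the same route as the paper: determine whether the preimage of $c_{k\ell}$ is connected (one Dehn twist) or disconnected (a product of squared twists), identify the marked points enclosed by each component, and apply the known abelianization formula. The paper organizes the enclosed-point computation by lifting the core arc $\alpha_{k\ell}$ and taking a regular neighborhood of its preimage, while you cut the disk neighborhood along the branch cut and track sheet labels; these are the same covering-space computation, and your intersection-pairing criterion for connectivity is a clean way to package the case split.
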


\begin{proof}

Let $c$ be the curve in $\D_n$ corresponding to the Artin generator $T_{k\ell}$.  As discussed in the proof of Lemma~\ref{lem:lift}, the preimage in $\tilde \D_n$ is either a single curve $\tilde c$ or a pair of curves $\tilde c_1, \tilde c_2$.  In the first case $\Lift(T_c^2)$ is equal to $T_{\tilde c}$ and in the second case it is equal to $T_{\tilde c_1}^2 T_{\tilde c_1}^2$.

By the way the $\psi_{ij}$ are defined, and because we already have a formula for the image of a Dehn twist in the abelianization the pure braid group, it remains to determine the preimage of $c$ in each case.  In fact, the only relevant feature of the preimage of $c$ is the set of marked points that it surrounds.

There are nine cases, as the fifth case of the lemma only makes sense for $(ij)$-covers with $i,j \neq \infty$.  A representative picture for the case where $j \neq \infty$ and $\{i,j\}$ and $\{k,\ell\}$ are linked is shown in Figure~\ref{fig:psi artin}.  The curve $c$ is the boundary of a regular neighborhood of the arc $\alpha = \alpha_{k\ell}$.  Therefore the preimage of $c$ is the boundary of a regular neighborhood of the preimage of $\alpha$.  The path lift of $\alpha$ starting at the marked point $k$ crosses the preimage of $\alpha_{ij}$ and ends at $\ell'$; this path lift is hence a connected component of the preimage of $\alpha$.  Similarly, the other component of the preimage of $\alpha$ is an arc connecting the marked points $k'$ and $\ell$.  So the preimage of $c$ is a pair of curves, one surrounding the marked points $k$ and $\ell'$ and one surrounding the marked points $k'$ and $\ell$.  It follows that $\psi_{ij}(T_{k\ell}^2) = 2(k\ell') + 2(k'\ell)$, as in the statement of the lemma.  The other cases are handled similarly.
\end{proof}

\p{Closed formulas} We also have a closed formula for an arbitrary $\psi_{ij}(T_c^2)$.  The formula has three parts, depending on how many of $\{i,j\}$ lie in the interior of $c$.  Since we will not require these general formulas in the sequel, we do not supply the proofs.

Let $A$ be the set of labels of the marked points lying in the interior of $c$.  In the case where $A \cap \{i,j\} = \{i\}$ the formula is:
\[
\psi_{ij}(T_c^2) = \left(A \cup \left(A-\{i\}\right)'\right).
\]

We now suppose that $A \cap \{i,j\} = \emptyset$.  In this case there is a natural partition of $A$ into two subsets: two elements of $A$ are in the same subset if an arc that lies in the interior of $c$ and connects the corresponding marked points intersects $\alpha_{ij}$ in an even number of points.  If we denote the two subsets of $A$ by $A_1$ and $A_2$, the formula is
\[
\psi_{ij}(T_c^2) = 2(A_1 \cup A_2') + 2(A_1' \cup A_2).
\]

In the third and final case, where $\{i,j\} \subseteq A$, we need to define two subsets $B$ and $C$ of $[n] \setminus A$.  An element of $[n] \setminus A$ lies in $B$ if and only if an arc that lies in the exterior of $c$ and connects that marked point to $\partial \D_n$ crosses $\alpha_{ij}$ in an even number of points.  Then $C$ is the complement of $B$ in $[n]\setminus A$.  We have in this case
\[
\psi_{ij}(T_c^2) = 2(A \cup C \cup B' \cup (A \setminus \{i,j\})') + 2(C \cup B').
\]
It is straightforward to check that in the case where $T_c = T_{k\ell}$ our formulas here agree with Lemma~\ref{lem:psi Artin}.  For instance, in the third case we have $A = \{k,\ell\}$, $B = [n] \setminus \{k,\ell\}$, and $C = \emptyset$.  Thus our formula gives that $\psi_{k\ell}(T_{k\ell}^2)$ is
\[
2(A \cup C \cup B' \cup (A \setminus \{i,j\})') + 2(C \cup B') = 2(\{k,\ell\} \cup [n \setminus k,\ell]') + 2([n \setminus k,\ell]'),
\]
as per Lemma~\ref{lem:psi Artin}.

\subsection{Naturality.}\label{sec:nat} Our final task in this section to give a formula for the image under $\psi_{ij}$ of
\[
T_{k\ell} \cdot f = T_{k\ell} f T_{k\ell}^{-1}
\]
in terms of the image of $f$, where $f \in \B_n[4]$ and $T_{k\ell}$ is an Artin generator for $\PB_n$.  Since we already have a formula for each $\psi_{ij}(T_{k\ell}^2)$ (Lemma~\ref{lem:psi Artin}), this will give a formula for the image under $\psi_{ij}$ of an arbitrary $T_{k\ell} \cdot T_{pq}^2$.

Let $\{i,j\} \in [n]_\infty^{\underline 2}$ with $i < j$ and $k,\ell \in [n]$ with $k < \ell$.  We define a permutation $\iota_{k\ell}^{ij}$ of the set $[n] \cup [n]'$
as follows:
\[
\iota_{k\ell}^{ij} = 
\begin{cases}
(\ell\ \ell') &  \{i,j\} \cap \{k,\ell\} = \{k\} \\
(k\ k') &  \{i,j\} \cap \{k,\ell\} = \{\ell\} \\
id & \text{otherwise}.
\end{cases}
\]
In this formula we are using cycle notation for the symmetric group on $[n] \cup [n]'$, so $(k\ k')$ and $(\ell\ \ell')$ are  transpositions.

Let $\{i,j\} \in [n]_\infty^{\underline 2}$, let $\tilde \D_n$ be the corresponding branched cover of $\D_n$, and let $L_{ij}$ be the set of labels of the marked points of $\tilde \D_n$, as in Section~\ref{sec:psi def}.  We may regard $\iota_{k\ell}^{ij}$ as a permutation of $L_{ij}$, and as such it acts on the abelianization of $\PMod(\tilde \D_n)$.  We abuse notation and write the corresponding automorphism of the abelianization as $\iota_{k\ell}^{ij}$.  

\begin{lemma}
\label{lem:psi act}
Let $\{i,j\} \in [n]_\infty^{\underline 2}$ with $i < j$ and $\{k,\ell\} \in [n]^{\underline 2}$ with $k < \ell$.  Let $f$ be an element of $\B_n[4]$.  Then
\[
\psi_{ij}(T_{k\ell} \cdot f) = \iota_{k\ell}^{ij} \left(\psi_{ij}(f)\right).
\]
\end{lemma}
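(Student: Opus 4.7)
The key idea is that $T_{k\ell}$ lies in $\PB_n$ (even though it need not lie in $\B_n[4]$), so the lifting homomorphism applies to it, giving
\[
\Lift(T_{k\ell} f T_{k\ell}^{-1}) = \Lift(T_{k\ell})\,\Lift(f)\,\Lift(T_{k\ell})^{-1}.
\]
Since $\Capmap$ is a homomorphism, this conjugation identity survives capping. Upon abelianizing the pure mapping class group of the (capped) cover, conjugation by any mapping class factors through the permutation it induces on the marked points---exactly as in the formula $f_*(ij) = (\sigma(i)\sigma(j))$ for the $\B_n$-action on $H_1(\PB_n)$ recalled earlier in this section. The lemma thus reduces to showing that the permutation of the marked points of $\tilde{\D}_n$ induced by $\Lift(T_{k\ell})$ is precisely $\iota_{k\ell}^{ij}$.

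We verify this by the case analysis used in Lemma~\ref{lem:psi Artin}. The cover $\tilde{\D}_n$ is branched exactly at $\{i,j\} \cap [n]$, and $c_{k\ell}$ has connected preimage in $\tilde{\D}_n$ precisely when $|\{i,j\} \cap \{k,\ell\}| = 1$; otherwise its preimage consists of two disjoint components. In the two-component case---covering both $\{i,j\} \cap \{k,\ell\} = \emptyset$ and $\{i,j\} = \{k,\ell\}$---the lift $\Lift(T_{k\ell})$ is the product of Dehn twists about the two preimage components, just as in the proof of Lemma~\ref{lem:lift}. This product can be represented by a homeomorphism supported in small annular neighborhoods disjoint from all marked points, and therefore fixes every marked point, matching $\iota_{k\ell}^{ij} = \text{id}$.

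In the remaining case, say $\{i,j\} \cap \{k,\ell\} = \{k\}$ (the case $\{\ell\}$ is symmetric), the preimage of $c_{k\ell}$ is a single connected double cover $\tilde{c}$, and by Riemann--Hurwitz the preimage of the disk bounded by $c_{k\ell}$ is itself a disk $D'$ containing the three marked points $k$, $\ell$, $\ell'$. The analysis in the proof of Lemma~\ref{lem:lift} gives $\Lift(T_{k\ell})^2 = T_{\tilde c}$. A direct geometric argument shows that the unique lift fixing $\partial \tilde{\D}_n$ pointwise restricts on $D'$ to a half-twist exchanging the two sheets of the cover over the base disk; such a half-twist fixes the branch point $k$ and interchanges $\ell$ with $\ell'$, producing the transposition $\iota_{k\ell}^{ij} = (\ell\ \ell')$.

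The main technical point is the connected-preimage case: one must verify concretely that $\Lift(T_{k\ell})$ acts on $D'$ as the claimed half-twist, and not as some other square root of $T_{\tilde c}$ that differs by a deck transformation. This can be checked by tracking the unique lift of a path joining a marked point to $\partial \D_n$ under a chosen representative of $T_{k\ell}$, and reading off the endpoint. Once this is settled, the lemma follows by combining the three cases with the general principle that conjugation acts on the abelianization of a pure mapping class group through the induced permutation of marked points.
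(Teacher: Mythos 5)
Your proof is correct and follows essentially the same route as the paper's: reduce, via the homomorphism properties of $\Lift$ and $\Capmap$ and the fact that conjugation acts on the abelianization of a pure mapping class group through the induced permutation of marked points, to computing the permutation induced by $\Capmap \circ \Lift(T_{k\ell})$, and then do the case analysis on the connectivity of the preimage of $c_{k\ell}$. Your handling of the connected-preimage case is if anything slightly more careful than the paper's (you explicitly flag that one must distinguish the boundary-fixing lift from the one differing by a deck transformation), and your conclusion that it gives the transposition $(\ell\ \ell')$ fixing the branch point $k$ is exactly what the definition of $\iota_{k\ell}^{ij}$ requires.
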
  

\begin{proof}

For concreteness, we suppose that $j \neq \infty$; the case where $j =\infty$ is essentially the same.  As in Section~\ref{sec:psi def}, the set $L_{ij}$ is $[n] \cup [n]' \setminus \{i',j'\}$.  Let $\psi$ denote the abelianization of $\PMod(\tilde \D_n) \cong \PB_{2n-2}$.  We have that $\Capmap \circ \Lift(T_{k\ell})$ is an element of $\Mod(\tilde \D_n) \cong \B_{2n-2}$.  As such it acts on $\PMod(\tilde \D_n)$ by conjugation.  As in Section~\ref{sec:psi def} we denote the induced action on the abelianization by $\Capmap \circ \Lift(T_{k\ell})_*$.  We have:
\begin{align*}
\psi_{ij}(T_{k\ell} \cdot f) & = \psi(\Capmap \circ \Lift(T_{k\ell} \cdot f)) \\
& = \psi(\Capmap \circ \Lift(T_{k\ell}) \cdot \Capmap \circ \Lift(f)) \\
& = \Capmap \circ \Lift(T_{k\ell})_* \psi(\Capmap \circ \Lift(f)) \\
& = \Capmap \circ \Lift(T_{k\ell})_* \psi_{ij}(f).
\end{align*}
It remains to check that $\Capmap \circ \Lift(T_{k\ell})_*$ is equal to $\iota_{k\ell}^{ij}$.

If $\{i,j\} \cap \{k,\ell\}$ is not a singleton, then the simple closed curve $c$ in $\D_n$ corresponding to $\{k,\ell\}$ has an even number of intersections with the arc $\alpha_{ij}$.  Thus the preimage of $c$ in $\tilde \D_n$ is a pair of curves.  As in the proof of Lemma~\ref{lem:lift}, it follows that $\Capmap \circ \Lift(T_{k\ell})$ lies in $\PB_{2n-2}$.  This agrees with the fact that $\iota_{k\ell}^{ij}$ is trivial in this case.

Suppose that on the other hand $\{i,j\} \cap \{k,\ell\} = \{k\}$.  In this case the simple closed curve $c$ in $\D_n$ corresponding to $\{k,\ell\}$ intersects the arc $\alpha_{ij}$ in a single point.  So the preimage of $c$ in $\tilde \D_n$  is a single curve $\tilde c$ surrounding the points labeled $k$, $\ell$, and $k'$.  And $\Capmap \circ \Lift(T_{k\ell})$ interchanges the points labeled $k$ and $k'$ (the square of $\Capmap \circ \Lift(T_{k\ell})$ is the Dehn twist about $\tilde c$).  This again agrees with the definition of $\iota{k\ell}^{ij}$.  The case $\{i,j\} \cap \{k,\ell\} = \{\ell\}$ is exactly the same, with the roles of $k$ and $\ell$ interchanged.
\end{proof}

We now give three sample computations with Lemmas~\ref{lem:psi Artin} and~\ref{lem:psi act}.  First, for any $n \geq 3$ we have:
\begin{align*}
\psi_{1\infty}(T_{12} \cdot T_{23}^2) &= \iota_{12}^{1\infty}\left(\psi_{1\infty}(T_{23}^2)\right) \\
& = \iota_{12}^{1\infty} \left( 2(23) + 2(2'3') \right) \\
&= 2(23') + 2(2'3).
\end{align*}
Second, for any $n \geq 3$ we have:
\begin{align*}
\psi_{12}(T_{23}\cdot T_{12}^2) &= \iota_{23}^{12}\left(\psi_{12}(T_{12}^2)\right) \\
&= \iota_{23}^{12} \left( 2(123'4' \cdots n') + 2(3'4' \cdots n')  \right) \\
&= 2(1234' \cdots n') + 2(34' \cdots n').
\end{align*}
Third, for any $n \geq 3$ we have:
\begin{align*}
\psi_{1\infty}([T_{23}^2,T_{12}]) &= \psi_{1\infty}(T_{23}^2(T_{12} \cdot T_{23}^{-2})) \\
 &= \psi_{1\infty}(T_{23}^2) \iota_{12}^{1\infty}\left(\psi_{1\infty}(T_{23}^{-2})\right) \\
&= 2(23)+2(2'3') -   \iota_{12}^{1\infty} (2(23)+2(2'3')) \\
&= 2(23) + 2(2'3')-2(2'3)-2(23').
\end{align*}

As in the introduction, we refer to the image of $T_{k\ell}^2$ in $H_1(\B_n[4];\Q)$ by $\tau_{k\ell}$.  Regarding the $\psi_{ij}$ as maps defined on $H_1(\B_n[4];\Q)$, we can reinterpret the above calculations as:
\begin{align*}
\psi_{1\infty}(T_{12}\tau_{23}) = 2(23') &+ 2(2'3),   \quad \psi_{12}(T_{23}\tau_{12}) = 2(1234' \cdots n') + 2(34' \cdots n'), \quad \text{and} \\
&\psi_{1\infty}((1-T_{12})\tau_{23}) = 2(23) + 2(2'3')-2(2'3)-2(23').
\end{align*}


\section{Proof of Theorem~\ref{thm:main} for three strands}\label{3strandsection}

In this section we prove Theorem~\ref{thm:main} for the case of $B_3[4]$.  This result is stated as Corollary~\ref{cor:3b cor} below.  We begin by showing that $\dim H_1(B_3[4];\Q) = 6$ (Proposition~\ref{prop:3a}).  Then we show that a set $\S'$ closely related to the set $\S$ from Theorem~\ref{thm:main} forms a basis (Proposition~\ref{prop:3b}).  We then use this to prove Corollary~\ref{cor:3b cor}.  

Below, we denote by $\PMod_{0,m}$ the pure mapping class group of a sphere with $m$ marked points. 

\begin{proposition}
\label{prop:3a}
The dimension of $H_1(B_3[4];\Q)$ is equal to 6.
\end{proposition}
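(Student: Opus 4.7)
The plan is to exploit the concrete description $\B_3[4] = \PB_3^2$ together with the classical splitting $\PB_3 \cong F_2 \times \ZZ$ (where the $\ZZ$-factor is the center, generated by the full twist).  Since every commutator is a product of squares modulo higher squares, in any group $G$ the subgroup $G^2$ generated by squares coincides with the kernel of the mod $2$ abelianization $G \to H_1(G;\ZZ/2)$.  Applying this to $G = \PB_3 \cong F_2 \times \ZZ$, we get
\[
\B_3[4] \;=\; \PB_3^2 \;=\; \ker\!\bigl(F_2 \times \ZZ \to (\ZZ/2)^2 \times \ZZ/2\bigr) \;=\; K \times 2\ZZ,
\]
where $K$ is the kernel of the mod $2$ abelianization $F_2 \to (\ZZ/2)^2$.

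First I would identify $K$ as a free group using Reidemeister--Schreier (or Nielsen--Schreier): $K$ has index $4$ in $F_2$, and any index $k$ subgroup of $F_n$ is free of rank $1 + k(n-1)$, giving rank $1 + 4(2-1) = 5$.  Hence $K \cong F_5$ and $\B_3[4] \cong F_5 \times \ZZ$.

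From this product decomposition, abelianizing and tensoring with $\Q$ yields
\[
H_1(\B_3[4];\Q) \cong H_1(F_5;\Q) \oplus H_1(\ZZ;\Q) \cong \Q^5 \oplus \Q = \Q^6,
\]
so $\dim H_1(\B_3[4];\Q) = 6$.

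There is no serious obstacle here; the only subtlety worth recording is the identity $\PB_3^2 = \ker(\PB_3 \to H_1(\PB_3;\ZZ/2))$, which is a general fact about subgroups generated by squares and which, combined with the Brendle--Margalit identification $\B_n[4] = \PB_n^2$, lets us translate the problem into pure group theory for $F_2 \times \ZZ$.  Everything else is a rank computation via Schreier's formula.
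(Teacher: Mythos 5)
Your proof is correct and follows essentially the same route as the paper's: both reduce to the index-$4$ subgroup of squares in a rank-$2$ free group (your $F_2$ factor is the paper's $\PMod_{0,4}$, via the splitting $\PB_3 \cong \PMod_{0,4}\times\ZZ$ coming from the capping homomorphism), apply Nielsen--Schreier to get rank $5$, and add $1$ for the central $\ZZ$-factor. Invoking the product decomposition up front is a mild streamlining, since it makes the paper's separate verification that $\Q\langle T_\partial^2\rangle$ injects into $H_1(\B_3[4];\Q)$ automatic.
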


\begin{proof}

We may glue a disk with one marked point to the boundary of $\D_3$ in order to obtain a sphere with four marked points.  There is a resulting capping homomorphism
\[
\PB_3 \to \PMod_{0,4}.
\]
This map is defined analogously to the capping homomorphism in Section~\ref{sec:psi def}.  The kernel is the infinite cyclic group generated by $T_\partial$, the Dehn twist about the boundary of $\D_3$ \cite[Proposition 3.19]{farbmargalit}.

Since $\B_3[4] = \PB_3^2$ and since $\langle T_\partial \rangle \cap \B_3[4] = \langle T_{\partial}^2\rangle$ we may restrict the capping homomorphism to obtain a short exact sequence 
\[
1\rightarrow \langle T_{\partial}^2\rangle \rightarrow B_3[4]\rightarrow \PMod_{0,4}^2\rightarrow 1
\]
where $\PMod_{0,4}^2$ denotes the subgroup of $\PMod_{0,4}$ generated by all squares.  This extension gives rise to an exact sequence in homology
\[
\Q\langle T_{\partial}^2\rangle \rightarrow H_1(B_3[4];\Q)\rightarrow H_1(\text{PMod}^2_{0,4};\Q)\rightarrow 0
\]
We analyze the terms on the right and left in turn.  We claim that the term on the right is isomorphic to $\Q^5$.  By the Birman exact sequence \cite[Theorem 4.6]{farbmargalit} and the fact that $\PMod_{0,3}$ is trivial \cite[Proposition 2.3]{farbmargalit} we have that $\PMod_{0,4}$ is a free group of rank 2.  The index of $\PMod_{0,4}^2$ in $\PMod_{0,4}$ is 4 (it is the kernel of the mod 2 abelianization).  By the Nielsen--Schreier formula the former is a free group of rank 5 and the claim follows.

We next claim that the first map is injective.  The sequence of inclusions $\langle T_{\partial}^2\rangle \to \B_3[4] \to \PB_3$ induces maps on homology $\Q\langle T_{\partial}^2\rangle \rightarrow H_1(\B_3[4];\Q)\rightarrow H_1(\PB_3;\Q)$.  It follows from the discussion in Section~\ref{sec:psi def} that the composition is nontrivial, and the claim follows.

It follows from the two claims that $\dim H_1(B_3[4];\Q) = 1 + 5 = 6$. 
\end{proof}

Consider the set $\S' = \S_1 \cup \S_2'$ where
\[ \S_1 = \{ \tau_{12},\ \tau_{13},\ \tau_{23}\} \quad \text{and} \quad 
\S_2' = \{(1-T_{13})\tau_{12},\ (1-T_{23})\tau_{13},\ (1-T_{12})\tau_{23}\}
\]

\begin{proposition}
\label{prop:3b}
The set $\S'$ is a basis for $H_1(B_3[4];\Q)$.
\end{proposition}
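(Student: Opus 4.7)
The plan is to exploit the homomorphisms $\psi$ and $\psi_{i\infty}$ constructed in Section~\ref{doublecoverssection}. Since Proposition~\ref{prop:3a} gives $\dim H_1(B_3[4];\Q) = 6$ and $|\S'| = 6$, it suffices to establish linear independence of $\S'$.

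Suppose we have a relation
\[
a_1 \tau_{12} + a_2 \tau_{13} + a_3 \tau_{23} + b_1 (1-T_{13})\tau_{12} + b_2 (1-T_{23})\tau_{13} + b_3 (1-T_{12})\tau_{23} = 0
\]
in $H_1(B_3[4];\Q)$. Each element of $\S_2'$ is represented in $B_3[4]$ by a commutator $[T_{pq}^2, T_{k\ell}]$ of pure braids, and so lies in the kernel of $\psi$, which factors through the abelianization of $\PB_3$. On the other hand $\psi(\tau_{ij}) = 2(ij)$, and these three vectors are linearly independent in $\Q\{(ij)\}_{i<j}$. Applying $\psi$ to the relation therefore forces $a_1 = a_2 = a_3 = 0$.

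To pin down the $b_j$ I would next apply each of $\psi_{1\infty}$, $\psi_{2\infty}$, $\psi_{3\infty}$ in turn. Using Lemma~\ref{lem:psi Artin} to compute $\psi_{i\infty}(\tau_{pq})$ and Lemma~\ref{lem:psi act} to rewrite $\psi_{i\infty}((1-T_{k\ell})\tau_{pq})$ as $(1-\iota_{k\ell}^{i\infty})\psi_{i\infty}(\tau_{pq})$, I expect to verify that these three maps act diagonally on $\S_2'$: the map $\psi_{i\infty}$ detects the element of $\S_2'$ whose $\tau$-index is $\{1,2,3\}\setminus\{i\}$ and annihilates the other two. The prototype computation $\psi_{1\infty}((1-T_{12})\tau_{23}) = 2(23) + 2(2'3') - 2(2'3) - 2(23') \neq 0$ has already been carried out at the end of Section~\ref{sec:nat}. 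The vanishing on the other four combinations should come for free: in each such case $\iota_{k\ell}^{i\infty}$ is either the identity or a transposition of two symbols that do not appear in $\psi_{i\infty}(\tau_{pq})$, so the difference $(1-\iota_{k\ell}^{i\infty})$ annihilates it.

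The main obstacle is purely bookkeeping, namely verifying the full diagonality pattern, which is a short case check built from the definition of $\iota_{k\ell}^{i\infty}$ and the formulas $(k\ell\ell')$ and $(kk'\ell)$ from Lemma~\ref{lem:psi Artin}. Once diagonality is established, the three equations $\psi_{i\infty}(\text{relation})=0$ decouple into independent single-variable equations forcing $b_1=b_2=b_3=0$ separately, completing the proof of the proposition.
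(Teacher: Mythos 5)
Your proposal is correct and follows essentially the same route as the paper: apply $\psi$ to see that $\S_1$ is independent and $\S_2'$ lies in $\ker\psi$, then verify (via Lemmas~\ref{lem:psi Artin} and~\ref{lem:psi act}) that the three maps $\psi_{i\infty}$ act diagonally on $\S_2'$, which is exactly the table computed in the paper. Your predicted diagonal pattern and the mechanism for the vanishing entries (the relevant $\iota_{k\ell}^{i\infty}$ is either the identity or a transposition of symbols absent from $\psi_{i\infty}(\tau_{pq})$) both check out against the paper's computation.
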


\begin{proof}

The first step is to compute the values of $\psi$ on the elements of $\S'$. We find that
\begin{align*}
\psi(\tau_{12}) &= 2(12) & \psi((1-T_{13})\tau_{12} ) & =0\\
\psi(\tau_{13}) &= 2(13) & \psi((1-T_{23})\tau_{13}) & = 0 \\
\psi(\tau_{23}) &= 2(23) & \psi((1-T_{12})\tau_{23} ) &= 0. 
\end{align*}
Thus $\{\tau_{12},\tau_{13},\tau_{23}\}$ is linearly independent and the span of $\S_2'$ is contained in the kernel of $\psi$.  Thus it suffices to show that $\S_2'$ is linearly independent.  Let $\delta_{ij}$ denote $(ij) + (i'j') - (ij') - (i'j)$.  The images of the elements of $\S_2'$ under the $\psi_{i\infty}$ are as shown in the following table (the top-right entry was computed as an example in Section~\ref{sec:nat}):

\setlength{\tabcolsep}{15pt}
\renewcommand{\arraystretch}{1.25}
\begin{center}
\begin{tabular}{|l|c|c|c|}
\hline
& $(1-T_{13})\tau_{12}$ & $(1-T_{23})\tau_{13}$ & $(1-T_{12})\tau_{23}$ \\ \hline
$\psi_{1\infty}$ & 0 & 0 & $2\delta_{23}$ \\ \hline
$\psi_{2\infty}$ & 0 & $-2\delta_{13}$ & 0 \\ \hline
$\psi_{3\infty}$ & $2\delta_{12}$ & 0 & 0 \\ \hline
\end{tabular}
\end{center}

\bigskip

Since each element of $\S_2'$ has nontrivial image under exactly one $\psi_{i\infty}$ it follows that $\S_2'$ is linearly independent, and the proposition follows.
\end{proof}

As in the introduction, let $\S = \S_1 \cup \S_2$, where
\[
\S_2 = \{ T_{13}\tau_{12}, \ T_{23}\tau_{13}, \ T_{12}\tau_{23}\}.
\]
The set $\S$ lies in the span of $\S'$ and vice versa.  Since the two sets have the same cardinality, we have the following corollary of Proposition~\ref{prop:3b}, which is the $n=3$ case of Theorem~\ref{thm:main}.

\begin{corollary}
\label{cor:3b cor} 
The set $\S$ is a basis for $H_1(B_3[4];\Q)$.
\end{corollary}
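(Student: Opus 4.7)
The plan is to deduce the corollary directly from Proposition~\ref{prop:3b} by showing that the two sets $\S$ and $\S'$ have the same linear span in $H_1(\B_3[4];\Q)$.

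First I would observe that $\S$ and $\S'$ share the common subset $\S_1 = \{\tau_{12},\tau_{13},\tau_{23}\}$, so it suffices to compare $\S_2$ with $\S_2'$. For each pair $(k,\ell)$ and each $i \notin \{k,\ell\}$, the element $(1-T_{i\ell'})\tau_{k\ell}$ of $\S_2'$ (or its analogue) is simply $\tau_{k\ell} - T_{i\ell'}\tau_{k\ell}$. Thus each element of $\S_2'$ is an explicit $\ZZ$-linear combination of an element of $\S_1$ and an element of $\S_2$; solving for the $\S_2$-element likewise writes each element of $\S_2$ as a $\ZZ$-linear combination of elements of $\S_1$ and $\S_2'$. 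For example,
\[
(1-T_{13})\tau_{12} = \tau_{12} - T_{13}\tau_{12}, \qquad T_{13}\tau_{12} = \tau_{12} - (1-T_{13})\tau_{12},
\]
and analogously for the other two pairs.

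Second, these identities show that $\S \subseteq \mathrm{span}(\S')$ and $\S' \subseteq \mathrm{span}(\S)$, so the two sets span the same subspace of $H_1(\B_3[4];\Q)$. By Proposition~\ref{prop:3b}, $\S'$ is a basis, so this common subspace is all of $H_1(\B_3[4];\Q)$, which by Proposition~\ref{prop:3a} has dimension $6$. Since $|\S| = 6$ equals this dimension, the spanning set $\S$ must in fact be linearly independent, and is therefore a basis.

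There is no significant obstacle here; the content has all been accomplished in Propositions~\ref{prop:3a} and~\ref{prop:3b}, and this corollary is a routine change-of-basis argument that merely rewrites the generators $(1-T_{ij})\tau_{k\ell}$ in terms of $T_{ij}\tau_{k\ell}$ (and $\tau_{k\ell}$). The only thing to verify is the trivial arithmetic that each $\S_2$-element and each $\S_2'$-element is expressible in terms of the other set modulo $\S_1$, which is immediate from the action notation.
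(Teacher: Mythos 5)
Your proposal is correct and follows essentially the same route as the paper: the paper likewise observes that $\S$ and $\S'$ span the same subspace (via the trivial identities $T_{ik}\tau_{ij} = \tau_{ij} - (1-T_{ik})\tau_{ij}$, etc.) and concludes from the equal cardinalities and Proposition~\ref{prop:3b} that $\S$ is a basis. Your write-up just makes the change-of-basis arithmetic explicit where the paper leaves it as a one-line remark.
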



\section{A linearly independent set}\label{4punctureslowerbound}

Let $\S$ be the subset of  $H_1(\B_n[4]; \Q)$ from Theorem~\ref{thm:main}.  The goal of this section is to prove the  ``lower bound'' for Theorem~\ref{thm:main}, namely, that $\S$ is linearly independent (Proposition~\ref{prop:low} below).  The proof will make use of the homomorphisms introduced in Section~\ref{doublecoverssection}. 

As in Section~\ref{3strandsection} we will prove that $\S$ is linearly independent by showing that a slightly different set $\S'$ is linearly independent. Specifically, let $\S' = \S_1 \cup \S_2' \cup \S_3'$, where
\begin{align*}
\S_1 &= \{\tau_{ij} \mid 1\leq i < j \leq n \},\\
\S_2' &= \{(1-T_{jk})\tau_{ij}, (1-T_{jk})\tau_{ik}, (1-T_{ij})\tau_{jk} \mid 1\leq i < j < k \leq n \}, \text{ and}\\
\S_3' &= \{ (1-T_{i\ell})(1-T_{jk})\tau_{ij}, (1-T_{ij})(1-T_{k\ell})\tau_{ik}, (1-T_{ik})(1-T_{j\ell})\tau_{i\ell} \mid 1\leq i < j < k < \ell \leq n \}.
\end{align*}
For a group $G$ and $g,h \in G$ we denote by $[g,h]$ the commutator $ghg^{-1}h^{-1}$.  The elements $(1-T_{k\ell})\tau_{ij}$ are the images of the commutators $[T_{k\ell},T_{ij}^2]$ in $H_1(\B_n[4];\Q)$.  Similarly, the $(1-T_{pq})(1-T_{k\ell})\tau_{ij}$ are the images of $[T_{pq},[T_{k\ell},T_{ij}^2]]$.

\begin{proposition}\label{prop:low}
For all $n\geq 4$ the sets $\S$ and $\S'$ are linearly independent. 
\end{proposition}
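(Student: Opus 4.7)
The plan is to first prove linear independence of $\S'$ and then deduce it for $\S$. The two sets span the same subspace of $H_1(\B_n[4];\Q)$, since modulo $\S_1$ each element of $\S_2$ agrees up to sign with the corresponding element of $\S_2'$ (the key identity being $T_{ik}\tau_{ij} = T_{jk}\tau_{ij}$, which follows from the centrality of $T_{ij}T_{ik}T_{jk}$ in the triangle subgroup $\langle T_{ij},T_{ik},T_{jk}\rangle \cong \PB_3$ together with the fact that each $T_{ab}$ acts as an involution on $H_1(\B_n[4];\Q)$, since $T_{ab}^2 \in \B_n[4]$), and an analogous identification modulo $\mathrm{span}(\S_1\cup\S_2)$ relates $\S_3$ to $\S_3'$. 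Since $|\S|=|\S'|$, this reduces the proposition to the linear independence of $\S'$.

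For $\S'$, the strategy is to stratify by support size (2, 3, 4 for $\S_1$, $\S_2'$, $\S_3'$ respectively) and peel off one stratum at a time using the homomorphisms from Section~\ref{doublecoverssection}. First, I would apply $\psi$: this kills $\S_2' \cup \S_3'$ (each element is the image of a commutator in $\PB_n$) while sending $\S_1$ to the linearly independent set $\{2(ij)\}_{i<j}$, so the $\S_1$-coefficients in any putative relation must vanish. Second, I would apply the family $\{\psi_{m\infty}\}_{m\in[n]}$. The key observation is that every $\psi_{m\infty}$ annihilates all of $\S_3'$: by Lemma~\ref{lem:psi act}, the image of $(1-T_{pq})(1-T_{rs})\tau_{ab}$ equals $(\mathrm{id}-\iota_{pq}^{m\infty})(\mathrm{id}-\iota_{rs}^{m\infty})\psi_{m\infty}(\tau_{ab})$, and in each $\S_3'$-element the pairs $\{p,q\}$ and $\{r,s\}$ form a $2{+}2$-partition of a quadruple, so their intersection is empty and at least one factor is the identity.

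The $\psi_{m\infty}$-images of $\S_2'$-elements can then be computed explicitly using Lemmas~\ref{lem:psi Artin} and~\ref{lem:psi act}. Within each triple $\{i<j<k\}$ the three elements of $\S_2'$ are separated by judicious choices of $m\in\{i,j,k\}$, in the same spirit as the table in the proof of Proposition~\ref{prop:3b}; contributions from distinct triples live in essentially disjoint coordinate ranges of $\Q^{\binom{2n-1}{2}}$, because the $\psi_{m\infty}$-image of $(1-T_{pq})\tau_{ab}$ is visibly supported on the labels $\{a,b,p,q\}$ and their primes. This forces the $\S_2'$-coefficients to vanish.

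Third, with the remaining relation confined to $\S_3'$, I would apply the maps $\psi_{pq}$ with $p,q\in[n]$. For an $\S_3'$-element $(1-T_{pq})(1-T_{rs})\tau_{ab}$ attached to a quadruple $Q$, the image under $\psi_{cd}$ is nontrivial only when $\{c,d\}\subset Q$ is a ``crossing pair'' meeting each of $\{p,q\}$ and $\{r,s\}$ in a single element; the three $\S_3'$-elements per quadruple correspond to the three $2{+}2$-partitions of $Q$, and the crossing pairs associated to each partition suffice to distinguish them by direct computation with Lemma~\ref{lem:psi Artin}. The main obstacle is the careful bookkeeping required in these last two stages: once triples or quadruples share indices, the naive disjointness of coordinate supports breaks, and one must verify nonsingularity of the resulting coefficient array by inspecting specific entries in the formulas of Lemmas~\ref{lem:psi Artin} and~\ref{lem:psi act}. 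This is a finite but delicate case analysis mirroring the structure already visible in the $n=3$ case.
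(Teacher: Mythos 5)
Your proposal is correct in outline and rests on the same toolkit as the paper (the detection homomorphisms $\psi$, $\psi_{m\infty}$, $\psi_{pq}$ together with Lemmas~\ref{lem:psi Artin} and~\ref{lem:psi act}, plus the standard reduction from $\S$ to $\S'$), but it organizes the general-$n$ case genuinely differently. The paper's proof introduces the forgetful maps $F_n$ (and $\bar F_4$) to the groups $\B_2[4]$, $\B_3[4]$, $\B_4[4]$ and peels off $\S_1$, $\S_2'$, $\S_3'$ by observing that distinct pairs, triples, and quadruples land in distinct direct summands; the covering-space homomorphisms are then only needed in the base cases $n=3$ (Proposition~\ref{prop:3b}) and $n=4$ (the $\psi_{12},\psi_{13},\psi_{14}$ table for $\S_3'$). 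You instead apply the $\psi_{m\infty}$ and $\psi_{pq}$ directly for all $n$ and substitute disjointness of coordinate supports (the $\delta_{ab}$ for distinct pairs $\{a,b\}$ have disjoint supports) for the paper's direct-sum decomposition. Your two structural observations are correct and worth highlighting: every $\psi_{m\infty}$ annihilates $\S_3'$ because $m$ can meet at most one of the two disjoint twist pairs, so one factor $(\mathrm{id}-\iota)$ vanishes; and $\psi_{cd}$ can detect $(1-T_{pq})(1-T_{rs})\tau_{ab}$ only when $\{c,d\}$ meets each of $\{p,q\}$ and $\{r,s\}$ in a singleton. The paper gets these facts for free from the forgetful maps rather than stating them. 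The trade-off is exactly the one you flag: your route requires verifying nonvanishing of specific entries and checking that overlapping triples or quadruples still produce a triangular coefficient array, a finite computation you sketch but do not complete, whereas the forgetful-map device reduces all of that bookkeeping to the two small cases that must be computed anyway. Both arguments are sound; the paper's is shorter to write down in full, while yours makes the combinatorial mechanism of the detection more transparent.
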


In the proof we use a homomorphism $F_n$ defined as follows.   Since $\B_n[4]$ is the subgroup of $\PB_n$ generated by all squares, there are well-defined maps $\B_n[4] \to \B_{n-k}[4]$ for all $0 \leq k < n$ obtained by forgetting $k$ of the marked points in $\D_n$.  There are thus induced maps $H_1(\B_n[4];\Q) \to H_1(\B_{n-k}[4];\Q)$.  By introducing formal variables $\varepsilon_{ijk\ell}$, $\varepsilon_{ijk}$ and $\varepsilon_{ij}$ we can combine the maps with $2 \leq n-k \leq 4$ into a single homomorphism $F_n$.  When $n=4$ it will also be convenient to use a function $\bar F_4$ that is defined in the same way as $F_4$ except without the terms corresponding to quadruples:
\begin{equation*}\label{psieq}
\bar F_4: H_1(B_n[4];\Q)\rightarrow \left(\bigoplus_{i<j<k}H_1(B_3[4];\Q)\otimes \varepsilon_{ijk}\right)\bigoplus\left(\bigoplus_{i<j}H_1(B_2[4];\Q)\otimes \varepsilon_{ij}\right),
\end{equation*}
where $\varepsilon_{ijk}$ corresponds to the map $\B_n[4]\rightarrow \B_3[4]$ obtained by forgetting the marked point not labeled by $i$, $j$, or $k$ and $\varepsilon_{ijk}$ corresponds to the map $\B_n[4]\rightarrow \B_2[4]$ obtained by forgetting the marked point not labeled by $i$ or $j$.

\begin{proof}[Proof of Proposition~\ref{prop:low}]

We proceed in two steps, first dealing with the case $n=4$ and then the general case.  In both cases it suffices to show that $\S'$ is independent, since (as in the proof of Corollary~\ref{cor:3b cor}) each element of $\S$ lies in the span of $\S'$ and vice versa.  

We now proceed with the proof for the case $n=4$.  We first claim that $\S_1$ is linearly independent.  For each $i < j$ we have
\[
\bar F_4(\tau_{ij}) = \tau_{ij}\otimes \varepsilon_{ij} + \sum \tau_{ij}\otimes \varepsilon_{abc},
\]
where the sum is over all triples $(a,b,c)$ with $a<b<c$ such that $i,j \in \{a,b,c\}$.  Since each $\varepsilon_{ij}$ only appears in the image of $\tau_{ij}$, and since $H_1(\B_2[4];\Q)$ is non-zero, the claim follows.

We next claim that $\S_1 \cup \S_2'$ is linearly independent.  We begin by computing the images of the elements of $\S_2'$ under $\bar F_4$.  For fixed $i<j<k$, we have
\begin{align*}
\bar F_4((1-T_{jk})\tau_{ij}) &= (1-T_{jk})\tau_{ij}\otimes \varepsilon_{ijk},  \\
\bar F_4((1-T_{jk})\tau_{ik}) &=(1-T_{jk})\tau_{ik} \otimes \varepsilon_{ijk}, \text{ and}\\
\bar F_4((1-T_{ij})\tau_{jk}) & = (1-T_{ij})\tau_{jk} \otimes \varepsilon_{ijk}. 
\end{align*}
First of all, since there are no $\varepsilon_{ij}$ terms here, it is enough to check that $\S_2'$ is independent.  Second, since exactly three of the twelve elements of $\S_2'$ have an $\varepsilon_{ijk}$ term in their images, it is enough to check that these three elements are linearly independent.  By replacing $i$, $j$, and $k$ with 1, 2, and 3, we see that this is equivalent to the statement that the set $\S_2'$ from the proof of Proposition~\ref{prop:3b} is independent.  Applying that proposition, the claim is proven.

The set $\S_3'$ lies in the kernel of $\bar F_4$, and so to prove the $n=4$ case it suffices to show that $\S_3'$ is linearly independent.  To do this we compute the images under $\psi_{12}$, $\psi_{13}$, and $\psi_{14}$:

\setlength{\tabcolsep}{7.5pt}
\renewcommand{\arraystretch}{1.25}
\begin{center}
\begin{tabular}{|l|c|c|c|}
\hline
& $(1-T_{14})(1-T_{23})\tau_{12}$ & $(1-T_{12})(1-T_{34})\tau_{13}$ & $(1-T_{13})(1-T_{24})\tau_{14}$ \\ \hline
$\psi_{12}$ & $4\delta_{34}$ & 0 & 0 \\ \hline
$\psi_{13}$ & 0 & $4\delta_{24}$ & 0 \\ \hline
$\psi_{14}$ & 0 & 0 & $4\delta_{23}$ \\ \hline
\end{tabular}
\end{center}
As in the proof of Proposition~\ref{prop:3b} the symbol $\delta_{ij}$ denotes the element $(ij) + (i'j') - (ij') - (i'j)$.

It is clear from the table that the image of $\S_3'$ is independent, and so it remains to verify the entries of the table.  The calculations for the three rows are similar.  We show details for only the first one.  For this we have
\[
(1-T_{14})(1-T_{23})\tau_{12} = \tau_{12} -T_{14} \tau_{12} - T_{23} \tau_{12}+ T_{14}T_{23}\tau_{12}
\]
By Lemma~\ref{lem:psi Artin}, the image of $\tau_{12}$ under $\psi_{12}$ is $2(123'4') + 2(3'4')$.  By Lemma~\ref{lem:psi act}, the images of $T_{14} \tau_{12}$, $T_{23} \tau_{12}$, and $T_{14}T_{23}\tau_{12}$ are $2(123'4') + 2(3'4')$, $2(123'4) + 2(3'4)$, and $2(1234) + 2(34)$.  The calculation in the table follows.  This completes the proof of the $n=4$ case. 

We now proceed to the general case.  The only elements of $\S'$ detected by the components of $F_n$ corresponding to pairs of marked points are the elements of $\S_1$.  The set $\S_1$ is linearly independent for the same reason it is in the proof of the $n=4$ case, namely, the fact that $H_1(\B_2[4];\Q)$ is nontrivial.  It then remains to show that $\S_2' \cup \S_3'$ is linearly independent.  The only elements of the latter detected by the components of $F_n$ corresponding to triples of marked points are the elements of $\S_2'$.  By applying Proposition~\ref{prop:3b} to each choice of triple, we conclude that $\S_2'$ is linearly independent.  It then remains to check that $\S_3'$ is linearly independent.  This follows by evaluating $F_n$ on $\S_3'$ and applying the $n=4$ case of the proposition.  This completes the proof.
\end{proof}


\section{A spanning set in terms of Artin generators}\label{spanning}

The main goal of this section is to prove the following proposition.  In the statement $\{T_{k\ell}\}$ is the set of Artin generators for $\PB_n$.  

\begin{proposition}\label{prop:span}
For all $n\geq 3$, the $\Q[\overline{PB}_n]$-module $H_1(B_n[4];\Q)$ is generated by $\{ \tau_{ij} \}$.
Equivalently, the vector space $H_1(B_n[4];\Q)$ is spanned by
\[
\T = \{(1-T_1)\cdots (1-T_m)\tau_{ij} \mid m \geq 0, \ T_1,\dots,T_m \in \{T_{k\ell}\} \}.
\]
\end{proposition}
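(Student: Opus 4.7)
The two formulations are equivalent. Since each $T_{k\ell}$ has image of order $2$ in $\overline{PB}_n$, the elements $1-T_{k\ell}$ together with $1$ generate $\Q[\overline{PB}_n]$, so the $\Q$-span of $\T$ coincides with the $\Q[\overline{PB}_n]$-submodule generated by $\{\tau_{ij}\}$. I prove the module statement. Observe that the conjugation action of $\PB_n$ on $B_n[4]$ descends to a $\Q[\overline{PB}_n]$-action on $H_1(B_n[4];\Q)$, because inner automorphisms of $B_n[4]$ act trivially on its abelianization.

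By the Brendle--Margalit theorem cited in the introduction, $B_n[4]$ is generated by the set $\{T_c^2 : c \text{ a simple closed curve in } \D_n\}$, so $H_1(B_n[4];\Q)$ is $\Q$-spanned by the classes $\tau_c := [T_c^2]$. The change-of-coordinates principle implies that $\PB_n$ acts transitively on isotopy classes of simple closed curves enclosing any fixed subset $A\subseteq [n]$ of marked points; fixing one representative $c_A$ for each such $A$ with $|A|\geq 2$, every $\tau_c$ is a $\Q[\overline{PB}_n]$-multiple of $\tau_{c_A}$. It therefore suffices to show that $\tau_{c_A}$ lies in the $\Q[\overline{PB}_n]$-submodule $M$ generated by $\{\tau_{ij}\}_{i<j}$, for every $A$ with $|A|\geq 2$.

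I argue by induction on $k=|A|$; the base case $k=2$ is tautological. For $k\geq 3$, use that in the subgroup $\PB_A\subseteq \PB_n$ supported on the strands indexed by $A$, the boundary twist $T_{c_A}$ generates the center and admits the classical decomposition as a product of the Artin generators $T_{ij}$ with $i<j$ in $A$. Applying iteratively the identity
\[
(xy)^2 = x^2 y^2 \cdot y^{-1}[y^{-1},x^{-1}] y,
\]
which one verifies by direct calculation, and passing to classes in $H_1(B_n[4];\Q)$ via
\[
[(xy)^2] = \tau_x + \tau_y + y^{-1}\cdot[[y^{-1},x^{-1}]],
\]
one expresses $\tau_{c_A}$ as a $\Q$-linear combination of the classes $\tau_{ij}$ with $i,j\in A$ (already in $M$) and iterated commutator classes $[[u,v]]$ for $u,v$ subproducts of Artin generators of $\PB_A$. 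Each commutator class is then reduced into $M$ using the identity $(1-g)\tau_a = [[a^2,g]]$ (from $g a^2 g^{-1}\cdot a^{-2} = [g, a^2]$) combined with $[[a^2,b]] = (1+a)\cdot[[a,b]]$ (from $[a^2,b] = a[a,b]a^{-1}\cdot[a,b]$) and its counterpart with the roles of $a$ and $b$ reversed; applied to Artin generators, these yield
\[
(1+T_{ij})\cdot[[T_{ij},T_{k\ell}]] = (1-T_{k\ell})\tau_{ij},
\]
and symmetrically, which together with the Artin relations within $\PB_A$ suffice to complete the reduction.

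The main difficulty is the combinatorial bookkeeping in the induction: iteration of the $(xy)^2$ identity produces nested commutator classes of arbitrary depth, and each must be reduced to lie in $M$. The key structural fact enabling this is that $[\PB_n,\PB_n]\subseteq B_n[4]$ because $\overline{PB}_n$ is abelian, so every iterated commutator of elements of $\PB_n$ defines a well-defined class in $H_1(B_n[4];\Q)$ to which the identities above may be applied.
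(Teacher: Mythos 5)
Your setup is sound: the equivalence of the two formulations, the reduction via Brendle--Margalit to classes $\tau_c=[T_c^2]$, and the change-of-coordinates reduction to one curve per subset $A$ all match the paper, and the commutator identities you write down are individually correct. The gap is in the last step, where you claim the iterated commutator classes can be ``reduced into $M$'' using $(1+T_{ij})\cdot\overline{[T_{ij},T_{k\ell}]}=(1-T_{k\ell})\tau_{ij}$ and its symmetric counterpart. These identities only show that $(1+T_{ij})c$ and $(1+T_{k\ell})c$ lie in $M$ for $c=\overline{[T_{ij},T_{k\ell}]}$; since $T_{ij}$ has order $2$ in $\overline{\PB}_n$, the element $1+T_{ij}$ is a zero divisor in $\Q[\overline{\PB}_n]$ (it kills the $(-1)$-eigenspace of $T_{ij}$), so you cannot invert it to conclude $c\in M$, and no $\Q[\overline{\PB}_n]$-combination of the two relations recovers $c$ itself. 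This is not a bookkeeping issue: every identity you use (the $(xy)^2$ expansion, Witt--Hall, $[a^2,b]=a[a,b]a^{-1}[a,b]$, and the Artin relations) is a formal consequence of group theory, whereas locating $\overline{[T_{ij},T_{jk}]}$ inside $M$ genuinely requires a relation special to the braid group --- in the paper this is the squared lantern relation, used in Lemma~\ref{lem:comm} to prove $\overline{[T_{ij},T_{jk}]}=\frac12\bigl((1-T_{ik})\tau_{ij}+(1-T_{ij})\tau_{ik}-(1-T_{ij})\tau_{jk}\bigr)$, and even that lemma presupposes the spanning statement for $\tau_\partial$.

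The paper's proof of Proposition~\ref{prop:span} sidesteps commutators entirely. Lemma~\ref{boundarylemma} shows $\tau_\partial=2^{-\binom{n}{2}}(1+T_{12})\cdots(1+T_{n-1,n})\sum_{i<j}\tau_{ij}$, which is visibly in $M$, by a representation-theoretic argument: the space of $\Z_n$-invariants of $H_1(\B_n[4];\Q)$ is one-dimensional (it is isomorphic to $H_1(\B_n;\Q)\cong\Q$), both $\tau_\partial$ and the displayed element are invariant, and both have nonzero image under the signed word length, so they are proportional. One then pushes this forward under the inclusions $\B_k[4]\to\B_n[4]$ to handle the round curves $c_k$, and uses the $\B_n$-invariance of $M$ for arbitrary curves. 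If you want to salvage your inductive approach, you would need to replace the formal commutator reductions by either this invariance argument or by an appeal to the squared lantern relation.
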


We now explain why the two statements of the proposition are equivalent.  Clearly any element of $\T$ lies in the module spanned by the $\tau_{ij}$.  For the other direction, suppose we have an element of $H_1(B_n[4];\Q)$ of the form $T\tau_{ij}$ with $T \in \PB_n$.  We first write this as $T_1 \cdots T_m \tau_{ij}$ with each $T_i \in \{T_{k\ell}\}$ (note that no inverses are needed because the actions of $T_i$ and $T_i^{-1}$ are the same).  Then we may inductively apply the formula $T\tau _{ij}= -(1-T)\tau_{ij} + \tau_{ij}$ in order to express the original element as a linear combination of elements of $\T$.

\p{The boundary twist} Let $T_\partial$ denote the element of $\PB_n$ corresponding to the Dehn twist about $\partial \D_n$, and let $\tau_\partial$ denote the image of $T_\partial^2$ in $H_1(B_n[4];\Q)$.  We first express $\tau_\partial$ as a linear combination of elements $T\tau_{ij}$ as in the statement of Proposition~\ref{prop:span}.  Then we introduce the squared lantern relation and use it to prove the proposition.

\begin{lemma}\label{boundarylemma}
For all $n \geq 2$ we have 
\begin{equation*}
\tau_{\partial} = 2^{-{n\choose 2}}(1+T_{12})\cdots (1+T_{n-1,n})\sum_{i<j}\tau_{ij}.
\end{equation*}
In particular $\tau_\partial$ lies in the span of the $\tau_{ij}$.  
\end{lemma}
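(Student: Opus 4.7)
The plan is to recognize the right-hand side as a scalar multiple of the projection of $\sum_{i<j}\tau_{ij}$ onto the $\PZ_n$-invariant subspace of $H_1(\B_n[4];\Q)$, and then to match it with $\tau_\partial$ by comparing their images under the inclusion-induced map $\psi:H_1(\B_n[4];\Q)\to H_1(\PB_n;\Q)$. Decomposing $H_1(\B_n[4];\Q)$ into isotypic components for the finite abelian group $\PZ_n \cong (\ZZ/2)^{\binom{n}{2}}$, each factor $(1+T_{k\ell})$ acts as $2$ on components where $T_{k\ell}$ acts trivially and as $0$ on components where it acts by $-1$. Consequently the operator $\pi := \prod_{i<j}(1+T_{ij})$ equals $2^{\binom{n}{2}}$ times the projection onto the $\PZ_n$-invariants. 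Both sides of the claimed identity therefore lie in this invariant subspace: the right-hand side by construction, and the left-hand side because $T_\partial^2$ is central in $\B_n$.

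Under $\psi$, both sides map to $2\sum_{i<j}(ij)$. For $\tau_\partial$ this uses $T_\partial \equiv \prod_{i<j} T_{ij}$ modulo $[\PB_n,\PB_n]$, yielding $\psi(\tau_\partial) = 2\sum_{i<j}(ij)$. For the right-hand side, equivariance of $\psi$ combined with the triviality of the $\PZ_n$-action on $H_1(\PB_n;\Q)$ converts $\pi$ into multiplication by $2^{\binom{n}{2}}$, so after cancelling the prefactor one gets $\psi(\sum_{i<j}\tau_{ij}) = 2\sum_{i<j}(ij)$ as well.

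Equality of the two $\PZ_n$-invariant vectors then follows from injectivity of $\psi$ restricted to invariants. In characteristic zero the natural composition
$$H_1(\B_n[4];\Q)^{\PZ_n} \hookrightarrow H_1(\B_n[4];\Q) \twoheadrightarrow H_1(\B_n[4];\Q)_{\PZ_n}$$
is an isomorphism, and the Lyndon--Hochschild--Serre spectral sequence for the extension $1\to \B_n[4] \to \PB_n \to \PZ_n \to 1$ collapses at $E^2$ over $\Q$, since $\PZ_n$ is finite and so $H_p(\PZ_n;-)=0$ for $p>0$; this identifies the coinvariant quotient with $H_1(\PB_n;\Q)$ via $\psi$. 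Hence any two $\PZ_n$-invariants in $H_1(\B_n[4];\Q)$ with the same $\psi$-image are equal, which gives the lemma. The only mildly delicate step is the spectral-sequence collapse identifying the coinvariants with $H_1(\PB_n;\Q)$; the rest is bookkeeping about the $\PZ_n$-module structure.
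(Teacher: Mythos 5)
Your proof is correct, but it takes a genuinely different route from the paper's. The paper argues via the full $\Z_n$-action: it first shows that the $\Z_n$-invariant subspace of $H_1(\B_n[4];\Q)$ is one-dimensional and spanned by $\tau_\partial$ (using $H_1(\B_n[4];\Q)^{\Z_n}\cong H_1(\B_n;\Q)\cong\Q$ and the signed word length homomorphism), then verifies by a direct computation in $\Q[\PZ_n]$ that the element $x=(1+T_{12})\cdots(1+T_{n-1,n})\sum\tau_{ij}$ is invariant under all of $\B_n$ (this step needs the observation that $\sigma\tau_{ij}=T\tau_{\sigma_*(ij)}$ for some $T\in\PZ_n$), and finally pins down the scalar by evaluating the signed word length on both sides. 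You instead stay entirely inside the abelian group $\PZ_n$: you identify $\prod_{i<j}(1+T_{ij})$ as $2^{n\choose 2}$ times the projector onto the $\PZ_n$-invariants (valid since every character of $(\ZZ/2)^{n\choose 2}$ is $\pm1$-valued, hence realized over $\Q$), note that $\tau_\partial$ is invariant by centrality of $T_\partial$, and conclude by injectivity of $\psi$ on invariants, which follows from $H_1(\B_n[4];\Q)^{\PZ_n}\cong H_1(\B_n[4];\Q)_{\PZ_n}\cong H_1(\PB_n;\Q)$ (five-term exact sequence / collapse of the LHS spectral sequence over $\Q$). Your approach buys a shorter argument that avoids checking $S_n$-symmetry of the right-hand side and avoids the scalar comparison; the paper's approach buys the extra structural fact that $\tau_\partial$ spans the full $\Z_n$-invariant subspace, and its invariance computation in $\Q[\PZ_n]$ is reused in spirit elsewhere (e.g., in the proof of Lemma~\ref{lem:comm}). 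Both hinge on the same underlying principle that invariants, coinvariants, and the homology of the ambient group coincide rationally for a finite quotient.
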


\begin{proof}

The steps of the proof are:
\begin{enumerate}
\item Every $\Z_n$-invariant element of $H_1(\B_n[4];\Q)$ is a multiple of $\tau_\partial$.
\item The following element of $H_1(\B_n[4];\Q)$ is $\Z_n$-invariant:
\[
x = (1+T_{12})(1+T_{13})\cdots (1+T_{n-1,n})\sum_{i<j}\tau_{ij}.
\]
\item $x = 2^{{n \choose 2}} \tau_\partial$.
\end{enumerate}
For the first step let $H_1(\B_n[4];\Q)^{\Z_n}$ and $H_1(\B_n[4];\Q)_{\Z_n}$ denote the spaces of $\Z_n$-invariants and $\Z_n$-coinvariants of $H_1(\B_n[4];\Q)$, respectively. Since $\Z_n$ is finite there are isomorphisms \[
H_1(B_n[4];\Q)^{\Z_n} \cong H_1(\B_n[4];\Q)_{\Z_n} \cong H_1(\B_n;\Q) \cong \Q,
\]
where the last isomorphism is induced by the signed word length homomorphism $\B_n \to \ZZ$.  
We conclude that, up to scale, there is a unique $\Z_n$-invariant element of $H_1(B_n[4];\Q)$. 
The image of $T_\partial^2$ under the signed word length homomorphism is $4{n \choose 2}$.  This is because $T_\partial$ can be written as a product of $T_{ij}$ where each $T_{ij}$ appears exactly once and because the signed word length of each $T_{ij}$ is 2.  Thus the image of $\tau_\partial$ of in $H_1(\B_n;\Q)\cong \Q$ is equal to $4{n \choose 2} \neq 0$, so any $\Z_n$-invariant element of $H_1(B_n[4];\Q)$ is a multiple of $\tau_\partial$.  

We now proceed to the second step.  Let $\sigma \in \B_n$ and let $\sigma_*$ be the induced permutation of $[n]^{\underline 2}$.  We will use two facts.  The first fact is that the action of $\sigma$ on the image of $\{T_{ij}\}$ in $\PZ_n$ is a permutation.  The second fact is that $\sigma \tau_{ij}$ is equal to $T\tau_{\sigma_*(ij)}$ for some $T \in \PZ_n$.  Both statements hold because $\sigma \cdot T_{ij}^k$ is conjugate in $\PB_n$ to $T_{\sigma_*(ij)}^k$.  

By the previous paragraph, the action of $\sigma$ on the $ij$-term of $x$ is given by
\[
\sigma \cdot (1+T_{12})\cdots (1+T_{n-1,n})\tau_{ij} = (1+T_{12})\cdots (1+T_{n-1,n}) T \tau_{\sigma_*(ij)}.
\]
Here we have used the fact that $\Q[\PZ_n]$ is commutative and so the permutation of $\{T_{ij}\}$ induced by $\sigma$ is irrelevant.

To complete the proof of the second step, it is then enough to show for $T \in \PZ_n$ that
\[
(1+T_{12})\cdots (1+T_{n-1,n}) T = (1+T_{12})\cdots (1+T_{n-1,n}) 
\]
in $\Q[\PZ_n]$.  If $T = T_{ij}$, then this follows from the equality
\[
(1+T_{ij})T_{ij} = T_{ij} + T_{ij}^2 = T_{ij} + 1 = 1 + T_{ij}
\]
in $\Q[\PZ_n]$ and the commutativity of the latter.  If $T$ is a product of more than one $T_{ij}$ then we apply this equality inductively.  This completes the proof of the second step.  

We now proceed to the third step.  As above, the image of $\tau_\partial$ in $H_1(\B_n;\Q)\cong \Q$ is equal to $4{n \choose 2}$.   We similarly compute the image of $x$ to be $4{n \choose 2}2^{{n \choose 2}}$.  Thus $x = 2^{{n \choose 2}} \tau_\partial$, as desired.  The lemma follows.
\end{proof}

We are now ready to prove Proposition~\ref{prop:span}.

\begin{proof}[Proof of Proposition~\ref{prop:span}]

We will use the theorem of Brendle and the second author that $\B_n[4]$ is equal to the subgroup of $\PB_n$ generated by squares of Dehn twists.  Because of this theorem it is enough to show that for any curve $c$ the image of $T_c^2$ in $\B_n[4]$ lies in the $\Q[\PZ_n]$-submodule of $H_1(B_n[4];\Q)$ generated  by $\{ \tau_{ij} \}$.  We first prove this in the special case where $c$ is $c_k$, the round circle in $\D_n$ surrounding the first $k$ marked points.

Fix some $k \leq n$.  The standard inclusion $\B_k \to \B_n$ induces an inclusion $f : \B_k[4] \to \B_n[4]$.  The latter further induces a map $f_* : H_1(\B_k[4];\Q) \to H_1(\B_n[4];\Q)$.  The map $f_*$ sends each $\tau_{ij}$ in $H_1(\B_k[4];\Q)$ to $\tau_{ij}$ in $H_1(\B_n[4];\Q)$.

By Lemma~\ref{boundarylemma}, the element $\tau_\partial \in H_1(\B_k[4];\Q)$ lies in the submodule of  $H_1(\B_k[4];\Q)$ generated by the $\tau_{ij}$.  By the previous paragraph, it follows that $f_*(\tau_\partial) \in H_1(\B_n[4];\Q)$ lies in the submodule of  $H_1(\B_n[4];\Q)$ generated by the $\tau_{ij}$.  But since $f(T_\partial^2) = T_{c_k}^2$ we have that $f_*(\tau_\partial)$ is the class of $T_{c_k}^2$, and so this completes the proof of the special case.

Let $c$ be an arbitrary curve in $\D_n$.  Say that $c$ surrounds $k$ marked points of $\D_n$.  There is a braid $\sigma \in \B_n$ with $\sigma(c_k) = c$ (this is a special case of the change of coordinates principle \cite[Section 1.3]{farbmargalit}).  Thus the image of $T_c^2$ in $H_1(\B_n[4];\Q)$ is obtained by applying the action of $\sigma$ to the image of $T_{c_k}$.   But the $\Q[\PZ_n]$-submodule of $H_1(B_n[4];\Q)$ generated  by $\{ \tau_{ij} \}$ is invariant under the action of $\B_n$ (as in the proof of Lemma~\ref{boundarylemma}, each $\sigma \tau_{ij}$ is equal to $T\tau_{k\ell}$ for some $k$, $\ell$ and $T \in \PB_n$), so the proposition follows.
\end{proof}


\section{Basis and dimension}\label{upperboundsection}

The goal of this section is to complete the proof of Theorem~\ref{thm:main}, which states that the set $\S$ from the introduction is a basis for $H_1(\B_4[4];\Q)$.  

Proposition~\ref{prop:low} states that the subset $\S'$ of $H_1(\B_n[4];\Q)$ is linearly independent.  The cardinality of $\S'$ is 
\[
3{n \choose 4} + 3{n \choose 3} + {n \choose 2}.
\]
Proposition~\ref{prop:span} gives a spanning set $\T$ for $H_1(\B_n[4];\Q)$ that contains $\S'$.  So our task in this section is to show that the elements of $\T \setminus \S'$ lie in the span of $\S'$.  In fact, we will see that the elements of $\T \setminus \S'$ are all multiples of elements of $\S'$.

Before proving Theorem~\ref{thm:main} we state and prove Lemma~\ref{lem:lantern}, which will allow us to eliminate many of the elements from the spanning set $\T$ for $H_1(\B_n[4];\Q)$ given in Proposition~\ref{prop:span}.  Next we state and prove Lemma~\ref{lem:comm}, which gives an expression for the class of $[T_{ij},T_{jk}]$ in $H_1(\B_n[4]; \Q)$ in terms of the $\tau_{ij}$.  We then state and prove Lemma~\ref{lem:jacobi}, a pair of algebraic identities used to prove the subsequent Lemma~\ref{lemma:key}, which gives certain equalities between elements of $\T$.  We then finally proceed to the proof of the theorem.

\begin{lemma}\label{lem:lantern}
Let $n\geq 2$.   The following statements hold in $H_1(\B_n[4];\Q)$.
\begin{enumerate}[leftmargin=*,itemsep=2ex]
\item \label{lem:lanternpart1}
For $i<j<k$ we have  
\[
T_{ik} \tau_{ij} = T_{jk} \tau_{ij} \label{5.1}, \qquad T_{ij}\tau_{ik} = T_{jk}\tau_{ik}, \qquad \text{and} \qquad T_{ij}\tau_{jk} = T_{ik} \tau_{jk}. 
\]
\item \label{lem:lanternpart2}
For pairwise distinct $i,j,k,\ell$ we have
\[
T_{ij}\tau_{k\ell} = \tau_{k\ell}.
\]
\end{enumerate}
\end{lemma}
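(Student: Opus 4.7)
The plan is to prove both parts using the squared lantern relation together with commutator identities in $\PB_n$. For three indices $i<j<k$, let $c_{ijk}$ denote a simple closed curve in $\D_n$ bounding a subdisk containing exactly the marked points $i, j, k$. The classical lantern relation, applied in this three-marked-point subdisk (viewed as a four-holed sphere), gives
\[
T_{c_{ijk}} = T_{ij}T_{jk}T_{ik}
\]
in $\PB_n$. Since $c_{ijk}$ is disjoint from the supporting curves of $T_{ij}$, $T_{jk}$, $T_{ik}$, the twist $T_{c_{ijk}}$ is central in $\langle T_{ij}, T_{jk}, T_{ik}\rangle$, so the lantern admits the cyclic rearrangements $T_{c_{ijk}} = T_{jk}T_{ik}T_{ij} = T_{ik}T_{ij}T_{jk}$. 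These in turn are equivalent to the three commutation relations $[T_{ij}, T_{jk}T_{ik}] = [T_{jk}, T_{ik}T_{ij}] = [T_{ik}, T_{ij}T_{jk}] = 1$ in $\PB_n$.

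For part (1), note that $(T_{jk}T_{ik})^2$ lies in $\B_n[4]$, since its image in $\PZ_n$ is $2$-torsion. The commutation $[T_{ij}, T_{jk}T_{ik}] = 1$ yields $T_{ij}\cdot\bigl[(T_{jk}T_{ik})^2\bigr] = \bigl[(T_{jk}T_{ik})^2\bigr]$ in $H_1(\B_n[4];\Q)$. Expanding $(T_{jk}T_{ik})^2 = T_{jk}^2\cdot [T_{jk}^{-1}, T_{ik}] \cdot T_{ik}^2$ via the identity $(xy)^2 = x^2 \cdot [x^{-1},y] \cdot y^2$, and using the key formula $(1+T_{ab})\cdot[T_{cd}, T_{ab}] = (T_{cd}-1)\tau_{ab}$ in $H_1(\B_n[4];\Q)$ (which follows from applying $[x, y^2] = [x,y]\cdot y[x,y]y^{-1}$ to $[T_{cd}, T_{ab}^2]$), I would extract an equation relating the $T_{ij}$-actions on $\tau_{jk}$, $\tau_{ik}$, and the class of $[T_{jk}^{-1},T_{ik}]$. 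Carrying out the same analysis for the other two cyclic shifts produces a system of three such equations which, after combination, simplifies to $T_{ik}\tau_{ij} = T_{jk}\tau_{ij}$ and the two symmetric identities.

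For part (2), I would split on whether the pairs $\{i,j\}$ and $\{k,\ell\}$ are unlinked or linked in $[n]^{\underline 2}$. In the unlinked case (disjoint or nested as subintervals of $[n]$), the curves for $T_{ij}$ and $T_{k\ell}$ can be chosen disjoint, so these twists commute in $\PB_n$ and $T_{ij}\tau_{k\ell} = \tau_{k\ell}$ is immediate. In the linked case, the standard Artin relation in $\PB_n$ expresses $T_{ij}T_{k\ell}T_{ij}^{-1}$ as a conjugate $wT_{k\ell}w^{-1}$, where $w$ is an explicit product of Artin generators $T_{ab}$ with $\{a,b\}$ meeting $\{i,j\}$ but disjoint from $\{k,\ell\}$. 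Squaring gives $T_{ij}\cdot\tau_{k\ell} = \bar w\cdot\tau_{k\ell}$ in $H_1(\B_n[4];\Q)$, and using the identities from part (1) one verifies that the action of $\bar w \in \PZ_n$ on $\tau_{k\ell}$ reduces to the identity.

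The main obstacle is the algebraic bookkeeping in part (1): since $(1+T_{ab})$ is a zero divisor in $\Q[\PZ_n]$, the class of an individual commutator $[T_{cd}, T_{ab}]$ is not determined by the identity $(1+T_{ab})[T_{cd},T_{ab}] = (T_{cd}-1)\tau_{ab}$ alone, so the three equations produced by the cyclic lantern shifts must be combined so that the commutator classes cancel and only the desired equalities among $T_{ab}\tau_{cd}$ remain.
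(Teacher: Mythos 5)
There is a genuine gap in your treatment of part (1). You correctly identify the relevant relations (the centrality of the twist about a curve enclosing $i,j,k$, equivalently the standard Artin relations $[T_{ij},T_{ik}T_{jk}]=[T_{ik},T_{jk}T_{ij}]=[T_{jk},T_{ij}T_{ik}]=1$), but you then apply each one in the unproductive direction: you use the fact that $T_{ij}$ fixes the conjugacy class of $(T_{jk}T_{ik})^2$, whose image in $H_1(\B_n[4];\Q)$ is $\tau_{jk}+\overline{[T_{jk}^{-1},T_{ik}]}+\tau_{ik}$. This forces you to control the classes $\overline{[T_{ab},T_{cd}]}$, and the three equations you obtain each involve a \emph{different} such class multiplied by a zero divisor of the form $(1-T_{ab})$, so there is no visible cancellation; as you yourself concede, this elimination is the ``main obstacle'' and you have not carried it out. (In the paper these commutator classes are only computed later, in Lemma~\ref{lem:comm}, and that computation uses the present lemma, so importing it here would be circular.) The fix is to read the same relation the other way: $[T_{ik}T_{jk},T_{ij}]=1$ gives $[T_{ik}T_{jk},T_{ij}^2]=1$, i.e.\ $(T_{ik}T_{jk})\tau_{ij}=\tau_{ij}$ in $H_1(\B_n[4];\Q)$; since the action factors through $\PZ_n$, where every element is an involution, this yields $T_{jk}\tau_{ij}=T_{ik}^{-1}\tau_{ij}=T_{ik}\tau_{ij}$ immediately, and likewise for the other two identities. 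You are one line away from this, but your proposal as written does not reach it. (A minor additional caution: with the conventions in play the central element is $T_{ij}T_{ik}T_{jk}$, not $T_{ij}T_{jk}T_{ik}$, and only cyclic shifts of the correct word give valid commutation relations.)

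Part (2) is essentially correct and matches the paper: the unlinked cases follow from disjointness, and in the linked case $T_{ij}T_{k\ell}T_{ij}^{-1}=wT_{k\ell}w^{-1}$ for some $w\in\PB_n$, whence $T_{ij}\tau_{k\ell}=\bar w\,\tau_{k\ell}$. Note, however, that the reason $\bar w$ acts trivially is not ``the identities from part (1)'' but simply that the action factors through the abelian group $\PZ_n$, so the conjugating word contributes nothing beyond its image there (in the form of the Artin relation used in the paper, $w$ can be taken so that its image in $\PZ_n$ is trivial, e.g.\ a commutator).
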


\begin{proof}

We begin with the first statement.  For $i<j<k$, we have the following standard Artin relations in $\PB_n$:
\[
[T_{ik}T_{jk},T_{ij}] = [T_{jk}T_{ij}, T_{ik}] = [T_{ij}T_{ik}, T_{jk}] = 1.
\]
This implies that for $i<j<k$ we have
\[
[T_{ik}T_{jk},T^2_{ij}] = [T_{jk}T_{ij}, T^2_{ik}] = [T_{ij}T_{ik}, T^2_{jk}] = 1.
\]
These relations are also expressible as 
\[
(T_{ik}T_{jk})\cdot T_{ij}^2 = T_{ij}^2 \qquad
(T_{ij}T_{jk})\cdot T_{ik}^2 = T_{ik}^2 \qquad
(T_{ij}T_{ik})\cdot T_{jk}^2 = T_{jk}^2.
\]
Since an element of $\PB_n$ and its inverse have the same action on $H_1(\B_n[4];\Q)$ the above relations take the following form in $H_1(\B_n[4];\Q)$:
\[
T_{ik}\tau_{ij} = T_{jk}\tau_{ij} \qquad
T_{ij}\tau_{ik} = T_{jk}\tau_{ik}\qquad
T_{ij}\tau_{jk} = T_{ik}\tau_{jk}.
\]
This completes the proof of the first statement.

We now proceed to the second statement.  Let $i$, $j$, $k$, and $\ell$ be distinct.  Let us also assume that $i < j$ and $k < \ell$.  There are six possible configurations for $\{i,j,k,\ell\}$, two of which are linked and four of which are unlinked (see Section~\ref{sec:psi comp} for the definition of linked). In the unlinked cases the result follows from the fact that Dehn twists about disjoint curves commute.  Thus it remains only to consider the linked cases $i < k < j < \ell$ and $k < i < \ell < j$.  The two cases are essentially the same, so we deal only with the first.  

If $i < k < j < \ell$ then in $\PB_n$ we have the standard Artin relation $(T_{j\ell}T_{ij}T_{j\ell}^{-1})\cdot T_{k\ell} = T_{k\ell}$ which (as above) gives the relation
\[
(T_{j\ell}T_{ij}T_{j\ell}^{-1})\cdot T_{k\ell}^2 = T_{k\ell}^2.
\]
In $H_1(\B_n[4];\Q)$ this takes the form $T_{ij}\tau_{k\ell} = \tau_{k\ell}$, as desired.  
\end{proof}

In the next lemma we use $\bar T$ to denote the image of $T \in \B_n[4]$ in $H_1(\B_n[4]; \Q)$.

\begin{lemma}\label{lem:comm}
For $n\geq 3$ and $i<j<k$, the following holds in $H_1(\B_n[4]; \Q)$:
\[
\overline{[T_{ij},T_{jk}]} = \frac{1}{2}\left((1-T_{ik})\tau_{ij} + (1-T_{ij})\tau_{ik} - (1-T_{ij})\tau_{jk} \right).
\]
\end{lemma}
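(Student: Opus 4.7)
The plan is to apply the commutator identity $[x^2,y] = x[x,y]x^{-1}\cdot[x,y]$ to the three pairs $(x,y) = (T_{ij},T_{jk})$, $(T_{jk},T_{ij})$, and $(T_{ik},T_{ij})$. Since $\B_n[4]$ is normal in $\B_n$, this identity descends in $H_1(\B_n[4];\Q)$ to $\overline{[x^2,y]} = (1+x)\cdot \overline{[x,y]}$. The left-hand sides $\overline{[T_{pq}^2,T_{rs}]}$ can be read off directly by decomposing $[T_{pq}^2,T_{rs}] = T_{pq}^2 \cdot (T_{rs} T_{pq}^{-2} T_{rs}^{-1})$ as a product of two elements of $\B_n[4]$; the image in $H_1$ is $(1-T_{rs})\tau_{pq}$.

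Setting $a = \overline{[T_{ij},T_{jk}]}$, the first two applications directly give $(1+T_{ij})a = (1-T_{jk})\tau_{ij}$ and $(1+T_{jk})a = -(1-T_{ij})\tau_{jk}$; the first part of Lemma~\ref{lem:lantern} converts the right-hand side of the first equation into $(1-T_{ik})\tau_{ij}$. For the third application I first need to relate $\overline{[T_{ij},T_{ik}]}$ to $a$: the Artin relation $T_{ij}T_{ik}T_{jk} = T_{ik}T_{jk}T_{ij}$ shows that $T_{ij}$ commutes with $T_{ik}T_{jk}$, and expanding $0 = \overline{[T_{ij},T_{ik}T_{jk}]}$ via $[x,yz] = [x,y]\cdot y[x,z]y^{-1}$ yields $\overline{[T_{ij},T_{ik}]} = -T_{ik}a$. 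Feeding this into the $(x,y)=(T_{ik},T_{ij})$ identity and using that $T_{ik}^2$ acts trivially on $H_1(\B_n[4];\Q)$ produces $(1+T_{ik})a = (1-T_{ij})\tau_{ik}$.

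Adding the first and third identities and subtracting the second produces $(3 + T_{ij} + T_{ik} + T_{jk})a$ on the left and exactly the right-hand side of the lemma on the right. It therefore suffices to show $(1 + T_{ij} + T_{ik} + T_{jk})a = 0$. Two more applications of the commutator-expansion trick, to the commuting pairs $(T_{jk},T_{ij}T_{ik})$ and $(T_{ik},T_{jk}T_{ij})$ (whose commutativity is again a direct consequence of the three-strand Artin relation), produce two different expressions for $\overline{[T_{ik},T_{jk}]}$; equating them yields $T_{ij}a = T_{ik}T_{jk}a$. Substituting this rewrites $(1+T_{ij}+T_{ik}+T_{jk})a$ as $(1+T_{ik})(1+T_{jk})a$, and the third identity converts this to $(1+T_{jk})(1-T_{ij})\tau_{ik}$, which vanishes by a short direct computation using $T_{ij}\tau_{ik} = T_{jk}\tau_{ik}$ (first part of Lemma~\ref{lem:lantern}) together with the triviality of the action of squared Artin generators on $H_1$. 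The main obstacle is simply spotting the correct linear combination of the $(1+T_{pq})a$ identities and recognizing the factorization that lets Lemma~\ref{lem:lantern} finish the job; the factor $\tfrac{1}{2}$ in the statement is then explained precisely by the coincidence that $(1+T_{ij}+T_{ik}+T_{jk})a = 0$ collapses $(3+\cdots)a$ down to $2a$.
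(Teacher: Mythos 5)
Your argument is correct, and it takes a genuinely different route from the paper's. The paper proves the $n=3$ case by starting from the squared lantern relation of Brendle--Margalit in $\B_3[4]$, which expresses $\overline{[T_{12},T_{13}]}$ in terms of $\tau_{12}$, $\tau_{13}$, $\tau_{23}$, and $\tau_\partial$; it then eliminates $\tau_\partial$ via Lemma~\ref{boundarylemma}, applies $\sigma_1$ to pass to $\overline{[T_{12},T_{23}]}$, and pushes forward under an embedding $\D_3\hookrightarrow\D_n$. You instead derive the formula purely algebraically: the identity $\overline{[x^2,y]}=(1+x)\overline{[x,y]}$ together with the Witt--Hall expansion, the three-strand Artin relations, and Lemma~\ref{lem:lantern} yield the three equations $(1+T_{ij})a=(1-T_{ik})\tau_{ij}$, $(1+T_{jk})a=-(1-T_{ij})\tau_{jk}$, $(1+T_{ik})a=(1-T_{ij})\tau_{ik}$, plus the relation $T_{ij}a=T_{ik}T_{jk}a$, which lets you factor $(1+T_{ij}+T_{ik}+T_{jk})a=(1+T_{ik})(1+T_{jk})a=(1+T_{jk})(1-T_{jk})\tau_{ik}=0$. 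I checked each step and they are all valid: $[\PB_n,\PB_n]\leqslant\PB_n^2=\B_n[4]$, so every commutator you take has a well-defined class, and the operators commute because the action factors through the abelian group $\PZ_n$. Your route exposes the formula as a formal consequence of the Artin relations and the triviality of the action of squares, avoids the lantern relation and Lemma~\ref{boundarylemma} entirely, and works for all $n$ at once without the embedding step; the paper's route explains the formula topologically and reuses Lemma~\ref{boundarylemma}, which is needed elsewhere anyway. One bookkeeping slip: given the sign you chose in your second identity, the correct operation is to \emph{add} all three identities rather than ``add the first and third and subtract the second''; that is clearly what you actually did, since the resulting left-hand side $(3+T_{ij}+T_{ik}+T_{jk})a$ and right-hand side (the bracketed expression without the $\tfrac{1}{2}$) are exactly what adding all three produces, and your explanation of the factor $\tfrac{1}{2}$ then goes through.
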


\begin{proof}

We first prove the lemma for the $n=3$ case and then use this to obtain the general case.

Brendle and the second author proved a relation in $\B_3[4]$ called the squared lantern relation \cite[Proposition 4.2]{brendlemargalit}.  In terms of the Artin generators for $\PB_n$ this relation can be written as
\[
[T_{12}, T_{12}\cdot T_{13}] = T_{12}^2T_{23}^2T_{13}^2T_{\partial}^{-2}
\]
Conjugating both sides by $T_{12}^{-1}$ and using the fact that $T_\partial$ is central yields the relation
\[
[T_{12}, T_{13}] = T_{12}^2\left(T_{12}^{-1}\cdot T_{23}^2\right)\left(T_{12}^{-1}\cdot T_{13}^2\right)T_{\partial}^{-2}\]
(this commutator is conjugate to the one in the statement).  Thus the following identity holds in $H_1(\B_3[4]; \Q)$:
\[
\overline{[T_{12},T_{13}]} = \tau_{12}+T_{12}\tau_{13} + T_{12}\tau_{23} - \tau_{\partial}.
\]
Next we claim that
\[
\tau_{\partial} = \frac{1}{2}((1+T_{13})\tau_{12} + (1+T_{12})\tau_{23} + (1+T_{12})\tau_{13}).
\]
By Lemma \ref{boundarylemma} we have that
\[
\tau_{\partial} = \frac{1}{8}(1+T_{12})(1+T_{13})(1+T_{23})(\tau_{12} + \tau_{13} + \tau_{23}).
\]
Lemma \ref{lem:lantern} implies that $T_{12}\tau_{23}= T_{13}\tau_{23}$ and therefore that 
\[
(1+T_{12})(1+T_{13})(1+T_{23})\tau_{23} = 2(1+T_{12})^2\tau_{23} = 4(1+T_{12})\tau_{23}.
\] 
Similar calculations show that 
\[
(1+T_{12})(1+T_{13})(1+T_{23})\tau_{12} = 4(1+T_{13})\tau_{12}
\]
and
\[
(1+T_{12})(1+T_{13})(1+T_{23})\tau_{13} = 4(1+T_{12})\tau_{13}.
\]
We therefore have that 
\begin{align*}
\tau_{\partial} &=\frac{1}{8}\left(4(1+T_{13})\tau_{12}+ 4(1+T_{12})\tau_{13}+(1+T_{12})\tau_{23}\right)\\
&= \frac{1}{2}((1+T_{13})\tau_{12} + (1+T_{12})\tau_{23} + (1+T_{12})\tau_{13}),
\end{align*} 
whence the claim.

Combining the claim and the above expression for $\overline{[T_{12},T_{13}]}$ we obtain
\begin{align*}
\overline{[T_{12},T_{13}]} & =  \tau_{12}+T_{12}\tau_{13} + T_{12}\tau_{23}  - \frac{1}{2}\left((1+T_{13})\tau_{12} + (1+T_{12})\tau_{23} + (1+T_{12})\tau_{13}\right)\\
&= \frac{1}{2}((1-T_{13})\tau_{12}-(1-T_{12})\tau_{13} - (1-T_{12})\tau_{23}).
\end{align*}
If we apply the braid generator $\sigma_1$ to the first and last expressions above we obtain
\begin{align*}
\overline{[T_{12},T_{23}]} &= \frac{1}{2}((1-T_{23})\tau_{12}-(1-T_{12})\tau_{23} - (1-T_{12})T_{12}\tau_{13}) \\
&= \frac{1}{2}((1-T_{23})\tau_{12}-(1-T_{12})\tau_{23} + (1-T_{12})\tau_{13}) \\ 
&= \frac{1}{2}((1-T_{13})\tau_{12}-(1-T_{12})\tau_{23} + (1-T_{12})\tau_{13})
\end{align*}
The second equality is obtained by multiplying the terms $(1-T_{12})$ and $T_{12}$ and the third equality is obtained from the equality $(1-T_{23})\tau_{12} = (1-T_{13})\tau_{12}$ from Lemma~\ref{lem:lantern}.  

For $n\geq 3$ and $i < j < k$ there exists an embedding $f: \D_3\hookrightarrow \D_n$ such that the images of $T_{12},T_{13}$, and $T_{23}$ under the induced map $f_*: \PB_3\rightarrow \PB_n$ are $T_{ij}, T_{ik}$, and $T_{jk}$, respectively. Applying $f_*$ to the expression for $\overline{[T_{12},T_{23}]}$ in the $n=3$ case yields the lemma.
\end{proof}

For a group $G$, the commutator subgroup $[G,G]$ is a subgroup of $G^2$, the subgroup of $G$ generated by squares of elements of $G$.  For $g \in G^2$ we denote by $\bar g$ the image in $H_1(G^2;\Q)$.  The group $G$ acts $G^2$ by conjugation.  This induces an action of $G$ on $H_1(G^2;\Q)$, which descends to an action of $\bar G = G/G^2$: for $g \in G^2$ and $h \in G$ we have $h \bar g = \overline{hgh^{-1}}$.  

\begin{lemma}
\label{lem:jacobi}
Let $G$ be a group, and let $x,y,z \in G$.  We have the following identities in $H_1(G^2;\Q)$, thought of as a $\Q[\bar G]$-module.
\begin{align*}
\text{Witt--Hall: }\ \ &\overline{[x,yz]} = \overline{[x,y]} + y\overline{[x,z]}  \\
\text{Jacobi: }\ \ &(1-x)\overline{[y,z]}-(1-y)\overline{[x,z]} + (1-z)\overline{[x,y]} = 0 
\end{align*}
\end{lemma}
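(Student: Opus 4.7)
The plan is to derive both identities in two stages: first, to establish Witt--Hall as the direct image of a group-theoretic identity; second, to obtain Jacobi by computing $\overline{[[x,y],z]}$ in two different ways. Both stages rest on the observation that the conjugation action of $G$ on $H_{1}(G^{2};\Q)$ factors through $\bar G = G/G^{2}$, since inner automorphisms of $G^{2}$ act trivially on its abelianization, and $[G,G]\subseteq G^{2}$.

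For Witt--Hall, I would start from the purely group-theoretic equality $[x,yz]=[x,y]\cdot y[x,z]y^{-1}$, verified by direct expansion. Both factors on the right lie in $[G,G]\subseteq G^{2}$, so passing to $H_{1}(G^{2};\Q)$ and unpacking the definition of the $\bar G$-action yields $\overline{[x,yz]}=\overline{[x,y]}+y\,\overline{[x,z]}$. The same strategy applied to the ``left'' identity $[ab,c]=a[b,c]a^{-1}\cdot [a,c]$ gives the symmetric form $\overline{[ab,c]}=\overline{[a,c]}+a\,\overline{[b,c]}$, which I will also need.

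For Jacobi, the key is to evaluate $\overline{[[x,y],z]}$ twice. On one hand, the group-theoretic identity $xy=[x,y]\cdot yx$ together with the symmetric Witt--Hall form gives
\[
\overline{[xy,z]}=\overline{[[x,y]\cdot yx,\,z]}=\overline{[[x,y],z]}+[x,y]\,\overline{[yx,z]}.
\]
Since $[x,y]\in G^{2}$ acts trivially on $H_{1}(G^{2};\Q)$, the last term collapses to $\overline{[yx,z]}$. Applying the symmetric Witt--Hall form to both $\overline{[xy,z]}$ and $\overline{[yx,z]}$ then yields
\[
\overline{[[x,y],z]}=\overline{[xy,z]}-\overline{[yx,z]}=(1-y)\,\overline{[x,z]}-(1-x)\,\overline{[y,z]}.
\]
On the other hand, using $-\overline{w}=\overline{w^{-1}}$ and $z\,\overline{w}=\overline{zwz^{-1}}$ in $H_{1}(G^{2};\Q)$, one computes directly
\[
(1-z)\,\overline{[x,y]}=\overline{[x,y]\cdot z[x,y]^{-1}z^{-1}}=\overline{[[x,y],z]}.
\]
Equating the two expressions for $\overline{[[x,y],z]}$ and rearranging produces the Jacobi identity exactly as stated.

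The argument is essentially formal, so there is no serious obstacle; the main point requiring care is the bookkeeping between multiplicative notation in $G^{2}$ and additive notation in $H_{1}(G^{2};\Q)$, together with the crucial use of the fact that the $G$-action descends to $\bar G$. Once these conventions are in place, both identities reduce to symbolic manipulation with the single group-theoretic input $[x,yz]=[x,y]\cdot y[x,z]y^{-1}$ and its symmetric counterpart.
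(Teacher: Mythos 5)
Your proof is correct and follows essentially the same strategy as the paper: Witt--Hall is obtained by pushing the group identity $[x,yz]=[x,y]\,(y[x,z]y^{-1})$ into $H_1(G^2;\Q)$, and Jacobi by evaluating a double commutator class in two ways. The only difference is a mirror-image choice: the paper computes $\overline{[x,[y,z]]}$, getting $(x-1)\overline{[y,z]}$ on one side and expanding $[x,(yz)(zy)^{-1}]$ via Witt--Hall (together with the relation $[a,b^{-1}]=b^{-1}[b,a]b$) on the other, whereas you compute $\overline{[[x,y],z]}$ using the left-handed identity $[ab,c]=a[b,c]a^{-1}[a,c]$ and the decomposition $xy=[x,y]\,yx$; your variant sidesteps the inverse-commutator manipulation but is otherwise the same argument.
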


\begin{proof}

The Witt--Hall identity for groups is the equality
\[
[x,yz] = [x,y]\left(y[x,z]y^{-1}\right)
\]
in $G$, which can can checked by simply expanding both sides.  Since $[G,G] \leqslant G^2$ we obtain from this the Witt--Hall identity in the statement.

We now proceed to the Jacobi identity.  The strategy is to express $[x,[y,z]]$ in two ways and to set the resulting expressions equal to each other.  On one hand, since
\[
[x,[y,z]] = (x[y,z]x^{-1})[y,z]^{-1}
\]
we have
\[
\overline{[x,[y,z]]} = (x-1)\overline{[y,z]}.
\]
On the other hand, writing
\[
[x,[y,z]] = [x,(yz)(zy)^{-1}]
\]
we obtain
\begin{align*}
\overline{[x,[y,z]]} &= \overline{[x,yz]} + yz\overline{[x,(zy)^{-1}]} \\
&= \overline{[x,y]} + y\overline{[x,z]} + yz\overline{[x,(zy)^{-1}]}\\
& = \overline{[x,y]} + y\overline{[x,z]} + yz\left((zy)^{-1}\overline{[zy,x]}\right)\\
& = \overline{[x,y]} + y\overline{[x,z]}  - \overline{[x,zy]}\\
& = \overline{[x,y]} + y\overline{[x,z]} - \left(\overline{[x,z]} + z\overline{[x,y]}\right)  \\
& = -(1-y)\overline{[x,z]} +(1-z)\overline{[x,y]},
\end{align*}
where the first, second, and fifth equalities use the Witt--Hall identity, the third equality uses the relation $[a,b^{-1}] = b^{-1}[b,a]b$, and the fourth equality uses the fact that $G^2$, hence $[G,G]$ acts trivially on $H_1(G^2;\Q)$.  The Jacobi identity follows. 
\end{proof}

\begin{lemma}\label{lemma:key}
For all $p<q<r<s$ we have
\begin{align*}
(1-T_{ps})(1-T_{qr})\tau_{pq} &=\ \ \, (1-T_{ps})(1-T_{qr})\tau_{rs}\\
(1-T_{pq})(1-T_{rs})\tau_{pr} &= -(1-T_{pq})(1-T_{rs})\tau_{qs}\\
(1-T_{pr})(1-T_{qs})\tau_{ps} &=\ \ \, (1-T_{pr})(1-T_{qs})\tau_{qr}
\end{align*}
\end{lemma}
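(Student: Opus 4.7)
The plan is to derive each identity as a consequence of the Jacobi identity from Lemma~\ref{lem:jacobi}, applied to a carefully chosen triple of Artin generators in $\PB_n$, together with the commutator expansion formulas of Lemma~\ref{lem:comm} (and the analogous formula $\overline{[T_{ij},T_{ik}]}=\tfrac12\bigl((1-T_{ik})\tau_{ij}-(1-T_{ij})\tau_{ik}-(1-T_{ij})\tau_{jk}\bigr)$ for $i<j<k$ that is computed in its proof) and the lantern-type relations of Lemma~\ref{lem:lantern}.

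I would begin with the second identity, applying Jacobi to the triple $(T_{pq},T_{qr},T_{rs})$. Since $T_{pq}$ and $T_{rs}$ commute in $\PB_n$ (disjoint supports), the term $\overline{[T_{pq},T_{rs}]}$ vanishes, leaving only two Jacobi contributions. Each of the remaining commutators is of the form $\overline{[T_{ij},T_{jk}]}$ and expands explicitly via Lemma~\ref{lem:comm}. Several terms in the resulting expression are killed by Lemma~\ref{lem:lantern}(2) (e.g.\ $(1-T_{pq})\tau_{rs}=0$), and the survivors can be canonicalized using the lantern identifications of Lemma~\ref{lem:lantern}(1) (such as $T_{pq}\tau_{qs}=T_{ps}\tau_{qs}$, which yields $(1-T_{pq})\tau_{qs}=(1-T_{ps})\tau_{qs}$). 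Collecting the surviving terms gives $(1-T_{pq})(1-T_{rs})(\tau_{pr}+\tau_{qs})=0$, which is the second identity.

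The third identity follows from an analogous Jacobi application to the triple $(T_{pq},T_{pr},T_{rs})$: again $\overline{[T_{pq},T_{rs}]}=0$, while $\overline{[T_{pq},T_{pr}]}$ is of type $\overline{[T_{ij},T_{ik}]}$ and $\overline{[T_{pr},T_{rs}]}$ of type $\overline{[T_{ij},T_{jk}]}$, both admitting Lemma~\ref{lem:comm}-style formulas. The same pattern of vanishing and lantern rewriting produces $(1-T_{pq})(1-T_{rs})(\tau_{ps}-\tau_{qr})=0$, which is equivalent to the stated third identity after applying lantern rewritings such as $(1-T_{qs})\tau_{ps}=(1-T_{pq})\tau_{ps}$ and $(1-T_{pr})\tau_{ps}=(1-T_{rs})\tau_{ps}$.

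For the first identity the approach is slightly more delicate, since no triple of three Artin generators has all three pairwise commutators admitting closed Lemma~\ref{lem:comm} formulas (the ``linked'' commutator $\overline{[T_{pr},T_{qs}]}$ and those of form $\overline{[T_{ik},T_{jk}]}$ are unavailable). The plan is to first apply Jacobi to $(T_{pq},T_{pr},T_{qr})$ and simplify—using $(1-T)^2=2(1-T)$ and the lantern identifications—to obtain the auxiliary relation
\[
(1-T_{pq})\,\overline{[T_{pr},T_{qr}]} \;=\; (1-T_{pq})(\tau_{pr}-\tau_{qr}).
\]
A second Jacobi application, to the triple $(T_{pr},T_{qr},T_{ps})$ (which eliminates the unwanted commutator via $\overline{[T_{qr},T_{ps}]}=0$ and leaves only the tractable $\overline{[T_{pr},T_{ps}]}$ of type $\overline{[T_{ij},T_{ik}]}$), expresses $(1-T_{ps})\overline{[T_{pr},T_{qr}]}$ in terms of standard elements. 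Combining these two expressions for shifted versions of $\overline{[T_{pr},T_{qr}]}$ and using the lantern rewritings $(1-T_{qr})\tau_{pq}=(1-T_{pr})\tau_{pq}$ and $(1-T_{ps})\tau_{pq}=(1-T_{qs})\tau_{pq}$ (and analogues for $\tau_{rs}$) collapses the result to the first identity $(1-T_{ps})(1-T_{qr})(\tau_{pq}-\tau_{rs})=0$.

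The main obstacle is the extensive algebraic bookkeeping. Each Jacobi application generates six or more terms of the shape $(1-T_{ab})(1-T_{cd})\tau_{ef}$; one must systematically apply Lemma~\ref{lem:lantern}(2) to discard terms with disjoint support, use Lemma~\ref{lem:lantern}(1) together with the abelianness of the action of $\PZ_n$ to canonicalize the survivors, and then track the many cancellations until the target identity appears. The opposite sign in the second identity (compared to the first and third) reflects the combinatorial distinction between the linked $\tau$-pairing $\{p,r\}\sqcup\{q,s\}$ appearing there and the nested/disjoint pairings of the $\tau$'s appearing in the first and third identities.
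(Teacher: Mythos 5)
Your treatment of the second identity is exactly the paper's: apply the Jacobi identity of Lemma~\ref{lem:jacobi} to $(T_{pq},T_{qr},T_{rs})$, use $[T_{pq},T_{rs}]=1$ to kill one term, expand the two surviving commutators with Lemma~\ref{lem:comm}, and clean up with Lemma~\ref{lem:lantern}. Your direct route to the third identity, via Jacobi on $(T_{pq},T_{pr},T_{rs})$, also checks out: after expanding $\overline{[T_{pr},T_{rs}]}$ and $\overline{[T_{pq},T_{pr}]}$, the $\tau_{pq}$ and $\tau_{rs}$ terms die by Lemma~\ref{lem:lantern}(2), the $\tau_{pr}$ terms cancel via the lantern identifications, and what remains is the third identity. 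That is a genuine alternative to the paper, which instead proves only the second identity by Jacobi (for $n=4$) and then obtains the first and third by acting on both sides with the half-twists $\sigma_2$ and $\sigma_3$, finally transporting to general $p<q<r<s$ via an embedding $\D_4\hookrightarrow\D_n$.

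The argument for the first identity has a genuine gap. Write $Y=\overline{[T_{pr},T_{qr}]}$. Your first auxiliary relation is correct, but note what it says: $(1-T_{pq})Y=(1-T_{pq})(\tau_{pr}-\tau_{qr})$ — the $\tau_{pq}$ contributions cancel in the simplification, so no $\tau_{pq}$ survives on the right. The second Jacobi application gives $(1-T_{ps})Y=\tfrac{1}{2}(1-T_{qr})(1-T_{ps})(\tau_{pr}-\tau_{rs})$, which involves only $\tau_{pr}$ and $\tau_{rs}$. Since neither relation contains $\tau_{pq}$, no combination of them under further $(1-T_{ab})$ operators can yield the first identity, which equates an expression in $\tau_{pq}$ with one in $\tau_{rs}$. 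Concretely, multiplying the first relation by $(1-T_{ps})$ and the second by $(1-T_{pq})$ so that the left-hand sides agree, and then applying the lantern rewritings, reduces the comparison of right-hand sides to $0=0$; the same happens for the other single Jacobi applications available here (e.g.\ on $(T_{pq},T_{qr},T_{qs})$, where all three commutators are known). You need the paper's missing ingredient: having proved the second identity, act on both sides by the half-twist exchanging $q$ and $r$. Since in $\Z_n$ a half-twist conjugates $T_{ij}$ to $T_{\sigma(i)\sigma(j)}$ and sends $\tau_{ij}$ to $T\tau_{\sigma(i)\sigma(j)}$ for some $T\in\PZ_n$ (as in the proof of Lemma~\ref{boundarylemma}), the second identity is carried to the first, with the extra factors of $T$ absorbed — and producing the sign change — via $(1-T_{ab})T_{ab}=-(1-T_{ab})$ and the lantern identifications.
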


\begin{proof}

We first prove the lemma in the case $n=4$ and then use this to obtain the general case.  When $n$ is 4, we have that $p$, $q$, $r$, and $s$ are 1, 2, 3, and 4.   Thus the statement of the lemma reduces to the following three specific equalities:
\begin{align*}
(1-T_{14})(1-T_{23})\tau_{12} &=\ \ \,(1-T_{14})(1-T_{23})\tau_{34}\\
(1-T_{12})(1-T_{34})\tau_{13} &=-(1-T_{12})(1-T_{34})\tau_{24}\\
(1-T_{13})(1-T_{24})\tau_{14} &=\ \ \,(1-T_{13})(1-T_{24})\tau_{23}
\end{align*}

We will prove the second equality using the Jacobi identity (Lemma~\ref{lem:jacobi}) and then use the $\B_n$-action to derive the other two.  Specifically, by inserting $x = T_{12}$, $y=T_{23}$, $z=T_{34}$ into the Jacobi identity and using the fact that $[T_{12},T_{34}] = 1$ in $\PB_4$, we obtain the following equality in $H_1(\B_n[4];\Q)$:
\begin{equation*}\label{keyrelationv2}
(1-T_{12})\overline{[T_{23},T_{34}]} = - (1-T_{34})\overline{[T_{12},T_{23}]}.
\end{equation*}
Applying Lemma~\ref{lem:comm} twice with $(i,j,k)$ equal to $(1,2,3)$ and $(2,3,4)$, and inserting the results into both sides of the above equation and simplifying, we obtain the second equality.

Acting on both sides of the second equality with $\sigma_2$ and $\sigma_3$, respectively, yields the first and third equalities.  This completes the proof of the lemma in the case $n=4$.  

We now address the general case.  Let $f: \{1,2,3,4\} \to \{p,q,r,s\}$ be the unique increasing map.  
There is an embedding $\D_4 \to \D_n$ so that the induced homomorphism $\B_4 \to \B_n$ maps $T_{ij}$ to $T_{f(i)f(j)}$.  It follows that the induced homomorphism on homology maps $\tau_{ij}$ to $\tau_{f(i)f(j)}$.  The images of the equalities from the $n=4$ case are the desired equalities.
\end{proof}

We are now ready to prove Theorem~\ref{thm:main}. 

\begin{proof}[Proof of Theorem~\ref{thm:main}]

By Proposition~\ref{prop:low} the set $\S$ is linearly independent.  To prove the theorem, we must show that $\S$ spans $H_1(\B_n[4]; \Q)$.  Since $\S$ has cardinality $3{n \choose 4} + 3{n \choose 3} + {n \choose 2}$, it suffices to show that there is some spanning set of this size.

The second statement of Proposition~\ref{prop:span} states that $H_1(\B_n[4]; \Q)$ is spanned as $\Q$-vector space by the set
\[
\T = \{(1-T_1)\cdots (1-T_m)\tau_{ij} \mid m \geq 0, T_1,\dots,T_m \in \{T_{k\ell}\} \}.
\] 
Let $\T_0 = \T$.  The goal is to successively eliminate elements of $\T_0$ until we obtain a set $\T_3$ with exactly $3{n \choose 4} + 3{n \choose 3} + {n \choose 2}$ elements.  The elements of $\T_3$ will in fact all be scalar multiples of the elements of the set $\S'$ from Proposition~\ref{prop:low}.

We first claim that $H_1(\B_n[4]; \Q)$ is spanned by the subset $\T_1$ of $\T_0$ containing all elements of the form
\[
\prod_{k \neq i,j} (1-T_{ik})^{\varepsilon_k} \, \tau_{ij}
\]
where each $\varepsilon_k$ lies in $\{0,1\}$.  Each element of $\T_1$ is an element of $\T_0$ where the corresponding $m$ is at most $n-2$ (but not all such elements of $\T_0$ lie in $\T_1$).  

To prove the claim we consider an element
\[
x = (1-T_1)\cdots (1-T_m)\tau_{ij}
\]
of $\T_0$ and consider a single term $(1-T_p)$ of the product.  Suppose that $T_p$ is the Artin generator $T_{k\ell}$.  If $x$ is non-zero then the product $(1-T_p)\tau_{ij}$ must be non-zero.  By the second statement of Lemma~\ref{lem:lantern} the intersection of $\{k,\ell\}$ with $\{i,j\}$ must contain exactly one element.  Using the first statement of Lemma \ref{lem:lantern} we may assume without loss of generality that the intersection is $\{i\}$.  This leaves exactly $n-2$ possibilities for $\{k,\ell\}$, namely, the sets $\{i,\ell\}$ with $\ell \neq i,j$.  We also have that $(1-T_p)^2 = 2(1-T_p)$ in $\Q[\PZ_n]$, and so may further assume that each term in the product appears at most once.  The claim now follows.

We next claim that $H_1(\B_n[4]; \Q)$ is spanned by the subset $\T_2$ of $\T_1$ consisting of elements where there are only two or fewer factors of the form $(1-T_{ik})$.  In other words, $\T_2$ consists of elements of the following form
\[
\tau_{ij}, \qquad (1-T_{ik_1})\tau_{ij}, \qquad (1-T_{ik_2})(1-T_{ik_1})\tau_{ij}
\]
where each $k_1$ and $k_2$ lies outside $\{i,j\}$ and in each product $k_1 \neq k_2$.  To prove the claim, it suffices to show that an element of $\T_1$ of the form
\[
(1-T_{ik_3})(1-T_{ik_2})(1-T_{ik_1})\tau_{ij}
\]
lies in the span of $\T_1$, where $k_3$ does not lie in $\{k_1,k_2\}$.  Since the latter is equal to
\[
(1-T_{ik_2})(1-T_{ik_1})\tau_{ij} -T_{ik_3}(1-T_{ik_2})(1-T_{ik_1})\tau_{ij}
\]
and the first of these terms already lies in $\T_2$ it suffices to show that the second term
\[
T_{ik_3}(1-T_{ik_2})(1-T_{ik_1})\tau_{ij}
\]
lies in the span of $\T_2$.  The basic strategy is to use Lemmas~\ref{lem:lantern} and~\ref{lemma:key} to convert the latter into an element of the form
\[
\pm T_{ik_3}(1-T_{\star\star})(1-T_{\star\star})\tau_{k_1k_2}.
\]
By Lemma~\ref{lem:lantern} and the fact that $i$, $k_1$, $k_2$, and $k_3$ are all distinct we have that $T_{ik_3}\tau_{k_1k_2} = \tau_{k_1k_2}$.  If both of the $T_{\star\star}$ terms are of the form $T_{k_1\star}$ then the given element lies in $\T_2$ (up to sign).  If either of the $T_{\star\star}$ terms are of the form $T_{k_2\star}$, then we may apply Lemma~\ref{lem:lantern} to replace it with $T_{k_1\star}$, leading to the previous case.    If either $T_{\star\star}$ term is not of the form $T_{k_1\star}$ or $T_{k_2\star}$, then the corresponding product $(1-T_{\star\star})\tau_{k_1k_2}$ equals 0.  In all cases, the given element lies in the span of $\T_2$.  

In order to convert $T_{ik_3}(1-T_{ik_2})(1-T_{ik_1})\tau_{ij}$ into the desired form, we proceed in two steps.  The first step is to replace either $T_{ik_1}$ or $T_{ik_2}$ with a different Artin generator, so that the result is one of the six types of elements listed in the statement of Lemma~\ref{lemma:key}.  Here is how we do this.  The disjoint sets $\{i,j\}$ and $\{k_1,k_2\}$ are either linked or unlinked.  If they are unlinked then we can replace $T_{ik_1}$ with $T_{jk_1}$ by Lemma~\ref{lem:lantern}.  If they are linked then we can replace $T_{ik_2}$ with $T_{jk_2}$ by the same lemma.

Since we have converted the given element $T_{ik_3}(1-T_{ik_2})(1-T_{ik_1})\tau_{ij}$ into one of the six forms in the statement of Lemma~\ref{lemma:key}, we can apply the corresponding equality from Lemma~\ref{lemma:key} and we obtain an element of the desired form.  The claim is now proved.

Our final claim is that $H_1(\B_n[4]; \Q)$ is spanned by the subset $\T_3$ of $\T_2$ consisting of all of the $\tau_{ij}$, all of the terms of the elements of the form $(1-T_{ik_1})\tau_{ij}$, and among the elements of the form $(1-T_{ik_2})(1-T_{ik_1})\tau_{ij}$, only those that satisfy
\[
i = \min\{i,j,k_1,k_2\}.
\]
This claim follows from Lemma~\ref{lemma:key}.  Indeed, of the six types of elements in the statement of that lemma, there are three types that do not satisfy the condition $i = \min\{i,j,k_1,k_2\}$, and in each case the element on the other side of the equality does satisfy the condition.

To complete the proof, it remains to check that the cardinality of $\T_3$ is $3{n \choose 4} + 3{n \choose 3} + {n \choose 2}$.  The number of $\tau_{ij}$ with $i < j$ is $n \choose 2$, the number of $(1-T_{ik_1})\tau_{ij}$ with $i < j$  and $k \notin \{i,j\}$ is $3{n \choose 3}$, and the number of $(1-T_{ik_2})(1-T_{ik_1})\tau_{ij}$ with $i < j$, with $k_1 < k_2$ and with $k_1,k_2 \notin \{i,j\}$ is $3{n \choose 4}$.  Adding these three terms together gives the desired result.
\end{proof}


\section{Representation theory of $\Z_n$}
\label{sec:bnbar}

In this section we prove Theorem~\ref{thm:class}, which states that the $V_n(\rho,\lambda)$ are irreducible representations of $\Z_n$ and moreover that every irreducible representation of $\Z_n$ is isomorphic to exactly one $V_n(\rho,\lambda)$.

In Section~\ref{sec:proj} we define the map $\Z_n^I \to \Z_m^I\times S_{n-m}$ used in the definition of the $V_n(\rho,\lambda)$ and prove that it is surjective (Lemma~\ref{lem:product}).  Then in Section~\ref{sec:iso} we give a complete criterion for a representation of $\Z_n$ to be irreducible (Proposition~\ref{prop:irred}) and use this to show that the $V_n(\rho,\lambda)$ are irreducible.  Finally in Section~\ref{sec:class} we complete the proof of Theorem~\ref{thm:class}.

\subsection{Projection maps}\label{sec:proj} 

Our definition of the $V_n(\rho,\lambda)$ was predicated on the existence of a map $\Z_n^I \to \Z_m^I \times \Z_{n-m}$.  In this section we prove Lemma~\ref{lem:product}, which gives such a map.

Let $I$ be an element of $\mathbb{I}_m$ and let $n \geq m$.  By the definition of $\mathbb{I}_m$ the union of the elements of $I$ is $[m]$.  As in the introduction, we may regard $I$ as a subset of $[n]^{\underline 2}$.  There are forgetful maps $f_1 : \B_n^I \to \B_m^I$ and $f_2 : \B_n^I \to \B_{n-m}$ obtained by forgetting the last $n-m$ strands and the first $m$ strands, respectively.  Since the $f_i$ take squares of pure braids to squares of pure braids, and since $\B_n[4] = \PB_n^2$, there are induced maps
\[
F_1: \Z_n^I\rightarrow \Z_m^I \quad \text{and} \quad F_2: \Z_n^I\rightarrow \Z_{n-m}.
\]
Let $P$ the composition of $F_1 \times F_2$ with the natural surjection $\B_m^I \times \B_{n-m} \to \B_m^I \times S_{n-m}$. Let $K_{n,m}$ be the subgroup of $\PZ_n$ generated by the images of the $T_{ij}$ with $j > m$.

In the proof of the lemma, we will need the following isomorphism:
\[
\overline \PB_n \cong \bigoplus_{[n]^{\underline 2}} \ZZ/2.
\]
This isomorphism follows from the description of the abelianization of $\PB_n$ in Section~\ref{sec:psi def} and the fact that $\B_n[4]$ is the kernel of the mod 2 abelianization of $\PB_n$. 

\begin{lemma}
\label{lem:product}
The map
\[
P : \Z_n^I \to \Z_m^I\times S_{n-m}
\]
is surjective with kernel $K_{n,m}$. 
\end{lemma}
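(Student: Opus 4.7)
My plan is to establish surjectivity by an explicit lifting construction, then verify the two containments $K_{n,m} \subseteq \ker P$ and $\ker P \subseteq K_{n,m}$ separately, exploiting the fact that any element of $\ker P$ must actually live in $\PZ_n$, where one can reason on an explicit basis.

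For surjectivity, given $(\bar\alpha, \tau) \in \Z_m^I \times S_{n-m}$ I would lift $\bar\alpha$ to some $\alpha \in \B_m^I$ and $\tau$ to any $\beta \in \B_{n-m}$. Under the standard embedding $\B_m \times \B_{n-m} \hookrightarrow \B_n$, obtained by separating $[m]$ and $\{m+1,\dots,n\}$ into disjoint subdisks of $\D_n$, the element $\alpha\beta$ lies in $\B_n^I$: the factor $\alpha$ acts only on $[m]$ and preserves $I$ there, while $\beta$ acts only on $[n] \setminus [m]$, which is disjoint from every pair in $I$. The maps $f_1, f_2$ recover $\alpha$ and $\beta$ respectively, so $P$ carries the image of $\alpha\beta$ in $\Z_n^I$ to $(\bar\alpha, \tau)$.

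Next, the containment $K_{n,m} \subseteq \ker P$ follows by checking on the generators $\bar T_{ij}$ with $j > m$: we have $f_1(T_{ij}) = 1$ in $\B_m$, since forgetting strand $j$ removes one of the two strands of the Dehn twist; and $f_2(T_{ij})$ is either trivial (if $i \leq m$) or a pure Dehn twist in $\PB_{n-m}$ (if $i > m$), and in either case maps to the identity in $S_{n-m}$.

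For the reverse inclusion, take $g \in \ker P$ and let $\sigma \in S_n$ be its image under $\Z_n \to S_n$. Since $g \in \Z_n^I$, $\sigma$ preserves $I$ and hence $[m]$. The triviality of $F_1(g)$ forces any lift of $g$ under $f_1$ to lie in $\B_m[4] \subseteq \PB_m$, so $\sigma|_{[m]} = \mathrm{id}$; the vanishing of the $S_{n-m}$-component of $P(g)$ forces $\sigma|_{[n] \setminus [m]} = \mathrm{id}$. Hence $\sigma = 1$ and $g \in \PZ_n$. Using the identification $\PZ_n \cong (\ZZ/2)^{\binom{n}{2}}$ recalled just before the lemma, we obtain a direct-sum splitting $\PZ_n = \PZ_m \oplus K_{n,m}$ coming from the partition of $[n]^{\underline 2}$ according to whether a pair is contained in $[m]$ or not. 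The restriction of $F_1$ to $\PZ_n$ carries $\bar T_{ij}$ to $\bar T_{ij} \in \PZ_m$ when $j \leq m$ and to the identity when $j > m$, so it is precisely the projection onto the $\PZ_m$ summand. Therefore $F_1(g) = 1$ forces the $\PZ_m$-component of $g$ to vanish, and $g \in K_{n,m}$.

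I expect no serious obstacle: the argument is essentially bookkeeping. The only mildly subtle point is ensuring that a kernel element lifts to a pure braid, which requires using both conditions (triviality of $F_1(g)$ and triviality of the $S_{n-m}$-component of $P(g)$) in tandem. Once $g$ is known to lie in $\PZ_n$, the abelian structure reduces everything to a direct-sum computation.
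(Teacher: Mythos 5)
Your proof is correct. The surjectivity argument is the same as the paper's (lift each factor separately and use the disjoint embedding $\B_m^I \times \B_{n-m} \hookrightarrow \B_n$), and both proofs check $K_{n,m} \subseteq \ker P$ on generators. Where you diverge is in the reverse inclusion $\ker P \subseteq K_{n,m}$: the paper finishes with a counting argument, computing $|K_{n,m}| = 2^{\binom{n}{2}-\binom{m}{2}}$ from the standard basis of $\PZ_n$ and comparing with $|\ker P| = |\Z_n^I|/\bigl(|\Z_m^I|\cdot|S_{n-m}|\bigr)$, which forces equality once one containment is known. You instead argue structurally: fullness of $I$ in $[m]^{\underline 2}$ makes the induced permutation preserve $[m]$, the two components of $P$ kill its restrictions to $[m]$ and to $[n]\setminus[m]$, so any kernel element lies in $\PZ_n$, and then $F_1|_{\PZ_n}$ is visibly the projection with complement $K_{n,m}$ in the splitting $\PZ_n \cong (\ZZ/2)^{\binom{n}{2}}$. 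Your version is slightly longer but more self-contained, since the paper's count implicitly requires knowing $|\Z_n^I|$ (via the index of the stabilizer of $I$ in $S_n$), a computation it does not display; the paper's version buys brevity at the cost of that suppressed calculation. Both are valid.
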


\begin{proof}

We first show that $P$ is surjective.  Let $(g,\sigma) \in \Z_m^I\times S_{n-m}$.  Let $\iota : \B_m^I \times \B_{n-m} \leqslant \B_n$ be the natural inclusion, induced by disjoint embeddings $\D_m \to \D_n$ and $\D_{n-m} \to \D_n$.  Let $\tilde \sigma$ be a lift of $\sigma$ to $\B_{n-m}$.  Then $P \circ \iota (g,\tilde \sigma) = (g,\sigma)$.  Thus $P$ is surjective.

It remains to determine the kernel of $P$.  First, we observe that $K_{n,m}$ is contained in the kernel.  Since the stated generating set for $K_{n,m}$ has ${n \choose 2} - {m \choose 2}$, elements, and since these elements are part of the standard basis for $\PZ_n$, it follows that $K_{n,m}$ has cardinality $2^{{n \choose 2} - {m \choose 2}}$.  Computing the cardinalities of $\Z_n$, $\Z_m^I$, and $\Z_{n-m}$, we see that the kernel of $P$ must have cardinality $2^{{n \choose 2} - {m \choose 2}}$. The result follows. 
\end{proof}

\subsection{$I$-Isotypic representations and a criterion for irreducibility}\label{sec:iso} 

We will give in this section a characterization of the irreducible representations of $\Z_n$, Proposition~\ref{prop:irred} below.  As a consequence, we deduce in Corollary~\ref{cor:irred} that the $V_n(\rho,\lambda)$ are irreducible.

Our characterization uses the notion of an $I$-isotypic representation, and so we begin with this idea.  It follows from the above description of $\overline \PB_n$ that 
\[
H^1(\overline \PB_n;\mu_2) \cong \prod_{[n]^{\underline 2}} \mu_2,
\]
and so elements of $H^1(\PZ_n;\mu_2)$ are labeled by subsets of $[n]^{\underline 2}$ (recall $\mu_2=\{\pm 1\}$).  
We may identify $H^1(\PZ_n;\mu_2)$ with $\Hom(\PZ_n, \mu_2)$, and we denote the homomorphism corresponding to $I \subseteq [n]^{\underline 2}$ by $\rho_I$.  We denote the corresponding 1-dimensional representation of $\PZ_n$ over $\C$ by $V_I$.

Let $\Gamma$ be a subgroup of $\Z_n$ that contains $\PZ_n$; for instance $\Gamma = \Z_n^I$ for some $I$. Let $V$ be a representation of $\Gamma$ over $\C$ and let $I \subseteq [n]^{\underline 2}$. We will say that a subspace $W$ of $V$ is $I$-isotypic if it is a $\PZ_n$-submodule of $V$ and there is a $\PZ_n$-module isomorphism $W \cong V_I^{\oplus m}$ for some $m\geq 1$.

\begin{lemma}\label{lem:symm}
Let $\Gamma$ be a subgroup of $\Z_n$ that contains $\PZ_n$, and let $V$ be a representation of $\Gamma$ over $\C$. If $W \subset V$ is $I$-isotypic, then for all $\sigma\in \Gamma$ we have that $\sigma W$ is $\sigma(I)$-isotypic. 
\end{lemma}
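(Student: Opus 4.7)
The plan is to exploit two facts: that $\PZ_n$ is normal in $\Z_n$ (being the image of the normal subgroup $\PB_n \trianglelefteq \B_n$), and that conjugation in $\B_n$ permutes the Artin generators $T_{ij}$ according to the symmetric group action on $[n]^{\underline 2}$.

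First I would observe that for any $\sigma \in \Gamma$, the subspace $\sigma W \subseteq V$ is again a $\PZ_n$-submodule: if $p \in \PZ_n$ and $w \in W$, then $p(\sigma w) = \sigma(\sigma^{-1} p \sigma) w$, and $\sigma^{-1} p \sigma$ lies in $\PZ_n$ by normality, so $(\sigma^{-1} p \sigma) w \in W$ and thus $p(\sigma w) \in \sigma W$. The map $w \mapsto \sigma w$ is then a $\C$-linear isomorphism $W \to \sigma W$ which intertwines the given $\PZ_n$-action on $\sigma W$ with the \emph{twisted} $\PZ_n$-action on $W$ defined by $p \cdot w := (\sigma^{-1} p \sigma) w$.

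Next I would compute this twisted character on $V_I$. Since $W \cong V_I^{\oplus m}$, every $p \in \PZ_n$ acts on $W$ by the scalar $\rho_I(p)$, and the twisted action is thus by the character $p \mapsto \rho_I(\sigma^{-1} p \sigma)$. The key calculation is that for any Artin generator $T_{ij} \in \PB_n$, the braid $\sigma^{-1} T_{ij} \sigma$ is conjugate in $\PB_n$ to $T_{\sigma^{-1}(ij)}$ (a standard fact about the $\B_n$-action on $\PB_n$; $\sigma$ here is viewed via its image in $S_n$, since only this image affects the action on $\PZ_n$). Hence in the abelianization quotient $\PZ_n$, the element $\sigma^{-1} T_{ij} \sigma$ has the same image as $T_{\sigma^{-1}(ij)}$, so
\[
\rho_I(\sigma^{-1} T_{ij} \sigma) = \rho_I(T_{\sigma^{-1}(ij)}) = \begin{cases} -1 & \sigma^{-1}(ij) \in I \\ +1 & \text{otherwise} \end{cases} = \rho_{\sigma(I)}(T_{ij}).
\]
Since the $T_{ij}$ generate $\PZ_n$ and both sides are homomorphisms, this gives $\rho_I \circ \mathrm{conj}_{\sigma^{-1}} = \rho_{\sigma(I)}$ on all of $\PZ_n$.

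It follows that the twisted $\PZ_n$-action on $W$ (equivalently, the original $\PZ_n$-action on $\sigma W$) is $\sigma(I)$-isotypic of the same multiplicity $m$, so $\sigma W \cong V_{\sigma(I)}^{\oplus m}$ as desired. There is no serious obstacle here; the only point requiring care is ensuring the correct direction ($\sigma$ versus $\sigma^{-1}$) in the conjugation calculation, which is why I would verify the character identity on generators rather than attempt a coordinate-free argument.
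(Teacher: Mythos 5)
Your proposal is correct and follows essentially the same route as the paper: both reduce to checking that an Artin generator $T_{ij}$ acts on $\sigma W$ by the scalar $\rho_{\sigma(I)}(T_{ij})$, via the computation $\rho_I(\sigma^{-1}T_{ij}\sigma)=\rho_I(T_{\sigma^{-1}\{i,j\}})=\rho_{\sigma(I)}(T_{ij})$. The extra framing in terms of the twisted $\PZ_n$-action is a harmless repackaging of the paper's one-line chain of equalities.
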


\begin{proof}

Let $v \in W$, let $\sigma \in \Gamma$, and let $T_{ij}$ denote the image of an Artin generator for $\PB_n$ in $\PZ_n$.  It suffices to show that $T_{ij}(\sigma v)$ is equal to $\rho_{\sigma(I)}(T_{ij})\left(\sigma v\right)$.
We indeed have:
\begin{align*}
T_{ij}&\left(\sigma v\right)  = \sigma\left(\sigma^{-1}T_{ij}\sigma\right)v =  \sigma T_{\sigma^{-1}\{i,j\}}v \\  &=\sigma  \rho_I(T_{\sigma^{-1}\{i,j\}})\left(v\right)
 =\rho_I(T_{\sigma^{-1}\{i,j\}})\left(\sigma  v\right)
 = \rho_{\sigma(I)}(T_{ij})\left(\sigma v\right),
\end{align*}
as desired.
\end{proof}

\begin{proposition}\label{prop:irred}
Let $W$ be a representation of $\Z_n$ over $\C$. Then $W$ is irreducible if and only if there exists an $I \subseteq [n]^{\underline 2}$ and an irreducible, $I$-isotypic $\Z_n^I$-submodule $W_I \subset W$ so that we have a $\Z_n$-module isomorphism
\[
W\cong \Ind_{\Z_n^I}^{\Z_n}W_I.
\]
\end{proposition}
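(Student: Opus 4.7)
The plan is to recognize this as an instance of Clifford theory applied to the normal abelian subgroup $\PZ_n \trianglelefteq \Z_n$. Since $\PZ_n \cong (\ZZ/2)^{n \choose 2}$ is elementary abelian, its complex irreducibles are one-dimensional, and the complete list is precisely $\{V_I : I \subseteq [n]^{\underline 2}\}$; moreover the action of $\Z_n$ on these irreducibles, by dualizing its conjugation action on $\PZ_n$, is the one induced by the standard $S_n$-action on $[n]^{\underline 2}$. The proof splits into two directions: sufficiency (the ``if'' direction) is the easy one, while necessity (the ``only if'' direction) is where the Clifford-theoretic setup must be done carefully.

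For the ``if'' direction, assume $W \cong \Ind_{\Z_n^I}^{\Z_n} W_I$ with $W_I$ irreducible and $I$-isotypic over $\Z_n^I$. Choosing coset representatives $\sigma_1,\dots,\sigma_k$ for $\Z_n/\Z_n^I$, we have $W = \bigoplus_j \sigma_j W_I$ as a $\PZ_n$-module. By Lemma~\ref{lem:symm} each summand $\sigma_j W_I$ is $\sigma_j(I)$-isotypic, and since $\Z_n^I$ is by definition the full stabilizer of $I$, the sets $\sigma_j(I)$ are pairwise distinct. Hence the $\sigma_j W_I$ are precisely the nontrivial $\PZ_n$-isotypic components of $W$. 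Any $\Z_n$-submodule $U \subseteq W$ is automatically $\PZ_n$-invariant, so it respects this decomposition: $U = \bigoplus_j (U \cap \sigma_j W_I)$. If $U$ is nonzero, the transitive $\Z_n$-action on $\{\sigma_j(I)\}$ forces $U \cap W_I \neq 0$; irreducibility of $W_I$ as $\Z_n^I$-module then gives $U \cap W_I = W_I$, whence $W_I \subseteq U$, and applying $\Z_n$ gives $U = W$.

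For the ``only if'' direction, suppose $W$ is irreducible. Restrict to $\PZ_n$ and decompose $W|_{\PZ_n} = \bigoplus_I W^{(I)}$ into $V_I$-isotypic components. By Lemma~\ref{lem:symm} the group $\Z_n$ permutes these components according to $\sigma \cdot W^{(I)} = W^{(\sigma(I))}$. The union of any $\Z_n$-orbit of isotypics is a $\Z_n$-submodule, so irreducibility forces the set of $I$ with $W^{(I)}\neq 0$ to form a single $\Z_n$-orbit. Choose any such $I$ and set $W_I := W^{(I)}$; this is $I$-isotypic by construction, and Lemma~\ref{lem:symm} shows it is $\Z_n^I$-invariant. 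Summing over coset representatives of $\Z_n/\Z_n^I$ produces the decomposition $W = \bigoplus_\sigma \sigma W_I$, from which the canonical map $\Ind_{\Z_n^I}^{\Z_n} W_I \to W$ sending $\sigma \otimes w$ to $\sigma w$ is a surjective $\Z_n$-equivariant homomorphism between spaces of equal dimension, hence an isomorphism. Finally $W_I$ must be irreducible as a $\Z_n^I$-module: any proper $\Z_n^I$-submodule $U \subsetneq W_I$ would induce, by exactness of $\Ind$, to a proper $\Z_n$-submodule of $W$, contradicting irreducibility.

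The main obstacle is essentially bookkeeping: one must keep straight the interplay between the $\PZ_n$-isotypic decomposition of $W|_{\PZ_n}$ and the $\Z_n$-action on subsets of $[n]^{\underline 2}$. The key technical input making everything run is Lemma~\ref{lem:symm}, which simultaneously guarantees that $\Z_n^I$ preserves $W^{(I)}$ and that the transitive $\Z_n$-orbit on labels translates into the correct induced structure. Beyond invoking this lemma, every step is standard Clifford theory.
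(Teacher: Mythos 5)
Your proof is correct and takes essentially the same route as the paper's: both directions hinge on the $\PZ_n$-isotypic decomposition of $W$ and on Lemma~\ref{lem:symm}, with your ``only if'' argument matching the paper's verbatim in structure. The only cosmetic difference is in the ``if'' direction, where you show an arbitrary nonzero $\Z_n$-submodule $U$ must be all of $W$, while the paper argues with the irreducible submodule containing $W_I$; these are the same idea.
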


\begin{proof}

First assume $W$ is irreducible. Let $ \Res ^{\bar  \B_n}_{\PZ_n}W = \bigoplus_{I} W_I$ be the decomposition into isotypic subspaces. By Lemma~\ref{lem:symm} we have that $gW_I$ is $g(I)$-isotypic for each $g\in \Z_n$. Thus $gW_I = W_{g(I)}$ and the $\Z_n$-action permutes the $W_I$. Since $W$ is irreducible, the induced action on the set of indices $I$ is transitive. Hence for any choice of $I$ there is an isomorphism of $\Z_n$-modules $W \cong  \Ind_{\Z_n^{I}}^{\Z_n}W_I$. Since $W$ is irreducible, $W_I$ is an irreducible $\Z_n^{I}$-module. 

For the other direction, assume that $W$ is a $\Z_n$-module of the form $\Ind_{\Z_n^I}^{\Z_n}W_I$ for some irreducible, $I$-isotypic $\Z_n^I$-module $W_I$.  Let $W'$ be the irreducible $\Z_n$-submodule of $W$ that contains $W_I$.  For any $g\in \Z_n$ we have $gW_I\subseteq W'$.   Since $gW_I$ is $g(I)$-isotypic (Lemma~\ref{lem:symm}), $W'$ contains the direct sum $\bigoplus_{g\in \Z_n/\Z_n^I}gW_I$.  This direct sum is isomorphic to the $\Z_n$-module $\Ind_{\Z_n^I}^{\Z_n}W_I$, which we assumed to be isomorphic to $W$.  Thus $W'=W$, as desired.
\end{proof}

\begin{corollary}
\label{cor:irred}
Each $\Z_n$-representation $V_n(\rho,\lambda)$ is irreducible.
\end{corollary}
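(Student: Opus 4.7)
The plan is to apply the ``if'' direction of Proposition~\ref{prop:irred}: I will show that $V_m(\rho)\boxtimes V_{n-m}(\lambda)$, viewed as a $\Z_n^I$-module via the surjection $P:\Z_n^I\to\Z_m^I\times S_{n-m}$ of Lemma~\ref{lem:product}, is both $I$-isotypic and irreducible. Since $V_n(\rho,\lambda)=\Ind_{\Z_n^I}^{\Z_n}(V_m(\rho)\boxtimes V_{n-m}(\lambda))$, Proposition~\ref{prop:irred} then yields that $V_n(\rho,\lambda)$ is an irreducible $\Z_n$-module.

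For the $I$-isotypic check, first note that $\PZ_n\subseteq \Z_n^I$, since pure braids act trivially on $[n]^{\underline 2}$ and hence stabilize any subset. I would then trace how the Artin generators act on $V_m(\rho)\boxtimes V_{n-m}(\lambda)$ through $P$. The map $P$ factors through the forgetful maps $\B_n^I\to\B_m\times\B_{n-m}$, which send pure braids to pairs of pure braids, so the $S_{n-m}$-factor acts trivially on $\PZ_n$. By the description of the kernel in Lemma~\ref{lem:product}, a generator $T_{ij}\in\PZ_n$ with $\{i,j\}\not\subseteq[m]$ lies in $K_{n,m}=\ker P$, so it acts trivially; a generator $T_{ij}$ with $\{i,j\}\subseteq[m]$ maps to the corresponding element of $\PZ_m\subset\Z_m^I$ and therefore acts by the scalar $\rho_I(T_{ij})$ on $V_m(\rho)$ (since $\rho$ is $I$-isotypic with $I\subseteq[m]^{\underline 2}$) and trivially on $V_{n-m}(\lambda)$. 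This is precisely the character $\rho_I$ of $\PZ_n$ associated to $I$ now regarded as a subset of $[n]^{\underline 2}$, so $V_m(\rho)\boxtimes V_{n-m}(\lambda)$ is $I$-isotypic.

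For the irreducibility step, I would invoke Lemma~\ref{lem:product} to say the $\Z_n^I$-action genuinely factors through the surjection onto $\Z_m^I\times S_{n-m}$, so it suffices to show irreducibility over the direct product. This is the standard fact that over $\C$ the external tensor product of irreducible representations of two finite groups is an irreducible representation of their direct product; here $V_m(\rho)$ is an irreducible $\Z_m^I$-module by hypothesis and $V_{n-m}(\lambda)$ is an irreducible $S_{n-m}$-module by the classical representation theory of the symmetric group.

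The main obstacle, such as it is, is the bookkeeping in the isotypic verification: one has to reconcile three slightly different incarnations of $I$ --- as a subset of $[m]^{\underline 2}$ governing the character of $\rho|_{\PZ_m}$, as the kernel/image data in Lemma~\ref{lem:product}, and as a subset of $[n]^{\underline 2}$ governing the character $\rho_I$ of $\PZ_n$ --- and check they line up exactly. Once this is done, Proposition~\ref{prop:irred} supplies the conclusion with no further work.
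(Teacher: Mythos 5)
Your proposal is correct and follows essentially the same route as the paper: reduce to Proposition~\ref{prop:irred} by checking that $V_m(\rho)\boxtimes V_{n-m}(\lambda)$ is an irreducible, $I$-isotypic $\Z_n^I$-module, using the surjection $P$ of Lemma~\ref{lem:product} for irreducibility of the external tensor product and the fact that $\PZ_n$ maps into $\Z_m^I$ (with the generators $T_{ij}$, $j>m$, in the kernel) for the isotypic condition. Your generator-by-generator verification of the $I$-isotypic claim is just a more explicit version of the paper's one-line argument.
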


\begin{proof}

Fix some $V_n(\rho,\lambda)$.  From the definition there is an $m \leq n$, a full subset $I$ of $[m]^{\underline 2}$, an irreducible $\Z_m^I$-representation $V_m(\rho)$, and an irreducible $S_{n-m}$-representation $V(\lambda)$ so that 
\[
V_n(\rho,\lambda) = \Ind_{\Z_n^I}^{\Z_n} \left(V_m(\rho) \boxtimes V_{n-m}(\lambda)\right).
\]
Since we are working over an algebraically closed field of characteristic 0, and since $V_m(\rho)$ and $V_{n-m}(\lambda)$ are irreducible, and since the action of $\Z_n^I$ on $V_m(\rho) \boxtimes V_{n-m}(\lambda)$ factors through the surjective map $P$ from Lemma~\ref{lem:product}, the tensor product $V_m(\rho) \boxtimes V_{n-m}(\lambda)$ is an irreducible $\Z_n^I$-representation.  Since $\rho$ is $I$-isotypic by assumption, and since the image of $\PZ_n$ under $P$ lies in $\Z_m^I \leqslant \Z_m^I \times S_{n-m}$ it follows that $V_m(\rho) \boxtimes V_{n-m}(\lambda)$ is $I$-isotypic.  The corollary is thus an immediate consequence of Proposition~\ref{prop:irred}.
\end{proof}

\subsection{Classification of representations} 
\label{sec:class}

We are almost ready to prove Theorem~\ref{thm:reps}, our classification of irreducible representations of $\Z_n$.  What remains is to distinguish between different representations of the form $V_n(\rho,\lambda)$.  The following technical lemma provides the required tools for this.

\begin{lemma}\label{lem:technical} Let $n\geq 2$ and let $I,J \subseteq [n]^{\underline 2}$.
\begin{enumerate}[leftmargin =*]

\item Let $U$ be an $I$-isotypic $\Z_n^I$ module.  There is a $J$-isotypic $\Z_n^J$-module $W$ with \newline $\Ind_{\Z_n^I}^{\Z_n} U \cong \Ind_{\Z_n^J}^{\Z_n} W$ if and only if $I$ and $J$ lie in the same $\Z_n$-orbit.

\item If $U,W$ are irreducible $I$-isotypic $\Z_n^I$-modules, $\Ind_{\Z_n^I}^{\Z_n} U \cong \Ind_{\Z_n^I}^{\Z_n} W$ if and only if $U\cong W$. 
\end{enumerate}
\end{lemma}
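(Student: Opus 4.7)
The plan is to treat the two parts separately, using Frobenius reciprocity for part (2) and a direct analysis of $\PZ_n$-isotypic components for part (1). In both parts, the key observation is that when we restrict an induced $\Z_n$-module $\Ind_{\Z_n^I}^{\Z_n} U$ to $\PZ_n$, Lemma~\ref{lem:symm} lets us read off its $\PZ_n$-isotypic decomposition from the coset decomposition.

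For part (1), in the ``if'' direction I would pick $\sigma \in \Z_n$ with $J = \sigma(I)$; conjugation by $\sigma$ gives an isomorphism $c_\sigma : \Z_n^I \to \Z_n^J$, and twisting $U$ by $c_\sigma^{-1}$ yields a $\Z_n^J$-module $W$ on the same underlying vector space. The standard isomorphism of induced modules under subgroup conjugation then gives $\Ind_{\Z_n^I}^{\Z_n} U \cong \Ind_{\Z_n^J}^{\Z_n} W$. To check $W$ is $J$-isotypic, I would observe that for any $T_{ij}$ in $\PZ_n$, the new action on $W$ is the old action of $\sigma^{-1} T_{ij} \sigma = T_{\sigma^{-1}\{i,j\}}$ (using that $\PZ_n$ is abelian and that $\Z_n$ acts on $\PZ_n$ through the natural $S_n$-action on pairs), and $\rho_I(T_{\sigma^{-1}\{i,j\}}) = -1$ exactly when $\{i,j\} \in \sigma(I) = J$. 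For the ``only if'' direction, I would restrict both induced modules to $\PZ_n$: the decomposition $\Res_{\PZ_n}^{\Z_n} \Ind_{\Z_n^I}^{\Z_n} U \cong \bigoplus_{g \in \Z_n / \Z_n^I} gU$, together with Lemma~\ref{lem:symm}, shows that the set of $\PZ_n$-characters appearing in $\Ind_{\Z_n^I}^{\Z_n} U$ is precisely the $\Z_n$-orbit of $I$. The analogous statement for $\Ind_{\Z_n^J}^{\Z_n} W$ and the assumed isomorphism then force $I$ and $J$ to lie in the same orbit.

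For part (2), the plan is to apply Frobenius reciprocity to obtain $\Hom_{\Z_n}(\Ind U, \Ind W) \cong \Hom_{\Z_n^I}(U, \Res_{\Z_n^I}^{\Z_n} \Ind_{\Z_n^I}^{\Z_n} W)$, and then to identify the $I$-isotypic part of the target. From the decomposition used in part (1), $\Res_{\PZ_n}^{\Z_n^I} \Res_{\Z_n^I}^{\Z_n} \Ind_{\Z_n^I}^{\Z_n} W \cong \bigoplus_{g} gW$, and the only $I$-isotypic summand is the one corresponding to the trivial coset, namely $W$ itself, since $\Z_n^I$ is the stabilizer of $I$ and Lemma~\ref{lem:symm} forces $gW$ to have character $g(I) \neq I$ for $g \notin \Z_n^I$. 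Since $\Z_n^I$ fixes $I$, this $I$-isotypic subspace is in fact a $\Z_n^I$-submodule of $\Res \Ind W$. Using that $U$ is $I$-isotypic, any $\Z_n^I$-equivariant map from $U$ to $\Res \Ind W$ must land in $W$, so $\Hom_{\Z_n^I}(U, \Res \Ind W) \cong \Hom_{\Z_n^I}(U, W)$. An isomorphism $\Ind U \cong \Ind W$ makes this Hom space nonzero, and Schur's lemma applied to the irreducibles $U, W$ yields $U \cong W$. The converse is immediate.

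The main obstacle I expect is the bookkeeping in part (2): separating out the $I$-isotypic part of $\Res \Ind W$ as a $\Z_n^I$-submodule and identifying it with $W$. Both facts rest on Lemma~\ref{lem:symm} and the fact that $\Z_n^I$ is defined as the stabilizer of $I$, but care is needed to juggle the $\PZ_n$- and $\Z_n^I$-structures simultaneously.
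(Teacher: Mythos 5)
Your proposal is correct and follows essentially the same route as the paper: both parts rest on Lemma~\ref{lem:symm}, the coset decomposition of the restriction to $\PZ_n$, Frobenius reciprocity, and Schur's lemma. The one genuine (if minor) difference is in part (2): the paper applies Frobenius reciprocity, then the Mackey double-coset formula for $\Res^{\Z_n}_{\Z_n^I}\Ind_{\Z_n^I}^{\Z_n}W$, then Frobenius reciprocity again, before using the isotypic condition to kill every double coset except the identity; you instead apply Frobenius reciprocity once and observe directly that the $I$-isotypic part of $\Res^{\Z_n}_{\Z_n^I}\Ind_{\Z_n^I}^{\Z_n}W$ is the trivial-coset summand $W$ itself, so $\Hom_{\Z_n^I}(U,\Res\Ind W)\cong\Hom_{\Z_n^I}(U,W)$. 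Your shortcut is valid --- it uses exactly the fact that $\Z_n^I$ is the full stabilizer of $I$, so $g(I)=I$ only for the trivial coset --- and it saves one application of Frobenius reciprocity and the citation of Mackey's formula, at the cost of having to justify by hand that the trivial-coset summand is a $\Z_n^I$-submodule and that a $\Z_n^I$-map out of an $I$-isotypic module lands in the $I$-isotypic part; you flag both points correctly. In part (1) you also spell out the verification that the conjugated module $W$ is $J$-isotypic, which the paper leaves implicit in its citation of Brown's exercise.
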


\begin{proof}

We begin with the first statement.  For the reverse implication, we first observe that if $I$ and $J$ lie in the same $\Z_n$-orbit, which is to say that they lie in the same $S_n$-orbit, then $\B_n^I$ and $\B_n^J$ are conjugate in $\B_n$.  It follows that $\Z_n^I$ and $\Z_n^J$ are conjugate in $\Z_n$.  The desired conclusion is then given by the first part of the first exercise in Section III.5 of Brown's book~\cite{brown}. 

For the forward implication, suppose that $\Ind_{\Z_n^I}^{\Z_n} U \cong \Ind_{\Z_n^J}^{\Z_n} W$.  Since $U$ and $W$ are $I$- and $J$-isotypic, respectively, it follows that $\Res^{\Z_n}_{\PZ_n} \Ind_{\Z_n^I}^{\Z_n} U$ and $\Res^{\Z_n}_{\PZ_n} \Ind_{\Z_n^J}^{\Z_n} W$ are each direct sums of copies of representations of the form $V_{g(I)}$ and $V_{g(J)}$ for $g\in \Z_n$ (possibly with multiplicities), respectively.  We conclude that there is a $g$ so that $V_{g(I)}$ is isomorphic to $V_J$ as $\PZ_n$-modules.  But this implies that $g(I)=J$, as desired.

We proceed to the second statement.  The reverse implication is trivial.  For the forward implication suppose that $U$ and $W$ are non-isomorphic $I$-isotypic irreducible $\Z_n^I$-modules.  It suffices to prove that 
\[
\Hom_{\Z_n}\left(\Ind_{\Z_n^I}^{\Z_n}U ,\Ind_{\Z_n^I}^{\Z_n}W\right) = 0.
\]

By Frobenius reciprocity we have an isomorphism
\[
\Hom_{\Z_n}\left(\Ind_{\Z_n^I}^{\Z_n}U ,\Ind_{\Z_n^I}^{\Z_n}W\right) \cong \Hom_{\Z_n^I}\left(U, \Res^{\Z_n}_{\Z_n^I}\Ind_{\Z_n^I}^{\Z_n}W\right). 
\]
Let $E\subset \Z_n$ be a set of representatives for the set of double cosets $\Z_n^I \backslash \Z_n/ \Z_n^I$. Using the formula $g\Z_n^Ig^{-1} = \Z_n^{g(I)}$ for $g\in \Z_n$ we have an isomorphism of $\Z_n^I$-modules 
\[
\Res^{\Z_n}_{\Z_n^I}\Ind_{\Z_n^I}^{\Z_n}W = \bigoplus_{g\in E}\Ind_{\Z_n^I \cap \Z_n^{g(I)}}^{\Z_n^I} \Res_{\Z_n^I \cap \Z_n^{g(I)}}^{\Z_n^{g(I)}}gW
\]
(see \cite[p.69 Proposition 5.6(b)]{brown}), and so
\[
\Hom_{\Z_n^I}\left(U, \Res^{\bar  \B_n}_{\Z_n^I}\Ind_{\Z_n^I}^{\bar  \B_n}W\right) \cong \bigoplus_{g\in E}\Hom_{\Z_n^I }\left(U, \Ind_{\Z_n^I \cap\bar  \B_n^{g(I)}}^{\Z_n^I} \Res_{\Z_n^I \cap\bar  \B_n^{g(I)}}^{\Z_n^{g(I)}}gW\right).
\]
Applying Frobenius reciprocity once more, we see that the right-hand side is isomorphic to 
\[
\bigoplus_{g\in E}\Hom_{\Z_n^I\cap \Z_n^{g(I)}}\left(\Res_{\Z_n^I \cap \Z_n^{g(I)}}^{\Z_n^I}U, \Res_{\Z_n^I \cap \Z_n^{g(I)}}^{\Z_n^{g(I)}}gW\right).
\]
Since any $\Z_n^I \cap \Z_n^{g(I)}$-module map between $U$ and $gW$ restricts to a $\PZ_n$-module map, the fact that $U$ is $I$-isotypic and $gW$ is $g(I)$-isotypic implies that there can be no non-trivial $\Z_n^I \cap \Z_n^{g(I)}$-module maps between them unless $g(I) = I$. Since $g$ ranges over a set of representatives for the set of double cosets $\Z_n^I \backslash \Z_n/ \Z_n^I$, the only $g$ for which this condition is satisfied is $g = id$. Thus the only nontrivial summand in the above direct sum is $\Hom_{\Z_n^I}\left(U,W\right)$, which vanishes by Schur's lemma, because $U$ and $W$ are non-isomorphic irreducible $\Z_n^I$-modules.
\end{proof}

We are finally ready to prove Theorem~\ref{thm:class}, which states that every $V_n(\rho,\lambda)$ is an irreducible $\Z_n$-representation and conversely that every irreducible $\Z_n$-representation is isomorphic to exactly one $V_n(\rho,\lambda)$.

\begin{proof}[Proof of Theorem~\ref{thm:class}]

Corollary~\ref{cor:irred} already gives that the $V_n(\rho,\lambda)$ are irreducible.  Let $V$ be an arbitrary irreducible $\Z_n$-representation.  We would like to show that $V$ is isomorphic to some $V_n(\rho,\lambda)$ as a $\Z_n$-module.  By Proposition~\ref{prop:irred}, there is an $I \subseteq [n]^{\underline 2}$ and an irreducible $I$-isotypic $\Z_n^I$-representation $W_I$ such that $V = \Ind_{\Z_n^I}^{\Z_n}W_I$.  By the first statement of Lemma~\ref{lem:technical} we may assume that the union of the elements of $I$ is $[m]$ for some $m$.  Let $K_{n,m}$ be the kernel of the map $P$, as in Lemma~\ref{lem:product}.  Since the generators for $K_{n,m}$ act trivially on $W_I$, the $\Z_n^I$-action on $W_I$ descends to an action of the quotient $\Z_m^I\times \bar S_{n-m}$.  Since $W_I$ is an irreducible representation of $\Z_n^I$, it is an irreducible representation of the quotient $\Z_m^I\times S_{n-m}$.  Since we are working over an algebraically closed field of characteristic 0 an irreducible representation of a direct product of groups decomposes as an external tensor product of irreducible representations of the two factors \cite{serre}.  In particular, there are irreducible representations $U_1$ and $U_2$ of $\B_m^I$ and $S_{n-m}$ such that $W_I\cong U_1\boxtimes U_2$ as $\Z_m^I\times S_{n-m}$-modules.  Since $W_I$ is $I$-isotypic and since $\PZ_n$ acts trivially on $U_2$ it follows that $U_1$ is $I$-isotypic.  

To complete the proof of the theorem, it remains to prove the uniqueness statement.  Suppose that $V_n(\rho,\lambda)$ and $V_n(\rho',\lambda')$ are isomorphic as $\Z_n$-modules.  By the second statement of Lemma~\ref{lem:technical}, the $\Z_n$-modules $V_m(\rho) \boxtimes V_{n-m}(\lambda)$ and $V_{m'}(\rho') \boxtimes V_{n-m'}(\lambda')$ from which $V_n(\rho,\lambda)$ and $V_n(\rho',\lambda')$ are induced must be isomorphic.  It follows from the first statement of Lemma~\ref{lem:technical} that $m =m'$.  Since the tensor products are isomorphic, it follows that the individual factors are as well (as we are working over $\C$).
\end{proof}

\subsection{A non-splitting} The following proposition ties up a loose end from the introduction.

\begin{proposition}\label{prop:nosplit}
The following extension is not split:
\[
1\rightarrow \PZ_n \rightarrow \Z_n\rightarrow S_n\rightarrow 1.
\]
\end{proposition}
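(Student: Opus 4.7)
My plan is to argue by contradiction: assuming a splitting $\phi\colon S_n \to \Z_n$ existed, I would show that no lift of the simple transposition $s_i = (i,i+1)$ to $\Z_n$ has order $2$. Since $\phi(s_i)$ would be precisely such a lift, this yields a contradiction.

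Any lift of $s_i$ to $\Z_n$ has the form $\sigma_i p$ for some $p \in \PZ_n$, where $\sigma_i$ is the image in $\Z_n$ of the standard Artin generator of $\B_n$. Because $\PZ_n$ is abelian, $\PZ_n$ acts trivially on itself by conjugation, so the conjugation action of $\Z_n$ on $\PZ_n$ descends to an action of $S_n = \Z_n/\PZ_n$, which is just the action by permutation of the index set $[n]^{\underline 2}$. Using this together with $\sigma_i^2 = T_{i,i+1}$, I would obtain
\[
(\sigma_i p)^2 \;=\; (\sigma_i p \sigma_i^{-1}) \cdot \sigma_i^2 \cdot p \;=\; T_{i,i+1}\cdot (s_i\cdot p)\cdot p,
\]
an identity in $\PZ_n$, where $s_i \cdot p$ denotes the $S_n$-action.

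The main---and essentially only---observation is then the following. Using $\PZ_n \cong (\ZZ/2)^{[n]^{\underline 2}}$, write $p$ additively as $\sum_{k<\ell}a_{k\ell}T_{k\ell}$. Since $s_i$ fixes the unordered pair $\{i,i+1\}$, the $T_{i,i+1}$-coordinate of $(s_i\cdot p)+p$ equals $2a_{i,i+1}=0$ in $\ZZ/2$. Hence $(\sigma_i p)^2$ has $T_{i,i+1}$-coordinate $1$, so $(\sigma_i p)^2 \neq 1$ for every $p \in \PZ_n$, as desired.

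There is no real obstacle; the whole argument is essentially a one-line computation. The crucial point is that the ``diagonal'' $T_{i,i+1}$-contribution from $p$ to $(\sigma_i p)^2$ is always even, and hence zero modulo $2$, so it cannot cancel the $T_{i,i+1}$ contribution coming from $\sigma_i^2$.
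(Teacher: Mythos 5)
Your proof is correct, but it takes a different route from the paper's. The paper abelianizes: it shows $H_1(\Z_n;\ZZ)\cong \ZZ/4$ (via the exact sequence $H_1(\B_n[4];\ZZ)\to H_1(\B_n;\ZZ)\to H_1(\Z_n;\ZZ)\to 0$ and the fact that the Artin generators of $\PB_n$ and the generators of $\B_n[4]=\PB_n^2$ have signed word lengths $2$ and $4$), and then observes that a splitting of the extension would yield a splitting of the induced surjection $\ZZ/4\to H_1(S_n;\ZZ)\cong\ZZ/2$, which does not exist. Your argument instead works directly in the group: you show that no lift $\sigma_i p$ of a transposition can be an involution, because $(\sigma_i p)^2=T_{i,i+1}\cdot(s_i\cdot p)\cdot p$ and the $T_{i,i+1}$-coordinate of $(s_i\cdot p)+p$ in $\PZ_n\cong(\ZZ/2)^{\binom{n}{2}}$ vanishes while that of $T_{i,i+1}$ is $1$. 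All the ingredients you use (the identification of $\PZ_n$ with $(\ZZ/2)^{\binom{n}{2}}$ on the classes of the $T_{k\ell}$, the permutation action of $S_n$ on that basis, and $\sigma_i^2=T_{i,i+1}$) are established in the paper, so the computation goes through. The paper's argument is shorter once $H_1(\Z_n;\ZZ)$ is known and records that abelianization as a byproduct; yours is more elementary, avoids computing $H_1(\Z_n;\ZZ)$ altogether, and localizes the obstruction to a single coordinate of $\PZ_n$, giving the slightly sharper statement that no transposition admits an order-$2$ lift. (Both arguments implicitly assume $n\geq 2$, which is of course the only case of interest.)
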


\begin{proof}

The surjection $\Z_n\rightarrow S_n$ induces a surjection
\[
H_1(\bar\B_n; \ZZ)\rightarrow H_1(S_n; \ZZ) \cong \ZZ/2
\]
We claim that $H_1(\bar{\B}_n; \ZZ) \cong \ZZ/4$.  Since there is no split surjection $\ZZ/4 \to \ZZ/2$, the proposition follows from this.

Since $\Z_n$ is the quotient of $\B_n$ by $\B_n[4]$ there is an exact sequence
\[
H_1(\B_n[4]; \ZZ) \to H_1(\B_n;\ZZ) \to H_1(\Z_n;\ZZ) \to 0.
\]
We have $H_1(\B_n;\ZZ) \cong \ZZ$.  The image of $H_1(\PB_n;\ZZ)$ in $H_1(\B_n;\ZZ)$ is $2\ZZ$, since each Artin generator evaluates to 2 under the length homomorphism on $\B_n$. Since $\B_n[4]$ is $\PB_n^2$ the image of $H_1(\B_n[4]; \ZZ)$ in $H_1(\B_n;\ZZ)$ is $4\ZZ$.  The claim follows.
\end{proof}


\section{Representation stability}

In this section we prove Theorem~\ref{thm:reps}, which gives the decomposition of $H_1(\B_n[4]; \C)$ into irreducible $\Z_n$-representations, and also states that the $H_1(\B_n[4]; \C)$ satisfy uniform representation stability.

In Section~\ref{sec:homs}, we define the representations $V_3(\rho_3)$ and $V_4(\rho_4)$ of $\bar\B_3^{I_3}$ and $\bar\B_4^{I_4}$ that are used in the representations $V_n(\rho_3,0)$ and $V_n(\rho_4,0)$ from the statement of Theorem~\ref{thm:reps}.  Then we prove the isomorphisms from Theorem~\ref{thm:reps} in Section~\ref{sec:reppf} by exhibiting the given $V_n(\rho,\lambda)$ as $\Z_n$-submodules of $H_1(\B_n[4]; \C)$.  These submodules are the spans of the orbits of the elements 
\begin{align*}
x_3  = (1-T_{13})\prod_{4 \leq j \leq n}&(1+T_{1j})(1+T_{2j})\tau_{12}\hspace{.25in} x_4 = (1-T_{14})(1-T_{23})\tau_{12}.
\end{align*}
(note that $x_3$ is only defined for $n \geq 3$ and $x_4$ only for $n \geq 4$).  Finally, in Section~\ref{sec:urs} we complete the proof of Theorem~\ref{thm:reps} by showing that $H_1(\B_n[4]; \C)$ satisfies the definition of uniform representation stability.

In this section we denote the span of $x \in H_1(\B_n[4];\C)$ by $\langle x\rangle$.

\subsection{Representations of \boldmath$\Z_n$}\label{sec:homs} The representations $V_3(\rho_3)$ and $V_4(\rho_4)$ of $\Z_n^{I_3}$ and $\Z_n^{I_4}$ will both be 1-dimensional representations obtained from homomorphisms $\rho_k : \Z_k^{I_k} \to \mu_2$.  We first define maps $\omega_k : \B_n^{I_k} \to \ZZ$ (Lemma~\ref{lem:homs}) and then obtain the $\rho_k$ from the mod 2 reductions of the $\omega_k$.  

In order to define the $\omega_k$ we take a different point of view on braids, as follows.  Let $C_n(\R^2)$ be the space of configurations of $n$ distinct, indistinguishable points in $\R^2$.  Choose a base point for $C_n(\R^2)$ where the $n$ points lie on a horizontal line.  There is a natural isomorphism $\pi_1(C_n(\R^2)) \cong \B_n$.  We label the points in the base point of $C_n(\R^2)$ by $[n]$ from left to right.  A loop in $C_n(\R^2)$ induces a permutation of $[n]$, and this is the usual homomorphism $\B_n \to S_n$.  If we represent a braid by a loop in $C_n(\R^2)$, then the $i$th strand of this braid representative is the path traced out by the point labeled $i$ (the terminology is explained by considering a spacetime diagram of the loop).  Let
\[
\xi_{ij} : \B_n \to \tfrac{1}{2}\ZZ
\]
be the function that counts the total winding number of the $i$th strand with the $j$th strand.  This is well defined because of our choice of base point for $C_n(\R^2)$.

With this in hand, we define a function $\omega_3 : \B_n^{I_3} \to \ZZ$ 
by the formula
\[
\omega_3 =  \xi_{13} + \xi_{23}.
\]
We similarly we define $\omega_4 : \B_n^{I_4} \to \ZZ$
by
\[
\omega_4 = \xi_{13} + \xi_{14} + \xi_{23} + \xi_{24}.
\]
The subgroup $\B_n^{I_3}$ can alternatively be described as the subgroup of $\B_n$ preserving the subsets $\{1,2\}$ and $\{3\}$ of $[n]$.  Similarly, $\B_n^{I_4}$ can be described as the subgroup preserving the pair of sets $\{\{1,2\},\{3,4\}\}$.

A priori the functions $\omega_3$ and $\omega_4$ are not well defined, since the natural codomain is $\tfrac{1}{2}\ZZ$ in both cases.  

\begin{lemma}
\label{lem:homs}
For $k \in \{3,4\}$, the function $\omega_k$ is a well-defined homomorphism.
\end{lemma}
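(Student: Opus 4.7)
The plan is to establish two things for each $k\in\{3,4\}$: that $\omega_k$ lands in $\ZZ\subseteq\tfrac12\ZZ$, and that it is additive. The key auxiliary tool is the \emph{parity principle}: for any $\alpha\in\B_n$ with induced permutation $\sigma_\alpha\in S_n$, and any $i<j$ in $[n]$, the half-integer $\xi_{ij}(\alpha)$ is an integer if and only if $\sigma_\alpha(i)<\sigma_\alpha(j)$; equivalently, $2\xi_{ij}(\alpha)\bmod 2$ detects whether $\{i,j\}$ is an inversion of $\sigma_\alpha$. This is a classical fact that can be verified by induction on word length in the Artin generators, using that $\sigma_p$ contributes $\pm1/2$ only to $\xi_{p,p+1}$ and transposes strands $p,p+1$.

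Given the parity principle, integrality of $\omega_3$ is immediate: any $\alpha\in\B_n^{I_3}$ fixes $3$ and preserves $\{1,2\}$, so $\sigma_\alpha(1),\sigma_\alpha(2)\in\{1,2\}$ and $\sigma_\alpha(3)=3$, whence the pairs $(1,3)$ and $(2,3)$ preserve relative order and each of $\xi_{13}(\alpha),\xi_{23}(\alpha)$ is individually integral. For $\omega_4$, the permutation $\sigma_\alpha$ either preserves each of $\{1,2\}$ and $\{3,4\}$ (in which case all four of $\xi_{13},\xi_{14},\xi_{23},\xi_{24}$ are integers) or swaps the two blocks (in which case all four pairs in $\{1,2\}\times\{3,4\}$ simultaneously reverse relative order, so each of the four $\xi_{ij}(\alpha)$ is a half-integer and their sum is an integer).

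For additivity I would use the concatenation identity
\[
\xi_{ij}(\alpha\beta)=\xi_{ij}(\beta)+\xi_{\beta(i)\beta(j)}(\alpha),
\]
where $\beta$ denotes both the braid and its induced permutation. This is clear from the loop-in-$C_n(\R^2)$ model: the $i$th strand of $\alpha\beta$ traverses the $i$th strand of $\beta$ and then, starting at position $\beta(i)$, the $\beta(i)$th strand of $\alpha$. Summing this identity over the index set defining $\omega_k$ and using that $\beta\in\B_n^{I_k}$ stabilizes the relevant set system, the reindexing $(i,j)\mapsto(\beta(i),\beta(j))$ is a bijection of $\{(1,3),(2,3)\}$ to itself when $k=3$, and permutes $\{1,2\}\times\{3,4\}$ to itself (possibly after invoking the symmetry $\xi_{ij}=\xi_{ji}$ when $\beta$ swaps the two blocks) when $k=4$. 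This yields $\omega_k(\alpha\beta)=\omega_k(\beta)+\omega_k(\alpha)$.

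The main conceptual obstacle is the block-swapping case for $\omega_4$, where no single $\xi_{ij}$ is an integer and integrality depends on the four half-integer contributions conspiring; everything else is bookkeeping once the parity principle and the concatenation formula are in place.
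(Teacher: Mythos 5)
Your proposal is correct and follows essentially the same route as the paper: the integrality argument (each $\xi_{ij}$ integral for $\omega_3$; either all four integral or all four proper half-integers summing to an integer for $\omega_4$) is identical in substance, and your concatenation identity $\xi_{ij}(\alpha\beta)=\xi_{ij}(\beta)+\xi_{\beta(i)\beta(j)}(\alpha)$ together with the reindexing bijection on $I_k$ is just a more explicit formulation of the paper's strand-coloring argument for additivity. No gaps.
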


\begin{proof}

We begin by showing that $\omega_3$ and $\omega_4$ are well defined.    For any braid in $\B_n^{I_3}$, the 1st and 2nd strands both start and end to the left of the 3rd strand.  It follows that both $\xi_{13}$ and $\xi_{23}$ map $\B_n^{I_3}$ to $\ZZ$.  Thus, $\omega_3$ is a well-defined function to $\ZZ$.  

For any braid in $\B_n^{I_4}$ that preserves $\{1,2\}$ the numbers $\xi_{13}$, $\xi_{14}$, $\xi_{23}$, and $\xi_{24}$ are all integers, similar to the $\omega_3$ case.  Also, for any braid in $\B_n^{I_4}$ that interchanges $\{1,2\}$ and $\{3,4\}$, none of  $\xi_{13}$, $\xi_{14}$, $\xi_{23}$, and $\xi_{24}$ are integers, and so again $\omega_4$ is well defined.

To complete the proof it remains to show that $\omega_3$ and $\omega_4$ are homomorphisms.  We observe that $\B_n^{I_3}$ can alternatively be described as the subgroup of $\B_n$ preserving the subsets $\{1,2\}$ and $\{3\}$.  Similarly, $\B_n^{I_4}$ is the subgroup preserving the pair of sets $\{\{1,2\},\{3,4\}\}$.

We begin with $\omega_3$.  Let $g \in \B_n^{I_3}$.  We color the 1st and 2nd strands red and the 3rd strand blue.  Then $\omega_3(g)$ is the sum of the winding numbers of red strands with blue strands.  If $g$ and $h$ are two elements of $\B_n^{I_3}$ then the colorings of the strands in $\R^2$ for $g$ and $h$ agree with the coloring of $gh$ (defined in the same way as the one for $g$).  It follows that $\omega_3$ is a homomorphism.

The case of $\omega_4$ is similar.  In this case, given $g \in \B_n^{I_4}$ we color the 1st and 2nd strands red and we color the 3rd and 4th strands blue.  Then $\omega_4(g)$ again is the sum of the winding numbers of red strands with blue strands.  Suppose now that $h$ is another element of $\B_n^{I_4}$.  If $g$ preserves the set $\{1,2\}$ then we color the strands of $h$ in the same way that we colored the strands of $g$.  Otherwise, if $g$ interchanges $\{1,2\}$ and $\{3,4\}$ then we color $h$ in the opposite way: the 1st and 2nd strands are blue and the 3rd and 4th strands are red.  For either coloring of $h$, the number $\omega_4(h)$ counts the sum of the winding numbers of red strands with blue strands.  The chosen colorings of $g$ and $h$ agree with the coloring on $gh$.  It follows that $\omega_4$ is a homomorphism. 
\end{proof}

The homomorphisms $\omega_3$ and $\omega_4$ induce homomorphisms $\B_n^{I_3} \to \mu_2$ and $\B_n^{I_4} \to \mu_2$.  The pure braid group $\PB_n$, hence $\B_n[4]$, is contained in each $\B_n^{I_k}$.  Since $\B_n[4]$ is equal to $\PB_n^2$ the image of $\B_n[4]$ under each map is trivial.  It follows that $\omega_3$ and $\omega_4$ induce homomorphisms $\bar\B_n^{I_3} \to \mu_2$ and $\bar\B_n^{I_4} \to \mu_2$.  Further restricting to $n=3$ and $n=4$ gives the desired homomorphisms
\[
\rho_3 : \bar\B_3^{I_3} \to \mu_2 \quad \text{and} \quad \rho_4 : \bar\B_4^{I_4} \to \mu_2.
\]
These homomorphisms give rise to the representations $V_3(\rho_3)$ and $V_4(\rho_4)$ from the introduction.  

\begin{lemma}
\label{lem:iso}
Let $k \in \{3,4\}$.  The representation $V_k(\rho_k)$ is $I_k$-isotypic.
\end{lemma}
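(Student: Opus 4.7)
The plan is to exploit the fact that $V_k(\rho_k)$ is one-dimensional: for such a representation, being $I_k$-isotypic as a $\PZ_k$-module is equivalent to the assertion that the restricted character $\rho_k|_{\PZ_k}$ agrees with the character $\rho_{I_k}: \PZ_k \to \mu_2$ defined in Section~\ref{sec:iso}. So the whole statement reduces to checking one equality of characters on a set of generators.

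First I would recall that $\PZ_k$ is contained in $\Z_k^{I_k}$ (pure braids fix every subset of $[k]^{\underline{2}}$ pointwise), so the restriction $\rho_k|_{\PZ_k}$ makes sense. The character $\rho_{I_k}$ is the one determined by the rule $\rho_{I_k}(T_{ij}) = -1$ if $\{i,j\} \in I_k$ and $\rho_{I_k}(T_{ij}) = +1$ otherwise, where $T_{ij}$ denotes the image of the Artin generator in $\PZ_k$. Since the $T_{ij}$ (with $1 \leq i < j \leq k$) generate $\PZ_k$, it suffices to verify the equality $\rho_k(T_{ij}) = \rho_{I_k}(T_{ij})$ on these generators.

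Next I would compute $\rho_k(T_{ij})$ directly from the definition. Recall $\rho_k$ is the mod-$2$ reduction of $\omega_k = \sum_{\{a,b\} \in I_k} \xi_{ab}$. The key observation is that for the Artin generator $T_{ij}$, the winding number $\xi_{ab}(T_{ij})$ equals $1$ when $\{a,b\} = \{i,j\}$ and $0$ otherwise; this is immediate from the standard picture of $T_{ij}$ as a full positive twist of the $i$th and $j$th strands, all other strands being vertical. Consequently $\omega_k(T_{ij}) = 1$ if $\{i,j\} \in I_k$ and $\omega_k(T_{ij}) = 0$ otherwise, so $\rho_k(T_{ij})$ agrees with $\rho_{I_k}(T_{ij})$ on every Artin generator. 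This forces $\rho_k|_{\PZ_k} = \rho_{I_k}$ and concludes the argument.

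There is really no serious obstacle: the only thing to be careful about is bookkeeping, namely confirming that $I_3 = \{\{1,3\},\{2,3\}\}$ and $I_4 = \{\{1,3\},\{2,3\},\{1,4\},\{2,4\}\}$ are precisely the index sets used to define $\omega_3$ and $\omega_4$, so that the sum $\sum \xi_{ab}$ is indexed by the same pairs that appear in $\rho_{I_k}$. Once this matching is recorded, the whole proof is a one-line character computation.
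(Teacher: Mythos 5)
Your proof is correct and follows the same route as the paper: reduce to the equality of characters $\rho_k|_{\PZ_k}=\rho_{I_k}$ (legitimate since $V_k(\rho_k)$ is one-dimensional), then verify it by expressing $\rho_{I_k}$ in terms of the winding numbers $\xi_{ij}$ and comparing with the definition of $\omega_k$. Your version just carries out the generator-by-generator computation $\xi_{ab}(T_{ij})=\delta_{\{a,b\},\{i,j\}}$ explicitly, which the paper leaves implicit.
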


\begin{proof}

Since each $\rho_k$ defines a 1-dimensional representation, it is enough to check that the restriction of $\rho_k$ to $\PZ_k$ is equal to $\rho_{I_k}$.   For any $I$ the homomorphism $\rho_I$ can be written as
\[
\rho_I = \sum_{\{i,j\} \in I} \frac{1}{2} \xi_{ij} \mod 2.
\]
The lemma now follows from this and the expressions of the $\rho_{I_k}$ in terms of the $\xi_{ij}$.
\end{proof}


\bigskip
\bigskip
\bigskip

\subsection{The irreducible decomposition}\label{sec:reppf}

We are now in a position to prove the first part of Theorem~\ref{thm:reps}, which we state separately as Proposition~\ref{prop:rep1} below.    We require two lemmas.

In the statement of the first lemma,  $\sigma_{13}$ denotes the half-twist in $\B_n$ whose square is $T_{13}$.  In terms of the standard generators for $\B_n$, we can write $\sigma_{13}$ as $(\sigma_3 \sigma_2) \sigma_1 (\sigma_3 \sigma_2)^{-1}$.  Also, when an element is not defined we simply drop it from the proposed generating sets (for instance $T_{14}$ is not an element of $\B_3^{I_3}$).

\newpage

\begin{lemma}
\label{lem:bni gens}
For $n \geq 3$ the group $\B_n^{I_3}$ is generated by the set 
\[
\{ T_{13}, T_{14}, T_{34}, \sigma_1 \} \cup \{\sigma_i \mid i \geq 4\}. 
\]
For $n \geq 4$ the group $\B_n^{I_4}$ is generated by the set 
\[
\{ T_{23} ,T_{45}, \sigma_1, \sigma_2 \sigma_{13}^{-1}\} \cup \{\sigma_i \mid i \geq 5 \}.
\]
\end{lemma}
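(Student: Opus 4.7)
The plan is to prove both statements via a common template based on the short exact sequence
\[
1 \to \PB_n \to \B_n^{I_k} \to H_k \to 1,
\]
where $H_k$ is the image of $\B_n^{I_k}$ under the natural surjection $\B_n \to S_n$. This reduces the generation problem to two subproblems: (a) check that the proposed generators surject onto $H_k$, and (b) verify that, together with suitable conjugates, they contain every Artin generator $T_{ij}$ of $\PB_n$. A direct analysis of how permutations act on $I_k$ identifies $H_3 = S_{\{1,2\}} \times S_{\{4,\ldots,n\}}$ and $H_4 \cong (S_2 \wr S_2) \times S_{\{5,\ldots,n\}}$, with the wreath product acting on the partition $\{\{1,2\},\{3,4\}\}$. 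That each proposed element actually lies in $\B_n^{I_k}$ is a routine permutation check.

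For $k=3$, the surjection onto $H_3$ is immediate from $\sigma_1 \mapsto (1\,2)$ and $\sigma_i \mapsto (i,i+1)$ for $i \geq 4$. To hit the kernel I will use the Artin relation $\sigma_k T_{ij} \sigma_k^{-1} = T_{\sigma_k(i)\,\sigma_k(j)}$ together with $\sigma_i^2 = T_{i,i+1}$. Starting from the seed set $\{T_{13}, T_{14}, T_{34}\}$ along with $T_{12} = \sigma_1^2$ and $T_{i,i+1} = \sigma_i^2$ for $i \geq 4$, conjugation by $\sigma_1$ yields $T_{23}$ and $T_{24}$; iterated conjugation by $\sigma_i$ for $i \geq 4$ propagates $T_{14}, T_{24}, T_{34}$ to produce all $T_{kj}$ with $k \in \{1,2,3\}$ and $j \geq 5$; finally, conjugating the squares $\sigma_i^2$ generates every $T_{ij}$ with $4 \leq i < j \leq n$.

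The $k=4$ case follows the same outline with more care. First I will verify that $\sigma_2 \sigma_{13}^{-1}$ lies in $\B_n^{I_4}$ and induces a permutation that exchanges the two blocks $\{1,2\}$ and $\{3,4\}$; together with the transposition $(1\,2)$ supplied by $\sigma_1$, this generates the $S_2 \wr S_2$-factor of $H_4$. For the kernel I will start from the smaller seed $\{T_{23}, T_{45}\}$ along with $T_{12} = \sigma_1^2$ and $T_{i,i+1} = \sigma_i^2$ for $i \geq 5$, and produce each missing $T_{ij}$ through a specific conjugation: conjugating $T_{23}$ by lifts of the elements of $S_2 \wr S_2$ recovers every $T_{ij}$ with $\{i,j\} \subseteq \{1,2,3,4\}$, while conjugation of $T_{45}$ and of the $\sigma_i^2$ ($i \geq 5$) by products of the remaining generators propagates everything across all strands.

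The main obstacle will be the bookkeeping in the $k=4$ case. Because the seed set of pure braids is small, each missing $T_{ij}$ must be reached by a carefully chosen conjugation using a lift of an element of $H_4$; verifying that such a lift is always available in the proposed generating set reduces to tracking the action of a few $H_4$-elements on pairs in $[n]^{\underline 2}$, which is elementary but requires a brief case analysis.
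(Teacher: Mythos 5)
Your proposal is correct and follows essentially the same route as the paper: reduce along the exact sequence $1 \to \PB_n \to \B_n^{I_k} \to \mathrm{Stab}_{S_n}(I_k) \to 1$, check that the proposed generators surject onto the stabilizer (identified as $S_2\times S_1\times S_{n-3}$ and $(S_2\times S_2\times S_{n-4})\rtimes \ZZ/2$, matching your $H_3$ and $H_4$), and then produce every Artin generator $T_{ij}$ by iterated conjugation from the seed twists and the squares $\sigma_i^2$. One tiny inaccuracy: the $S_2\wr S_2$-orbit of $\{2,3\}$ in $[4]^{\underline 2}$ is $I_4$ itself, so conjugating $T_{23}$ does not reach $T_{12}$ or $T_{34}$ — but these come from $\sigma_1^2$ and its conjugate by $\sigma_2\sigma_{13}^{-1}$, which your seed set supplies.
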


\begin{proof}

We begin with the case of $\B_n^{I_3}$.  To simplify the exposition we assume $n \geq 4$; the case $n=3$ is obtained by ignoring the elements $T_{14}$ and $T_{34}$. The stabilizer of $I_3$ in $S_n$ is the image of $S_2 \times S_1 \times S_{n-3}$ under the standard inclusion.  The $\sigma_i$ in the proposed generating set map to the standard generators for this subgroup.  Thus it suffices to check that every Artin generator $T_{ij} \in \PB_n$ lies in the group generated by the proposed generators.  Using the fact that $T_{13}$, $T_{14}$, and $T_{34}$ lie in the generating set and inductively applying the formulas $\sigma_j T_{ij} \sigma_j^{-1} = T_{i,j+1}$ and $\sigma_i^{-1} T_{ij} \sigma_i = T_{i-1,j}$ shows that each $T_{ij}$ is a product of the proposed generators, as desired.

We now treat  $\B_n^{I_4}$.  Again, to simplify the exposition we assume $n \geq 5$; the case $n=4$ is obtained by ignoring the element $T_{45}$. The stabilizer of $I_4$ in $S_n$ is isomorphic to $(S_2 \times S_2 \times S_{n-4}) \rtimes \ZZ/2$, where the $\ZZ/2$ factor is any element of order 2 that interchanges $\{1,2\}$ with $\{3,4\}$.  The element $\sigma_2\sigma_{13}^{-1}$ maps to $(1 4)(2 3)$, giving the $\ZZ/2$ factor.  The element $\sigma_1$ maps to the generator of the first $S_2$ factor.  Since the $\ZZ/2$ factor interchanges the  $S_2$ factors, the generator for the other $S_2$ factor also is in the image.   The $\sigma_i$ with $i \geq 5$ map to the standard generators for the $S_{n-4}$ factor.  So again the lemma reduces to the problem of exhibiting each $T_{ij}$ as a product of generators.  This is achieved in the same way as in the previous case.
\end{proof}

\begin{lemma}
\label{lem:x1x2}
Let $k \in \{3,4\}$ and let $n \geq k$.  The subspace $\langle x_k \rangle$ of $H_1(\B_n[4];\C)$ is a $\Z_n^{I_k}$-module isomorphic to $V_k(\rho_k) \boxtimes V_{n-k}(0)$.
\end{lemma}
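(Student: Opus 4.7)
The plan is to verify that each generator of $\B_n^{I_k}$ provided by Lemma~\ref{lem:bni gens} acts on $x_k$ by the scalar predicted by $V_k(\rho_k) \boxtimes V_{n-k}(0)$, pulled back along the projection $P : \Z_n^{I_k} \to \Z_k^{I_k} \times S_{n-k}$ of Lemma~\ref{lem:product}. Both tensor factors are one-dimensional, so once I know $x_k \neq 0$ and that each generator acts on $\langle x_k\rangle$ by a scalar, the isomorphism will follow by matching characters: the scalars must equal $\rho_k(F_1(\bar g))$ on each generator $g$ of $\B_n^{I_k}$.

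For nonvanishing, $x_4$ is already in the linearly independent set $\S_3'$ of Proposition~\ref{prop:low}. For $x_3$, I would apply the forgetful homomorphism $\B_n[4] \to \B_3[4]$ remembering only the first three marked points; this kills each $T_{1j}, T_{2j}$ with $j \geq 4$, so on homology each factor $(1+T_{1j})(1+T_{2j})$ becomes the scalar $4$, and $x_3$ maps to $4^{n-3}(1-T_{13})\tau_{12}$, nonzero by Proposition~\ref{prop:3b}.

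For the generator check, the crucial simplification is that $\PZ_n$ acts abelianly on $H_1(\B_n[4];\Q)$, so pure-braid factors commute freely. The central twists $T_{13}$ (for $k=3$) and $T_{23}$ (for $k=4$) act by $-1$ via $T_{ij}(1-T_{ij}) = -(1-T_{ij})$, matching $\rho_k(T_{ij}) = -1$. Twists involving indices $> k$ are absorbed either via $T_{1j}(1+T_{1j}) = 1+T_{1j}$ or via Lemma~\ref{lem:lantern}(2), so they act trivially. The generator $\sigma_1$ swaps the indices $1$ and $2$; conjugation replaces $T_{1\ell}$ by $T_{2\ell}$ inside the expression for $x_k$, but Lemma~\ref{lem:lantern}(1) gives $T_{1\ell}\tau_{12} = T_{2\ell}\tau_{12}$, and combined with commutativity of the $\PZ_n$-action this yields $\sigma_1 \cdot x_k = x_k$. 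The generators $\sigma_i$ with $i > k$ permute the symmetric product in $x_3$ and fix $x_4$ entirely, so they act trivially, as required by the trivial $S_{n-k}$-action. Finally, for $k=4$, the ``swap'' generator $\sigma_2 \sigma_{13}^{-1}$ sends $\tau_{12}$ to $\tau_{34}$ while preserving $(1-T_{14})(1-T_{23})$, so it sends $x_4$ to $(1-T_{14})(1-T_{23})\tau_{34}$, which equals $x_4$ by Lemma~\ref{lemma:key}.

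The last step is to match each scalar computed above with $\rho_k \circ F_1$ on the corresponding generator; this is a routine computation of $\omega_k$ via the winding-number formulas (e.g.\ $\omega_4(\sigma_2\sigma_{13}^{-1}) = \tfrac{1}{2} - \tfrac{1}{2} = 0$, consistent with $\sigma_2\sigma_{13}^{-1}$ acting trivially on $x_4$), and the kernel elements of $F_1$ correspond exactly to the trivially-acting generators. The main obstacle is simply organizing the sizable list of generator checks; the conceptual engine, making each individual check short, is the commutativity of the $\PZ_n$-action together with Lemmas~\ref{lem:lantern} and~\ref{lemma:key}.
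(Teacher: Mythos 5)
Your proposal is correct and follows essentially the same route as the paper: establish nonvanishing of $x_k$ (via membership in $\S_3'$ for $x_4$ and via the forgetful map to $\B_3[4]$ for $x_3$), then check that each generator of $\B_n^{I_k}$ from Lemma~\ref{lem:bni gens} acts on $\langle x_k\rangle$ by the scalar $\rho_k\circ P_1$, using the commutativity of the $\PZ_n$-action together with Lemmas~\ref{lem:lantern} and~\ref{lemma:key}. The only differences are cosmetic (e.g.\ citing Proposition~\ref{prop:low} rather than Theorem~\ref{thm:main} for the nonvanishing of $x_4$).
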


\begin{proof}

We begin by observing that $x_3$ and $x_4$ are nonzero in $H_1(\B_n[4];\C)$.  The element $x_4$ is certainly nonzero, as it is one of the basis elements of $H_1(\B_n[4];\C)$ from Theorem~\ref{thm:main}.  To see that the element $x_3$ is nonzero, we apply the forgetful map $\PB_n \to \PB_3$ that forgets the last $n-3$ strands.  Via this map, $(1+T_{1j})$ and $(1+T_{2j})$ both map to 2 in $\Q[\PZ_3]$, and $(1-T_{13})\tau_{12}$ maps to $(1-T_{13})\tau_{12}$ in $H_1(\B_3[4];\C)$.  Since the latter is one of the basis elements for $H_1(\B_3[4];\C)$ from Theorem~\ref{thm:main} it follows that the image of $x_3$, hence $x_3$ itself, is nonzero.

As in the introduction, the action of $\Z_n^{I_k}$ on $V_k(\rho_k) \boxtimes V_{n-k}(0)$ factors through the surjection to $P : \Z_n^{I_k} \to \Z_k^{I_k} \times S_{n-k}$ from Lemma~\ref{lem:product}.  Let $P_1$ denote the composition of $P$ with projection to the first factor.

For each $k$, Lemma~\ref{lem:bni gens} gives a set of generators for $\B_n^{I_k}$.  We will show that the image $g$ of each generator in $\Z_n^{I_k}$ preserves $\langle x_k \rangle$ and moreover that $gx_k = \rho_k \circ P_1(g)x_k$.  Since the representation $V_k(\rho_k) \boxtimes V_{n-k}(0)$ is determined by $\rho_k \circ P_1$ the lemma follows from this.  In the argument we refer to an element of $\B_n^{I_k}$ and its image in $\Z_n^{I_k}$ by the same symbol.  

We begin with the case $k=3$.  Again, to simplify the exposition, we assume $n \geq 4$.  For $T_{13}$ we have $T_{13}(1-T_{13}) = -(1-T_{13})$ and so $T_{13}x_3 = -x_3 = \rho_3 \circ P_1(T_{13})x_3$, as desired.  For $T_{14}$ we have $T_{14}(1+T_{14}) = 1+T_{14}$ and so again $T_{14}x_3 = x_3 = \rho_3 \circ P_1(T_{14})x_3$.  Next we have $T_{34}\tau_{12}=\tau_{12}$ and so $T_{34}x_3 = x_3 = \rho_3 \circ P_1(T_{34})x_3$.

For $\sigma_1$ we use the following relations in $\B_n$: $\sigma_1T_{1j}\sigma_1^{-1} = T_{2j}$ for $j \geq 3$,  $\sigma_1T_{2j}\sigma_1^{-1} = T_{12}T_{1j}T_{12}^{-1}$ for $j \geq 3$,  and $\sigma_1$ commutes with $T_{12}^2$.  Since $\PZ_n$ is abelian we have $T_{12}T_{1j}T_{12}^{-1} = T_{1j}$ in $\Z_n^{I_3}$.  Using these facts and Lemma~\ref{lem:lantern} in turn we obtain that 
\begin{align*}
\sigma_1 \cdot (1-T_{13})\prod_{4 \leq j \leq n}(1+T_{1j})(1+T_{2j})\tau_{12} &= (1-T_{23})\prod_{4 \leq j \leq n}(1+T_{1j})(1+T_{2j})\tau_{12} \\ &= (1-T_{13})\prod_{4 \leq j \leq n}(1+T_{1j})(1+T_{2j})\tau_{12},
\end{align*}
which is to say that $\sigma_1 x_3 = x_3 = \rho_3 \circ P_1(\sigma_3)x_3$.  

Finally we must deal with the $\sigma_j$ with $j \geq 4$.  As elements of $\B_n$, each of these commutes with $T_{12}^2$ and $T_{13}$.  Similarly, for $i \in \{1,2\}$ and $k \notin \{1,2,j-1,j\}$ we have that $\sigma_j$ commutes with $T_{ij}$.  Also, for $i \in \{1,2\}$ we have that $\sigma_j T_{ij} \sigma_j^{-1} = T_{i,j+1}$ and $\sigma_j T_{i,j+1} \sigma_j^{-1} = T_{j,j+1}T_{ij}T_{j,j+1}^{-1}$ in $\B_n$.  As before we have the the latter is equal to $T_{ij}$ in $\Z_n^{I_3}$.  We deduce that $\sigma_j x_3 = x_3 = \rho_3\circ P_1(\sigma_j)x_3$ for $j \geq 4$.

We now treat the case $k=4$.  To simplify the exposition, we assume $n \geq 5$.  For $T_{45}$, we have that $T_{45}\tau_{12} = \tau_{12}$ and so $T_{45}x_4 = x_4 = \rho_4 \circ P_1(T_{45})x_4$.  For $T_{23}$ we have $T_{23}(1-T_{23}) = -(1-T_{23})$ and so $T_{23}x_4 = -x_4 = \rho_4 \circ P_1(T_{23})x_4$.  Since $\sigma_i$ commutes with $T_{14}$, $T_{23}$, and $T_{12}^2$ for $i \geq 5$ we also have $\sigma_i x_4 = x_4 = \rho_4 \circ P_1(\sigma_i)x_4$ for $i \geq 5$.  Using the relations $\sigma_1 T_{14} \sigma_1^{-1} = T_{24}$ and $\sigma_1 T_{23}^2 \sigma_1^{-1} = T_{12} T_{13}^2 T_{12}^{-1}$ in $\B_n$ we have
\[
\sigma_1 \cdot (1-T_{14})(1-T_{23})\tau_{12} = (1-T_{24})(1-T_{13})\tau_{12} = (1-T_{14})(1-T_{23})\tau_{12}
\]
and so $\sigma_1 x_4 = x_4 = \rho_4 \circ P_1(\sigma_1)x_4$.  Finally, we will show that $\sigma_2\sigma_{13}^{-1} x_4 = x_4 = \rho_4 \circ P_1(\sigma_2\sigma_{13}^{-1})x_4$.   The braid $\sigma_2\sigma_{13}^{-1}$ commutes with $T_{14}$ and $T_{23}$, and $(\sigma_2\sigma_{13}^{-1})T_{12}^2(\sigma_2\sigma_{13}^{-1})^{-1} = T_{34}^2$ in $\B_n$.  Applying these facts and the first equality of Lemma~\ref{lemma:key} in turn we have:
\[
\sigma_2\sigma_{13}^{-1} \cdot (1-T_{14})(1-T_{23})\tau_{12} = (1-T_{14})(1-T_{23})\tau_{34} = (1-T_{14})(1-T_{23})\tau_{12},
\]
as desired.
\end{proof}

\begin{proposition}
\label{prop:rep1}
There are $\Z_n$-equivariant isomorphisms
\[
H_1(\B_n[4]; \C)\cong 
\begin{cases}
V_2(1,(0)) & n =2 \\[1ex]
V_3(1,(0)) \oplus V_3(1,(1)) \oplus V_3(\rho_3,(0)) & n=3 \\[1ex]
V_n(1,(0)) \oplus V_n(1,(1)) \oplus V_n(1,(2)) \oplus V_n(\rho_3,(0)) \oplus V_n(\rho_4,(0))  & n\geq 4.\\
\end{cases}
\]
\end{proposition}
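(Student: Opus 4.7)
The plan is to exhibit each claimed summand as an explicit $\Z_n$-submodule of $H_1(\B_n[4];\C)$, to argue that their sum is direct, and to conclude by a dimension count against Theorem~\ref{thm:main}. I focus on the main case $n\geq 4$; the cases $n=2,3$ are handled identically (and more easily), using only the summands that remain valid.

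First I would isolate the summands with $\rho = 1$. The action of $\PZ_n$ on these is trivial, so I focus on the invariants $H_1(\B_n[4];\C)^{\PZ_n}$. Since $\PZ_n$ is finite and we work over $\C$, the Hochschild--Serre spectral sequence for the extension $1 \to \B_n[4] \to \PB_n \to \PZ_n \to 1$ collapses, giving an $S_n$-equivariant isomorphism
\[
H_1(\B_n[4];\C)^{\PZ_n} \cong H_1(\B_n[4];\C)_{\PZ_n} \cong H_1(\PB_n;\C).
\]
Church--Farb's decomposition of $H_1(\PB_n;\C)$ as an $S_n$-module then contributes exactly the summands $V_n(1,(0)) \oplus V_n(1,(1)) \oplus V_n(1,(2))$, viewed as $\Z_n$-modules through the quotient $\Z_n \to S_n$.

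Next, for $k \in \{3,4\}$ I would realize the summand $V_n(\rho_k,(0))$ as the $\Z_n$-submodule generated by the orbit of $x_k$. By Lemma~\ref{lem:x1x2}, the span $\langle x_k\rangle$ is a $\Z_n^{I_k}$-submodule isomorphic to $V_k(\rho_k) \boxtimes V_{n-k}(0)$. The universal property of induction then produces a $\Z_n$-equivariant surjection
\[
V_n(\rho_k,(0)) = \Ind_{\Z_n^{I_k}}^{\Z_n}\bigl(V_k(\rho_k) \boxtimes V_{n-k}(0)\bigr) \twoheadrightarrow \Z_n \cdot \langle x_k\rangle.
\]
Since the source is irreducible by Corollary~\ref{cor:irred} and $x_k \neq 0$, this surjection is an isomorphism.

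Finally, these three families of summands are supported on pairwise disjoint $\PZ_n$-isotypic components, indexed respectively by the $S_n$-orbits $\{\emptyset\}$, $S_n \cdot I_3$, and $S_n\cdot I_4$ of subsets of $[n]^{\underline 2}$; these three orbits are distinct because their representatives have sizes $0$, $2$, and $4$. Hence the sum of the exhibited submodules is automatically direct, and computing dimensions yields
\[
\binom{n}{2} + 3\binom{n}{3} + 3\binom{n}{4},
\]
matching $\dim H_1(\B_n[4];\C)$ by Theorem~\ref{thm:main}. The sum therefore exhausts the whole space, proving the proposition. The main difficulty is packaged into Lemma~\ref{lem:x1x2}, namely the discovery and verification of the distinguished vectors $x_3$ and $x_4$; once these are in hand, the argument above is a clean combination of isotypic decomposition, Frobenius reciprocity, and a dimension count.
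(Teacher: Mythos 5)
Your proposal is correct and follows the same overall architecture as the paper's proof: exhibit each claimed irreducible as an explicit submodule, note that the pieces are supported on pairwise disjoint $\PZ_n$-isotypic components (indexed by the orbits of $\emptyset$, $I_3$, $I_4$) so that their sum is direct, and finish with the dimension count against Theorem~\ref{thm:main}. The one genuine difference is the treatment of the $\rho=1$ part. The paper stays self-contained: it introduces the explicit vectors $\alpha_{ij}=\prod_{r<s}(1+T_{rs})\tau_{ij}$, checks via the forgetful maps to $\B_2[4]$ that they are independent and permuted by $\Z_n$, identifies their span with $\Ind_{\Z_n^{I_2}}^{\Z_n}\C$, and only then applies the branching rule to obtain $V_n(1,(0))\oplus V_n(1,(1))\oplus V_n(1,(2))$. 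You instead pass to $\PZ_n$-invariants, identify them $S_n$-equivariantly with $H_1(\PB_n;\C)$, and import Church--Farb's decomposition of the latter. This is valid and shorter, but it uses as input exactly the statement that the paper intends to \emph{recover} as a corollary of Theorem~\ref{thm:reps}; the paper's route derives that decomposition rather than assuming it (the underlying computation, the Pieri rule applied to $\Ind_{S_2\times S_{n-2}}^{S_n}\C$, is the same in both). For the summands $V_n(\rho_k,(0))$ the two arguments coincide up to packaging: the paper shows the translates $g\langle x_k\rangle$ are $g(I_k)$-isotypic, hence independent, and sum to $\Ind_{\Z_n^{I_k}}^{\Z_n}\langle x_k\rangle$, while you apply Frobenius reciprocity together with irreducibility of the induced module (Corollary~\ref{cor:irred}) and the nonvanishing of $x_k$, which is indeed established inside the proof of Lemma~\ref{lem:x1x2}.
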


\begin{proof}[Proof of Proposition~\ref{prop:rep1}]

As usual, to simplify the exposition, we assume $n \geq 4$.  The other cases are obtained by ignoring the appropriate terms.

The first step is to show that we have the following isomorphisms of $\Z_n$-modules:
\[
H_1(\B_n[4]; \C) \cong 
\begin{cases}
\Ind_{\Z_2^{I_2}}^{\Z_2}\C & n=2 \\[2ex]
\Ind_{\Z_3^{I_2}}^{\Z_3}\C \oplus \Ind_{\Z_3^{I_3}}^{\Z_3}\langle x_3\rangle & n=3 \\[2ex]
\Ind_{\Z_n^{I_2}}^{\Z_n}\C \oplus \Ind_{\Z_n^{I_3}}^{\Z_n}\langle x_3\rangle \oplus \Ind_{\Z_n^{I_4}}^{\Z_n}\langle x_4\rangle & n \geq 4,
\end{cases}
\]
where $\C$ is the trivial $\Z_2^{I_2}$ module.  The second step is to identify the summands of this decomposition with the summands in the statement of the theorem.  

We begin with the first step.   The index of $\Z_n^{I_k}$ in $\B_n$ is $n \choose 2$, $3{n  \choose 3}$, and $3{n \choose 4}$ for $k$ equal to 2, 3, and 4, respectively.  Thus the dimension of the purported decomposition equals the dimension of $H_1(\B_n[4]; \C)$ as given in Theorem~\ref{thm:main}.  Since representations of finite groups are completely reducible in characteristic zero, it thus suffices to show that the representations
\[
\Ind_{\Z_n^{I_2}}^{\Z_n}\C, \quad 
\Ind_{\Z_n^{I_3}}^{\Z_n}\langle x_3 \rangle, \quad \text{and} \quad 
\Ind_{\Z_n^{I_4}}^{\Z_n}\langle x_4 \rangle
\]
appear as submodules of $H_1(\B_n[4]; \C)$ and that they pairwise intersect in the zero vector.  We deal with each summand in turn.  

We start with the first submodule $\Ind_{\Z_n^{I_2}}^{\Z_n} \C$. For $i < j$ we define 
\[
\alpha_{ij} = \prod_{r<s} (1+T_{rs}) \tau_{ij}.
\]
The $\alpha_{ij}$ are linearly independent since $\alpha_{ij}$ is detected exactly by the forgetful map $\B_n[4] \to \B_2[4]$ that forgets all but the $i$th and $j$th marked points in $\D_n$. Since $g \alpha_{ij} = \alpha_{g(i)g(j)}$ for all $g \in \Z_n$ it follows that $H_1(\B_n[4]; \C)$ contains the $\Z_n$-module
\[
\bigoplus_{[g]\in \Z_n/\Z_n^{I_2}} g\langle \alpha_{12}\rangle = \Ind_{\Z_n^{I_2}}^{\Z_n}\langle \alpha_{12}\rangle = \Ind_{\Z_n^{I_2}}^{\Z_n}\C,
\]
as desired.  

Next, we identify the submodules $\Ind_{\Z_n^{I_k}}^{\Z_n}\langle x_k\rangle$.  Fix $k \in  \{3,4\}$.  We consider the subspaces $g\langle x_k\rangle$ of $H_1(\B_n[4];\C)$ for $[g]\in \Z_n/\Z_n^{I_k}$.  Since $\langle x_k\rangle$ is a $\Z_n^{I_k}$-module (Lemma~\ref{lem:x1x2}), these subspaces do not depend on the choice of $g \in [g]$. It follows from the fact that $\langle x_k\rangle$ is $I_k$-isotypic (Lemma~\ref{lem:iso}) and Lemma~\ref{lem:symm} that the $g\langle x_k\rangle$ are mutually non-isomorphic, and so $H_1(\B_n[4];\C)$ contains the direct sum.  We may write this direct sum as 
\[
\bigoplus_{[g]\in \Z_n/\Z_n^{I_k}} g\langle x_k \rangle = \Ind_{\Z_n^{I_k}}^{\Z_n}\langle  x_k \rangle,
\]
which is the desired submodule.

Since $I_2$, $I_3$, and $I_4$ lie in different $\Z_n$-orbits, it follows from Lemma~\ref{lem:symm} that the three summands we have found have trivial intersection pairwise.  This completes the first step.  

We now proceed to the second step.   By Lemma~\ref{lem:x1x2}, the second and third summands from the first step agree with the summands $V_n(\rho_3,(0))$ and $V_n(\rho_4,(0))$ from the statement of the proposition.  It therefore remains to show that we have an isomorphism of $\Z_n$-modules 
\[
\Ind_{\Z_n^{I_2}}^{\Z_n}\C \cong V_n(1,(0)) \oplus V_n(1,(1)) \oplus V_n(1,(2)).
\]
Any representation of $\Z_n$ where $\PZ_n$ acts trivially can be naturally identified with a representation of $S_n$.  Applying this identification to the right-hand side of the above isomorphism yields the $S_n$-representation $V_n(0) \oplus V_n(1) \oplus V_n(2)$.  Recall that on the left-hand side $\C$ is the trivial $\Z_n^{I_2}$-representation and so again $\PZ_n$ acts trivially.  Applying the same identification to the left-hand side yields the $S_n$-representation $\Ind_{S_2 \times S_{n-2}}^{S_n}\C$.  It follows from the branching rule that the latter is isomorphic to $V_n(0) \oplus V_n(1) \oplus V_n(2)$, as desired.  
\end{proof}


\subsection{Uniform representation stability}
\label{sec:urs}

In this section we prove the second statement of Theorem~\ref{thm:reps}, namely, that the sequence $\{H_1(\B_n[4]; \Q)\}$ of $\Z_n$-modules is uniformly representation stable (Proposition~\ref{prop:urs} below).  

\begin{lemma}
\label{lem:inc}
The standard embedding $\B_n\to \B_{n+1}$ induces injective maps $\B_n[4] \to \B_{n+1}[4]$ and $\Z_n \rightarrow \Z_{n+1}$.
\end{lemma}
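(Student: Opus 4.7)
The plan is to leverage the theorem of Brendle and the second author that $\B_n[4] = \PB_n^2$, together with the existence of a retraction on pure braid groups obtained by forgetting a strand. The whole argument is a two-line diagram chase once the retraction is in hand; there is no serious obstacle, just some bookkeeping about which strands map where.

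First I would dispose of the first assertion. The standard embedding $\iota \colon \B_n \to \B_{n+1}$ adds a trivial strand, so it is injective and carries pure braids to pure braids. Because group homomorphisms carry squares to squares, $\iota(\PB_n^2) \subseteq \PB_{n+1}^2$. Invoking $\B_n[4] = \PB_n^2$ on both sides, $\iota$ restricts to a map $\B_n[4] \to \B_{n+1}[4]$, and this restriction is injective because $\iota$ itself is.

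For the map $\Z_n \to \Z_{n+1}$, injectivity is equivalent to the identity $\iota^{-1}(\B_{n+1}[4]) = \B_n[4]$. One inclusion is the first part of the lemma, so the content is the reverse inclusion: if $f \in \B_n$ and $\iota(f) \in \B_{n+1}[4]$, then $f \in \B_n[4]$. Since $\B_{n+1}[4] \subseteq \PB_{n+1}$ and the added strand is trivial, the permutation induced by $\iota(f)$ on the first $n$ strands coincides with the one induced by $f$; thus $f \in \PB_n$. Now I would use the forgetful homomorphism $\pi \colon \PB_{n+1} \to \PB_n$ (the map obtained by forgetting the added strand, which exists as a group homomorphism on pure braid groups). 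By construction $\pi \circ \iota$ is the identity on $\PB_n$, and $\pi$ being a homomorphism gives $\pi(\PB_{n+1}^2) \subseteq \PB_n^2 = \B_n[4]$. Therefore
\[
f = \pi(\iota(f)) \in \pi(\B_{n+1}[4]) \subseteq \B_n[4],
\]
completing the argument.

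The only subtle point is that there is no forgetful homomorphism defined on all of $\B_{n+1}$; this is why one first uses the trivial-strand property to force $f$ into $\PB_n$ before invoking $\pi$. Once this order of operations is respected, the verification is immediate.
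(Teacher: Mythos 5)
Your proof is correct. The first assertion is handled exactly as in the paper: both arguments rest on the Brendle--Margalit identification $\B_n[4]=\PB_n^2$ and the fact that a homomorphism carries squares to squares. For the injectivity of $\Z_n\to\Z_{n+1}$, however, you take a genuinely different route. The paper characterizes $\B_n[4]$ as the kernel of the mod 2 abelianization $\PB_n\to(\ZZ/2)^{n\choose 2}$ and deduces $\iota^{-1}(\B_{n+1}[4])\subseteq\B_n[4]$ from a commuting square whose right-hand vertical map $(\ZZ/2)^{n\choose 2}\to(\ZZ/2)^{n+1\choose 2}$ is injective. You instead exploit the forgetful retraction $\pi\colon\PB_{n+1}\to\PB_n$ with $\pi\circ\iota=\mathrm{id}$, noting that $\pi(\PB_{n+1}^2)\subseteq\PB_n^2$, so $f=\pi(\iota(f))\in\B_n[4]$ whenever $\iota(f)\in\B_{n+1}[4]$. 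Your version is arguably cleaner in that it avoids invoking the second characterization of $\B_n[4]$ (as the kernel of the mod 2 abelianization) and uses only the generated-by-squares description; it is also the same retraction trick the paper itself deploys later (in the proof of Proposition 8.9) to get injectivity of the induced maps on $H_1$. You are also right to flag, and correctly handle, the one subtlety: the forgetful map is only a homomorphism on pure braids, so one must first observe that $\iota(f)$ pure forces $f$ pure before applying $\pi$ — a point the paper's write-up leaves implicit.
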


\begin{proof}

As mentioned in the introduction, Brendle and the second author proved that the group $\B_n[4]$ is equal to $\PB_n^2$.  It follows that the image of $\B_n[4]$ under $\B_n \to \B_{n+1}$ is contained in $\B_{n+1}[4]$.  Thus the standard embedding $\B_n\to \B_{n+1}$ induces a well-defined maps $\B_n[4] \to \B_{n+1}[4]$ and $\Z_n \rightarrow \Z_{n+1}$.  The first map is injective since it is the restriction of an injective map.  The injectivity of the second map is equivalent to the statement that the preimage of $\B_{n+1}[4]$ under $\B_n\to \B_{n+1}$ is contained in $\B_n[4]$.  This follows from the fact that $\PB_n^2$, hence $\B_n[4]$, is the kernel of the mod 2 abelianization of $\PB_n$ and the fact that the following square commutes, where the horizontal arrows are the mod 2 abelianizations, and the vertical maps are the inclusions:
\[
\xymatrix
   { 
   \PB_{n+1}  \ar[r] & (\ZZ/2)^{n+1 \choose 2}  & \\
   \PB_n  \ar[u] \ar[r] & (\ZZ/2)^{n \choose 2} \ar[u] & \\
   }
\]
This completes the proof.
\end{proof}

\begin{lemma}
\label{lem:bn4cons}
For each $k \geq 0$ and $n \geq 0$, the vector spaces $H_k(\B_n[4]; \Q)$ form a consistent sequence of $\Z_n$-representations with respect to the maps induced by the standard inclusions.
\end{lemma}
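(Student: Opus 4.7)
The plan is to verify that the standard inclusion $\iota\colon \B_n\to \B_{n+1}$ induces a map on the homology of the level 4 subgroups that intertwines the conjugation actions, and then to check that this equivariance passes to the quotient groups $\Z_n$.

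First I would recall from Lemma~\ref{lem:inc} that $\iota$ restricts to an inclusion $\B_n[4]\to \B_{n+1}[4]$ and descends to an injection $\Z_n\hookrightarrow \Z_{n+1}$. The $\Z_n$-action on $H_k(\B_n[4];\Q)$ is, by definition, the action induced by the conjugation action of $\B_n$ on its normal subgroup $\B_n[4]$; this descends from $\B_n$ to $\Z_n=\B_n/\B_n[4]$ because $\B_n[4]$ acts on itself by inner automorphisms, and inner automorphisms act trivially on group homology.

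The key observation is entirely formal: because $\iota$ is a group homomorphism, for any $g\in \B_n$ and $x\in \B_n[4]$ we have
\[
\iota(gxg^{-1})=\iota(g)\,\iota(x)\,\iota(g)^{-1}.
\]
Thus the inclusion $\B_n[4]\to \B_{n+1}[4]$ is $\B_n$-equivariant, where $\B_n$ acts on $\B_{n+1}[4]$ by conjugation through its image under $\iota$. Applying the functor $H_k(-;\Q)$ then produces a $\B_n$-equivariant map $\varphi_n\colon H_k(\B_n[4];\Q)\to H_k(\B_{n+1}[4];\Q)$.

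Finally I would argue that this $\B_n$-equivariance descends to $\Z_n$-equivariance. The source action factors through $\Z_n$ by construction. The pulled-back target action factors through the quotient of $\B_n$ by $\iota^{-1}(\B_{n+1}[4])$, which by Lemma~\ref{lem:inc} is exactly $\B_n/\B_n[4]=\Z_n$; the resulting $\Z_n$-action on the target coincides with the pullback of the $\Z_{n+1}$-action along $\Z_n\hookrightarrow \Z_{n+1}$. This is precisely the condition for $\{H_k(\B_n[4];\Q)\}$ to be a consistent sequence of $\Z_n$-representations. There is no substantive obstacle here; the argument is a routine diagram chase enabled by Lemma~\ref{lem:inc}.
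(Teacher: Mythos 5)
Your proposal is correct and is essentially the paper's own argument: the identity $\iota(gxg^{-1})=\iota(g)\iota(x)\iota(g)^{-1}$ is exactly the commutative square the paper draws, and both arguments then apply $H_k$ and invoke the triviality of the action of a group on its own homology to descend from $\B_n$-equivariance to $\Z_n$-equivariance.
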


\begin{proof}

For each $g \in \B_n$ there is a commutative diagram of groups
\[
\xymatrix
   { 
   \B_n[4] \ar[d]_{g} \ar[r] & \B_{n+1}[4] \ar[d]^{g} & \\
   \B_n[4]  \ar[r] & \B_{n+1}[4] & \\
   }
\]
where $g$ acts by conjugation.  The lemma then follows by applying $H_k$ to all four groups, and using the fact that a group acts trivially on its homology groups.
\end{proof}

\begin{proposition}
\label{prop:urs}
The sequence $\{H_1(\B_n[4]; \Q)\}$ of $\Z_n$-modules is uniformly representation stable.
\end{proposition}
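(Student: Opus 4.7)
The plan is to verify the three conditions in the definition of uniform representation stability for the sequence $\{H_1(\B_n[4];\C)\}$ with stability threshold $N=4$. By Lemma~\ref{lem:bn4cons} the sequence is consistent in the sense of Church--Farb, so all three conditions make sense. The third condition (stability of multiplicities) will follow immediately from Proposition~\ref{prop:rep1}: for every $n \geq 4$ the module has exactly one copy each of $V_n(1,(0))$, $V_n(1,(1))$, $V_n(1,(2))$, $V_n(\rho_3,(0))$, and $V_n(\rho_4,(0))$, so the multiplicities $c_{\rho,\lambda,n}$ are all $0$ or $1$ and independent of $n$ for $n \geq 4$.

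For the injectivity of $\varphi_n \colon H_1(\B_n[4];\C) \to H_1(\B_{n+1}[4];\C)$, I would appeal directly to the explicit basis $\S$ of Theorem~\ref{thm:main}. Every element of $\S$ for $\B_n[4]$ is a product of $T_{ab}$'s applied to a $\tau_{ij}$, all indexed by elements of $[n]$ (and in fact only by four or fewer such indices). Under the standard inclusion $\B_n \to \B_{n+1}$ these expressions map to the correspondingly-labeled elements of the basis $\S$ for $H_1(\B_{n+1}[4];\C)$. Since distinct basis elements of $\S$ for $n$ map to distinct basis elements of $\S$ for $n+1$, the induced map $\varphi_n$ is injective.

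For the orbit-spanning condition, I would verify that for $n \geq 4$ every element of the basis $\S$ for $H_1(\B_{n+1}[4];\C)$ lies in the $\Z_{n+1}$-orbit of $\varphi_n(H_1(\B_n[4];\C))$. Each such basis element involves at most four indices drawn from $[n+1]$. Since $n \geq 4$, for any four-element subset $A \subseteq [n+1]$ there is a permutation $\sigma \in S_{n+1}$, hence an element of $\Z_{n+1}$, carrying $A$ into $[n]$. Lifting $\sigma$ to $\B_{n+1}$ and using the naturality built into the consistency of Lemma~\ref{lem:bn4cons}, each basis element of $\S \subset H_1(\B_{n+1}[4];\C)$ is the image under the $\Z_{n+1}$-action of a basis element lying in $\varphi_n(H_1(\B_n[4];\C))$. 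Hence the $\Z_{n+1}$-span of $\varphi_n(H_1(\B_n[4];\C))$ exhausts $H_1(\B_{n+1}[4];\C)$.

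Since the three ingredients -- Theorem~\ref{thm:main}, Proposition~\ref{prop:rep1}, and Lemma~\ref{lem:bn4cons} -- are all in hand, the proof reduces to these direct verifications and there is no serious obstacle. The only point requiring any thought is the combinatorial observation that the basis $\S$ behaves well under the standard stabilization, so that injectivity and orbit-spanning follow from simple counting on index sets; the core analytic work has already been absorbed into the proofs of Theorems~\ref{thm:main} and \ref{thm:reps}.
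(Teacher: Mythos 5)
Your proof is correct and follows the same overall strategy as the paper: verify the three conditions of the definition, with the multiplicity condition coming directly from Proposition~\ref{prop:rep1}. The only differences are in how the first two conditions are checked. For injectivity the paper simply observes that the standard inclusion $\B_n[4]\to\B_{n+1}[4]$ is split by the forgetful map, so $\varphi_n$ is automatically injective; your argument via the basis of Theorem~\ref{thm:main} works equally well. For the orbit-spanning condition the paper invokes Proposition~\ref{prop:span} (the $\tau_{ij}$ generate $H_1(\B_{n+1}[4];\C)$ as a $\Q[\PZ_{n+1}]$-module) together with the fact that every $\tau_{ij}$ lies in the $\Z_{n+1}$-orbit of $\tau_{12}\in\varphi_n(H_1(\B_n[4];\C))$. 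Your index-permutation argument reaches the same conclusion, but one phrase is slightly imprecise: a lift of $\sigma\in S_{n+1}$ does not carry a basis element of $\S$ exactly to the correspondingly relabeled basis element, only to that element up to a further twist by an element of $\PZ_{n+1}$ (since $\sigma T_{ij}^2\sigma^{-1}$ is merely conjugate in $\PB_{n+1}$ to $T_{\sigma(i)\sigma(j)}^2$; cf.\ the proof of Lemma~\ref{boundarylemma}). This does not affect the conclusion, because the relevant group element can be taken to be $\sigma$ composed with a pure braid and the $\Z_{n+1}$-orbit absorbs that correction, but the step should be phrased that way.
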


\begin{proof}

We check the three parts of the definition of uniform representation stability in turn.  The standard  inclusion map $\B_n[4] \to \B_{n+1}[4]$ from Lemma~\ref{lem:inc} is a right inverse to the surjective map $\B_{n+1}[4] \to \B_n[4]$ obtained by forgetting the last strand.  It follows that the induced maps $\varphi_n : H_1(\B_n[4];\C) \to H_1(\B_{n+1}[4];\C)$ are injective.  It follows from the first statement of Proposition~\ref{prop:span} and the fact that every $\tau_{ij}$ lies in the same $\Z_{n+1}$-orbit as $\tau_{12}$ that the $\Z_{n+1}$-span of $\varphi_n(H_1(\B_n[4];\C))$ is equal to $H_1(\B_{n+1}[4];\C)$.  Finally, the condition on the multiplicities of the irreducible components follows immediately from Proposition~\ref{prop:rep1}.
\end{proof}


\section{A non-generating set}\label{sec:nongen}

In this short section we use Lemma~\ref{boundarylemma} to prove Theorem~\ref{thm:2}, which states that if $\BI_n \leqslant G \leqslant \B_n[4]$ then $G$ is not generated by even powers of Dehn twists about curves surrounding two points.

\begin{proof}[Proof of Theorem~\ref{thm:2}]

Brendle and the second author proved that the standard forgetful maps $\B_n[4] \to \B_3[4]$ induce a surjection $G \to \B_3[4]$; see \cite[Corollary 4.4]{brendlemargalit}.  Under any such forgetful map, an even power of a Dehn twist about a curve surrounding two marked points either maps to the identity or to an even power of a Dehn twist about a curve surrounding two marked points.  Thus  it suffices to prove the result for the case $n=3$.

By Lemma~\ref{boundarylemma}, we have in $H_1(\B_3[4]; \Q)$ that 
\[
\tau_{\partial} = \frac{1}{2}\left(\tau_{12} + \tau_{13} + \tau_{23} + T_{13}\tau_{12} + T_{12}\tau_{13} + T_{12}\tau_{23}\right).
\]
On the other hand, if $T_{\partial}^2$ could be written as a product of even powers of Dehn twists about curves surrounding two points, there would exist integers $c_1,\ldots, c_6$ such that
\[
\tau_{\partial} = c_1\tau_{12} + c_2\tau_{13} + c_3\tau_{23} + c_4T_{13}\tau_{12} + c_5T_{12}\tau_{13} + c_6T_{12}\tau_{23}
\]
But this is impossible, since $\tau_{12}$, $\tau_{13}$, $\tau_{23}$, $T_{13}\tau_{12}$, $T_{12}\tau_{13}$, and $T_{12}\tau_{23}$ are exactly the elements of our basis $\S$ for $H_1(\B_3[4]; \Q)$ from Corollary~\ref{cor:3b cor}. 
\end{proof}


\section{Albanese cohomology}\label{sec:alb}

In this section, we will prove Theorems~\ref{thm:mainD}, and~\ref{thm:mainE}, which state that $H^*_{Alb}(\B_n[4]; \Q)$ is a proper subalgebra of $H^*(\B_n[4]; \Q)$ for all $n\geq 15$ and that $H^*_{Alb}(\SMod_g[4];\Q)$ is a proper subalgebra of $H^*(\SMod_g[4];\Q)$ for $g\geq 7$, respectively. We conclude the section with the proofs of Theorem~\ref{thm:smallbetti} and Proposition~\ref{prop:genus2prop}. The former gives the Betti numbers of $\B_3[4]$ and $\B_4[4]$, while the latter gives a new (large) lower bound on the top Betti number of $\Mod_2[4]$.

\subsection{Interpretations of the level 4 braid group \`a la Brendle--Margalit}\label{sec:ala} In this section, it will be advantageous to recast the group $\B_n[4]$ in two different ways.  Specifically, we will utilize the following two isomorphisms, which hold for $g \geq 1$:
\begin{align*}
\B_{2g+1}[4] &\cong \SMod_g[4] \times \ZZ \\
\B_{2g+1}[4] &\cong \PMod_{0,2g+2}^2 \times \ZZ.
\end{align*}
As in Section~\ref{3strandsection}, $\PMod_{0,n}$ denotes the pure mapping class group of a sphere with $n$ marked points and $\PMod_{0,n}^2$ is the subgroup generated by all squares.

Neither of the above isomorphisms are stated explicitly by Brendle and the second author.  However, both are easily obtained from their work, as we shall explain currently. 

We begin with the first isomorphism.   Brendle and the second author \cite[Theorem 4.2]{pointpushing} proved the analogous isomorphism $\BI_{2g+1}  \cong \SI_g \times \ZZ$ (their theorem actually refers to the hyperelliptic Torelli group $\SI_g^1$ of a surface with boundary instead of $\BI_{2g+1}$, but as explained in their introduction the groups $\SI_g^1$ and $\BI_{2g+1}$ are naturally isomorphic).  The proof of their isomorphism applies verbatim in our situation, except with the Torelli group replaced with the level 4 mapping class group.  

The second isomorphism follows from the theorem of Brendle and the second author that $\B_{2g+1}[4] \cong \PB_{2g+1}^2$ and the fact that $\PB_n$ splits as a direct product as $\PMod_{0,n+1} \times \ZZ$; see \cite[p. 252]{farbmargalit}.

We can also combine the above two isomorphisms in order to obtain the isomorphism
\[
\SMod_g[4] \cong \PMod_{0,2g+2}^2
\]
for $g \geq 1$.  Indeed, the group $\B_{2g+1}[4]$ has infinite cyclic center, and so the composition of the two isomorphisms above must identify the two given $\ZZ$-factors.

In this section we will use one other fact from the work of Brendle and the second author.  They observed \cite[Corollary 4.4]{brendlemargalit} that each of the forgetful maps $\PB_n \to \PB_m$ induces a surjective homomorphism
\[
\B_n[4] \to \B_m[4].
\]
(cf. the proof of Theorem~\ref{thm:2}).  This map is split.  For instance if the forgetful map $\PB_n \to \PB_m$ is the one obtained by forgetting the last $n-m$ marked points of $\D_n$ then the splitting is the restriction of the standard inclusion $\B_m \to \B_n$.  Both the surjectivity and the existence of the splitting follow directly from the isomorphism $\B_{2g+1}[4] \cong \PB_{2g+1}^2$.

\subsection{The proofs of Theorems~\ref{thm:mainD} and~\ref{thm:mainE}}

Our next goal is to prove Theorems~\ref{thm:mainD} and~\ref{thm:mainE}, which state that the Albanese cohomology algebras of $\B_n[4]$ and $\SMod_g[4]$ are proper subalgebras of $H^*(\B_n[4];\Q)$ and $H^*(\SMod_g[4]; \Q)$ for $n \geq 15$ and $g\geq 7$, respectively.  We give two lemmas that give the cohomological dimension and the Euler characteristic of $\SMod_g[4]$ before proceeding to the proofs of Theorem~\ref{thm:mainE} and~\ref{thm:mainD} (in that order).

For a group $G$ we denote by $\cd G$ its cohomological dimension.  

\begin{lemma}
\label{lem:cd} 
For $n \geq 3$ we have
\[
\cd \PMod_{0,n}^2 = n-3
\]
and for $g \geq 1$ we have
\[
\cd \SMod_g[4] = 2g-1.
\]
\end{lemma}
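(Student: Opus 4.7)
The plan is to establish the first formula for $\PMod_{0,n}^2$ by reducing to a classical result of Arnol'd on the pure braid group, and then to derive the formula for $\SMod_g[4]$ using the isomorphism $\SMod_g[4] \cong \PMod_{0,2g+2}^2$ recorded in Section~\ref{sec:ala}.

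First, I will invoke the classical computation $\cd \PB_n = n-1$ (due to Arnol'd) which follows from the facts that the ordered configuration space $F_n(\C)$ is a $K(\PB_n,1)$ realized as the complement of the complex braid arrangement and that the explicit computation of $H^*(\PB_n;\ZZ)$ shows this ring is torsion-free, concentrated in degrees $\leq n-1$, and nonzero in degree $n-1$. Combining this with the direct product splitting $\PB_n \cong \PMod_{0,n+1} \times \ZZ$ recorded in Section~\ref{sec:ala} and the fact that cohomological dimension is additive over direct products of torsion-free groups of finite cohomological dimension, I immediately deduce $\cd \PMod_{0,n} = n-3$ for $n\geq 3$.

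Next, I will observe that $\PMod_{0,n}^2$ is the kernel of the mod 2 abelianization of $\PMod_{0,n}$, hence a finite-index subgroup. Since $\PMod_{0,n}$ is torsion-free (as is standard for pure mapping class groups of punctured surfaces) and has finite cohomological dimension, Serre's theorem on cohomological dimension of finite-index subgroups applies and gives $\cd \PMod_{0,n}^2 = \cd \PMod_{0,n} = n-3$.

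Finally, substituting $n = 2g+2$ and applying the isomorphism $\SMod_g[4] \cong \PMod_{0,2g+2}^2$ from Section~\ref{sec:ala} yields $\cd \SMod_g[4] = (2g+2)-3 = 2g-1$. There is no serious obstacle in this argument; the one point that requires care is checking the torsion-freeness hypothesis for Serre's theorem, but this is standard for pure mapping class groups with at least three marked points.
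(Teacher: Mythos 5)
Your proof is correct and follows essentially the same route as the paper: the paper likewise starts from $\cd \PMod_{0,n} = n-3$, passes to the finite-index subgroup $\PMod_{0,n}^2$, and then applies the isomorphism $\SMod_g[4] \cong \PMod_{0,2g+2}^2$; you merely supply the extra justification of $\cd \PMod_{0,n} = n-3$ via $\cd \PB_n = n-1$ and the splitting $\PB_n \cong \PMod_{0,n+1}\times\ZZ$, which the paper takes for granted. (One small caveat: additivity of $\cd$ over arbitrary direct products of torsion-free groups is not a theorem in that generality, but the instance you actually need, $\cd(G\times\ZZ)=\cd G+1$ for $\cd G<\infty$, is valid.)
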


\begin{proof}

For $n \geq 3$ we have $\cd \PMod_{0,n} = n-3$.  The two statements now follow from the fact that $\PMod_{0,n}^2$ has finite index in $\PMod_{0,n}$ and the isomorphism $\SMod_g[4] \cong \PMod_{0,2g+2}^2$ from Section~\ref{sec:ala}, respectively.
\end{proof}

\begin{lemma}\label{lem:eulerlemma}
For $g\geq 1$ we have
\[
\chi(\SMod_g[4]) = -2^{{2g+1 \choose 2}-1}(2g-1)!
\]
\end{lemma}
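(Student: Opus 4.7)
The plan is to invoke the isomorphism $\SMod_g[4] \cong \PMod_{0,2g+2}^2$ from Section~\ref{sec:ala} and then exploit multiplicativity of the Euler characteristic on finite-index subgroups:
\[
\chi(\SMod_g[4]) = \chi(\PMod_{0,2g+2}^2) = [\PMod_{0,2g+2} : \PMod_{0,2g+2}^2] \cdot \chi(\PMod_{0,2g+2}).
\]
This reduces the problem to two independent calculations: the Euler characteristic of $\PMod_{0,n}$ and the index of $\PMod_{0,n}^2$ inside $\PMod_{0,n}$ for $n = 2g+2$.

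For the Euler characteristic of $\PMod_{0,n}$, I would induct using the Birman exact sequence
\[
1 \to \pi_1(S^2 \setminus \{n \text{ points}\}) \to \PMod_{0,n+1} \to \PMod_{0,n} \to 1,
\]
whose kernel is a free group of rank $n-1$, hence has Euler characteristic $2-n$. Starting from $\chi(\PMod_{0,3}) = 1$, this yields $\chi(\PMod_{0,n}) = (-1)^{n-3}(n-3)!$ and in particular $\chi(\PMod_{0,2g+2}) = -(2g-1)!$.

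For the index, I would use that $G^2 \supseteq [G,G]$ (because $G/G^2$ is an elementary abelian 2-group, hence abelian), so the subgroup $\PMod_{0,n}^2$ is precisely the kernel of the mod 2 abelianization of $\PMod_{0,n}$. Its index therefore equals $|H_1(\PMod_{0,n};\ZZ/2)|$. The standard splitting $\PB_{n-1} \cong \PMod_{0,n} \times \ZZ$ (using that $\PB_{n-1}$ has a central $\ZZ$ factor generated by the boundary twist) together with $H_1(\PB_{n-1};\ZZ) \cong \ZZ^{\binom{n-1}{2}}$ gives $H_1(\PMod_{0,n};\ZZ) \cong \ZZ^{\binom{n-1}{2}-1}$, and so $[\PMod_{0,2g+2} : \PMod_{0,2g+2}^2] = 2^{\binom{2g+1}{2}-1}$.

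Multiplying the two quantities yields $\chi(\SMod_g[4]) = -2^{\binom{2g+1}{2}-1}(2g-1)!$, as claimed. The main obstacle is really bookkeeping: confirming that $G^2$ coincides with the mod 2 commutator kernel (so that the index is computed by a homology calculation rather than by a direct combinatorial argument) and keeping the indexing of the Birman sequence consistent with the shift between $\PB_n$ and $\PMod_{0,n+1}$. Neither step is deep, but both must be set up cleanly for the final arithmetic to produce exactly the exponent $\binom{2g+1}{2}-1$.
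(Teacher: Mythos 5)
Your proof is correct and follows essentially the same route as the paper: identify $\SMod_g[4]$ with $\PMod_{0,2g+2}^2$, compute the index $2^{\binom{2g+1}{2}-1}$ via the mod 2 abelianization and the splitting $\PB_{n-1}\cong\PMod_{0,n}\times\ZZ$, and multiply by $\chi(\PMod_{0,2g+2})=-(2g-1)!$. The only difference is that you derive $\chi(\PMod_{0,n})=(-1)^{n-3}(n-3)!$ by induction on the Birman exact sequence, whereas the paper simply cites Harer--Zagier; your derivation is valid since $\PMod_{0,n}$ is an iterated extension of free groups, so Euler characteristics multiply across the sequence.
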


\begin{proof}

As explained in Section~\ref{sec:ala}, the group $\SMod_g[4]$ is isomorphic to $\PMod_{0,2g+2}^2$.  We will compute the Euler characteristic of the latter.  

We claim that the index of $\PMod_{0,n}^2$ in $\PMod_{0,n}$ is $2^{{n-1 \choose 2} - 1}$.  Since for any group $G$ we have $G/G^2 \cong H_1(G;\ZZ/2)$, it follows that
\[
\PMod_{0,n}/\PMod_{0,n}^2 \cong H_1(\PMod_{0,n}; \ZZ/2) \cong (\ZZ/2)^{{n-1 \choose 2}-1}
\]
The last isomorphism follows from the splitting $\PB_{n-1} \cong \PMod_{0,n}\times \ZZ$ and the usual description of the abelianization of $\PB_{n-1}$.  The claim follows.

Harer and Zagier \cite[p. 476]{harerzagier} proved that
\[
\chi(\PMod_{0,n}) = (-1)^{n-3}(n-3)!
\]
For any group $G$ and a subgroup $G'$ of finite index we have $\chi(G') = [G:G']\chi(G')$.  The lemma follows by combining this fact with the claim.
\end{proof}

\begin{proof}[Proof of Theorem~\ref{thm:mainE}]  

By Lemma~\ref{lem:cd} we have $\cd \SMod_g[4] = 2g-1$. Therefore, in order to show that $H^*_{Alb}(\B_n[4];\Q)$ is a proper subalgebra of $H^*(\B_n[4];\Q)$ we must show that the image of the cup product map
\[
\Lambda^i H^1(\SMod_g[4]; \Q) \to H^i(\SMod_g[4]; \Q).
\]
fails to be surjective for some $2 \leq i \leq 2g-1$. 

Let $b_i$ denote the $i$th Betti number of $\SMod_g[4]$ and let $d_i$ denote the dimension of the image of the above cup product map.  Our basic strategy is to show that there is some $i$ between 2 and $2g-1$ with $b_i > d_i$.  To do this we will estimate the $d_i$ from above and the $b_i$ from below.

We first claim that
\[
d_i \leq {b_1 \choose 2g-1}
\]
for all $2 \leq i \leq 2g-1$.  As $g \geq 7$ it follows from Corollary~\ref{cor:mainC} that $2g-1 < b_1/2$, and so
\[
d_i \leq \dim \Lambda^i H^1(\SMod_g[4]; \Q) = {b_1 \choose i} \leq {b_1 \choose 2g-1}
\]
for $i \leq 2g-1$, as desired.

We next claim that
\[
b_{2k-1} > \frac{1}{g-1}\left(2^{{2g+1 \choose 2}-1}(2g-1)! - b_1 \right)
\]
for some $k$ with $2 \leq k \leq g$.  Since (as above) the cohomological dimension of $\SMod_g[4]$ is $2g-1$ we have the following immediate consequence of Lemma~\ref{lem:eulerlemma}:
\[
b_1+b_3+\cdots + b_{2g-1} > 2^{{2g+1 \choose 2}-1}(2g-1)!
\]
The claim follows.

Combining the two claims, it is now enough to show that 
\[
\frac{1}{g-1}\left(2^{{2g+1 \choose 2}-1}(2g-1)! - b_1 \right)
>
{b_1 \choose 2g-1}
\]
for $g \geq 7$.

For $g = 7$ we can verify the inequality numerically.  Direct computation shows that the right-hand side is on the order of $10^{38}$ and that the left-hand side is on the order of $10^{40}$.

We now treat the general case $g \geq 8$.   We will perform four strengthenings of the desired inequality in order to obtain an inequality that we can prove with basic calculus.  First, using the estimate ${n \choose k} \leq n^k/k!$ and the estimate $(2g-1)! > \left(\frac{2g-1}{e}\right)^{2g-1}\sqrt{2\pi(2g-1)}$ we obtain the stronger inequality
\[
\frac{1}{g-1}\left(2^{{2g+1 \choose 2}-1}\left(\frac{2g-1}{e}\right)^{2g-1}\sqrt{2\pi(2g-1)} - b_1 \right)
>
\frac{b_1^{2g-1}}{(2g-1)!}
\]
Next, adding $b_1/(g-1)$ to both sides and using the fact that $b_1/(g-1) < b_1 \leq {b_1 \choose 2g-1}$, that ${n \choose k} \leq n^k$, and that $b_1^{2g-1} < b_1^{2g}$, we obtain the even stronger inequality
\[
2^{{2g+1 \choose 2}-1}\left(\frac{2g-1}{e}\right)^{4g-2}2\pi(2g-1) 
>
2b_1^{2g}.
\]
It follows from Theorem~\ref{thm:main} and the estimate ${n \choose k} \leq n^k/k!$ that $b_1 < \frac{(2g+2)^4}{6}$.  Using this and dividing both sides of the last inequality by 2 we obtain the even stronger inequality
\[
2^{{2g+1 \choose 2}-1}\left(\frac{2g-1}{e}\right)^{4g-2}\pi(2g-1)
>
\left(\frac{(2g+2)^4}{6}\right)^{2g}.
\]
Since both sides of the last inequality are positive, we may take the logarithms of both sides in order to obtain the equivalent inequality
\begin{align*}
\left(2g^2+g-1\right)\ln 2+ &(4g-2)\left(\ln(2g-1) -1\right) +  \ln(2g-1) + \ln \pi \\
&> 8g\ln(2g+2)-2g\ln 6.
\end{align*}
Set 
\[
G(x) = \left(2x^2+x-1\right)\ln 2+(4x-2)\left(\ln(2x-1)-1\right)+\ln(2x-1) + \ln \pi
\] and 
\[
H(x) = 8x\ln(2x+2)-2x\ln 6.
\]  
The last inequality can be restated as $G(g) > H(g)$. By direct computation, the function $F(x) = G(x)-H(x)$ satisfies $F(8) > 0$ and $F'(8) > 0$. Furthermore, for $x\geq 8$ we have that 
\begin{align*}
F''(x) & =  4\ln 2 + \frac{8}{2x-1} - \frac{8}{x+1} - \frac{4}{(2x-1)^2} - \frac{8}{(x+1)^2}\\
&> 4\ln 2 + 0 - \frac{8}{9}-\frac{4}{225}-\frac{8}{81}\\
&>0
\end{align*} 
where we have used the fact that $x\geq 8$ in the first inequality. This implies that $F'(x)$ is increasing for $x \geq 8$, and therefore that $F(x)$ is increasing for all $x\geq 8$. The theorem follows.
\end{proof}

\begin{proof}[Proof of Theorem~\ref{thm:mainD}]  

We will now derive Theorem~\ref{thm:mainD} from Theorem~\ref{thm:mainE}. Because of the isomorphism $\B_{2g+1}[4]\cong \SMod_g[4]\times \ZZ$ there is a split surjective homomorphism $\B_{2g+1}[4]\rightarrow \SMod_g[4]$ induced by projection onto the first factor. Any section $\sigma: \SMod_g[4]\rightarrow B_{2g+1}[4]$ induces a surjection
\[
\sigma^*: H^*(B_{2g+1}[4];\Q)\to H^*(\SMod_g[4];\Q)
\]
Let $\sigma^*(1)$ denote the algebra homomorphism 
\[
\Lambda^*H^1(B_{2g+1}[4];\Q)\rightarrow \Lambda^*H^1(\SMod_g[4];\Q)
\]
induced by $\sigma^*$ in degree 1. This map is surjective. 

The relationships between $\sigma^*$, $\sigma^*(1)$, and the cup product are given by the following commutative diagram:
\[
\xymatrix{
 \Lambda^*H^1(B_{2g+1}[4];\Q)\ar[r]^{\ \sigma_g^*(1)\ }\ar[d]^{\smile} & \Lambda^*H^1(\SMod_g[4];\Q)\ar[d]^{\smile}\\
 H^*(B_{2g+1}[4];\Q)\ar[r]^{\sigma_g^*} & H^*(\SMod_g[4];\Q).
}
\]

We complete the proof of Theorem~\ref{thm:mainD} by first dealing with the case of $n$ odd, followed by the case of $n$ even. 

By Theorem~\ref{thm:mainE}, the rightmost cup product in the above diagram fails to be surjective for $g\geq 7$. This implies that for all $g\geq 7$ the cup product $\Lambda^*H^1(\B_{2g+1}[4];\Q)\rightarrow H^*(\B_{2g+1}[4];\Q)$ is not surjective. This proves Theorem~\ref{thm:mainD} for $n$ odd with $n\geq 15$. 

It remains to deal with the case of $n$ even. Let $\B_{2g+2}[4]\rightarrow \B_{2g+1}[4]$ be the map induced by forgetting the last marked point of $\D_n$ and let $s$ be any section, for instance the one induced by the standard inclusion $\B_{2g+1} \to \B_{2g+2}$. Replacing $\B_{2g+1}[4]$, $\SMod_g[4]$, and $\sigma$ in the diagram above with $\B_{2g+2}[4]$, $\B_{2g+1}[4]$, and $s$, respectively, and applying the odd $n$ case of Theorem~\ref{thm:mainD}, we obtain that for $g\geq 7$ the cup product $\Lambda^*H^1(\B_{2g+2}[4];\Q)\rightarrow H^*(\B_{2g+2}[4];\Q)$ is not surjective. This completes the proof. 
\end{proof}

\subsection{Higher Betti numbers}\label{sec:smallbetti}

In this section we will prove Theorem~\ref{thm:smallbetti}, which gives the Betti numbers of $\B_n[4]$ for $n=3,4$ and Proposition~\ref{prop:genus2prop}, which gives a lower bound for the top Betti number of $\Mod_2[4]$.   We begin with a lemma. 

\begin{lemma}
\label{lem:cdeuler}
For all $n\geq 1$ we have $\cd \B_n[4] = n-1$ and $\chi(\B_n[4]) = 0$.
\end{lemma}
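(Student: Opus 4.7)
The plan is to deduce both assertions from the corresponding facts for $\PB_n$, using that $\B_n[4]$ is a finite-index subgroup of $\PB_n$ (see Section~\ref{sec:ala}).

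For the cohomological dimension, I would start from the Fadell--Neuwirth short exact sequence
\[
1 \to F_{n-1} \to \PB_n \to \PB_{n-1} \to 1,
\]
where $F_{n-1}$ is free of rank $n-1$, and argue inductively that $\cd \PB_n = n-1$; equivalently, the braid arrangement complement $X_n$ is a $K(\PB_n,1)$ of the homotopy type of an $(n-1)$-dimensional CW complex. Because $\PB_n$ is torsion-free and $\B_n[4]$ is a subgroup of finite index, cohomological dimension is preserved, so $\cd \B_n[4] = \cd \PB_n = n-1$.

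For the Euler characteristic, I would first show $\chi(\PB_n)=0$ for $n\geq 2$. The cleanest route is via the splitting $\PB_n \cong \PMod_{0,n+1} \times \ZZ$ used in Section~\ref{sec:ala}: any group containing $\ZZ$ as a direct factor has Euler characteristic zero, since $\chi$ is multiplicative across direct products and $\chi(\ZZ)=0$. Alternatively, iterating the Fadell--Neuwirth extension together with the multiplicativity of $\chi$ for extensions of $FP$ groups gives
\[
\chi(\PB_n) = \prod_{k=1}^{n-1}\chi(F_k) = \prod_{k=1}^{n-1}(1-k),
\]
which vanishes for $n\geq 2$ (the $k=1$ factor already contributes $0$). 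Finally, Euler characteristic is multiplicative under passage to finite-index subgroups:
\[
\chi(\B_n[4]) = [\PB_n:\B_n[4]]\cdot \chi(\PB_n) = 0
\]
for $n\geq 2$; the case $n=1$ is vacuous since $\B_1[4]$ is trivial.

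There is no substantive obstacle here: every ingredient (Fadell--Neuwirth, the behaviour of $\cd$ and $\chi$ under finite index, and the central $\ZZ$-factor in $\PB_n$) is standard, and the result is essentially a bookkeeping consequence of well-known properties of $\PB_n$.
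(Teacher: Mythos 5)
Your proposal is correct and follows essentially the same route as the paper: the paper also obtains $\cd \PB_n = n-1$ from the decomposition of $\PB_n$ as an $(n-1)$-fold iterated split extension by free groups (the combing, i.e.\ exactly your Fadell--Neuwirth tower) and then passes to $\B_n[4]$ by finite index, and it likewise deduces $\chi(\B_n[4])=0$ from $\chi(\PB_n)=0$ together with multiplicativity of $\chi$ under finite-index subgroups. One small but genuine omission: the Fadell--Neuwirth induction, and equally your remark that $X_n$ is a $K(\PB_n,1)$ of dimension $n-1$, only give the upper bound $\cd \PB_n \le n-1$; to get equality you still need a lower bound, which the paper supplies explicitly by exhibiting a free abelian subgroup of rank $n-1$ generated by Dehn twists (one could instead cite $H^{n-1}(\PB_n;\Q)\neq 0$ from Arnol'd). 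You should add one of these to close the argument. On the Euler characteristic the two treatments differ only cosmetically: the paper evaluates Arnol'd's Poincar\'e polynomial $\prod_{k=1}^{n-1}(1+kt)$ at $t=-1$, which is precisely your product $\prod_{k=1}^{n-1}(1-k)=0$, while your alternative via the central $\ZZ$-factor in $\PB_n \cong \PMod_{0,n+1}\times\ZZ$ is equally valid and arguably more self-contained given Section~\ref{sec:ala}. (Both your write-up and the paper's gloss over $n=1$, where the group is trivial and $\chi=1$; the lemma is only invoked for $n\geq 3$, so this is harmless.)
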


\begin{proof}

For $n \geq 1$ we have $\cd \PB_n = n-1$; the lower bound comes from the existence of a free abelian subgroup of rank $n-1$ (generated by Dehn twists) and the upper bound comes from the decomposition of $\PB_n$ into an $(n-1)$-fold iterated semidirect product of free groups (via combing).  Since $\B_n[4]$ has finite index in $\PB_n$, the first statement follows.

It follows from Arnol'd's computation \cite[Corollary 2]{arnold} of the Poincar\'{e} polynomial of $\PB_n$ that $\chi(\PB_n) = 0$. Since $\B_n[4]$ has finite index in $\PB_n$, we obtain the second statement. 
\end{proof}

\begin{proof}[Proof of Theorem~\ref{thm:smallbetti}]

In the proof we will denote the $i$th Betti number of a group $G$ by $b_i(G)$ and we will abbreviate $b_i(\B_n[4])$ by $b_i$.  

We begin with the case of $n=3$.  By the second statement of Lemma~\ref{lem:cdeuler} we have
\[
\chi(\B_3[4]) = b_0 - b_1 + b_ 2 = 0.
\]
By the $n=3$ case of Theorem~\ref{thm:main} we have $b_1(\B_3[4]) = 6$. Since 
$b_0 = 1$, we find that $b_2(\B_3[4]) = 5$. By the first statement of Lemma~\ref{lem:cdeuler}, we have found all of the nontrivial Betti numbers of $\B_3[4]$. 

Next we treat the case $n=4$. As in Section~\ref{sec:ala} we have $\B_4[4]\cong \PMod_{0,5}^2\times \ZZ$.  From this the K\"{u}nneth theorem gives
\[
H^j(\B_4[4]; \Q) \cong H^j(\PMod_{0,5}^2; \Q)\oplus H^{j-1}(\PMod_{0,5}^2; \Q)\]
for all $j\geq 1$. Thus for all $j\geq 1$ we have
\[
b_j = b_j(\PMod_{0,5}^2) + b_{j-1}(\PMod_{0,5}^2).
\]
It follows from Lemma~\ref{lem:cdeuler} and the isomorphism $\B_{2g+1}[4] \cong \PMod_{0,2g+2}^2 \times \ZZ$ (Section~\ref{sec:ala}) that $\cd \PMod_{0,5}^2 = 2$.  Thus by Lemma~\ref{lem:eulerlemma}
\[
64 = \chi(\PMod_{0,5}^2) = 1 - b_1(\PMod_{0,5}^2) + b_2(\PMod_{0,5}^2).
\]
Since $b_1(\PMod_{0,5}^2) = 20$, we obtain $b_2(\PMod_{0,5}^2) = 83$. Thus 
\[
b_2(\B_4[4]) = b_2(\PMod_{0,5}^2) + b_1(\PMod_{0,5}^2) = 83 + 20 = 103.
\]
Finally, since $\chi(\B_4[4]) = 0$ we have
\[
b_3(\B_4[4]) = 1 - b_1(\B_4[4]) + b_2(\B_4[4]) = 1-21 +103 = 83.
\]
Since $\cd \B_4[4] = 3$, we have found all of the non-trivial Betti numbers of $\B_4[4]$.
\end{proof}

\begin{proof}[Proof of Proposition~\ref{prop:genus2prop}]

Throughout we use the equality $\SMod_2[4] = \Mod_2[4]$, which follows immediately from the equality $\SMod_2 = \Mod_2$; see \cite[Section 9.4.2]{farbmargalit}.

By Lemma~\ref{lem:eulerlemma} we have $\chi(\text{Mod}_2[4]) = -3072$ and 
by Lemma~\ref{lem:cd} we have $\cd \Mod_2[4] = 3$.  Thus
\[
1 - b_1 + b_2 -b_3 = -3072.
\]
By Corollary~\ref{cor:mainC} we have $b_1 = 54$, whence $b_3 = 3019 + b_2$.  It remains to bound $b_2$ from below. 

Since $\B_5[4] \cong \Mod_2[4] \times \ZZ$, the K\"{u}nneth theorem gives
\[
H_2(\Mod_2[4]; \Q)\oplus H_1(\Mod_2[4]; \Q) \cong H_2(\B_5[4];\Q)
\]
and therefore that 
\[
b_2 + b_1 =  \dim H_2(\B_5[4]; \Q)
\] 
Since the map $\B_5[4]\rightarrow \B_4[4]$ induced by forgetting the last marked point in $\D_5$ is split, the induced map 
\[
H_2(\B_5[4]; \Q)\rightarrow H_2(\B_4[4];\Q)
\] 
is surjective. By the $n=4$ case of Theorem~\ref{thm:smallbetti} we have
\[
\dim H_2(\B_5[4]; \Q) \geq \dim H_2(\B_4[4]; \Q) = 103
\]
It follows that $b_2 \geq 103 - 54 = 49$ and therefore that $b_3\geq 3019 + 49 = 3068$. 
\end{proof}


\section{Hyperelliptic Torelli groups}\label{hyptorellisection}

In this section prove Theorem~\ref{thm:mainF}, which states that 
\[
\dim H_1(\SI_{g};\Q)\geq \frac{1}{6}\left(20g^4+12g^3-5g^2+9g-6\right).
\]
After recalling some facts about the second Johnson homomorphism $\tau_2$, we proceed to the proof of the theorem.  At the end of the section we prove Proposition~\ref{prop:unbound}.

In this section, $\Sp_g(\ZZ)[m]$ denotes the level $m$ congruence subgroup of $\Sp_g(\ZZ)$, that is, the kernel of the mod $m$ reduction map.

\p{The second Johnson homomorphism}  Let $\pi = \pi_1(\Sigma_g, *)$ and let $\pi^{(k)}$ denote the $k$th term of the lower central series of $\pi$. We define $\L_k = \pi^{(k)}/\pi^{(k+1)}\otimes \Q$.  Let $\K_g$ denote the subgroup of $\I_g$ generated by Dehn twists about separating simple closed curves. The second Johnson homomorphism is a $\Mod_g$-equivariant homomorphism
\[
\tau_2:\K_g\rightarrow \text{Hom}(\L_1,\L_3);
\]
see the papers by Hain and Morita \cite{hainkahler, morita} for the definition. The image of $\tau_2$ is a representation of $\Sp_g(\ZZ)$. Work of Hain \cite{haininfpresentations} implies that the image is isomorphic to the restriction to $\Sp_g(\ZZ)$ of the irreducible $\Sp_g(\Q)$-representation $V(2\lambda_2)$, where $\lambda_1,\lambda_2,\ldots, \lambda_g$ is a system of fundamental weights for $\Sp_g(\Q)$ (see also \cite[p.377]{morita}). 

The group $\SI_g$ is contained in $\K_g$; see of the paper by Brendle, Putman, and the second author \cite[p. 268]{brendlemargalitputman}.  Thus we may restrict $\tau_2$ to $\SI_g$ to obtain
\[
j: \SI_g\rightarrow V(2\lambda_2).
\]
The group $\SI_g$ is normal in $\SMod_g[2]$.  Also, A'Campo proved that the symplectic representation $\text{Mod}_g\rightarrow \Sp_g(\ZZ)$ induces an isomorphism $\SMod_g[2]/\SI_g\ \cong \Sp_g(\ZZ)[2]$.  It follows that $j$ is $\SMod_g[2]$-equivariant and that it induces an $\Sp_g(\ZZ)[2]$-equivariant map
\[
j_*: H_1(\SI_g;\Q)\rightarrow V(2\lambda_2).
\]

\begin{proof}[Proof of Theorem~\ref{thm:mainF}]

Let  $i: \SI_g\rightarrow \SMod_g[4]$ denote the inclusion and consider the map
\[
\Phi: H_1(\SI_g;\Q)\rightarrow H_1(\SMod_g[4];\Q)\oplus V(2\lambda_2)
\]
defined by
\[
\Phi(x) = \left(i_*(x), j_*(x)\right).
\]
By Corollary~\ref{cor:mainC} the dimension of the first summand is 
\[
3{2g+1\choose 4} + 3{2g+1\choose 3} + {2g+1 \choose 2} -1.
\]
The dimension of the second summand is also known (see \cite[Lemma 8.5]{hainkahler}):
\[
\dim V(2\lambda_2) = \frac{g(g-1)(4g^2+4g-3)}{3}.
\] 
Since the sum of these two dimensions is the desired lower bound, it suffices to prove that $\Phi$ is surjective.  To do this, we will first show that $i_*$ and $j_*$ are surjective.

First we show that $i_*$ is surjective. It follows from the aforementioned theorem of A'Campo that the map $\text{Mod}_g\rightarrow \Sp_g(\ZZ)$ induces an isomorphism $\SMod_g[4]/\SI_g \cong \Sp_g(\ZZ)[4]$.
We therefore have an exact sequence
\[
H_1(\SI_g;\Q)\rightarrow H_1(\SMod_g[4];\Q)\rightarrow H_1(\Sp_g(\ZZ)[4];\Q)
\]
The homology group $H_1(\Sp_g(\ZZ)[m];\Q)$ is zero for $g \geq 2$ and $m \geq 0$  (see \cite[p. 3]{putmanabelianization}).  Thus $i_*$ is surjective.

We now show that the map $j_*$ is surjective for each $g\geq 2$. By the Borel density theorem, any lattice $\Gamma \leq \Sp_g(\R)$ is Zariski dense; see \cite[p. 766]{putmanabelianization}.  It follows that the irreducible $\Sp_g(\R)$-module $V(2\lambda_2)\otimes \R$ is irreducible as a $\Gamma$-module.  Since tensoring a reducible representation with $\R$ results in a reducible representation, it follows that $V(2\lambda_2)$ is irreducible as a $\Gamma$-module. As $j_*$ is $\Sp_g(\ZZ)[2]$-equivariant, it suffices to show that $j_*$ is non-zero.

Morita proved that if $c$ is any (nontrivial) separating curve in $\Sigma_g$ then $\tau_2(T_c)$ is non-zero \cite[Proposition 1.1]{morita}.  If $c$ is any separating curve in $\Sigma_g$ that is preserved by the hyperelliptic involution $s$, then $T_c$ lies in $\SI_g$.  It follows that $j$ is non-zero, and hence that that $j_*$ is non-zero, hence surjective.

Finally, we show that $\Phi$ is surjective. Let $(x,y) \in H_1(\SMod_g[4];\Q)\oplus V(2\lambda_2)$.  We will show that $(x,y)$ lies in the image of $\Phi$.  Since both $i_*$ and $j_*$ are surjective, we can choose $\widetilde{x}, \widetilde{y} \in H_1(\SI_g;\Q)$ such that 
\[
i_*(\widetilde{x}) = x \quad \text{and} \quad    j_*(\widetilde{y}) = y. 
\]
Let $y_1 = -j_*(\widetilde{x})+y$.  
Since $V(2\lambda_2)$ is an irreducible $\Sp_g(\ZZ)[4]$-representation, the corresponding space of coinvariants $V(2\lambda_2)_{\Sp_g(\ZZ)[4]}$ is trivial.  This is the same as saying that $V(2\lambda_2)$ is spanned by
\[
\left\{  (h-1)v_h \mid h\in \Sp_g(\ZZ)[4],\ v_h\in H_1(\SI_g;\Q)  \right\}.
\]
In particular, there is a finite set $\cH\subset \Sp_g(\ZZ)[4]$ such that
\[
y_1= \sum_{h\in \cH} (h-1)v_h
\]
where each $v_h$ lies in $V(2\lambda_2)$. 

For each $h \in \cH$, let $\tilde v_h$ be an element of the $j_*$-preimage of $v_h$.  Let
\[
\widetilde y_1= \sum_{h\in \cH} (h-1)\widetilde v_h.
\]
By construction, we have that $j_*(\widetilde y_1) = y_1 = -j_*(\widetilde{x})+y$.  

Since $\SI_g$ is normal in $\SMod_g[4]$, and since $\SMod_g[4]/\SI_g \cong \Sp_g(\ZZ)[4]$,  the map $i_*$ is $\Sp_g(\ZZ)[4]$-equivariant.Thus
\[
i_*(\widetilde y_1) = \sum_{h\in \cH} (h-1)i_*(\widetilde v_h) = \sum_{h\in \cH} hi_*(\widetilde v_h)-i_*(\widetilde v_h).
\]
Since $\SI_g$ is contained in $\SMod_g[4]$ there is a well-defined action of the quotient $\Sp_g(\ZZ)[4]$ on $H_1(\SMod_g[4];\Q)$.  But the action of $\SMod_g[4]$ on $H_1(\SMod_g[4];\Q)$ is trivial and so the action of $\Sp_g(\ZZ)[4]$ is trivial.  Thus our last expression for $i_*(\widetilde y_1)$ is zero.  It follows that $\Phi(\widetilde{x}+\widetilde y_1) = (x,y)$, as desired.
\end{proof}

We now prove Proposition~\ref{prop:unbound}, which states that for $n$ odd the first homology $H_1(\BI_n;\Q)$ is infinite dimensional if the sequence $\left(\dim H_1(\B_n[m];\Q)\right)_{m=1}^{\infty}$ unbounded.

\begin{proof}[Proof of Proposition~\ref{prop:unbound}]

As in the statement, let $n=2g+1$ be odd.  For each $m$ we have $\BI_{2g+1} \subset \B_{2g+1}[m]$. Indeed, $\BI_{2g+1}$ is equal to the intersection of all $\B_{2g+1}[m]$ with $m\geq 1$. It follows from the work of Brendle and the second author \cite{brendlemargalit} that for $g\geq 1$ there is an isomorphism 
\[
\B_{2g+1}[2m]/\BI_{2g+1} \cong \Sp_g(\ZZ)[2m].
\]
The fact that $H_1(\Sp_g(\ZZ)[2m];\Q) = 0$ for $g\geq 2$ (see, for example, \cite{putman}) implies that there is a surjection
\[
H_1(\BI_{2g+1};\Q) \to H_1(\B_{2g+1}[2m];\Q).
\]
From this, it follows that if the sequence $\left(\dim H_1(\B_n[2m];\Q)\right)_{m=1}^{\infty}$ were unbounded then $H_1(\BI_{2g+1};\Q)$ would be infinite dimensional.  To complete the proof, it now suffices to observe that the transfer homomorphism gives a surjection $H_1(\B_{2g+1}[2m];\Q) \to H_1(\B_{2g+1}[m];\Q)$.  So if the sequence $\left(\dim H_1(\B_n[m];\Q)\right)_{m=1}^{\infty}$ unbounded the sequence $\left(\dim H_1(\B_n[2m];\Q)\right)_{m=1}^{\infty}$ would be unbounded as well.
\end{proof}

\section{2-torsion on the characteristic varieties of the braid arrangement}
\label{sec:tors}

The goal of this section is to prove Theorem~\ref{thm:2torsion}.  We first introduce a general branching rule that gives the restriction of an irreducible $\Z_n$-representation of  to $\PZ_n$ (Lemma~\ref{lem:branch}).  To prove the theorem we apply the lemma to our description of $H_1(\B_n[4];\C)$ from Theorem~\ref{thm:reps} in order to explicitly compute all of the 2-torsion points that lie on the characteristic variety of the braid arrangement. 

\begin{lemma}\label{lem:branch}
Let $n\geq 2$. Assume that $\rho$ is an irreducible $I$-isotypic representation of $\Z_m^I$ for some full subset $I\subset [m]^{\underline 2}$ where $m\leq n$. Then we have an isomorphism of $\PZ_n$-modules
\[
\Res^{\Z_n}_{\overline{PB}_n}V_n(\rho, \lambda) \cong \left(\dim V_m(\rho)\right)\left(\dim V(\lambda)\right)\bigoplus_{g\in \Z_n/\Z_n^I}V_{g(I)}
\]
\end{lemma}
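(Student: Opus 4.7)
The plan is to compute $\Res^{\Z_n}_{\PZ_n} \Ind_{\Z_n^I}^{\Z_n}(V_m(\rho) \boxtimes V_{n-m}(\lambda))$ by exploiting the fact that $\PZ_n$ is normal in $\Z_n$. Writing $W = V_m(\rho) \boxtimes V_{n-m}(\lambda)$, we have the coset decomposition
\[
\Ind_{\Z_n^I}^{\Z_n} W \;=\; \bigoplus_{[g]\in \Z_n/\Z_n^I} g\otimes W.
\]
Since $\PZ_n$ is normal, for any $h\in \PZ_n$ and any coset representative $g$ we have $hg = g(g^{-1}hg)$ with $g^{-1}hg\in \PZ_n\subseteq \Z_n^I$. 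Hence each summand $g\otimes W$ is $\PZ_n$-invariant, and the action of $h$ on $g\otimes w$ is $g\otimes (g^{-1}hg)\cdot w$. So the first step is just to reduce the problem to identifying each summand $g\otimes W$ as a $\PZ_n$-module.

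Next, I would identify the $\PZ_n$-action on $W$ itself. By Lemma~\ref{lem:product}, the $\Z_n^I$-action on $W$ factors through $P\colon \Z_n^I \to \Z_m^I \times S_{n-m}$, and $P$ sends $\PZ_n$ into $\PZ_m \times\{1\}$ (the generators $T_{ij}$ with $j>m$ lie in $K_{n,m}=\ker P$, and $T_{ij}$ with $i,j\le m$ map to $T_{ij}\in \PZ_m$). Because $V_m(\rho)$ is assumed $I$-isotypic, the restriction of $\rho$ to $\PZ_m$ acts by the character $\rho_I$. The factor $V_{n-m}(\lambda)$ is acted on trivially by $\PZ_n$. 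Therefore as a $\PZ_n$-module
\[
W \;\cong\; V_I^{\,\oplus\,(\dim V_m(\rho))(\dim V_{n-m}(\lambda))},
\]
where here we regard $\rho_I$ as a character of $\PZ_n$ by pulling back along the forgetful map $\PZ_n\twoheadrightarrow \PZ_m$ (equivalently, by viewing $I\subseteq [m]^{\underline 2}\subseteq [n]^{\underline 2}$).

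The key computation is then to see what conjugation by $g$ does to this character. For $h=T_{ij}\in \PZ_n$ one has $g^{-1}T_{ij}g = T_{g^{-1}\{i,j\}}$ in $\PZ_n$, so
\[
\rho_I(g^{-1}hg) \;=\; \rho_I(T_{g^{-1}\{i,j\}}) \;=\; \rho_{g(I)}(T_{ij}) \;=\; \rho_{g(I)}(h).
\]
Consequently the $\PZ_n$-action on $g\otimes W$ is via the character $\rho_{g(I)}$, and
\[
g\otimes W \;\cong\; V_{g(I)}^{\,\oplus\,(\dim V_m(\rho))(\dim V_{n-m}(\lambda))}
\]
as $\PZ_n$-modules. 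Summing over coset representatives $[g]\in \Z_n/\Z_n^I$ gives the claimed decomposition. The only mildly subtle point is verifying that the $I$-isotypy hypothesis indeed forces the $\PZ_n$-action on $W$ to be scalar by $\rho_I$ (and in particular that the factor from $K_{n,m}$ acts trivially), but this is immediate from the surjection in Lemma~\ref{lem:product} together with the definition of $I$-isotypic; there is no real obstacle.
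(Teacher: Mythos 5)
Your proof is correct and follows essentially the same route as the paper's: the paper applies the restriction-of-induction formula to get the coset decomposition $\bigoplus_{g\in \Z_n/\Z_n^I}\Res_{\PZ_n}^{\Z_n^{g(I)}} g\left(V_m(\rho)\boxtimes V_{n-m}(\lambda)\right)$, identifies the untranslated summand as a sum of copies of $V_I$ exactly as you do via Lemma~\ref{lem:product}, and then invokes Lemma~\ref{lem:symm} for the fact that the $g$-translate is $g(I)$-isotypic. The only cosmetic difference is that you re-derive the content of Lemma~\ref{lem:symm} (the computation $\rho_I(g^{-1}T_{ij}g)=\rho_{g(I)}(T_{ij})$) inline rather than citing it.
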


\begin{proof}

By the definition of the $V_n(\rho, \lambda)$, by the formula for the restriction of an induced representation \cite[Proposition 5.6(b)]{brown}, and by the fact that $\overline{PB}_n\subset \Z_n^J$ for every $J\subset [n]^{\underline 2}$, we have
\begin{align*}
\Res^{\Z_n}_{\overline{PB}_n}V_n(\rho, \lambda) &= \Res^{\Z_n}_{\overline{PB}_n}\Ind_{\Z_n^I}^{\Z_n}V_m(\rho)\boxtimes V(\lambda) \cong  \bigoplus_{g\in \Z_n/\Z_n^I}\Res_{\PZ_n}^{\Z_n^{g(I)}}gV_m(\rho)\boxtimes V(\lambda).
\end{align*}

To complete the proof, we observe that $V_{n-m}(\lambda)$ restricts to a direct sum of $\dim V_{n-m}(\lambda)$ copies of the trivial $\PZ_n$-module and that $V_m(\rho)$ restricts to the direct sum of $\dim V_m(\rho)$ copies of the representation $V_I$. Thus $V_m(\rho)\boxtimes V_{n-m}(\lambda)$ restricts to the direct sum of $\left(\dim V_m(\rho)\right)\left(\dim V_{n-m}(\lambda)\right)$ copies of $V_I$. Employing Lemma~\ref{lem:symm}, we see that $gV_m(\rho)\boxtimes V_{n-m}(\lambda)$ restricts to the direct sum of $\left(\dim V_m(\rho)\right)\left(\dim V_{n-m}(\lambda)\right)$ copies of $V_{g(I)}$. The result follows. 
\end{proof}

\begin{proof}[Proof of Theorem~\ref{thm:2torsion}]
A 2-torsion point of $V_d(X_n)$ is a homomorphism $\rho : \PB_n \to \mu_2$ with $\dim H^1(X_n;\C_\rho)\geq d$.  As in Section~\ref{sec:bnbar}, any such $\rho$ is equal to some $\rho_I$.  For any $I\subset [n]^{\underline 2}$, we may identify the fiber of $\C_{\rho_I}$ with the $\PZ_n$-module $V_I$, viewed as a $\PB_n$-module. The fact that $\PZ_n = \PB_n/\B_n[4]$ is a finite group implies that the Hochschild--Serre spectral sequence 
\[
E_2^{p,q} = H^p(\PZ_n; H^q(\B_n[4]; V_I))\implies H^{p+q}(\PB_n; V_I)
\] 
degenerates at the $E_2$ page. This gives isomorphisms
\[
H^1(X_n; \C_{\rho_I}) \cong H^1(\PB_n; V_I)\cong \left(H^1(\B_n[4]; \C)\otimes V_I\right)^{\PZ_n}.
\]
We conclude that $\dim H^1(X_n; \C_{\rho_I})$ is equal to the multiplicity of $V_I$ in $H^1(\B_n[4]; \C)\cong H_1(\B_n[4]; \C)$, regarded as a $\PZ_n$-module. Combining Theorem~\ref{thm:reps} with Lemma~\ref{lem:branch} we see that
\[
\Res^{\Z_n}_{\PZ_n} H_1(\B_n[4]; \C) \cong
\begin{cases}
\C^3\oplus \left(\bigoplus_{g\in \Z_3/\Z_3^{I_3}}V_{g(I_3)}\right) & n =3 \\
\C^{{n \choose 2}}\oplus \left(\bigoplus_{g\in \Z_n/\Z_n^{I_3}}V_{g(I_3)}\oplus \bigoplus_{g\in \Z_n/\Z_3^{I_4}}V_{g(I_4)}\right) & n \geq 4
\end{cases}
\]
Thus the multiplicity of any nontrivial $V_I$ in $\Res^{\Z_n}_{\PZ_n}H_1(\B_n[4]; \C)$ is at most 1. That is, for $d\geq 2$ there are no 2-torsion points on $V_d(X_n)$. Further, this decomposition shows that the 2-torsion points on $V_1(X_n)$ are exactly those of the form $\rho_{g(I_3)}$ or $\rho_{g(I_4)}$ for $g\in S_n$.  It remains only to show that these points lie on $\check{V}_1(X_n)$. 

Cohen--Suciu \cite{cohensuciu} found explicit equations for all of the components of $\check{V}_1(X_n)$.  For $i < j < k$ there is a component
\[
V_{ijk} = \{{\bf t}\in (\C^{\times})^{{n \choose 2}}: t_{ij}t_{ik}t_{jk} = 1\ \text{and}\ t_{pq}=1\ \text{if}\ |\{p,q\}\cap \{i,j,k\}|\leq 1\}
\]
and for each 4-element set $I = \{i,j,k,\ell\}$ with $i < j <k < \ell $ there is a component
\[
V_{ijk\ell} = \{{\bf t}\in (\C^{\times})^{{n \choose 2}}: t_{pq} = t_{rs}\ \text{if}\ \{p,q\}\cup \{r,s\} = I, t_{pq} = 1\ \text{if}\ \{p,q\}\not\subset I \ , \prod t_{pq} = 1 \}.
\]
We directly verify that the 2-torsion points of the form $\rho_{g(I_3)}$ lie in $V_{g(123)}$ and those of the form $\rho_{g(I_4)}$ lie in $V_{g(1234)}$. This completes the proof of the theorem. 
\end{proof}


\bibliographystyle{plain}
\bibliography{Level4}

\end{document}